\newcommand{\F}{{\mathbb F}}
\newcommand{\Z}{{\mathbb Z}}
\newcommand{\C}{{\mathbb C}}
\newcommand{\Q}{{\mathbb Q}}
\newcommand{\R}{{\mathbb R}}
\newcommand{\Br}{\mathrm{Br}}
\newcommand{\Ga}{\mathrm{Gal}}
\newtheorem{thm}{Theorem}[section]
\newtheorem{prop}[thm]{Proposition}
\newtheorem{cor}[thm]{Corollary}
\newcommand{\gen}{\mathbf{gen}}
\newcommand{\Ks}{K^{\rm sep}}
\newcommand{\uG}{\underline{G}}
\font\brus=wncyr10.240pk scaled 1200 .240pk
\DeclareFontFamily{U}{wncy}{}
    \DeclareFontShape{U}{wncy}{m}{n}{<->wncyr10}{}
    \DeclareSymbolFont{mcy}{U}{wncy}{m}{n}
    \DeclareMathSymbol{\Sha}{\mathord}{mcy}{"58}
\begin{document}
\title[Groups with good reduction]{Linear algebraic groups with good reduction}

\author[A.~Rapinchuk]{Andrei S. Rapinchuk}
\author[I.~Rapinchuk]{Igor A. Rapinchuk}

\address{Department of Mathematics, University of Virginia,
Charlottesville, VA 22904-4137, USA}

\email{asr3x@virginia.edu}

\address{Department of Mathematics, Michigan State University, East Lansing, MI
48824, USA}

\email{rapinchu@msu.edu}

%\thanks{2010 {\it Mathematics Subject Classification.} Primary 11E72, Secondary 11R34.}

\begin{abstract}
This article is a survey of conjectures and results on reductive algebraic groups having good reduction at a suitable set of discrete valuations of the base field. Until recently, this subject has received relatively little attention, but now it appears to be developing into one of the central topics in the emerging arithmetic theory of (linear) algebraic groups over higher-dimensional fields. The focus of this article is on
the Main Conjecture (Conjecture 5.7) asserting the finiteness of the number of isomorphism classes of forms of a given reductive group over a finitely generated field that have good reduction at a divisorial set of places of the field. Various connections between this conjecture and other problems in the theory of algebraic groups (such as the analysis of the global-to-local map in Galois cohomology, the genus problem, etc.) are discussed in detail. The article also includes a brief review of the required facts about discrete valuations, forms of algebraic groups, and Galois cohomology.
\end{abstract}

\maketitle

\section{Introduction}\label{S-Introduction}

Over the past six decades, the analysis of various properties of linear algebraic groups over local and global fields, the origins
of which can be traced to the works of Lagrange and Gauss, has developed into a well-established theory known as the {\it arithmetic
theory of algebraic groups} (cf. \cite{Pl-R}). While this subject remains an area of active research, there is growing interest in the arithmetic properties of linear algebraic groups over fields of an arithmetic nature that are not global (such as function fields of curves over various classes of fields, including $p$-adic fields and number fields). These recent developments rely on a symbiosis of methods from the theory of algebraic groups on the one hand, and arithmetic geometry on the other. At this stage, it is too early to give a comprehensive account of these new trends, so the goal of the present article is to discuss one important, and somewhat surprising, instance of the propagation of the ideas of arithmetic geometry into the theory of algebraic groups. Curiously, reduction techniques that have been used in the analysis of diophantine equations since antiquity, and the notion of good reduction, which is central to modern arithmetic geometry, were
utilized in the classical arithmetic theory of algebraic groups in a rather limited way (see the discussion in \S\ref{S-FinConjGR}).

A key novelty
in the current work is that the consideration of algebraic groups having good reduction at an appropriate set of discrete valuations of the base field has moved
to the forefront. In fact, one of the important conjectures in the area states that under suitable assumptions, the number of isomorphism classes of such
absolutely almost simple groups having a given type should be finite (see Conjecture 5.7 for the precise statement). Philosophically, this conjecture can be viewed as
an analogue of Shafarevich's conjecture, proved by Faltings \cite{Faltings}, on the finiteness of the number of isomorphism classes of abelian varieties defined over a given number field and having good reduction outside a fixed finite set of places of the field. More importantly, just like the work of Faltings is now the centerpiece of finiteness results in arithmetic geometry, this conjecture and related ones are likely to become
the cornerstone for various finiteness properties involving linear algebraic groups over higher-dimensional fields. Here we just mention that these conjectures deal with such classical aspects of the theory as the local-global principle (formulated in terms of properties of the global-to-local map in Galois cohomology --- cf. \cite{Serre-GC} and the recent survey \cite{ParimalaHasse}) as well as ways of extending the theorem on the finiteness of class numbers for groups over number fields to more general situations.
%and ways to extend the theorem on the finiteness of the class number of a number field to more general situations.
It should also be pointed out that, if proven, the finiteness conjecture for groups with good reduction would have numerous applications: we will discuss the genus problem for absolutely almost simple algebraic groups and weakly commensurable Zariski-dense subgroups of these that play a crucial role in the analysis of length-commensurable locally symmetric spaces, particularly those that are not arithmetically defined (such as, for example, nonarithmetic Riemann surfaces). We hope that this new direction of research in
the theory of algebraic groups, which brings together various topics from a number of areas and has rather unexpected applications, will be of interest to a broad audience.

%a variety of topics from different areas and has rather unexpected applications will be of interest to a broad audience.

The structure of the article is as follows. In \S \ref{S-GRAG}, we give a brief overview of the use of reduction techniques and the notion of good reduction in arithmetic geometry, focusing, in particular, on the (weak) Mordell theorem and Shafarevich's theorem on elliptic curves with good reduction that eventually culminated in the work of Faltings. Next, in \S \ref{S-AlgGpsGR}, we consider some motivating examples that naturally lead to the formal definition of good reduction for reductive groups. Since the formulation of our Main Conjecture relies on the notion of
forms of a given (reductive) algebraic group, we recall this in \S\ref{S-Forms}, along with the required facts about Galois cohomology.
In \S\ref{S-FinConjGR}, we first review previous work on the analysis of groups with good reduction, which focused exclusively on the case where the base field is the fraction field of a Dedekind domain and the relevant set of places consists of the discrete valuations of the field associated with maximal ideals of the ring. We then formulate the Main Conjecture (Conjecture 5.7) for groups having good reduction at a divisorial set of places of a finitely generated field. In \S\ref{S-OtherConj}, we discuss several other finiteness conjectures in this setting, which deal with the properness of the global-to-local map in Galois cohomology and finiteness conditions on the class set of an algebraic group. Available results on these conjectures are presented in \S\ref{S-Results}. We should point out that recently, all the finiteness properties discussed in this article were established for algebraic tori \cite{RR-Tori}. On the other hand, results for absolutely almost simple groups are more modest (see, in particular, \cite{CRR3}, \cite{CRR4}, \cite{CRR-Spinor}), and a great deal of work here still lies ahead. In \S\ref{S-AppGenus}, we apply these results to the genus problem, which focuses on understanding absolutely almost simple algebraic groups that have the same isomorphism classes of maximal tori over the field of definition (cf. \cite{CRR4}, \cite{CRR-Spinor}). In \S\ref{S-WCZariski}, we discuss some applications to Zariski-dense subgroups of absolutely almost simple groups related to the idea of eigenvalue
rigidity; the latter is based on the notion of weak commensurability and grew out of the investigation of isospectral and length commensurable locally symmetric spaces (cf. \cite{PrRap-WC}, \cite{R-ICM}). Finally, for completeness, in \S\ref{S-Conclusion}, we discuss two aspects of the
arithmetic theory of algebraic groups over higher-dimensional fields that are not directly related to the Main Conjecture, but which play an important role: strong approximation and rigidity.

\section{Good reduction in arithmetic geometry}\label{S-GRAG}

In order to provide some historical and philosophical context for the recent developments in the study of algebraic groups with good reduction, in this section, we give a brief overview of the use of reduction techniques and the notion of good reduction in arithmetic geometry.

To begin with, reduction techniques are among the most basic and oldest tools in number theory. Indeed, it has been known since antiquity that  they can be used to show that certain algebraic equations do not have integral or rational solutions (in other words, the corresponding algebraic varieties have no integral or rational points). For example, consider the equation
$$
x^2 - 7y^2 = -1
$$
and suppose that $(x_0, y_0)$ is an integral solution. Reducing this equation modulo 7, we obtain
$$
x_0^2 \equiv -1(\mathrm{mod}\: 7).
$$
However, there is no class modulo 7 that satisfies this condition as otherwise the multiplicative group $\left( \Z/7\Z \right)^{\times}$, which has order 6, would contain an element of order 4. This means that the original equation has no integral solutions.

At the beginning of the 20th century, it was realized that beyond simply
detecting the absence (and sometimes also the presence) of rational points, reduction techniques can be used to analyze the {\it structure} of the solution set, i.e. the set of rational points of the corresponding variety over a given field. One of the earliest, and perhaps most telling, examples of this arose in the study of elliptic curves. For simplicity, suppose that $K$ is a field of characteristic $\neq 2,3.$ We recall that an elliptic curve $E$ over $K$ is given
by an equation of the form
\begin{equation}\label{E:1}
y^2 = f(x),
\end{equation}
where $f(x) = x^3 + ax + b$ is a cubic polynomial over $K$ without multiple roots. More precisely, $E$ is the corresponding projective curve that is obtained by adding one point at infinity. It is well-known (see, for example, \cite[Ch. 1]{SilTate}) that the set of $K$-rational points $E(K)$ has the structure of an abelian group (with the group operation defined geometrically by the chord-tangent law). One of the cornerstone results in the arithmetic of elliptic curves is that when $K$ is a number field (or, more generally, a finitely generated field), then the group $E(K)$ is finitely generated. This statement, known as the {\it Mordell-Weil Theorem}, is actually true for any abelian variety (cf. \cite[Ch. VI]{LaDG}). The proof for elliptic curves over the field $K = \Q$ of rational numbers was given by Louis Mordell \cite{Mordell} in 1922. One of the key steps, which is now usually referred to as the {\it Weak Mordell Theorem}, is to show that the quotient $E(K)/ 2 E(K)$  is finite (in fact, $E(K) / nE(K)$ is finite for any $n \geq 1$); the argument here relies heavily on reduction.

%of rationals $K = \Q$ was given by Louis Mordell in 1922. One of the steps in the proof, viz. so-called {\it weak Mordell theorem} stating that the quotient $E(K) / 2E(K)$ is finite (in fact, $E(K) / nE(K)$ is finite for any $n \geqslant 1$) heavily relies on reduction.

For the purposes of this discussion, let us assume that the coefficients $a$ and $b$ of $f(x)$ in (\ref{E:1}) are integers, and let $p > 3$ be a prime. Reducing modulo $p$, we obtain the  equation
$$
%\begin{equation}\label{E:2}
y^2 = \bar{f}(x), \ \ \text{with} \ \ \bar{f}(x) = x^3 + \bar{a} x + \bar{b}
%\end{equation}
$$
over the finite field $\mathbb{F}_p$ with $p$ elements, where $\bar{a}$ and $\bar{b}$ denote the images of $a$ and $b$ in $\mathbb{F}_p$, respectively. If $\bar{f}$ does not have multiple roots, the reduced equation still defines an elliptic curve, in which case we say that the equation (\ref{E:1}) has {\it good reduction} at $p$. On the other hand, if $\bar{f}$ has multiple multiple roots, we say that (\ref{E:1}) has {\it bad reduction} at $p$. In this case, after reduction modulo $p$, the original elliptic curve degenerates into a singular rational curve. We note that the primes of bad reduction are precisely those that divide the discriminant $\Delta(f) =-4a^3-27b^2$ of  $f$, and therefore form a finite set. Furthermore, we say that an elliptic curve $E$ has a {\it good reduction at a prime $p > 3$} if (after a possible $\Q$-defined change of coordinates) it can be given by an equation (\ref{E:1}) that has good reduction at $p$.\footnote{More formally, let $v_p$ denote the (normalized) $p$-adic valuation on $\Q$ and let $\Z_{(p)}$ be the corresponding valuation ring. The elliptic curve $E$ is said to have good reduction at $p$ if there exists an abelian scheme $E_{(p)}$ over the valuation ring $\Z_{(p)}$ with generic fiber $E$ (the scheme $E_{(p)}$ is then unique, which leads to a well-defined notion of reduction modulo $p$).} For example, the equation $y^2 = x^3 - 625x$ has bad reduction at $p = 5$, but the elliptic curve it defines is isomorphic to the elliptic curve given by $y^2 = x^3 - x$, which has good reduction at $p = 5$. Otherwise, we say that $E$ has {\it bad reduction at $p$} (we refer the reader to \cite[Ch. VII]{Silverman} for a systematic account of these issues).

We will now sketch a proof of the Weak Mordell Theorem. Fix an elliptic curve $E$ over $\Q$ and let
$$
S = \{2,3 \} \cup \{p \mid E \ \text{has bad reduction at} \ p\}.
$$
(as noted above, $S$ is a finite set of primes).
Furthermore, for a point $R \in E(\bar{\Q})$, where $\bar{\Q}$ is a fixed algebraic closure of $\Q$, we let $\Q(R)$ denote the residue field of $R$, which is $\Q$ if $R$ is the point at infinity, and the field generated by the coordinates of $R$ for all other points. We now consider the isogeny
$$
\pi \colon E \to E, \ \ \ P \mapsto 2P.
$$
Since $\pi$ has degree 4, it follows that for any $P \in E(\Q)$ and any $R \in \pi^{-1}(P)$, the field extension $\Q(R)/\Q$ is of degree $\leq 4.$ Moreover, using the fact that $E$ has good reduction at any prime $p \notin S$, one shows that the extension $\Q(R)/\Q$ is {\it unramified} at $p$ (this part of the argument can be carried out either through the analysis of formal groups or Hensel's lemma).
%relies on the analysis of formal groups).
We now recall that according to the classical Hermite-Minkowski theorem, $\Q$ has only finitely many extensions of a bounded degree that are unramified at all primes outside of a fixed finite set of primes (cf. \cite[Ch. III, Theorem 2.13]{Neukirch}). Applying this to the set $S$, we see that among the fields $\Q(R)$, where $R \in \pi^{-1}(P)$ and $P$ runs through $E(\Q)$, there are only {\it finitely} many distinct ones. Consequently, the field $\Q(\pi^{-1}(E(\Q)))$, which is the compositum of all such $\Q(R)$, is a {\it finite} extension of $\Q.$ One then derives the required finiteness of the quotient $E(\Q)/2E(\Q)$ by combining the Kummer sequence with the inflation-restriction sequence in Galois cohomology.
%A formal argument using the Kummer sequence in Galois cohomology then yields the required finiteness of the quotient $E(\Q)/ 2E(\Q)$.
The reader who wishes to fill in the details of this sketch can consult \cite[Ch. VII and VIII]{Silverman} for a comprehensive account of the theory of elliptic curves over local and global fields.

%The details of this argument can be found, for example, in \cite[Ch. VII and VIII]{Silverman}.

Now, while this argument certainly demonstrates the utility of considering the places of good reduction of an elliptic curve, it still does not fully reveal their effect on the curve itself. In his 1962 ICM talk \cite{Shafarevich}, Shafarevich pointed out that if $S$ is a finite set of rational primes, then there are only finitely many isomorphism classes
of elliptic curves over $\Q$ having good reduction at all primes $p \notin S$ (in fact, he stated his theorem for an arbitrary number field $K$ and a finite set of places $S$). For the sake of completeness, we sketch an elegant proof of this theorem that appears in Serre's book \cite[IV-7]{Serre-Ladic} and which Serre attributes to Tate. The argument is based on Siegel's theorem from diophantine geometry and goes as follows. Let $A$ be the localization of $\Z$ with respect to the multiplicative set generated by $S$, which we can assume contains $2$ and $3$. First, using unique factorization in $A$, one shows that a given elliptic curve $E$ having good reduction at all $p \notin S$ is isomorphic to an elliptic curve given by (\ref{E:1}), where the coefficients $a$ and $b$ of the polynomial $f$ belong to $A$ and the discriminant $\Delta(f)$ belongs to the unit group $A^{\times}$.
%an equation of the form
%\begin{equation}\label{E:2}
%y^2 = 4x^3 - g_2x - g_3
%\end{equation}
%such that $g_2 , g_3 \in A$ and the discriminant $\Delta = g_2^3 - 27 g_3^2$ of the cubic polynomial on the right-hand side belongs to the unit group $A^{\times}$.
Next, if we have two elliptic curves with discriminants $\Delta_1 , \Delta_2 \in A^{\times}$, and $\Delta_2 = \Delta_1 u^{12}$ for some $u \in
A^{\times}$, then one curve can be replaced by an isomorphic one, whose equation is still of the form (\ref{E:1}), so that the discriminants
actually become equal. Since the quotient $A^{\times}/{A^{\times}}^{12}$ is finite, it is enough to show that the polynomials $f$ with $a , b \in A$ and a fixed value $\Delta_0\in A^{\times}$ of the discriminant form a finite set. But the equation $\Delta(f) = \Delta_0$ can be written in the form $-27b^2 =4a^3 + \Delta_0$, which itself defines an elliptic curve, hence has only finitely many solutions in $A$ by Siegel's Theorem (cf. \cite[Ch. VII, \S\S1-2]{LaDG}).

%Next, if we have two elliptic curves with discriminants $\Delta_1 , \Delta_2 \in A^{\times}$, and $\Delta_1 = \Delta_2 u^{12}$ for some $u \in A^{\times}$, then one curve can
%be replaced by an isomorphic one, whose equation is still of the form (\ref{E:2}), so that the discriminants actually become equal. Since the quotient $A^{\times}/{A^{\times}}^{12}$ is finite, it is enough to establish the finiteness of the set of equations (\ref{E:2}) with $g_2 , g_3 \in A$ such that the discriminant $g_2^3 - 27g_3^2$ equals a fixed element $\Delta_0 \in A^{\times}$. But the equation $X^2 = 27Y^3 + \Delta_0$ defines another elliptic curve, hence has only finitely many solutions in $A$ by Siegel's Theorem (cf. \cite[Ch. VII, \S\S1-2]{LaDG}).

The preceding argument is obviously very specific to elliptic curves, but Shafarevich felt that his theorem was an instance of a far more general phenomenon, which prompted him to formulate the following finiteness conjecture for abelian varieties (which are higher-dimensional analogues of elliptic curves):

\vskip2mm

\noindent {\it Let $K$ be a number field and $S$ be a finite set of primes of $K$. Then for every $g \geq 1$, there exist only finitely many $K$-isomorphism classes of abelian varieties of dimension $g$ having good reduction at all primes $\mathfrak{p} \notin S$.}

\vskip2mm

This conjecture was proved by Faltings \cite{Faltings} in 1982 as a culmination of research in diophantine geometry on finiteness properties over the course of several decades. Its numerous implications include the Mordell conjecture (a smooth projective curve of genus $\geq 2$ over a number field $K$ has only finitely many $K$-rational points) as well as Shafarevich's conjecture for curves (for any $g \geq 2$, there are only finitely many isomorphism classes of smooth projective curves over $K$ of genus $g$ having good reduction at all $\mathfrak{p} \notin S$). We refer the reader to the survey \cite{Darmon} for an account of these developments as well as a discussion of other instances of the analysis of good reduction for various classes of ``objects" over number fields. It will come as no surprise to the reader that, to this day, this subject remains one of the major themes in arithmetic geometry.

%To this day, this subject remains one of the major themes in arithmetic geometry.

%This subject remains one of the major themes in arithmetic geometry, but so far it has received surprisingly little attention in the context of linear algebraic groups. We started systematically looking at these issues in appropriate setting a few years ago, and in the remainder of this note we would like discuss available conjectures and results, and also motivations and applications. Let me start with some examples that will motivate (in fact, explain) our definition of groups with good reduction.

\section{Reductive algebraic groups with good reduction}\label{S-AlgGpsGR}
Having discussed the notion of good reduction in arithmetic geometry in the previous section, we will now transition to looking at good reduction in the context of linear algebraic groups. We begin this section with a series of examples that highlight several important points that arise in the consideration of reductions of algebraic groups. With these motivations in place, we will then formally define what it means for a reductive group to have good reduction with respect to a discrete valuation of the base field.

We refer the reader to Borel's book \cite{BorelAG} for a detailed account of the theory of linear algebraic groups.
For our purposes, we recall that a {\it linear algebraic group} is a subgroup $G \subset \mathrm{GL}_n (\Omega)$ that is defined by polynomial equations over $\Omega$ in terms of the matrix entries (where $\Omega$ is some algebraically closed field). Moreover, if $K$ is a subfield of $\Omega$ and the ideal of all polynomial functions over $\Omega$ that vanish on $G$ is generated by polynomials with coefficients in $K$, we say that $G$ is $K$-{\it defined} or is an {\it algebraic} $K$-{\it group}. We note that if $\mathrm{char}\:\Omega = 0$, then $G$ is $K$-defined if and only if it can be defined by polynomial equations with coefficients in $K$ (cf. \cite[AG, 12.2]{BorelAG}). For simplicity, we will consider examples over $K = \Q$, but these can be easily generalized.

\vskip3mm

\noindent {\bf Example 3.1.} Intuitively, the reduction of the general linear group $G = \mathrm{GL}_n$ modulo a prime $p$ should be the group $\mathrm{GL}_n$ over the field $\mathbb{F}_p = \Z/ p \Z.$ To justify this fact formally, one uses the following considerations. We observe that $G$
can be viewed
%Let $G = \mathrm{GL}_n$ be the general linear group over $\Q.$ Notice that we can view $G$
as a $\Z$-group scheme
$\mathrm{Spec}\: A$, where
$$
A = \Z\left[ x_{11}, \ldots , x_{nn}, \frac{1}{\det(x_{ij})} \right]
$$
in the sense that for any commutative ring $R$, one can identify the group $\mathrm{GL}_n(R)$ with
$\mathrm{Hom}_{\Z\text{-}\mathrm{alg}}(A , R)$, and this identification is natural in $R$. For any prime $p$, we can reduce $A$ modulo $p$:
$$
A_p := A \otimes_{\Z} \mathbb{F}_p = \mathbb{F}_p \left[ x_{11}, \ldots , x_{nn}, \frac{1}{\det(x_{ij})} \right].
$$
%where $\mathbb{F}_p = \Z/p\Z$.
Then it is easy to see that $A_p$ represents $\mathrm{GL}_n$ over the category of $\F_p$-algebras, i.e. for any such algebra $R$, there is an identification of $\mathrm{GL}_n(R)$ with $\mathrm{Hom}_{\mathbb{F}_p\text{-}\mathrm{alg}}(A_p , R)$ that is natural in $R$. Thus, we can say that the $\Q$-group $\mathrm{GL}_n$ has a $\Z$-structure given by the algebra $A$, and that the reduction modulo $p$
of the latter represents the group $\mathrm{GL}_n$ over $\mathbb{F}_p$.

%associated with the algebra $A$ and that reducing modulo $p$ yields the group $\mathrm{GL}_n$ over $\mathbb{F}_p.$

In particular, the 1-dimensional split torus $\mathbb{G}_m = \mathrm{GL}_1$ is represented by $\Z[x , x^{-1}]$; the reduction of this $\Z$-algebra modulo
$p$ is $\mathbb{F}_p[x , x^{-1}]$, which represents the 1-dimensional split torus over $\mathbb{F}_p$. More generally, the reduction modulo $p$ of the $d$-dimensional split torus $\mathbb{G}_m^d$, which is represented by the Laurent polynomial ring $$\Z[x_1, \ldots , x_d, x_1^{-1}, \ldots , x_d^{-1}],$$ is the $d$-dimensional split torus over $\mathbb{F}_p.$

\vskip3mm

\noindent {\bf Example 3.2.} The $\Q$-group $G= \mathrm{SL}_n$ is defined inside $\mathrm{GL}_n$ by the single equation $\det(x_{ij}) -1 = 0.$ Its reduction modulo $p$ give an equation of a similar shape over $\mathbb{F}_p$, suggesting that the reduction modulo $p$ should be $\mathrm{SL}_n$ over $\mathbb{F}_p$. Again, to justify this formally, we view
%Let $G = \mathrm{SL}_n$. As in the preceding example, we can view
$G$ as an affine group scheme over $\Z$ represented by the Hopf $\Z$-algebra
$$
\Z[x_{11}, \ldots , x_{nn}]/(\det(x_{ij}) - 1).
$$
The reduction of this algebra modulo $p$ is $\mathbb{F}_p[x_{11}, \ldots , x_{nn}]/(\det(x_{ij}) - 1)$, which represents $\mathrm{SL}_n$ as a group scheme over $\mathbb{F}_p$.

\vskip3mm

\noindent {\bf Example 3.3.} Let $G$ be the special orthogonal group $\mathrm{SO}_n(q)$, where $q = x_1^2 + \cdots + x_n^2 \in \Z[x_1, \dots, x_n]$ and $n \geq 3$. Then the reduction of $G$ modulo any prime $p > 2$ is again the special orthogonal group $\mathrm{SO}_n(\bar{q})$ of the quadratic form $\bar{q} = x_1^2 + \cdots + x_n^2$ over $\mathbb{F}_p$.

\vskip3mm

The common feature in these three examples is, in essence, that reduction modulo $p$ yields an algebraic group of the {\it same type.} To be more precise, the groups in Example 3.1, i.e. $\mathrm{GL}_n$ and the split tori, are (connected and) {\it reductive}, and so are their reductions modulo all primes $p$ (in this context, connectedness is always understood in terms of the Zariski topology). The groups in Examples 3.2 and 3.3 are (connected and) {\it semi-simple}, and all of their reductions are again semi-simple.\footnote{We recall that one defines the {\it unipotent radical} of a connected algebraic group $G$ to be the largest connected unipotent normal subgroup, and one says that $G$ is {\it reductive} if its unipotent radical is trivial. For example, all {\it tori} (i.e. connected diagonalizable algebraic groups) are reductive. An algebraic group is (absolutely almost) {\it simple} if it does not contain any proper connected normal subgroups, and {\it semi-simple} if it admits a surjective morphism from a direct product of simple groups. We refer the reader to \cite{BorelAG} and \cite{CGP} for the details.}
On the other hand, the next two examples exhibit a different type of behavior.

%On the other hand, the next two examples are of a different nature.
%But here is  examples of a different nature.

\vskip3mm

\noindent {\bf Example 3.4.} Let $p > 2$ be a prime and consider the quadratic extension $L = \Q(\sqrt{p})$. Recall that the norm of an element $z = a + b \sqrt{p} \in L$ is given by $\mathrm{N}_{L/\Q}(z) = a^2 - pb^2$. There exists an algebraic $\Q$-group $G$ whose group of $\Q$-points $G(\Q)$ consists precisely of the elements $z \in L^{\times}$ with
$\mathrm{N}_{L/\Q}(z) = 1$. Explicitly, for any $\Q$-algebra $R$, the group $G(R)$ consists of matrices of the form $\left( \begin{array}{cc} a & pb \\ b & a \end{array}  \right)$, with $a, b \in R$, having  determinant 1.  In other words, $G$ is defined by the following equations on a $(2\times 2)$-matrix $X = (x_{ij})$:
\begin{equation}\label{E:3}
x_{11} = x_{22}, \ \ x_{12} = p x_{21}, \ \ x_{11}^2 - p x_{21}^2 = 1.
\end{equation}
We note that the matrix $\left( \begin{array}{cc} \sqrt{p} & -\sqrt{p} \\ 1 & 1 \end{array}  \right)$ conjugates $G$ (over $\overline{\Q}$) into the group of diagonal matrices $\left( \begin{array}{cc} u & 0 \\ 0 & v \end{array} \right)$ with determinant 1, which means that structurally, $G$ is a 1-dimensional ($\Q$-anisotropic) {\it torus}, usually denoted $\mathrm{R}^{(1)}_{L/\Q}(\mathbb{G}_m)$ and called the {\it norm torus} associated with the extension $L/\Q$.

Now, considering the closed subscheme of the $\mathbb{Z}$-scheme $\mathrm{GL}_2$ given by (\ref{E:3}) and reducing the defining equations modulo $p$, we obtain the equations
$$
x_{11} = x_{22}, \ \ x_{12} = 0, \ \ x_{11}^2 = 1.
$$
The solutions to these equations are matrices of the form $\pm \left( \begin{array}{cc} 1 & 0 \\ u & 1 \end{array}  \right)$. Thus, the reduced equations define an algebraic group over $\mathbb{F}_p$ which is disconnected and whose connected component is a 1-dimensional {\it unipotent} group! At the same time,
reducing the equations (\ref{E:3}) modulo any prime $q > 2$ different from $p$, we still get a 1-dimensional torus.

\vskip3mm

\noindent {\bf Example 3.5.} We now consider a noncommutative version of Example 3.4. Let $p > 2$ be a prime and $D$ be the quaternion algebra over $\Q$ corresponding to the pair $(-1 , p)$. Explicitly, $D$ is a 4-dimensional vector space over $\Q$ with basis $1, i, j, k$ and the following multiplication table:
$$
i^2 = -1, \ \ j^2 = p, \ \ k = ij = -ji.
$$
We recall that
the reduced norm of a quaternion $z = a + bi + cj + dk \in D$ is given by $\mathrm{Nrd}_{D/\Q}(z)= a^2 + b^2 -pc^2 - pd^2$. Again, there is a $\Q$-defined algebraic group $G$, which is usually denoted $\mathrm{SL}_{1 , D}$, whose group of $\Q$-points is
$$
G(\Q) = \{ z \in D^{\times} \ \vert \ \mathrm{Nrd}_{D/\Q}(z) = 1 \}.
$$
Using the regular representation of $D$, we can represent $G$ explicitly by matrices of the form
\begin{equation}\label{E:4}
\left( \begin{array}{rrrr} a & -b & pc & -pd \\ b & a & pd & -pc \\ c & d & a & -b \\ d & -c & b & a \end{array} \right) \ \ \text{such that} \ \ a^2 + b^2 -pc^2 - pd^2 = 1.
\end{equation}
One can easily find a matrix over $\Q(\sqrt{-1})$ or $\Q(\sqrt{p})$ that conjugates $G$ into matrices of the form $\left( \begin{array}{cc} A & O \\ O & A \end{array}  \right)$, with $A \in \mathrm{SL}_2$, which means that over these extensions (and hence over $\bar{\Q}$), this group is isomorphic to $\mathrm{SL}_2$. In other words, $\mathrm{SL}_{1 , D}$ is a $\Q$-form of $\mathrm{SL}_2$, hence, in particular, a simple algebraic group (see \S\ref{S-Forms} for a more detailed discussion of forms).

Taking the obvious linear equations defining the matrices (\ref{E:4}) and reducing them modulo $p$,
%If we reduce the obvious linear equations defining the matrices (\ref{E:4}),
we obtain a system that defines the following group
$$
\left( \begin{array}{rrrr} a & -b & 0 & 0 \\ b & a & 0 & 0 \\ c & d & a & -b \\ d & -c & b & a \end{array}   \right) \ \ \text{with} \ \  a^2 + b^2 = 1.
$$
The matrices of the form $\left( \begin{array}{cc} I_2 & O \\ * & I_2 \end{array} \right)$ in this group form a connected unipotent normal subgroup, which shows that the reduction in this case is {\it not} a reductive group.

%This subgroup is no longer simple. In fact, it is solvable and has a nontrivial unipotent radical, hence is {\it not} reductive.

\vskip3mm

We see that the reductive $\Q$-groups in Examples 3.1-3.3 admit a structure of a group scheme over $\mathbb{Z}$ (or, more generally, over $\mathbb{Z}_{(p)}$ --- the localization of $\mathbb{Z}$ at the prime ideal $(p)$), which is described by a system of polynomial equations with coefficients in the respective ring such that the reductions of these equations modulo $p$ still define a reductive group.
%can be described by systems of polynomial equations with coefficients in $\Z$ (or, more generally, in $\Z_{(p)}$ --- the localization of $\Z$ at the prime ideal $(p)$), so that the system consisting of the reductions of these equations modulo $p$ still defines a reductive group.
On the other hand, in Examples 3.4 and 3.5, we have situations where a reductive group
%, we have seen situations where a given reductive group $G$
is given by a system of polynomials with coefficients in $\Z$ (or $\Z_{(p)}$) such that the reduced system no longer defines a reductive group.

So, in analogy with the case of elliptic curves, we are naturally led to the following definition for $\Q$-groups: we say that a reductive $\Q$-group $G$ has {\it good reduction} at a prime $p$ if it can be defined by a system of polynomial equations
with coefficients in $\Z_{(p)}$\footnote{In more technical terms, this system defines a scheme over $\Z_{(p)}$ with generic fiber $G$.} such that the reduction of the system modulo $p$ still defines a reductive group; otherwise we say that $G$ has {\it bad reduction at $p$}. Thus, the groups in Examples 3.1-3.3 {\it do} have good reduction at the specified primes. On the other hand, with some additional work, one can show that for any prime $p > 2$, the group in Example 3.4 has bad reduction at $p$ and good reduction at every odd prime $q \neq p$, and
%the group in Example 3.4
%For any prime p, the
%group in Example 3.4 has bad reduction at p and good reduction at every
%prime q, other than p
%has bad reduction at $p$ for any $p >2$ and
the group in Example 3.5 has bad reduction at $p$ whenever $p \equiv 3 \pmod{4}$. (We note that for $p \equiv 1 \pmod{4}$, the group $\mathrm{SL}_{1 , D}$ in Example 3.5 is in fact isomorphic to $\mathrm{SL}_2$, hence has good reduction at $p$. So, a group defined by a system whose reduction is not reductive may still have good reduction at $p$ since it may be possible to describe the group by another system whose reduction {\it does} yield a reductive group.)

%viz. it may be defined by another system that provides a reductive reduction.)

\vskip3mm

Now, for (reductive) linear algebraic groups defined over general fields, one considers the notion of good reduction with respect to {\it discrete valuations}. We recall that a (normalized) discrete valuation on a field $K$ is a surjective map $v \colon K^{\times} \to \mathbb{Z}$ such that

\vskip2mm

(a) $v(ab) = v(a) + v(b)$;

\vskip1mm

(b) $v(a + b) \geq \min(v(a) , v(b))$ whenever $a + b \neq 0$.

\vskip2mm

\noindent Then
$$
\mathcal{O}(v) := \{ a \in K^{\times} \ \vert \ v(a) \geqslant 0 \} \cup \{ 0 \}
$$
is a subring of $K$ called the {\it valuation ring} of $v$. It is a local ring with the maximal ideal
$$
\mathfrak{p}(v) = \{ a \in K^{\times} \ \vert \ v(a) > 0 \} \cup \{ 0 \},
$$
which is called the {\it valuation ideal} of $v$.
An element $\pi \in K^{\times}$ with $v(\pi) = 1$ is called a {\it uniformizer}. It is easy to see that $\pi$ generates $\mathfrak{p}(v)$, and in fact {\it every} nonzero ideal of $\mathcal{O}(v)$ is of the form $(\pi^k)$ for some $k \geq 0.$ In other words, $\mathcal{O}(v)$ is a {\it discrete valuation ring} (DVR) --- we refer the reader to \cite[Ch. 9]{At} or \cite[Ch. VI, \S3, n$^{\circ}$ 6]{Bour-CA} for a discussion of various equivalent definitions of a DVR.
Next, the quotient $\mathcal{O}(v)/\mathfrak{p}(v)$ is called the {\it residue field} of $v$, and will be denoted by $K^{(v)}$. Furthermore, the function
$\vert \cdot \vert_v \colon K \to \R$ defined by
$$
\vert a \vert_v = \rho^{v(a)} \ \ \ \ \text{for} \ a \in K^{\times} \ \ \ \ \text{and} \ \ \ \ \vert 0 \vert_v = 0,
$$
where $\rho$ is a fixed real number with $0 < \rho < 1$, is a ({\it non-archimedean} or {\it ultrametric}) {\it absolute value} on $K$. We will write $K_v$ for the completion of $K$ with respect to the metric associated with $\vert \cdot \vert_v$. One shows that $v$ naturally extends to a discrete valuation on $K_v$; for ease of notation, we will denote this extension simply by $v$. The corresponding valuation ring and valuation ideal in $K_v$ will be denoted $\mathcal{O}_v$ and $\mathfrak{p}_v$, respectively; we note that the quotient $\mathcal{O}_v/\mathfrak{p}_v$ coincides with the residue field $K^{(v)}$ defined above. We also set $U_v = \mathcal{O}_v^{\times}$ to be the group of units in $\mathcal{O}_v$.

\vskip3mm

\noindent {\bf Example 3.6.} Here are several useful examples of discrete valuations.

\vskip2mm

\noindent (a) To every rational prime $p$, there corresponds the (normalized) $p$-adic valuation $v_p$ on $\mathbb{Q}$ defined as follows: if $a \in \mathbb{Q}^{\times}$ is of the form $\displaystyle a = p^{\alpha} \cdot \frac{m}{n}$ with $m$ and $n$ relatively prime to $p$, then $v(a) = \alpha$. The corresponding valuation ring is the localization $\mathbb{Z}_{(p)}$ considered earlier, and the residue field is $\mathbb{F}_p$. Furthermore, the completion is the field of $p$-adic numbers $\mathbb{Q}_p$, and the valuation ring of $\Q_p$ is the ring of $p$-adic integers $\mathbb{Z}_p$.

\vskip2mm

\noindent (b) Let $K = k(x)$ be the field of rational functions in one variable over a field $k$, and let $p(x) \in k[x]$ be a (monic) irreducible polynomial. Then the same construction as in part (a) enables us to associate to $p(x)$ a discrete valuation $v_{p(x)}$ on $K$. There is one additional discrete valuation on $K$ given by
$$
v_{\infty}\left(\frac{f}{g}\right) = \deg(g) - \deg(f), \ \ \text{where} \ \ f , g \in k[x].
$$
Note that all of these valuations are trivial on the field of constants $k$ (i.e. $v(a) = 0$ for all $a \in k$), and cumulatively they constitute all valuations of $K$ with this property. These valuations are often called ``geometric" since they naturally correspond to the closed points of the projective line $\mathbb{P}^1_k$.

\vskip2mm

\noindent (c) Let again $K = k(x)$, but now assume that we are given a discrete valuation $v_0$ of $k$. Then $v_0$ can be extended to a discrete valuation $v$ on $K$ by first extending it to the polynomial ring $k[x]$ using the formula
$$
\text{for} \ \ f(x) = a_n x^n + \cdots + a_0, \ \ v(f(x)) := \min_{i = 0, \ldots , n} v(a_i),
$$
and then extending $v$ to $K$ by multiplicativity
$$
v\left(\frac{f(x)}{g(x)}\right) = v(f(x)) - v(g(x)).
$$
This extension is often called ``Gaussian" (see \cite[Ch. VI, \S 10, n$^{\circ}$ 1]{Bour-CA} for further details). In particular, we can start with the $p$-adic valuation $v_p$ on $\Q$ and then, using this procedure, extend it to a valuation $v$ on $\Q(x).$

%we can start with the $p$-adic valuation $v_p$ on $\mathbb{Q}$, and extend it to $\mathbb{Q}(x)$.

\vskip3mm

We are now ready to give a formal definition of good reduction for reductive groups. It requires language from the theory of reductive group schemes, for which we refer the reader to \cite{Con} or \cite{DemGr} (see also \cite{DemGab} or \cite{Waterhouse} for general introductions to group schemes). However, it turns out that in a number of particular cases, some of which will be discussed below, good reduction can be characterized in very concrete terms. We note that, in a broad sense, the theory of affine groups schemes shifts attention from concrete matrix groups to the Hopf algebras that represent these groups and on which one can consider various ``integral" structures.
For our purposes, this change in perspective provides an indispensable framework for discussing the reduction of algebraic groups (as we have already seen in Examples 3.1 and 3.2).

In our discussion, we will use the following standard notation: given an affine scheme $X = \mathrm{Spec}\: A$, where $A$ is a commutative algebra over a commutative ring $R$, and a ring extension $R \subset R'$, we will denote by
$X \times_R R'$ the affine scheme $\mathrm{Spec}\: (A \otimes_R R')$ over $R'$ (usually referred to as ``base change").

\vskip2mm

\noindent {\bf Definition 3.7.} {\it Let $K$ be a field equipped with a discrete valuation $v$ (with corresponding completion $K_v$ and valuation ring $\mathcal{O}_v \subset K_v$), and let $G$ be a connected reductive $K$-group. We say that $G$ has \emph{good reduction} at $v$ if there exists a reductive group scheme $\mathscr{G}$ over $\mathcal{O}_v$ with generic fiber $\mathscr{G} \times_{\mathcal{O}_v} K_v$ isomorphic to $G \times_K K_v$.}

\vskip3mm

We recall that a smooth $\mathcal{O}_v$-group scheme $\mathscr{G}$ is called {\it reductive} if for every $x \in {\rm Spec}(\mathcal{O}_v)$, the geometric fiber $\mathscr{G} \times_{\mathcal{O}_v} \overline{\kappa(x)}$ (where $\overline{\kappa(x)}$ is an algebraic closure of the residue field $\kappa(x)$) is a (connected) reductive algebraic group over $\overline{\kappa(x)}.$ We also note that the $\mathcal{O}_v$-scheme $\mathscr{G}$ with generic fiber $G \times_K K_v$ is unique (cf. \cite[Th\'eor\`eme 5.1]{Nisn1} for semi-simple groups as well as \cite[\S6]{Guo} for the general case); in particular, this means that reduction (or special fiber)
$$\uG^{(v)} := \mathscr{G} \times_{\mathcal{O}_v} K^{(v)}$$ is well-defined. Furthermore, when $G$ is an absolutely almost simple $K$-group, the reduction $\uG^{(v)}$ has the same Cartan-Killing type as $G$ (see \cite[Exp. XXII, Proposition 2.8]{DemGr}). It should be observed that the definition of good reduction is sometimes given in terms of reductive $\mathcal{O}(v)$-schemes instead of $\mathcal{O}_v$-schemes. This makes essentially no difference, but the above definition is more convenient for our purposes, particularly for discussing connections of good reduction with local-global principles (see \S\ref{S-OtherConj}).

\vskip3mm

We conclude this section with several examples of semi-simple groups with good reduction that will be sufficient for understanding the rest of the article.

%Sometimes good reduction is defined in terms of reductive group schemes over the valuation ring of $K$ rather than of $K_v$. We refer to ... for the definition
%of a reductive group scheme over a (local) ring but here only mention that what good reduction means in practice is that the $K_v$-group $G \times_K K_v$ has an $\mathcal{O}_v$-structure given by $\mathscr{G}$ such that the reduction $\uG^{(v)} := \mathscr{G} \times_{\mathcal{O}_v} K^{(v)}$ is a connected reductive group over $K_v$ (of the same type as $G$ if the latter is absolutely almost simple). Here are some basic examples of semi-simple groups with good reduction:

\vskip2mm

\noindent {\bf Example 3.8.}

\vskip2mm

\noindent (a) An absolutely almost simple simply connected $K$-split group $G$ has good reduction at any $v$. This follows from the Chevalley construction, which provides a $\mathbb{Z}$-structure for $G$, described in detail in \cite{BorelChevalley}.
%[Chevalley].
%, since, by the Chevalley construction, any such group arises from a group scheme over $\Z$ --- see, for example, \cite[\S1]{Conrad}.
%which is provided by the Chevalley construction.
(This generalizes the above discussion of $\mathrm{SL}_n$ in Example 3.2.)

\vskip2mm

\noindent (b) The algebraic $K$-group $G = \mathrm{SL}_{1 , A}$ associated with the group of elements of reduced norm 1 in a central simple $K$-algebra $A$ (which generalizes the group $\mathrm{SL}_{1 , D}$ considered in Example 3.5)
%we consider above for a quaternion algebra $D$)
has good reduction at $v$ if and only if $A \otimes_K K_v$ comes from an Azumaya algebra $\mathscr{A}$ over $\mathcal{O}_v$ (which means, in particular, that the reduction $\mathscr{A} \otimes_{\mathcal{O}_v} K^{(v)}$ is a central simple algebra over the residue field $K^{(v)}$). Another way of putting this is to say that $A$ is {\it unramified} at $v$ --- see the discussion at the beginning of \S\ref{S-ResultsMain} below.

\vskip2mm

\noindent (c) Assuming that $\mathrm{char}\: K^{(v)} \neq 2$, the spinor group $G = \mathrm{Spin}_n(q)$ of a nondegenerate quadratic form $q$ in $n > 2$ variables over $K$ has good reduction at $v$ if and only if, over $K_v$, the form $q$ is equivalent to a quadratic form
$$
\lambda (a_1 x_1^2 + \cdots + a_n x_n^2) \ \ \text{with} \ \ \lambda \in K_v^{\times} \ \ \text{and} \ \ a_i \in U_v \ \ \text{for all} \ \ i = 1, \ldots , n.
$$

\section{Forms with good reduction and Galois cohomology}\label{S-Forms}

The purpose of this section is to recall some key points concerning forms of algebraic groups and (nonabelian) Galois cohomology. These are needed, on the one hand, in order to formulate an appropriate analogue of Shafarevich's conjecture for reductive algebraic groups (see Question 4.3 and Conjecture 5.7), and, on the other hand, are indispensable for the discussion of local-global principles in \S\ref{S-OtherConj}.

\subsection{$F/K$-forms.}
%\noindent {\bf 4.1. $F/K$-forms.}
As we saw in \S\ref{S-GRAG}, Shafarevich's conjecture asserts that the number of $K$-isomorphism classes of abelian varieties of a {\it given dimension} over a number field $K$ that have good reduction outside a given finite set of places of $K$ is finite. So, as a straightforward analogue of this conjecture for reductive linear algebraic groups, one can consider the following:

\vskip2mm

\begin{center}

\parbox[t]{15cm}{\it Let $K$ be a field equipped with a set $V$ of discrete valuations. What are the reductive algebraic groups of a given dimension that have good reduction at all $v \in V$? More specifically, what are the situations in which the number of $K$-isomorphism classes of such groups is finite?}

\end{center}

\vskip2mm

\noindent Of course, to make these questions meaningful, one needs to specialize $K$ and $V$, which we will do in \S\ref{S-FinConjGR}. However, we would first like to point out that in the case of reductive algebraic groups, considering reductive algebraic groups of a given dimension is far less natural than considering abelian varieties of the same dimension. Indeed, in a very coarse sense, all complex abelian varieties of dimension $d$ ``look the same": they are all analytically isomorphic to complex tori $\mathbb{C}^d / \Lambda,$ where $\Lambda \subset \mathbb{C}^d$ is a lattice of rank $2d$ (see, for example, \cite[Chapter 1]{MumfordAV}). At the same time, the ``fine" structure and classification of these varieties are highly involved as these varieties have nontrivial {\it moduli spaces}, which leads to infinite continuous families of nonisomorphic varieties. As a simple example, we recall that the isomorphism class of a complex elliptic curve is uniquely determined by its $j$-invariant, which can be any complex number (in fact, isomorphism classes of elliptic curves are classified by the $j$-invariant over {\it arbitrary} algebraically closed fields --- see \cite[Ch. III, Proposition 1.4]{Silverman}).

On the contrary, reductive algebraic groups of the same dimension may look completely different. For example, the absolutely almost simple group $\mathrm{SL}_2$ is 3-dimensional, as is the torus $(\mathbb{G}_m)^3$, which is a reductive group that is not semi-simple. Furthermore, the absolutely almost simple simply connected group of type $\textsf{B}_3$, which can be realized as the spinor group $\mathrm{Spin}_7$, has the same dimension as the product of $7$ copies of $\mathrm{SL}_2$, which is a semi-simple but not absolutely almost simple group. A fundamental result in the structure theory of reductive groups is that over an algebraically (or separably) closed field, for each integer $d \geq 1$, there are only {\it finitely} many isomorphism classes of (connected) reductive groups of dimension $d$ (thus, there are no nontrivial moduli spaces for such groups --- see, for example, \cite[Theorem 1.3.15]{Con} for the precise statement). So, in the analysis of finiteness phenomena for reductive groups, it makes sense to focus on those classes of groups that becomes isomorphic over a separable closure of the base field; the main issue then becomes the passage from an isomorphism over the separable closure to an isomorphism over the base field (so-called {\it Galois descent}). This brings us to the following.

\vskip2mm

\noindent {\bf Definition 4.1.} {\it Let $G$ be a linear algebraic group over a field $K$, and let $F/K$ be a field extension. An algebraic $K$-group $G'$ is called an $F/K$-\emph{form} of $G$ if there exists an $F$-isomorphism
$$
G \times_K F \simeq G' \times_K F
$$
(where $G \times_K F$ denote the algebraic $F$-group obtained by base change from $K$ to $F$).}

\vskip2mm

%We refer the reader to [Serre, Ch. III, Galois cohomology] for a discussion of forms and their classification in terms of Galois cohomology.

Although we will restrict ourselves primarily to forms of algebraic groups in the present article, we should point out that the consideration of forms of various other ``algebraic objects" comes up naturally in many different contexts --- we refer the reader to \cite[Ch. III, \S1]{Serre-GC} for a general discussion of forms as well as a number of concrete examples. For our purposes, we will be interested mostly in $K^{\rm sep}/K$-forms of a linear algebraic group $G$, where $K^{\rm sep}$ is a fixed separable closure of $K$; these will often be referred to simply as {\it $K$-forms} of $G$ \footnote{For comparison, we would like to point out the following finiteness theorem for the forms of abelian varieties (cf. \cite[\S3, Finiteness theorem for forms]{Zar-Par}): {\it Let $X$ be an
abelian variety over a field $K$, and let $F/K$ be a finite separable extension. Then the set of $K$-isomorphism classes of abelian $K$-varieties $X'$ such that there exists an $F$-isomorphism $X \times_K F \simeq X' \times_K F$ is finite.} On the contrary, for a semi-simple linear algebraic
$K$-group $G$, and a finite separable extension $F/K$, the set of $K$-isomorphism classes of $F/K$-forms $G'$ of $G$ is infinite in many cases, even when $K$ is a number field (see, however, the discussion of fields of type (F) in \S\ref{S-FConjCurves}). So, the problem of classifying forms of (semi-simple) linear algebraic groups with special properties, which is central to the current article, comprises some
challenges that do not arise in the context of abelian varieties.}.

%For our purposes, we will be interested mostly in $K^{\rm sep}/K$-forms, where $K^{\rm sep}$ is a fixed separable closure of $K$; these will often be referred to simply as {\it $K$-forms} of $G$. Now, while we will restrict ourselves to forms of algebraic groups in the present article, let us note that the consideration of forms of various other ``algebraic objects" arises naturally in a number of different contexts --- we refer the reader to \cite[Ch. III, \S1]{Serre-GC} for the details.

Let us now look at several illustrative examples of forms of algebraic groups, which will suffice for understanding most of the article.

%Let us now look at several examples that play a central role in the theory of $K$-forms of algebraic groups and which will suffice for understanding most of the article.

%Here we only consider a few examples that will suffice for understanding most of the article.

\vskip2mm

\noindent {\bf Example 4.2.} We let $K$ be a field and fix a separable closure $K^{\rm sep}$ of $K$.

\vskip2mm

\noindent (a) Let $T = (\mathbb{G}_m)^d$ be a $d$-dimensional $K$-split torus. Any other $d$-dimensional $K$-torus $T'$ splits over $\Ks$, i.e. we have a $\Ks$-isomorphism
$$
T' \times_K \Ks \simeq T \times_K \Ks.
$$
This means that all $d$-dimensional $K$-tori are $K$-forms of $T$.

\vskip2mm

\noindent (b) Similarly, let $G$ be an absolutely almost simple simply connected {\it split} algebraic group over $K$, and let $G'$ be any absolutely almost simple simply connected $K$-group of the same (Killing-Cartan) type as $G$. Again, $G'$ splits over $\Ks$, hence there is a $\Ks$-isomorphism
\begin{equation}\label{E:XXX1}
G \times_K \Ks \simeq G' \times_K \Ks.
\end{equation}
This means that $G'$ is a $K$-form of $G$. Thus, every absolutely almost simple simply connected $K$-group is a $K$-form of an absolutely almost simple simply connected split $K$-group. For later use, let us also mention the following variant of this statement. First, we recall that a $K$-group $G$ is said to be {\it $K$-quasi-split} (or simply {\it quasi-split} if there is no risk of confusion) if it contains a Borel subgroup defined over $K$. Then one shows that every absolutely almost simple simply connected $K$-group is an {\it inner} form of an absolutely almost simple simply connected $K$-quasi-split group (see Example 4.5 below for a brief discussion of these matters).

\vskip2mm

\noindent (c) Let $A$ be a central simple $K$-algebra of degree $n$, and let $G = \mathrm{SL}_{1 , A}$ be the algebraic $K$-group associated with the group of elements of reduced norm 1 in $A$. Since $A \otimes_K \Ks$ is isomorphic to the matrix algebra $\mathrm{M}_n(\Ks)$, the group $G \times_K \Ks$ is $\Ks$-isomorphic to $\mathrm{SL}_n$. In other words, $G$ is a $K$-form of $\mathrm{SL}_n$. In fact, it is an inner form, and all inner forms are obtained this way --- see \cite[Ch. II, \S2.3.4]{Pl-R} for the details.

\vskip2mm

\noindent (d) Assume that ${\rm char}~K \neq 2$. Let $q$ be a nondegenerate quadratic form in $n$ variables over $K$, and let
$G = \mathrm{Spin}_n(q)$ be the corresponding spinor group (which is an absolutely almost simple simply connected $K$-group for $n \geq 3$, $n \neq 4$). Then any other nondegenerate quadratic form $q'$ over $K$ in $n$ variable is equivalent to $q$ over $\Ks$, implying that for $G' = \mathrm{Spin}_n(q')$, there is a $\Ks$-isomorphism similar to (\ref{E:XXX1}). Thus, $G'$ is a $K$-form of $G$. If $n$ is odd, then the groups $G' = \mathrm{Spin}_n(q')$ account for all $K$-forms of $G$. For $n$ even however, there may be other $K$-forms of $G$ defined in terms of the (universal covers of) the unitary groups of (skew-)hermitian forms over noncommutative central division $K$-algebras with an involution of the first kind (such as, for example, quaternion algebras). For the details on this, as well as a discussion of the $K$-forms of other groups of classical types, we refer the reader to \cite[Ch. II, \S2.3.4]{Pl-R}.

%, we refer the reader to [Pl-Rap, ...] where he/she can also find a treatment of $K$-forms of other groups of classical types.

\vskip3mm

We will close this subsection with the following adjusted version of our original question concerning reductive groups with good reduction.

\vskip2mm

\noindent {\bf Question 4.3.} {\it  Let $K$ be a field equipped with a set $V$ of discrete valuations. What are the $K$-forms (or even just inner $K$-forms) of a given reductive algebraic $K$-group $G$ that have good reduction at all $v \in V$? More specifically, in what situations is the number of $K$-isomorphism classes of such $K$-forms finite?}

\vskip2mm

\noindent (In fact, it follows from Example 4.1(b) that when $G$ is an absolutely almost simple simply connected algebraic $K$-group, we can assume that it is quasi-split over $K$).

\vskip2mm

We will return to this question in \S\ref{S-FinConjGR} below. First, however, we would like to review some of the main points of (nonabelian) Galois cohomology and its connection to forms of algebraic groups.

%What are the reductive algebraic groups of a given dimension that have good reduction at all $v \in V$? More specifically, what are the situations in which the number of $K$-isomorphism classes of such groups is finite?

%We would like to close this subsection with an adjusted version of the question discussed in the beginning: we are are interested in the $K$-forms (or just inner $K$-forms) of a given reductive group $G$ that have good reduction at all $v \in V$; more specifically, we would like to know when the number of $K$-isomorphism classes of such forms is finite. It follows from Example 4.1(b) that when $G$ is absolutely almost simple and simply connected, one can assume it to be split/quasi-split over $K$.

\vskip2mm

\subsection{A brief review of Galois cohomology}\label{S-ReviewGC} Since Galois cohomology will play a significant role in subsequent sections, we will now quickly review some of the basic notions, focusing particularly on nonabelian aspects (for further details, the reader can consult various sources, including %\cite[Ch. VII]{KMRT},
\cite[Ch. 2]{Pl-R} and \cite[Ch. I, \S5]{Serre-GC}).

%the next section, we now quickly review the basic notions pertaining to Galois cohomology and its relation to the classification of $K$-forms referring for the details to [Serre, GC, ..] and [Pl-Rap, ...].

First, suppose a group $\Gamma$ acts by automorphisms on another group $A$ (which may be non-commutative).
We then define the $0$-th (noncommutative) cohomology by $H^0(\Gamma , A) = A^{\Gamma}$ (the subgroup of $\Gamma$-fixed elements). Furthermore, a (noncommutative) {\it 1-cocycle} is a map $f \colon \Gamma \to A$ satisfying
$$
f(\sigma \tau) = f(\sigma) \cdot \sigma(f(\tau)) \ \ \text{for all} \ \ \sigma , \tau \in \Gamma.
$$
It should be noted that while the set $Z^1(\Gamma, A)$ of 1-cocycles is of course an abelian group when $A$ is abelian, it may {\it fail} to be a group in the general case; instead, it will be treated simply as a {\it pointed set}, whose distinguished element is the trivial cocyle. Next, two
cocycles $f , g \in Z^1(\Gamma , A)$ are said to be {\it equivalent} if there exists an element $a \in A$ such that
$$
g(\sigma) = a^{-1} \cdot f(\sigma) \cdot \sigma(a) \ \ \text{for all} \ \ \sigma \in \Gamma.
$$
One easily checks that this gives an equivalence relation on $Z^1(\Gamma , A)$, and we define $H^1(\Gamma , A)$ to be the quotient of
$Z^1(\Gamma , A)$ by this relation. Again, in the general case, $H^1(\Gamma, A)$ will be viewed as a pointed set whose distinguished element is the equivalence class of the trivial cocycle.

%it is a pointed set with the distinguished element  the equivalence class of the trivial cocycle.

As in the classical (abelian) situation, to every short exact sequence
\begin{equation}\label{E:YYY1}
1 \to A \longrightarrow B \longrightarrow C \to 1
\end{equation}
of $\Gamma$-groups and $\Gamma$-homomorphisms (i.e., group homomorphisms commuting with the $\Gamma$-action),
%with morphisms commuting with the $\Gamma$-action,
one can associate a long exact sequence
\begin{equation}\label{E:YYY2}
1 \to H^0(\Gamma , A) \longrightarrow H^0(\Gamma , B) \longrightarrow H^0(\Gamma , C) \longrightarrow H^1(\Gamma , A) \longrightarrow H^1(\Gamma , B) \longrightarrow H^1(\Gamma , C)
\end{equation}
of cohomology sets --- the exactness of the second half of the sequence is understood in terms of pointed sets, where the kernel is defined as the pre-image of the distinguished element. (Unfortunately, in general, this sequence cannot be extended beyond degree 1 since $H^i(\Gamma, A)$ is undefined for $i \geq 2$ if $A$ is noncommutative; however, as described in \cite[Ch. I, \S5, Proposition 43]{Serre-GC}, if $A$ is a {\it central} subgroup of $B$, then there is a natural map $H^1(G,C) \to H^2(G,A)$ that extends (\ref{E:YYY2})).

%the very general case, this sequence cannot be extended as $H^i(\Gamma , A)$ is undefined for $i \geq 2$ if $A$ is noncommutative; see however [Serre, GC, Central subgroup ...].) Finally, we point out
We also note that the standard functoriality properties of cohomology remain valid in the noncommutative situation. More precisely, let $A$ be a $\Gamma$-group and $B$ be a $\Delta$-group, and suppose we are given group homomorphisms
\begin{equation}\label{E:XXX2}
f \colon B \to A \ \ \text{and} \ \ \varphi \colon \Gamma \to \Delta \ \ \text{such that} \ \ f(\varphi(\sigma)(b)) = \sigma(f(b)) \ \ \text{for all} \ \ b \in B, \ \sigma \in \Gamma.
\end{equation}
Then there is a map of pointed sets $H^1(\Delta , B) \to H^1(\Gamma , A)$, which, on the level of cocycles, is defined as follows: the image of $g \in Z^1(\Delta , B)$ is $g^* \in Z^1(\Gamma , A)$ given by
$$
g^*(\sigma) = f(g(\varphi(\sigma))) \ \ \text{for all} \ \ \sigma \in \Gamma.
$$
In particular, given a $\Gamma$-group $A$, for any subgroup $\Delta \subset \Gamma$, we can apply this construction by taking $f$ to be the identity map $A \to A$ and $\varphi$ to be the natural embedding (in which case (\ref{E:XXX2}) obviously holds) to define the {\it restriction map} $H^1(\Gamma , A) \to H^1(\Delta , A)$. Next, given a normal subgroup $\Phi \unlhd\Gamma$, we can consider $A^{\Phi}$ as a group with the natural action of $\Delta = \Gamma/\Phi$. We can then apply the above construction by taking $f$ to be the natural embedding $A^{\Phi} \hookrightarrow A$ and $\varphi$ to be the canonical map $\Delta \to \Delta/\Phi$, to obtain the {\it inflation map} $H^1(\Gamma/\Phi , A^{\Phi}) \to H^1(\Gamma , A)$.

In order to transition to the Galois cohomology of algebraic groups, we need to specialize the preceding
definitions to one situation involving topological groups. More precisely, we will now assume that $\Gamma$ is a {\it profinite} group (i.e. a  compact and totally disconnected topological group) that {\it acts continuously} on a {\it discrete} group $A$ (the continuity assumption amounts to the requirement that for each $a \in A$, its stabilizer $\Gamma(a)$ is an open subgroup of $\Gamma$). We then consider {\it continuous} 1-cocycles $f \colon \Gamma \to A$. Since we will deal exclusively with continuous cocycles in this context, we will keep the notation
$Z^1(\Gamma , A)$ for the set of all continuous 1-cocycles. It is easy to see that an abstract cocycle that is equivalent to a continuous cocycle is itself continuous, and we again define $H^1(\Gamma , A)$ to be the pointed set of equivalence classes of continuous 1-cocycles. The basic properties of abstract noncommutative cohomology remain valid in this setting. In particular, to every exact sequence (\ref{E:YYY1}) of discrete groups with continuous $\Gamma$-action and $\Gamma$-homomorphisms, there corresponds the long exact sequence (\ref{E:YYY2}) of (continuous) cohomology. Furthermore, for every closed subgroup $\Delta \subset \Gamma$ and any discrete group $A$ with a continuous $\Gamma$-action, we have the restriction map $H^1(\Gamma , A) \to H^1(\Delta , A)$, and for every closed normal subgroup $\Phi \unlhd \Gamma$, we have the inflation map $H^1(\Gamma/\Phi, A^{\Phi}) \to H^1(\Gamma , A)$.

Let us now briefly indicate how the continuous cohomology of profinite groups is related to the cohomology of finite groups (see, for example, \cite[Ch. II]{Shatz} for the details). For a profinite group $\Gamma$, we let $\mathcal{N}$ denote the family of all open normal subgroups $N \subset \Gamma$. Then we have the identification
$$
\Gamma \simeq \lim_{\longleftarrow} \Gamma / N,
$$
where the inverse limit is
taken over all $N \in \mathcal{N}$ with respect to the canonical homomorphisms $\Gamma / N_1 \to \Gamma / N_2$ for $N_1 , N_2 \in \mathcal{N}$ with $N_1 \subset N_2$. Then for any discrete group $A$ with a continuous $\Gamma$-action, $H^1(\Gamma , A)$ defined as above in terms of continuous cocycles is naturally identified with the direct limit
$$
\lim_{\longrightarrow} H^1(\Gamma/N , A^N),
$$
taken over all $N \in \mathcal{N}$ with respect to the inflation maps $H^1(\Gamma/N_2 , A^{N_2}) \to H^1(\Gamma/N_1 , A^{N_1})$ for $N_1 \subset N_2$.

We will apply this general set-up only in situations where the profinite group $\Gamma$ is the Galois group $\mathrm{Gal}(L/K)$ of a (possibly infinite) Galois extension $L/K$; in fact, the most important case for us will be where $\Gamma = \Ga(\Ks/K)$ is the Galois group of a separable closure of $K$, i.e. the {\it absolute Galois group} of $K$. Then a discrete group $A$ with a continuous $\mathrm{Gal}(L/K)$-action will be called a {\it Galois $L/K$-module} (note that, as above, we allow $A$ to be noncommutative). The cohomology set $H^1(\mathrm{Gal}(L/K) , A)$ will be denoted simply by $H^1(L/K , A)$, which will be shortened to $H^1(K , A)$ if $L = \Ks$. For any subextension $K \subset M \subset L$, the Galois group $\mathrm{Gal}(L/M)$ is a closed subgroup of $\mathrm{Gal}(L/K)$, so for any
Galois $L/K$-module $A$, we have the restriction map $H^1(L/K , A) \to H^1(L/M , A)$. In particular, for any subextension $M$ of $\Ks$, we have the restriction map $H^1(K , A) \to H^1(M , A)$ (where $A$ is a Galois $K$-module, i.e. a $\Ks/K$-module).

Here is another important example of a restriction map. Let $K$ be a field equipped with a (discrete) valuation $v$, and let $K_v$ be the corresponding completion. It is well-known (cf. \cite[Ch. VII, Proposition 1.2]{ANT}) that the absolute Galois group $\mathrm{Gal}(\Ks_v/K_v)$ can be identified with the {\it decomposition group}
$D(\bar{v})$ of a fixed extension $\bar{v}$ of $v$ to $\Ks$, which is a closed subgroup of $\mathrm{Gal}(\Ks/K)$. This gives rise to the restriction map $H^1(K , A) \to H^1(K_v , A)$. Moreover, in a certain obvious sense, this map is independent of the choice of the extension $\bar{v}$ (see \cite[Ch. VII, Remark 2.4]{MilneCFT} for the details); in particular, its kernel is well-defined.

Now, given an algebraic $K$-group $G$, for any Galois extension $L/K$, the group of $L$-points $G(L)$ is naturally a Galois $L/K$-module. We then have the corresponding cohomology set $H^1(L/K, G(L))$, which is usually denoted $H^1(L/K, G)$; when $L = \Ks$, we will simply write $H^1(K, G).$
If $K$ is equipped with a (discrete) valuation $v$, then composing the restriction map discussed above with the map induced by the natural embedding $G(K) \hookrightarrow G(K_v)$, we obtain a map $H^1(K , G) \to H^1(K_v , G)$. By abuse of terminology, we will refer to this map, which will play an important role in \S\ref{S-OtherConj}, also as a restriction map.

%Somewhat abusing the language, we will also call this map, which plays a role in the next section, the restriction map.

\vskip2mm

%{\bf $L/K$ or $F/K$-forms?} \ \ {\bf One can give somewhere a reference to the ``Book of Involutions"}  {\bf We can make this long subsection into a separate section preceding this one} {\bf Another possibility is to make 4.1 and 4.2 into a separate section, and 4.3 and 4.4 into the next section} {\bf The first section could be titled ``Forms with good reduction," and there could be more subsections in it, e.g. ``Noncommutative cohomology," ``Classification of forms in terms of Galois cohomology"}

%\vskip2mm

\subsection{Forms and Galois cohomology}\label{S-FormsGC} To wrap up this section, we will now briefly review the description of $F/K$-forms in terms of Galois cohomology. As before, we will focus mostly on forms of algebraic groups, but in fact the description remains valid in many other situations, and we will mention some relevant examples. For further details, the reader can consult \cite[Ch. VII]{KMRT}, \cite[Ch. 2]{Pl-R}, or \cite[Ch. III, \S1]{Serre-GC}, among other sources.

%Finally, we will briefly review a description of $F/K$-forms in terms of Galois cohomology. We will focus on the case of forms of algebraic groups that we discussed above, but in fact the description remains valid in many other situations, and we will mention some relevant examples.

Let $G$ be an algebraic $K$-group and $F/K$ be a Galois extension. Note that although the group $A_F$ of $F$-defined automorphisms of $G$ (or rather of $G \times_K F$) may not naturally be the group of $F$-points of an algebraic $K$-group, it is always a Galois $F/K$-module. Next, let $G'$ be an $F/K$-form of $G$. By definition, there exists an $F$-defined isomorphism $t \colon G \to G'$. Since the groups $G$ and $G'$ are defined over $K$, the Galois group $\mathrm{Gal}(F/K)$ acts on the set of $F$-isomorphisms between $G$ and $G'$. So, for every $\sigma \in \mathrm{Gal}(F/K)$, we can consider the isomorphism $\sigma(t) \colon G \to G'$. Then
$$
f(\sigma) := t^{-1} \circ \sigma(t)
$$
belongs to $A_F$, and the mapping $(\sigma \mapsto f(\sigma))$ defines a 1-cocycle in $Z^1(F/K , A_F)$, which we will denote simply by $f$. Moreover, one shows that the equivalence class of $f$ is independent of the choice of the isomorphism $t$. Then it turns out that sending $G'$ to the equivalence class of $f$ in $H^1(F/K, A_F)$ defines a bijection between the set of $K$-isomorphism classes of $F/K$-forms of $G$ and $H^1(F/K , A_F)$. In particular, there is a natural bijection between the set of $K$-isomorphism classes of $K$-forms of $G$ and $H^1(K , A_{\Ks})$.

Here are several explicit examples of this classification.

%Here are a few explicit examples of this classification.

\vskip2mm

\noindent {\bf Example 4.4.} Let $T = (\mathbb{G}_m)^n$ be an $n$-dimensional $K$-split torus. Then the corresponding automorphism group $A_{\Ks}$ is $\mathrm{GL}_n(\mathbb{Z})$ equipped with the trivial action of $\mathrm{Gal}(\Ks/K)$. Since, according to Example 4.2(a), any $n$-dimensional $K$-torus is a $K$-form of $T$, we see that the $K$-isomorphism classes of $n$-dimensional $K$-tori are in bijection with the equivalence classes (in this case, conjugacy classes)
of continuous homomorphisms $f \colon \mathrm{Gal}(\Ks/K) \to \mathrm{GL}_n(\mathbb{Z})$. Let us now take an $n$-dimensional $K$-torus $T'$ and consider the corresponding homomorphism $f$. Then $N := \ker f$ is an open normal subgroup of $\mathrm{Gal}(\Ks/K)$, so the corresponding fixed subfield $L =
(\Ks)^N$ is a finite Galois extension of $K$ with Galois group $\mathscr{G} = \mathrm{Gal}(\Ks/K)/N$. In fact, $L$ is the {\it minimal splitting field} of $T'$, i.e. the minimal Galois extension of $K$ over which $T'$ becomes isomorphic to the split torus $T$ and it is uniquely determined by this property. We conclude that the $K$-isomorphism classes of $n$-dimensional tori with the minimal splitting field $L$ correspond bijectively to equivalence classes of $n$-dimensional faithful
representations $\mathscr{G} \to \mathrm{GL}_n(\mathbb{Z})$.

\vskip3mm

\noindent {\bf Example 4.5.} Let $G$ be an absolutely almost simple simply connected $K$-group. Then the automorphism group $A_{\Ks}$ fits into an exact sequence
\begin{equation}\label{E:ZZZ1}
1 \to I_{\Ks} \longrightarrow A_{\Ks} \longrightarrow S_{\Ks} \to 1
\end{equation}
of Galois $K$-modules, where $I_{\Ks}$ is the group of inner automorphisms, which can be identified with the group of $\Ks$-points $\overline{G}(\Ks)$ of the corresponding adjoint group $\overline{G}$, and $S_{\Ks}$ is the group of symmetries of the Dynkin diagram of $G$ (see \cite[Ch. 2, \S2.1.13]{Pl-R} for the details). Next, let $G'$ be another absolutely almost simple simply connected algebraic group of the same type as $G$. Then both $G$ and $G'$ become split, hence isomorphic, over $\Ks$. Thus, $G'$ is a $K$-form of $G$, implying that $H^1(K , A_{\Ks})$ in this case classifies the $K$-isomorphism classes of all absolutely almost simple simply connected $K$-groups of the same type as $G$. Associated to (\ref{E:ZZZ1}), we have the following exact sequence
of Galois cohomology
$$
H^1(K , I_{\Ks}) \stackrel{\alpha}{\longrightarrow} H^1(K , A_{\Ks}) \stackrel{\beta}{\longrightarrow} H^1(K , S_{\Ks}).
$$
Then the $K$-forms of $G$ that correspond to the cohomology classes in the image of $\alpha$ are called {\it inner}, while all other forms are called {\it outer} (note that $\alpha$ may not be injective!).

%In particular, if $G$ is $K$-split, we talk about inner forms of the split group. For example,

For concreteness, let us take $G = \mathrm{SL}_n$. Then the corresponding adjoint group is $\overline{G} = \mathrm{PGL}_n$, and it follows from the Skolem-Noether theorem that the elements of $H^1(K , \overline{G})$ are in one-to-one correspondence with the isomorphism classes of central simple $K$-algebras of degree $n$. Then if a cohomology class $\xi \in H^1(K , \overline{G})$ corresponds to an algebra $A$, the image $\alpha(\xi)$ corresponds to the norm 1 group $\mathrm{SL}_{1 , A}$ introduced in Example 4.1(c). We note that the norm 1 groups corresponding to an algebra $A$ and its opposite algebra $A^{\small \rm{opp}}$ are $K$-isomorphic, which means that the corresponding cohomology classes in $H^1(K , \overline{G})$, which are generally distinct, have the same image under $\alpha$!

Next, if $G$ is $K$-split, then it is known that the Galois action on $S_{\Ks}$ is trivial (so, the elements of $H^1(K , S_{\Ks})$ correspond to conjugacy classes of continuous homomorphisms $\mathrm{Gal}(\Ks/K) \to S_{\Ks}$), and the sequence (\ref{E:ZZZ1}) has a $\Ga(\Ks/K)$-equivariant splitting.
Let $\iota \colon H^1(K , S_{\Ks}) \to H^1(K , A_{\Ks})$ be the corresponding splitting for $\beta$. Then for $\xi \in H^1(K , S_{\Ks})$, the image $\iota(\xi)$ corresponds to a {\it quasi-split} group $G(\xi)$ (see, for example, \cite[Lemma 16.4.8]{SpringerAG}), and the inner forms of $G(\xi)$ correspond precisely to the elements of the fiber $\beta^{-1}(\xi)$. Thus, every form is an inner form of some quasi-split group (note that the quasi-split group is unique up to isomorphism).

Continuing with the assumption that $G$ is $K$-split, let us consider a $K$-form $G'$ of $G$. Let $c(G') \in H^1(K, A_{\Ks})$ be the corresponding cohomology class and let $d(G') \in H^1(K, S_{\Ks})$ be the image of $c(G')$ under $\beta$. Then $d(G')$ is represented by a continuous homomorphism $\Ga(\Ks/K) \to S_{\Ks}$ called the $*$-action associated with $G'$ (cf. \cite{TitsClassification}).
Thus, $G'$ is an inner form of the
split group $G$ if and only if the $*$-action is trivial.
%denote by $d(G')$ its projection to $H^1(K, S_{\Ks}).$ As we already remarked above, $d(G')$ is represented by the conjugacy class of a continuous homomorphism $\Ga(\Ks, K) \to S_{\Ks}$ called the $*$-action. Now, $G'$ is an inner form if and only if $d(G')$ is trivial, which, in turn, can be shown to be equivalent to the condition that the $*$-action is trivial.
Furthermore, if $\mathcal{H}$ denotes the kernel of the $*$-action, then the fixed field $L = (\Ks)^{\mathcal{H}}$ is the (uniquely defined) minimal Galois extension of $K$ over which $G'$ becomes an inner form of the split group $G$.
%(thus, if $L = K$, then $G'$ is an inner form of $G$ over $K$).
We also note that if $G'$ is $K$-quasi-split, then $L$ is the minimal
Galois extension of $K$ over which $G'$ splits.
%In the case where $G'$ is $K$-quasi-split, it follows from the preceding discussion that $L$ is precisely the minimal Galois extension of $K$ over which $G'$ splits.
We refer the reader to \cite[Lemma 4.1]{PrRap-WC} for a detailed discussion of these matters.

\vskip3mm

In our final example, we will briefly demonstrate how the above method for classifying the $K$-forms of algebraic groups can be used to classify (the isomorphism classes of) some other objects.

\vskip3mm

\noindent {\bf Example 4.6.} Fix a nondegenerate quadratic form in $n$ variables over a field $K$ of characteristic $\neq 2$, and let $G = \mathrm{O}_n(q)$ be the corresponding orthogonal group. It is well-known that any other nondegenerate quadratic form $q'$  in $n$ variables over $K$ becomes equivalent to $q$ over $\Ks$. Thus, if $Q$ and $Q'$ are the matrices of $q$ and $q'$, then there exists $X \in \mathrm{GL}_n(\Ks)$ such that $Q = X^t Q' X$. For each $\sigma \in \mathrm{Gal}(\Ks/K)$, the element
$$
f(\sigma) : = X^{-1} \cdot \sigma(X)
$$
belongs to $G(\Ks)$, and the mapping $(\sigma \mapsto f(\sigma))$ defines a 1-cocycle $f \colon \mathrm{Gal}(\Ks/K) \to G(\Ks)$. One then shows that associating to $q'$ the cohomology class of $f$ sets up a bijection between the set of equivalence classes of nondegenerate quadratic forms in $n$ variables over $K$ and $H^1(K , G)$. In fact, this easily follows from the fact that $H^1(K , \mathrm{GL}_n) = 1$ (the noncommutative version of Hilbert's Theorem 90). Furthermore, for $H = \mathrm{SO}_n(q)$, the elements of $H^1(K , H)$ correspond to the equivalence classes of nondegenerate $n$-dimensional quadratic forms that have the same discriminant as $q$ (see \cite[Ch. 2, \S2.2.2]{Pl-R} for the details).

\section{The finiteness conjecture for reductive groups with good reduction}\label{S-FinConjGR}

Having reviewed the necessary material on Galois cohomology, we now return to Question 4.3, our central question concerning forms with good reduction. In our discussion, we will deal with two natural choices for the field $K$ and the set of discrete valuations $V$ of $K$: we will first consider the case where $K$ is the field of fractions of a Dedekind ring $R$ and $V$ is the set of discrete valuations associated with the maximal ideals of $R$, after which we will turn to finitely generated fields $K$ equipped with a divisorial set of places $V$.

\subsection{The Dedekind case.}\label{S-Dedekind} Early interest in this case can be traced back to the work of G.~Harder \cite{Harder} and J.-L.~Colliot-Th\'el\`ene and  J.-J.~Sansuc \cite{CTS}. For example, the cohomology set introduced in \cite[Lemma 4.1.3]{Harder} (basically) yields, as a particular case, the set of $K$-isomorphism classes of $K$-forms of a given semi-simple group that have good reduction at all the relevant valuations. In \cite{CTS}, the authors define and analyze similar unramified cohomology sets in connection with the study of torsors under reductive group schemes over low-dimensional base schemes. The basic case where $K = \mathbb{Q}$ and $R = \mathbb{Z}$, and hence $V$ is the set of all $p$-adic valuations of $\Q$, was considered by B.H.~Gross \cite{Gross} and B.~Conrad \cite{Conrad}. One of the key observations in \cite{Gross} is the following.

\begin{thm}\label{T:XXX1}
{\rm (\cite[Proposition 1.1]{Gross})} Let $G$ be an absolutely almost simple simply connected algebraic group over $\mathbb{Q}$. Then $G$ has a good reduction at all primes $p$ if and only if $G$ is split over all $\mathbb{Q}_p$.
\end{thm}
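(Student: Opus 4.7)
The plan is to prove the easy direction via the Chevalley construction, and then to handle the harder direction by separating the outer and inner parts of the twist describing $G$, killing each with an independent arithmetic input: Hermite--Minkowski for the outer part and Lang's theorem (combined with smoothness/Hensel) for the inner part.

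For $(\Leftarrow)$: if $G \times_{\Q} \Q_p$ is split, Example 3.8(a) applied over $\Z_p$ furnishes a split reductive $\Z_p$-model whose generic fiber is $G \times_\Q \Q_p$, so $G$ has good reduction at $p$ in the sense of Definition 3.7.

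For $(\Rightarrow)$: let $G_0$ be the split simply connected Chevalley $\Z$-group of the same Killing--Cartan type as $G$, so that by Example~4.2(b), $G$ is a $\Q$-form of $G_0 \times_\Z \Q$. Following Example~4.5, let $\rho \colon \Ga(\Ks/\Q) \to S_{\Ks}$ denote the associated $*$-action, and let $L \subset \Ks$ be the fixed field of $\ker \rho$, i.e., the minimal Galois extension of $\Q$ over which $G$ becomes an inner form of $G_0$. Now suppose $G$ has good reduction at every prime $p$, yielding a reductive $\Z_p$-scheme $\mathscr{G}_p$. After base change to the strict Henselization $\cO_{\Q_p^{\mathrm{nr}}}$, the group scheme $\mathscr{G}_p$ becomes split (every reductive group scheme over a strictly Henselian local ring is split), so it is classified by a class in $H^1(\Z_p, \mathrm{Aut}(G_0))$ whose projection to $H^1(\Z_p, \mathrm{Out}(G_0))$ encodes the local $*$-action at $p$. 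Since this $H^1$ factors through the unramified Galois group $\Ga(\Q_p^{\mathrm{nr}}/\Q_p)$, the extension $L/\Q$ is unramified at $p$. As this holds at every finite prime, the Hermite--Minkowski theorem recalled in \S\ref{S-GRAG} forces $L = \Q$; hence $\rho$ is trivial and $G$ is an inner form of $G_0$ over $\Q$.

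Having reduced to the inner case, let $\xi_p \in H^1(\Q_p, G_0^{\mathrm{ad}})$ be the class describing $G \times_\Q \Q_p$ as an inner twist of $G_0 \times_\Z \Q_p$. The triviality of the local $*$-action means that $\mathscr{G}_p$ is itself an inner form of $G_0 \times_\Z \Z_p$ as a $\Z_p$-group scheme, so $\xi_p$ lifts to a class $\tilde{\xi}_p \in H^1_{\mathrm{\acute{e}t}}(\Z_p, G_0^{\mathrm{ad}})$. Since $G_0^{\mathrm{ad}}$ is smooth and $\Z_p$ is Henselian, Hensel's lemma identifies this étale cohomology with $H^1(\F_p, G_0^{\mathrm{ad}} \times_\Z \F_p)$, and Lang's theorem for connected algebraic groups over finite fields shows this set is trivial. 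Therefore $\tilde{\xi}_p = 1$, so $\xi_p = 1$, and $G \times_\Q \Q_p$ is split.

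The main obstacle is carrying out the reduction in the correct order: the outer part demands the genuinely global Minkowski bound on discriminants and cannot be done prime-by-prime, while the inner part is a purely local Lang/Hensel computation that becomes available only after the global $*$-action has been shown to vanish. A subtle point is verifying that $\mathscr{G}_p$ is an inner form of $G_0 \times_\Z \Z_p$ over $\Z_p$ (and not merely that $G \times_\Q \Q_p$ is an inner form of $G_0 \times_\Z \Q_p$ over $\Q_p$); this integral inner-form structure is exactly what allows $\xi_p$ to lift to $\Z_p$-étale cohomology, where Lang's theorem can then eliminate it.
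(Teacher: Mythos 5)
Your proof is correct, and its global skeleton coincides with the paper's: show that the minimal Galois extension $L/\Q$ over which $G$ becomes an inner form of the split group is unramified at every prime, invoke Minkowski's discriminant bound to get $L=\Q$, and then conclude splitting over each $\Q_p$. Where you diverge is in how the local input is obtained. The paper quotes a local characterization of good reduction over $p$-adic fields (Conrad, Corollary 5.2.14): good reduction holds if and only if the group is quasi-split and $L$ is unramified; it then finishes by noting that a quasi-split inner form of a split group is split. You instead reprove the needed local facts from the theory of reductive group schemes: the outer part via the splitting of reductive group schemes over a strictly henselian base (so the classifying cocycle's image in $H^1(\Z_p,\mathrm{Out})$ is unramified), and the inner part via the vanishing of $H^1_{\mathrm{\acute{e}t}}(\Z_p, G_0^{\mathrm{ad}})$, obtained from smoothness, Hensel's lemma, and Lang's theorem over $\F_p$. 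This buys a self-contained argument that bypasses the "quasi-split" intermediate step entirely, at the cost of being longer; you are also right to flag that the key point is that the \emph{integral} model $\mathscr{G}_p$, not merely its generic fiber, is an inner form of the split $\Z_p$-group scheme, since that is what places the obstruction class in the integral cohomology set killed by Lang's theorem. One minor imprecision: the class $\tilde{\xi}_p$ classifies $\mathscr{G}_p$ as an inner form over $\Z_p$, and what the argument really uses is that $\tilde{\xi}_p = 1$ forces $\mathscr{G}_p \simeq G_0\times_{\Z}\Z_p$ and hence that the generic fiber is split; the identification of the restriction of $\tilde{\xi}_p$ with your chosen $\xi_p$ is not needed and is only well defined up to the usual ambiguity in passing from $H^1(\cdot, G_0^{\mathrm{ad}})$ to forms. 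This does not affect the validity of the proof.
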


Note that the fact that an absolutely almost simple simply connected {\it split} group $G$ over $\mathbb{Q}$
has good reduction at all primes is a particular case of Example 3.8(a). (For comparison, we recall that there are no abelian varieties over $\mathbb{Q}$ that have good reduction at all primes --- see  \cite{Abr} and \cite{Fon}.) As we will see below, the proof of Theorem \ref{T:XXX1} yields the fact that
the $\mathbb{Q}$-forms $G'$ of the $\mathbb{Q}$-split group $G$ that have good reduction at all primes are necessarily {\it inner}.  Furthermore, using deeper results on Galois cohomology, such as the Hasse principle for adjoint semi-simple groups over number fields (see \cite[\S 6.5]{Pl-R}, and particularly Theorem 6.22 therein), one concludes that such forms are uniquely determined by their isomorphism class over $\mathbb{R}$, opening
thereby a way to their classification up to $\mathbb{Q}$-isomorphism (and of course implying the finiteness of the number of such classes).
In fact, this analysis is taken much further in \cite{Gross} and \cite{Conrad} by explicitly constructing such nonsplit forms for certain types in terms of $\mathbb{Z}$-lattices and even considering the problem of the classification of such lattices. We note that the choice of a $\mathbb{Z}$-lattice determines the structure of a group scheme over $\mathbb{Z}$ on the corresponding $\mathbb{Q}$-algebraic group, so the classification of such lattices really amounts to the classification of forms with good reduction at all primes up to $\mathbb{Z}$-isomorphism rather than just $\mathbb{Q}$-isomorphism, which of course is a much harder problem (to get a sense of the difficulty, the reader may want to consider the problem of classifying unimodular integral quadratic forms up to equivalence versus rational equivalence --- cf. \cite[Ch. V]{Serre-Course}).
It turns out that in some cases, the required lattices can in fact be classified, but in other cases one shows using the mass formula that such lattices are so numerous that no reasonable classification appears possible.

We will now briefly sketch a proof of Theorem \ref{T:XXX1}, mainly to indicate the ingredients that go into the argument. One of the key facts that is used is the following very explicit local description of groups with good reduction in the situation at hand (we refer the reader to Example 4.5 for the relevant terminology). Namely, suppose $\mathscr{G}$ is an absolutely almost simple simply connected algebraic group over a field $\mathscr{K}$ that is a finite extension of $\Q_p$. Then $\mathscr{G}$ has good reduction if and only if it is quasi-split over $\mathscr{K}$ and the minimal Galois extension $\mathscr{L}$ of $\mathscr{K}$ over which it becomes an inner form of the split group is unramified over $\mathscr{K}$ --- cf. \cite[Corollary 5.2.14]{Con}. Now, let $G$ be an absolutely almost simple simply connected $\mathbb{Q}$-group that has good reduction at all primes, and let $L$ be the minimal Galois extension of $\mathbb{Q}$ over which $G$ becomes an inner form of the split group. Applying the above local description of groups with good reduction, we conclude that $L/\Q$ is unramified at all primes. On the other hand, a well-known consequence of Minkowski's estimate for the discriminant of a number field is that $\mathbb{Q}$ does not have nontrivial extensions with this property. Thus, $L = \Q.$ It follows that for every prime $p$, the group $G$ is quasi-split over $\mathbb{Q}_p$ and at the same time is an inner form over $\Q_p$, implying that it actually splits over $\mathbb{Q}_p$. Note that since $L = \Q$, our argument also shows that $G$ is an inner form of the split group over $\Q$.

We should point out that Theorem \ref{T:XXX1} easily extends to any number field $K$ and any set $V$ of discrete valuations of $K$ if we limit ourselves to absolutely almost simple simply connected algebraic groups of those types for which the Dynkin diagram does not have nontrivial symmetries (recall that these types are $\textsf{A}_1$, $\textsf{B}_{\ell}$, $\textsf{C}_{\ell}$, $\textsf{E}_7$, $\textsf{E}_8$, $\textsf{F}_4$, and $\textsf{G}_2$). However, if we consider groups of other types over a number field that has an everywhere unramified quadratic extension, the above argument and the result itself become invalid even if we take $V$ to be the set of all nonarchimedean valuations of $K$. Nevertheless, we have the following finiteness result (cf. \cite{JL} --- we refer the reader to this paper for analogues of Shafarevich's conjecture for several classes of varieties). In the statement below, we denote by $V_f^K$ the set of all nonarchimedean places of a number field $K$.

\begin{prop}\label{P:XXX1}
Let $G$ be an absolutely almost simple simply connected algebraic group over a number field $K$, and let $V \subset V_f^K$ be a set of discrete valuations of $K$ such that $V_f^K \setminus V$ is finite. Then the number of $K$-isomorphism classes of $K$-forms of $G$ that have good reduction at all $v \in V$ is finite.
\end{prop}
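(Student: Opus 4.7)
The plan is to mimic the proof of Theorem~\ref{T:XXX1} sketched above, substituting the Hermite--Minkowski theorem for the triviality of everywhere-unramified extensions of $\Q$, and invoking a Borel--Serre-type finiteness result in place of the Hasse principle over $\R$. Set $S = V_f^K \setminus V$ (a finite set by hypothesis) and $S' = S \cup V^K_\infty$. By Example~4.5, any $K$-form $G'$ of $G$ is an inner form of a (unique up to $K$-isomorphism) quasi-split $K$-form $G_0$, and $G_0$ is classified by the $*$-action, i.e.\ by the conjugacy class of a continuous homomorphism $\Ga(\Ks/K) \to S_{\Ks}$. Let $L/K$ be the corresponding minimal inner-splitting Galois extension, so $[L:K] \leq |S_{\Ks}| \leq 6$. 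If $G'$ has good reduction at $v \in V$, then so does its underlying quasi-split companion $G_0$, and the local characterization of good reduction for absolutely almost simple simply connected groups (Corollary~5.2.14 of \cite{Con}, already invoked in the proof of Theorem~\ref{T:XXX1}) forces $L \cdot K_v/K_v$ to be unramified. Thus $L/K$ is unramified outside $S'$ and of bounded degree, so by the Hermite--Minkowski theorem there are only finitely many such $L$; for each, there are only finitely many conjugacy classes of homomorphisms $\Ga(L/K) \to S_{\Ks}$, and hence only finitely many quasi-split $K$-forms $G_0$ can arise.

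Next, I would fix such a $G_0$ (which automatically has good reduction at every $v \in V$, since $L/K$ is unramified there) and count its inner $K$-forms with good reduction at all $v \in V$. These forms are classified by $H^1(K, \overline{G}_0)$, where $\overline{G}_0$ denotes the adjoint group. For each $v \in V$, let $\mathscr{G}_0^{(v)}$ denote the canonical reductive $\cO_v$-model of $G_0 \times_K K_v$. Then the inner twist of $G_0$ by a class $\xi \in H^1(K, \overline{G}_0)$ has good reduction at $v$ if and only if $\xi|_{K_v}$ lies in the image of the natural map $H^1(\cO_v, \overline{\mathscr{G}}_0^{(v)}) \to H^1(K_v, \overline{G}_0)$. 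By smooth base change, $H^1(\cO_v, \overline{\mathscr{G}}_0^{(v)}) \cong H^1(K^{(v)}, \overline{\mathscr{G}}_0^{(v)} \otimes K^{(v)})$, and the latter is trivial by Lang's theorem (the residue field $K^{(v)}$ is finite and the adjoint group is connected). Consequently, the inner forms with good reduction at all $v \in V$ correspond bijectively to
$$
\ker\Bigl( H^1(K, \overline{G}_0) \longrightarrow \prod_{v \in V} H^1(K_v, \overline{G}_0) \Bigr),
$$
a generalized Tate--Shafarevich set. Its finiteness follows from classical results: the Tate--Shafarevich set $\Sha^1(K, \overline{G}_0)$ is finite (in fact, the Hasse principle for adjoint absolutely almost simple groups over number fields gives triviality in many cases, cf.\ \cite[\S6.5]{Pl-R}), the local cohomology $H^1(K_v, \overline{G}_0)$ is finite for every $v$, and only the finitely many places in $S'$ contribute nontrivially; hence this larger kernel is finite as well. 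Combining with the finitely many possibilities for $G_0$ from the previous step yields the proposition.

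The main conceptual obstacle is the first step: extracting from the good reduction of $G'$ the unramifiedness of the $*$-action splitting field $L/K$ of the underlying quasi-split companion $G_0$. This rests squarely on Conrad's local description of good reduction for quasi-split simply connected groups, and ensuring its applicability at every $v \in V$ is what enables both Hermite--Minkowski (to control the quasi-split forms) and the cohomological finiteness (to control the inner forms). Once these pieces are assembled, the argument is a routine combination of standard tools.
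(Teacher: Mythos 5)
Your proof is correct and follows essentially the same route as the paper: Hermite--Minkowski bounds the possible quasi-split companions via the unramifiedness of the $*$-action splitting field (Conrad's local criterion), and the inner forms of each companion with good reduction are then trapped in $\ker\theta_{\overline{G}_0,V}$, which is finite by the Borel/Borel--Serre properness of the global-to-local map over number fields. The only cosmetic difference is that you derive the local triviality of $\xi|_{K_v}$ via Lang's theorem on the special fiber of the adjoint $\mathcal{O}_v$-model, whereas the paper deduces it directly from the fact that $G'$ is quasi-split over $K_v$ and hence $K_v$-isomorphic to the quasi-split inner form $G_i$; both rest on the same underlying facts.
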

\begin{proof}

First, we recall that according to the Hermite-Minkowski theorem, for every integer $d \geq 1$, the number field $K$ has only finitely many extensions of degree $\leq d$ that are unramified at all $v \in V$. Now let $S$ be the group of symmetries of the Dynkin diagram of $G$. Then the Hermite-Minkowski theorem implies that there are only finitely many continuous homomorphisms
$$
\omega \colon \Ga(\Ks/K) \to S
$$
such that the fixed field $L(\omega) = (\Ks)^{\ker \omega}$ of $\ker \omega$ is unramified at all $v \in V$. Let $\omega_1, \ldots , \omega_r$ be representatives of the conjugacy classes (in $S$) of such homomorphisms, which we view as elements of $H^1(K,S).$ As we discussed in Example 4.5,
for each $i = 1, \ldots , r$, there is a quasi-split $K$-form $G_i$ of $G$ for which $L(\omega_i)$ is the minimal Galois extension of $K$ over which $G_i$ becomes split. Suppose now that $G'$ is a $K$-form of $G$ that has good reduction at all $v \in V$, and let $L$ be the minimal Galois extension of $K$ over which $G'$ becomes an inner form of the split group. Then, as discussed in Example 4.5,
$L$ is the fixed subfield of the kernel of the natural homomorphism $\omega \colon \mathrm{Gal}(\Ks/K) \to S$ given by the $*$-action. Besides, it follows from \cite[Corollary 5.2.14]{Con} that $L/K$ is unramified at all $v \in V$. By our construction, this means that  $\omega$ coincides with one of the $\omega_i$, in which case $G'$ is an inner form of $G_i$. Thus, it is enough to prove, for each $i = 1, \dots, r$, the finiteness of the number of isomorphism classes of those $K$-forms of $G$ that have good reduction at all $v \in V$ and are {\it inner} forms of $G_i$. By definition (see Example 4.5), any such $G'$ corresponds to an element $\xi \in H^1(K , \overline{G_i})$. Furthermore, as we already saw in our discussion of Theorem \ref{T:XXX1}, for each $v \in V$, the group $G'$ becomes quasi-split over $K_v$, hence $K_v$-isomorphic to $G_i$. In the language of Galois cohomology, this means that $\xi$ lies in the kernel of the restriction map $H^1(K , \overline{G_i}) \to H^1(K_v , \overline{G_i})$. Since this is true for all $v \in V$, we conclude that $\xi$ is contained in the kernel of the product map
$$
\theta_{G_i, V} \colon H^1(K , \overline{G_i}) \longrightarrow \prod_{v \in V} H^1(K_v, \overline{G_i}).
$$
However, it is well-known that over number fields, the map $\theta_{G_i , V}$ is {\it proper} (i.e. the pre-image of a finite set is finite), and hence, in particular, its kernel $\text{{\brus SH}}(\overline{G_i} , V)$ is finite --- see \cite[\S 5]{Borel} and \cite[\S 7]{BorelSerre} (in the next section we will discuss a conjectural extension of this property to a more general situation). So, the number of possible $\xi$ arising in this set-up is finite, and the finiteness of the number of $K$-isomorphism classes of such $K$-forms $G'$ follows.
\end{proof}

\vskip2mm

Another basic example of a Dedekind ring is the polynomial ring $R = k[x]$ over a field $k$. We let $V$ denote the set of discrete valuations of the field of rational functions $K = k(x)$ associated with monic irreducible polynomials $p(x) \in k[x]$ (see Example 3.6(b)). We then have the following.
\begin{thm}\label{T:XXX2}
{\rm (cf. \cite[Theorem 1.1]{RagRam} and \cite{GilleAffine})} Let $G_0$ be a (connected) semi-simple simply connected algebraic group over a field $k$. If $G'$ is a $K$-form of the group $G = G_0 \times_k K$ that has good reduction at all $v \in V$ and splits over $k^{\rm sep}(x)$, then $G' = G'_0 \times_k K$ for some $k$-form $G'_0$ of $G_0$.
\end{thm}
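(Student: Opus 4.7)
My plan is to combine two ingredients: the good reduction hypothesis at every $v \in V$ should allow one to extend $G'$ to a reductive group scheme over the affine line $\mathbb{A}^1_k = \mathrm{Spec}\, k[x]$; then, using the splitting hypothesis over $k^{\rm sep}(x)$ together with a homotopy-invariance theorem for nonabelian $H^1$ on the affine line (due to Raghunathan--Ramanathan and Gille), the resulting global model should be forced to be constant.

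First I would globalize the model. For each maximal ideal $\mathfrak{p} = (p(x))$ of $k[x]$, good reduction yields a reductive group scheme $\mathscr{G}'_v$ over the completed local ring $\mathcal{O}_v$ with generic fiber $G' \times_K K_v$. Uniqueness of such a reductive extension (cf.\ the discussion after Definition 3.7), combined with faithfully flat descent from $\mathcal{O}_v$ to the localization $k[x]_\mathfrak{p}$, produces a reductive model over each $k[x]_\mathfrak{p}$, and these glue to a reductive group scheme $\mathscr{G}'$ over $\mathbb{A}^1_k$ with generic fiber $G'$. Since $\mathscr{G}'$ and $G_0 \times_k \mathbb{A}^1_k$ are both reductive of the same Cartan--Killing type, the sheaf $\underline{\mathrm{Isom}}(G_0 \times_k \mathbb{A}^1_k,\, \mathscr{G}')$ is an $\underline{\mathrm{Aut}}(G_0)$-torsor for the \'etale topology on $\mathbb{A}^1_k$, producing a class $\zeta \in H^1_{\rm et}(\mathbb{A}^1_k, \underline{\mathrm{Aut}}(G_0))$ whose restriction to $\mathrm{Spec}\, K$ is the class of $G'$.

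Next, the splitting hypothesis, combined with the fact that $G_0 \times_k k^{\rm sep}$ is already split, yields an isomorphism $G' \times_K k^{\rm sep}(x) \simeq G_0 \times_k k^{\rm sep}(x)$. By uniqueness of reductive extensions over the Dedekind scheme $\mathbb{A}^1_{k^{\rm sep}}$, this generic identification propagates to a global isomorphism $\mathscr{G}' \times_k k^{\rm sep} \simeq G_0 \times_k \mathbb{A}^1_{k^{\rm sep}}$, so $\zeta$ becomes trivial after base change to $\mathbb{A}^1_{k^{\rm sep}}$. I would then invoke the theorem of Raghunathan--Ramanathan \cite{RagRam} in the form given by Gille \cite{GilleAffine}: for an affine smooth $k$-group scheme $H$ that is an extension of a finite \'etale group by a reductive group (applicable to $H = \underline{\mathrm{Aut}}(G_0)$), the pullback $\pi^* \colon H^1(k, H) \to H^1_{\rm et}(\mathbb{A}^1_k, H)$ identifies the source with the kernel of the restriction to $H^1_{\rm et}(\mathbb{A}^1_{k^{\rm sep}}, H)$. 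This gives $\zeta = \pi^* \zeta_0$ for a unique $\zeta_0 \in H^1(k, \underline{\mathrm{Aut}}(G_0))$, which classifies a $k$-form $G'_0$ of $G_0$; the associated isomorphism $\mathscr{G}' \simeq G'_0 \times_k \mathbb{A}^1_k$ restricts on the generic fiber to the desired $G' \simeq G'_0 \times_k K$.

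The delicate point will be the application of the Raghunathan--Ramanathan-type homotopy invariance to the non-connected group scheme $\underline{\mathrm{Aut}}(G_0)$, since the classical theorem handles only connected reductive groups. A route that bypasses this is to first analyze the exact sequence $1 \to G_0^{\mathrm{ad}} \to \underline{\mathrm{Aut}}(G_0) \to \mathrm{Out}(G_0) \to 1$: because $\mathrm{Out}(G_0)$ is finite \'etale over $k$, morphisms from $\mathrm{Spec}\, k^{\rm sep}(x)$ into it factor through $\mathrm{Spec}\, k^{\rm sep}$, so the splitting hypothesis forces the image of the class of $G'$ in $H^1(K, \mathrm{Out}(G_0))$ to be inflated from $H^1(k, \mathrm{Out}(G_0))$. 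This produces a quasi-split $k$-form $G_0^{\mathrm{qs}}$ of $G_0$ of which $G'$ is an inner form, reducing the problem to applying Raghunathan--Ramanathan directly to torsors under the connected reductive $k$-group $(G_0^{\mathrm{qs}})^{\mathrm{ad}}$ on $\mathbb{A}^1_k$.
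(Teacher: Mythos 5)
Your proposal is correct and, in its final ``bypass'' form, is essentially the paper's own argument: one uses the splitting over $k^{\rm sep}(x)$ to factor the $*$-action through $\mathrm{Gal}(k^{\rm sep}/k)$, obtains the quasi-split $k$-form $G_0^{\sharp}$, uses good reduction to extend the classifying cocycle to an \'etale torsor under $\overline{G}_0^{\sharp}$ over $\mathbb{A}^1_k$ that dies over $\mathbb{A}^1_{k^{\rm sep}}$, and then invokes Raghunathan--Ramanathan. You were also right to flag the direct application to the non-connected $\underline{\mathrm{Aut}}(G_0)$ as the delicate point; the reduction to the connected adjoint group is exactly how the paper avoids it.
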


The original result in \cite{RagRam} is more general and is formulated in terms of torsors (principal homogeneous spaces): {\it Let $G_0$ be a connected reductive algebraic group over a field $k$, and let $\pi \colon B \to \mathbb{A}^1_k$ be a $G_0$-torsor over the affine line $\mathbb{A}^1_k = \mathrm{Spec}\: k[x]$. If $\pi$ is trivialized by the base change from $k$ to $k^{\rm sep}$, then $\pi$ is obtained by the base change $\mathbb{A}^1_k \to \mathrm{Spec}\: k$ from a $G_0$-torsor $\pi_0 \colon B_0 \to \mathrm{Spec}\: k$.} To derive Theorem \ref{T:XXX2} from this statement, one argues as follows. Since, by our assumption, $G'$ splits over $k^{\rm sep}(x)$, the homomorphism $\mathrm{Gal}(\Ks/K) \to S$ to the group of symmetries of the Dynkin diagram that corresponds to the $*$-action associated with $G'$ (see Example 4.5) factors through a homomorphism $\mathrm{Gal}(k^{\rm sep}/k) \to S$. Let $G_0^{\sharp}$ be the quasi-split $k$-form of $G_0$ corresponding to this homomorphism, and set $G^{\sharp} = G_0^{\sharp} \times_k K$. Then $G'$ is an inner $K$-form of $G^{\sharp}$, so it corresponds to a cohomology class $\xi \in H^1(K , \overline{G}^{\sharp})$ with values in the corresponding adjoint group. Since $G'$ has good reduction at all $v \in V$, it follows that this cocycle gives rise to a torsor of $\overline{G}_0^{\sharp}$ over $\mathbb{A}^1_k$; in other words, $\xi$ is the image of some
$\zeta \in H^1_{_{\mathrm{\acute{e}t}}}(k[x] , \overline{G}_0^{\sharp})$ under
the natural map
$$
H^1_{_{\mathrm{\acute{e}t}}}(k[x] , \overline{G}_0^{\sharp}) \to H^1(K , \overline{G}^{\sharp})
$$
(where $H^1_{_{\mathrm{\acute{e}t}}}(k[x] , \overline{G}_0^{\sharp})$ denotes the \'etale cohomology set over ${\rm Spec}(k[x])$) --- cf. \cite[Corollary A.8]{GillePian}. But since $G'$ splits over $k^{\rm sep}$, the cocycle $\xi$ lies in the kernel of the restriction map $H^1(K , \overline{G}^{\sharp}) \to H^1(k^{\rm sep} K , \overline{G}^{\sharp})$. Then $\zeta$ lies in the kernel of the map $H^1_{et}(k[x] , \overline{G}_0^{\sharp}) \to H^1_{et}(k^{\rm sep}[x] , \overline{G}_0^{\sharp})$, hence by the result from \cite{RagRam} is the image of some $\omega \in H^1(k , \overline{G}_0^{\sharp})$ under the map $H^1(k , \overline{G}_0^{\sharp}) \to H^1_{_{\mathrm{\acute{e}t}}}(k[x] , \overline{G}_0^{\sharp})$. Then the $k$-form $G'_0$ of $G_0$ corresponding to $\omega$ is as required.

\vskip2mm

We note that the assumption that $G'$ splits over $k^{\rm sep}(x)$ holds automatically when $k$ has characteristic zero. Indeed, in this case, the separable closure $k^{\rm sep}$ coincides with the algebraic closure $\overline{k}$, so the field $k^{\rm sep}(x) = \overline{k}(x)$ has cohomological dimension $\leq 1$ by Tsen's theorem (cf. \cite[Ch. II, \S3]{Serre-GC}). Hence, applying Steinberg's theorem, we conclude that $G'$ is quasi-split over $\overline{k}(x)$ (see \cite[Ch. III, \S2.3]{Serre-GC}).
On the other hand,  since $G'$ has good reduction at all $v \in V$, the minimal Galois extension $L/K$ over which $G'$ becomes an inner form of the split group is unramified at all $v \in V$. Using the Riemann-Hurwitz formula, one concludes that $\overline{k}L = \overline{k}(x)$, implying that $G'$ is actually split over $\overline{k}(x)$.

%Since the Riemann-Hurwitz formula implies that $\mathbb{P}_{\overline{k}}^1$ has no nontrivial covers that are unramified at all finite points and ramified at infinity, we see that $\overline{k}L = \overline{k}(x)$, implying that $G'$ is actually split over $\overline{k}(x)$.

\vskip2mm

Yet another case where forms with good reduction have been considered in full involves the ring of Laurent polynomials $R = k[x , x^{-1}]$. Here, the set $V$ consists of the discrete valuations of $K = k(x)$ corresponding to monic irreducible polynomials $p(x) \in k[x]$, with $p(x) \neq x$.

\begin{thm}\label{T:XXX3}
{\rm (cf. \cite[Theorem 2.5]{ChGP})} Let $G_0$ be a (connected) semi-simple simply connected algebraic group over $k$. Assume that $\mathrm{char}\: k$ is prime to the order
of the Weyl group of $G_0$. Then there is a natural bijection
between the isomorphism classes of inner $K$-forms $G'$ of the group $G = G_0 \times_k K$ that have good reduction at all $v \in V$ and the elements of the Galois cohomology set $H^1(k(\!(x)\!) , \overline{G})$ of the corresponding adjoint group over the field of Laurent series.
\end{thm}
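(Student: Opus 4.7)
The plan is to reinterpret the problem via \'etale cohomology over $R = k[x, x^{-1}]$ and then invoke the theory of loop reductive groups developed in \cite{ChGP}. First, by Example~4.5, isomorphism classes of inner $K$-forms of $G$ are in bijection with $H^1(K, \overline{G})$, where $\overline{G} = \overline{G}_0 \times_k K$ is the adjoint group. I would then characterize the forms with good reduction at every $v \in V$ as the image of the natural map
$$
H^1_{\mathrm{\acute{e}t}}(R, \overline{G}_0) \longrightarrow H^1(K, \overline{G}).
$$
In one direction, any reductive $R$-group scheme restricts to a reductive group scheme over each $\mathcal{O}_v$ for $v \in V$, producing good reduction everywhere. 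Conversely, given local reductive extensions over each $\mathcal{O}_v$, one glues them into a global reductive group scheme over the Dedekind ring $R$ using Nisnevich-type patching for smooth affine torsors over Dedekind bases; the same descent arguments also deliver injectivity of this map.

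The crux of the theorem is the identification
$$
H^1_{\mathrm{\acute{e}t}}(R, \overline{G}_0) \xrightarrow{\ \sim\ } H^1(k(\!(x)\!), \overline{G})
$$
induced by the inclusion $R \hookrightarrow k(\!(x)\!)$. Geometrically, this asserts that the behavior of an inner form with good reduction throughout $\mathbb{G}_m = \mathrm{Spec}\: R$ is entirely encoded by its restriction to the punctured formal disk at the \emph{deleted} point $x = 0$. To prove this, one invokes the acyclicity theorem from \cite{ChGP}: under the hypothesis that $\mathrm{char}\: k$ is coprime to the order of the Weyl group of $G_0$, every $\overline{G}_0$-torsor over $\mathbb{G}_m$ is a \emph{loop torsor}, i.e.\ becomes trivial after pullback along some Kummer cover $z \mapsto z^n$ with $n$ coprime to $\mathrm{char}\: k$. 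Such torsors are classified by continuous cocycles valued in $\overline{G}_0(k^{\rm sep}(\!(x)\!))$ for the tame geometric fundamental group of $\mathbb{G}_m$, which is canonically isomorphic (as a $\Ga(k^{\rm sep}/k)$-module) to the tame inertia quotient of $\Ga(k^{\rm sep}(\!(x)\!)/k(\!(x)\!))$. The restriction map matches these two parametrizations and yields the desired bijection.

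The main obstacle is this second step, which relies crucially on the acyclicity theorem; the characteristic hypothesis cannot be dropped, as wild ramification at $x = 0$ in bad characteristic produces $\overline{G}_0$-torsors on $\mathbb{G}_m$ that fail to be loop torsors. Granting acyclicity, surjectivity of the restriction map comes from realizing a tame Galois cocycle over $k(\!(x)\!)$ explicitly as descent data along a Kummer cover of $\mathbb{G}_m$, while injectivity follows from the fact that a loop isomorphism defined over $k(\!(x)\!)$ involves only finitely many Laurent coefficients and therefore spreads out to an isomorphism of torsors over $R$ itself.
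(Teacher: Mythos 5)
Your proposal follows the same route as the paper: both reduce the statement to the Chernousov--Gille--Pianzola classification of torsors over the punctured affine line $\mathbb{A}^{\times}_k = \mathrm{Spec}\: k[x,x^{-1}]$ in terms of $H^1(k(\!(x)\!), \cdot)$, after identifying inner forms with good reduction at all $v \in V$ with (adjoint) torsors over $k[x,x^{-1}]$. The paper simply cites that result and leaves the reduction implicit, whereas you additionally sketch its internal mechanism (loop torsors, the acyclicity theorem, tame inertia) and the gluing/uniqueness step over the Dedekind ring; the skeleton of the argument is the same.
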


Again, this easily follows from the following more general result proved in \cite{ChGP}: {\it Let $G_0$ be a connected reductive group over a field $k$. Assume that
the characteristic of $k$ is good\footnotemark. Then there is a natural bijection between the isomorphism classes of $G_0$-torsors over the punctured affine
line $\mathbb{A}^{\times}_k = \mathbb{A}^1_k \setminus \{ 0 \}$ and the elements of $H^1(k(\!(x)\!) , G)$.} We refer the reader to \cite[\S5]{ChGP} for the details concerning this result, and only mention here that it plays a crucial role in the proof of the conjugacy of the analogues of Cartan subalgebras in certain infinite-dimensional Lie algebras \cite{CNPY16}.

\footnotetext{As defined in \cite[\S5]{CGP}.}

\subsection{The finiteness conjecture for function fields of curves.}\label{S-FConjCurves} An important class of examples of Dedekind rings consists of the rings of regular functions $R = k[C]$, where $C$ is a smooth geometrically integral affine curve over a field $k$. In view of the bijection between the maximal ideals of $R$ and the closed points of $C$, the corresponding $V$ is then the set of
discrete valuations of the function field $K = k(C)$ associated with the closed points of $C$.  Unfortunately, in most cases, no explicit description of the $K$-forms of a given reductive $K$-group $G$ that have good reduction at all $v \in V$, similar to Theorems \ref{T:XXX2} and \ref{T:XXX3}, seems to be available. A more tractable problem in this setting appears to be the qualitative question about the conditions that ensure the finiteness of the number of isomorphism classes of such forms. We observe that if $G_0$ is a reductive algebraic $k$-group and $G = G_0 \times_k K$ is the base change of $G_0$ to $K$, then for any $k$-form $G'_0$ of $G_0$ the group $G' = G'_0 \times_k K$ is a $K$-form of $G$ that has good reduction at all $v \in V$. So, even though non-isomorphic $k$-forms may become isomorphic after base change to $K$, in order to have the affirmative answer to the above question in a sufficiently general situation, one needs to assume that over $k$, the groups at hand have only finitely many non-isomorphic forms. This basically amounts to a hypothesis on the finiteness of Galois cohomology.

%which basically amounts to the finiteness of Galois cohomology.

In \cite[Ch. III, \S\S4.1-4.3]{Serre-GC}, Serre described a class of fields over which one does have the required finiteness by
introducing the following condition on a profinite group $\mathscr{G}$:

\vskip2mm

\noindent (F) \parbox[t]{14.6cm}{\it For every integer $m \ge 1$, the group $\mathscr{G}$ has only finitely many open subgroups of index $m$}

\vskip2mm

\noindent (such profinite groups are sometimes called ``small"). He then defined a field $K$ to be of type (F) if it is {\it perfect} and its absolute Galois group $\mathrm{Gal}(\Ks/K)$ satisfies (F). The key result is that if $K$ is a field of type (F), then the set $H^1(K,G)$ is finite for any linear algebraic $K$-group $G$ (see \cite[Ch. III, \S4.3, Theorem 4]{Serre-GC}). Moreover, if $K$ is of characteristic 0 and of type (F), then any linear algebraic group has finitely many $K$-forms (see \cite[Ch. III, \S4.3, Remark 1]{Serre-GC} and \cite[\S6]{BorelSerre}).

Recently, in \cite{IRap}, the second-named author proposed a generalization of condition (F) that holds in some situations where (F) fails and is still sufficient to establish certain finiteness results. For this, let $K$ be a field, and $m \geq 1$ be an integer prime to $\mathrm{char}\: K$. We say that $K$ is of type $(\mathrm{F}'_m)$ if

\vskip2mm

\noindent $(\mathrm{F}'_m)$ \parbox[t]{14.5cm}{\it For every finite separable extension $L/K$, the quotient $L^{\times}/{L^{\times}}^m$ of
the multiplicative group $L^{\times}$, is finite.}

\vskip2mm

\noindent If $K$ is of type (F), then it satisfies $(\mathrm{F}'_m)$ for all $m$ prime to $\mathrm{char}\: K$ --- we refer the reader to \cite{IRap} for a proof of this fact as well as a discussion of various examples of fields of type $(\mathrm{F}'_m)$ and the associated finiteness results.
It is reasonable to expect that given an absolutely almost simple algebraic $K$-group $G$ whose Weyl group has order $w$, the fact that $\mathrm{char}\: K$ is prime to $w$ and $K$  satisfies condition $(\mathrm{F}'_p)$ for all prime divisors of $w$ should imply the finiteness of $H^1(K , G)$; however, this has not yet been established. Along these lines, we would like to propose the following finiteness conjecture for forms with good reduction.

\vskip2mm

\noindent {\bf Conjecture 5.5.} {\it Let $K = k(C)$ be the function field of a smooth geometrically integral affine curve $C$ over a field $k$,
and let $V$ be the set of discrete valuations associated with the closed points of $C$. Consider an absolutely almost simple simply connected algebraic $K$-group $G$, and let $w$ be the order of the Weyl group of $G$. Assume that $\mathrm{char}\: k$ is prime to $w$ and that the field $k$ satisfies $(\mathrm{F}'_p)$ for all prime divisors $p$ of $w$. Then the number of isomorphism classes of $K$-forms $G'$ of $G$ that have good reduction at all $v \in V$ is finite.}

\vskip2mm

As we will see in \S\ref{S-Results}, for certain types, already condition $(\mathrm{F}'_2)$ for $k$ implies the finiteness of the number of isomorphism classes of forms with good reduction, which suggest that the assumptions on $k$ in the above conjecture could probably be weakened.

\subsection{The finiteness conjecture for arbitrary finitely generated fields.}\label{S-ConjFGfields} We now turn to the main finiteness conjecture for forms with good reduction over arbitrary finitely generated fields (see Conjecture 5.7 below). Let us point out that, unlike the Dedekind situation discussed in the previous subsections, forms with good reduction have never been previously considered in the higher-dimensional setting.

We begin with a description of our general set-up. Let $K$ be an arbitrary {\it finitely generated field}, i.e. a field that can be generated over its prime subfield by finitely many elements. Then $K$ possesses natural sets of discrete valuations called {\it divisorial}. More precisely, let $\mathfrak{X}$ be a {\it model} of $K$, i.e. a normal separated irreducible scheme of finite type over $\Z$ (if $\mathrm{char}\: K = 0$) or over a finite field (if $\mathrm{char}\: K > 0$) such that $K$ is the field of rational functions on $\mathfrak{X}$. It is well-known that to every prime divisor $\mathfrak{Z}$ of $\mathfrak{X}$, there corresponds a discrete valuation $v_{\mathfrak{Z}}$ on $K$ (cf. \cite[12.3]{Cut}, \cite[Ch. II, \S 6]{Hart}). Then
$$
V(\mathfrak{X}) = \{ v_{\mathfrak{Z}} \ \vert \ \mathfrak{Z} \ \ \text{prime divisor of} \ \ \mathfrak{X} \}
$$
is called the divisorial set of places of $K$ corresponding to the model $\mathfrak{X}$. Any set of places $V$ of $K$ of this form (for some model $\mathfrak{X}$) will be simply called {\it divisorial}. In terms of commutative algebra, this construction amounts to finding a subring $R$ of $K$ whose fraction field is $K$ and such that $R$ is integrally closed (in $K$) and a finitely generated $\mathbb{Z}$-algebra (or $\mathbb{F}_p$-algebra). For any height one prime ideal $\mathfrak{p} \subset R$ (i.e. a minimal nonzero prime ideal), the localization $R_{\mathfrak{p}}$ is a discrete valuation ring, hence gives rise to a discrete valuation $v_{\mathfrak{p}}$ of $K$. Then $V$ consists of these valuations $v_{\mathfrak{p}}$ corresponding to all height one prime ideals of $R$. We note that given a divisorial set  of places $V$ associated with a model  $\mathfrak{X}$, one can consider an affine open subscheme $\mathfrak{U} = \mathrm{Spec}\: R$ of $\mathfrak{X}$. Then for the divisorial set $V'$ associated with $\mathfrak{U}$, we have $V' \subset V$ and $V \setminus V'$ is finite. This means that, in most situations, it is enough to consider divisorial set associated with {\it affine} models, where, as we have just seen, they can be described in terms of height one prime ideals.

Furthermore, we observe that any two divisorial sets $V_1$ and $V_2$ associated with two different models of $K$ are {\it commensurable}, i.e. $V_i \setminus (V_1 \cap V_2)$ is finite for $i = 1, 2$ (this makes a divisorial set of places almost canonical), and that for any finite subset $S$ of a divisorial set $V$, the set $V \setminus S$ contains a divisorial set. For the sake of completeness, let us briefly justify these properties. We first note that any divisorial set of places $V$ of a finitely generated field $K$ satisfies the following condition

\vskip2mm

\noindent (A) For any $a \in K^{\times}$, the set $V(a) := \{ v \in V
\, \vert \, v(a) \neq 0 \}$ is finite.

\vskip2mm

\noindent Now, in order to show that two divisorial sets $V_1$ and $V_2$ are commensurable, we can assume, without loss of generality, that $V_i$ is associated with a model $\mathfrak{X}_i = \mathrm{Spec}\:R_i$, where $R_i$ is a finitely generated $\mathbb{Z}$-algebra with
fraction field $K$, for $i = 1, 2$. The finite generation of $R_i$ implies that there exist nonzero elements $a_1 \in R_1$ and $a_2 \in R_2$ such
that $R_2 \subset R_1[1/a_1]$ and $R_1 \subset R_2[1/a_2]$. Then $V_1\setminus V_1(a_1) \subset V_2$ and $V_2 \setminus V_2(a_2) \setminus
V_1$. Consequently,
$$
V_1 \setminus (V_1 \cap V_2) \subset V_1(a_1) \ \ \ \text{and} \ \ \ V_2
\setminus (V_1 \cap V_2) \subset V_2(a_2),
$$
and the commensurability of $V_1$ and $V_2$ follows. Furthermore, given a divisorial set $V$ associated with $\mathfrak{X} = \mathrm{Spec}\: R$ and a finite subset $S \subset V$, using weak approximation for $K$ we can find a nonzero $a \in R$ such that $v(a)> 0$ for all $v \in S$. Then the divisorial set $V_a$ associated with $\mathfrak{X}_a := \mathrm{Spec}\: R[1/a]$ satisfies $V_a \subset V\setminus S$, as required.

\vskip3mm

\noindent {\bf Example 5.6.} The field of rational functions $K = \mathbb{Q}(x)$ is the fraction field of $R = \mathbb{Z}[x]$ (which, of course, is an integrally closed finitely generated $\mathbb{Z}$-algebra). It is well-known that any height one prime ideal $\mathfrak{p}$ of $R$ is  principal with a generator $p$ of one of the following two types: (a) $p \in \mathbb{Z}[x]$ is an irreducible polynomial with content 1; or (b) $p \in \mathbb{Z}$ is a rational prime. In the first case,
the associated discrete valuation of $K$ coincides with the one corresponding to the monic irreducible polynomial in $\mathbb{Q}[x]$ obtained by dividing $p$ by its leading coefficient (cf. Example 3.6). We will call such discrete valuations {\it geometric} and denote the set of all these valuations by $V_0$. In the second case, the associated discrete valuation coincides with the Gaussian extension of the $p$-adic valuation of $\mathbb{Q}$. We will refer to such valuations as {\it arithmetic} and denote the set of all these valuations by $V_1$. Thus, the set of divisorial valuations of $K$ associated with the model $\mathfrak{X} = \mathrm{Spec}\: R$ is $V = V_0 \cup V_1$.

\vskip3mm

We are now in a position to formulate our central conjecture for forms with good reduction over arbitrary finitely generated fields.

\vskip2mm

\noindent {\bf Conjecture 5.7. (Main Conjecture for forms with good reduction)} {\it Let $G$ be a connected reductive algebraic group over a finitely generated field $K$, and $V$ be a divisorial set of places of $K$. Then the set of $K$-isomorphism classes of (inner) $K$-forms $G'$ of $G$ that have good reduction at all $v \in V$ is finite (at least when the characteristic of $K$ is ``good'').}

\vskip2mm

\noindent (When $G$ is an absolutely almost simple algebraic group, we say that $\mathrm{char}\: K = p$ is ``good" for $G$ if either $p = 0$ or $p > 0$ and does not divide the order of the Weyl group of $G$. For non-semisimple reductive groups, only characteristic $0$ will be considered good.)

\vskip2mm

To conclude this section, we would now like to make a few
%We conclude this section with a few
brief comments on the use of the word ``main" in the above designation. As we will see in \S\ref{S-OtherConj}, this conjecture has links to several other finiteness conjectures for reductive algebraic groups over finitely generated fields. In fact, the Main Conjecture, on the one hand, automatically implies the truth of some of these conjectures in certain cases (cf. the discussion following Conjecture 6.1), and on the other hand, some of the other conjectures are likely to provide tools for attacking the Main Conjecture. Second, the Main Conjecture has important applications to the genus problem (cf. \S\ref{S-AppGenus}) and to the analysis of weakly commensurable Zariski-dense subgroups of absolutely almost simple algebraic groups and the related concept of eigenvalue rigidity (cf. \S\ref{S-WCZariski}). We observe that these developments go back to geometric problems involving length-commensurable locally symmetric spaces \cite{PrRap-WC} and can be viewed as a rather inspiring  example of the application of techniques from
number theory and arithmetic geometry  to differential geometry. It should also be noted that we were led to the Main Conjecture by our earlier work on these applications, which indicated the necessity of considering the above statement in a higher-dimensional setting. To the best of our knowledge, this point of view has never come up before in the context of algebraic groups.

As a final remark, we note that one can also consider the original conjecture of Shafarevich for abelian varieties (as well as some other classes of varieties) over higher-dimensional fields. Let us first observe that, in the case of elliptic curves, the argument of Serre-Tate that we sketched at the end of \S\ref{S-GRAG} yields, with minimal changes,  the following statement:

\vskip2mm

\noindent {\it If $K$ is a finitely generated field of characteristic zero and $V$ is a divisorial set of places of $K$, then the set of
$K$-isomorphism classes of elliptic curves over $K$ that have good
reduction at all $v \in V$ is finite.}

\vskip2mm

\noindent Important results in this direction, particularly over function fields of characteristic $p > 0$, are due to Parshin \cite{Par-Izv}, Zarhin \cite{Zarhin}, Szpiro (\cite{Szpiro1} and \cite{Szpiro2}), and Moret-Bailly \cite{Moret-Bailly}. We also refer to the reader to \cite{Voloch} for a survey of results in diophantine geometry in positive characteristic.

%to consider the statement in the higher-dimensional setting that to the best of our knowledge has never come up before in the context of algebraic groups.

\section{Some other finiteness conjectures}\label{S-OtherConj}

While Conjecture 5.7 on forms of algebraic groups with good reduction appears to be most important (and in line with results on abelian varieties), it should really be viewed as part of a ``package" of several conjectures on finiteness properties of linear algebraic groups over higher-dimensional fields.
%, which appear to be fundamental for the emerging arithmetic theory of algebraic groups over such fields.
One of these finiteness properties is related to the local-global principle in this setting. We will begin with the general formulation of the local-global
principle in terms of Galois cohomology, and will then indicate how it translates into statements about norms in finite separable extensions, finite-dimensional
simple algebras, and quadratic forms.

%quadratic forms etc. We refer the reader to [.] or [.] for the basic notions concerning Galois cohomology of linear algebraic groups.

Let $K$ be a field equipped with a set $V$ of valuations (not necessarily discrete), and let $G$ be a linear algebraic $K$-group.
%For $v \in V$, we denote by $K_v$ the corresponding the completion of $K$.
We say that the (cohomological) local-global principle holds for $G$ with respect to $V$ if the global-to-local map in Galois cohomology
$$
\theta_{G , V} \colon H^1(K , G) \longrightarrow \prod_{v \in V} H^1(K_v , G),
$$
given by the product of restriction maps, is injective. As we noted in \S\ref{S-ReviewGC}, these Galois cohomology sets do \emph{not}, in general, have a natural group structure, and should instead be viewed as pointed sets whose distinguished element is the cohomology class of the trivial cocycle. For such a map $\theta_{G,V}$, one defines the corresponding {\it Tate-Shafarevich set} $\text{\brus SH}(G , V)$ to be the kernel $\ker \theta_{G,V}$, i.e. the preimage of the distinguished element. We should emphasize that, as a reflection of the absence of a natural group structure, different fibers of $\theta_{G , V}$ may have different sizes. In particular, while $\text{\brus SH}(G , V)$ is certainly trivial (i.e. reduces to a single element) when $\theta_{G,V}$ is injective, in general, the injectivity of $\theta_{G,V}$ is \emph{not} a formal consequence of the triviality of $\text{\brus SH}(G , V)$ and typically requires additional considerations involving \emph{twisting} (cf. \cite[Ch. 1, \S1.3.2]{Pl-R}, \cite[Ch. I, \S5]{Serre-GC}).

We now briefly recall how the cohomological local-global principle is interpreted in several concrete situations. First, let $L/K$ be a finite separable field extension, and let $T = \mathrm{R}^{(1)}_{L/K}(\mathbb{G}_m)$ be the corresponding norm torus (see Example 3.4 for a special case of such a torus and, e.g., \cite[Ch. 2, \S2.1.7]{Pl-R} for a discussion of the general case). As a consequence of Hilbert's Theorem 90 and Shapiro's Lemma, we have a natural isomorphism $$H^1(K, T) \simeq K^{\times}/ N_{L/K}(L^{\times}).$$ Therefore, the cohomological local-global principle for $T$ is equivalent to the statement (known as the {\it local-global norm principle}) that an element $a \in K^{\times}$ is a norm in the extension $L/K$ (i.e., $a$ belongs to the norm subgroup $N_{L/K}(L^{\times}) \subset K^{\times}$) if and only if it is a norm locally at all $v \in V$ (i.e., $a \in N_{L \otimes_K K_v/K_v}((L \otimes_K K_v)^{\times})$ for all $v \in V$).

In slightly different terms, if $L/K$ has degree $n$, then the norm $N_{L/K}(x)$ of an element $x \in L$ is given by a homogeneous polynomial $\nu(x_1, \ldots , x_n)$ over $K$ in terms of the coordinates $x_1, \ldots , x_n$ of $x$ with respect to a fixed basis of $L/K$. Then the norm principle asserts that the equation $$\nu(x_1, \ldots , x_n) = a$$ has a solution $(x_1, \ldots , x_n) \in K^n$ if and only it has a solution $(x_1^v, \ldots , x_n^v) \in K_v^n$ for every $v \in V$. Among the first results on the local-global norm principle was the famous Hasse Norm Theorem asserting that this principle indeed holds for all cyclic Galois extensions of number fields when $V$ is the set of all valuations of $K$, including the archimedean ones.
%(see \cite[Ch. VII, \S11.4]{ANT} for a discussion of this result in the context of global class field theory).
So, to recognize the pioneering role of this result in the subject, the local-global principle  is often referred to as the {\it Hasse principle}.

Next, it follows from Example 4.5 that the cohomological local-global principle for the group $G = \mathrm{PGL}_n$ is equivalent to the statement that two central simple algebras $A_1$ and $A_2$ of degree $n$ over $K$ are isomorphic if and only the algebras $A_1 \otimes_K K_v$ and $A_2 \otimes_K K_v$ are isomorphic over $K_v$ for all $v \in V$. Thus, the truth of the local-global principle for $G = \mathrm{PGL}_n$ for all $n \geq 2$ amounts to the fact that the natural map of Brauer groups
\begin{equation}\label{E:FF1}
\mathrm{Br}(K) \longrightarrow \prod_{v \in V} \mathrm{Br}(K_v),
\end{equation}
defined by sending the Brauer class $[A]$ of a finite-dimensional central simple $K$-algebra $A$ to $([A \otimes_K K_v])_{v \in V}$, is injective.
%According to the theorem of Albert-Brauer-Hasse-Noether, this is indeed the case when $K$ is a global field and $V$ is the set of all valuations of $K$, including the archimedean ones when $K$ is a number field (see \cite[Ch. 18, \S18.4]{Pierce} for the number field case and \cite[Ch. 6, \S6.5]{Gille} for the function field case).
Furthermore, the local-global principle for $G = \mathrm{O}_n(q)$ (the orthogonal group of a non-degenerate $n$-dimensional quadratic form $q$ over $K$) means that two nondegenerate $n$-dimensional quadratic forms $q_1$ and $q_2$ over $K$ are $K$-equivalent if and only if they are $K_v$-equivalent for all $v \in V$.
%A well-known consequence of the Hasse-Minkowski theorem is that the local-global principle holds when $K$ is a number field and $V$ is the set of all valuations of $K$ (see \cite[Ch. IV, \S 3]{Serre-Course} for a discussion of the Hasse-Minkowski theorem in the special case $K = \Q$ and \cite[Ch. VI, \S66]{OMeara} for the general case).
Similar interpretations can be given in the context of simple algebras with involution, hermitian forms, and so on.

Initially, the study of local-global principles focused almost exclusively on the case where $K$ is a number field and $V$ is the set of all valuations of $K$, and dealt primarily with some concrete situations rather than with the general cohomological set-up that we just described.
In particular, the norm principle was thoroughly investigated for arbitrary finite field extensions of number fields using techniques from class field theory (cf. \cite[Ch. VII, \S11.4]{ANT}). Another consequence of class field theory is the theorem of Albert-Brauer-Hasse-Noether stating that the map (\ref{E:FF1}) is injective in this case (see \cite[Ch. 18, \S18.4]{Pierce} for number fields and \cite[Ch. 6, \S6.5]{Gille} for function fields). As we mentioned above, this implies the cohomological Hasse principle for $G = \mathrm{PGL}_n$ for all $n \geq 2$, and, more generally, for $G = \mathrm{PGL}_{1 , A}$ for any finite-dimensional central simple algebra $A$ over $K$. Furthermore, the Hasse-Minkowski Theorem in the theory of quadratic forms implies the local-global principle for equivalence of quadratic forms, hence the cohomological local-global principle for the orthogonal groups (see \cite[Ch. IV, \S 3]{Serre-Course} for a discussion of the Hasse-Minkowski theorem in the special case $K = \Q$ and \cite[Ch. VI, \S66]{OMeara} for the general case). Eventually, these results and their variations led to the cohomological Hasse principle for \emph{all} semi-simple \emph{simply connected} groups with components of classical types.
This result was subsequently extended, using structural information provided by the theory of algebraic groups,  to include exceptional types, ultimately culminating in the proof of the cohomological Hasse principle for \emph{all} semi-simple simply connected groups over number fields (cf. \cite[Ch. 6, \S\S6.7-6.8]{Pl-R}). In fact, this result implies that the principle also holds for all absolutely almost simple groups, and consequently for all adjoint groups --- see \cite[Ch. 6, \S6.5]{Pl-R}.

On the other hand, it was discovered rather early that the Hasse norm principle may fail for non-cyclic finite extensions of number fields, which, in turn, entails
the failure of the cohomological principle for the corresponding norm torus. Later, examples of the failure of the cohomological Hasse principle were also found for semi-simple groups --- cf. \cite[Ch. III, \S4.7, Theorem 8]{Serre-GC} (note that such groups are, of course, neither simply connected nor adjoint). Nevertheless, it was proved in \cite{Borel} using reduction theory for adelic groups (see also \cite{BorelSerre}) that when $K$ is a number field, the map $\theta_{G,V}$ is \emph{proper} (i.e., the pre-image of a finite set is finite) for \emph{any} linear algebraic group $G$ whenever $V$ contains almost all valuations of $K$. The analogous statement for reductive groups over global fields of positive characteristic follows from results of Harder \cite{Harder1}.
%For global fields of positive characteristic, this was established in \cite{ConradFiniteness}, using the theory of pseudo-reductive groups developed in \cite{CGP}.
Informally, these results mean that while the local-global principle for groups over global fields may fail, the deviation from it is always of finite size.

In the past 10-15 years, the Hasse principle has been analyzed over certain other classes of fields, including function fields of $p$-adic curves, cf. \cite{CTPS}, \cite{HHK}, \cite{HHK1}. This work has provided numerous examples where the local-global principle holds for fields other than global. In this article, however, we would like to focus on the arithmetic situation, where the classical results over global fields and our own work over higher-dimensional global fields (see \S\ref{S-ResultsHasse}) strongly suggest that the following statement should be true.

%where the results over global field and our own work over higher-dimensional arithmetic (global) fields strongly suggest that the following should be true generally.

\vskip2mm

\noindent {\bf Conjecture 6.1.} {\it Let $G$ be a (connected) reductive algebraic group defined over a finitely generated field $K$, and let $V$ be a divisorial set of places of $K$. Then the global-to-local map $\theta_{G , V}$ is \emph{proper}. In particular, the Tate-Shafarevich set $\text{\brus SH}(G , V)$ is finite.}

\vskip2mm

Thus, Conjecture 6.1 expresses a broad expectation that the deviation from the local-global principle should be finite in all situations involving reductive linear algebraic groups over a finitely generated field $K$ with respect to any divisorial set of places $V$ of $K$. We will discuss available results on Conjecture 6.1 in \S\ref{S-ResultsHasse} At this point, we would like to indicate how the truth of Conjecture 5.7 automatically implies that of Conjecture 6.1 for \emph{adjoint} groups (cf. \cite[\S 6]{CRR2}, \cite[\S7]{R-ICM}).

To fix notations, let $G$ be an absolutely almost simple simply connected algebraic group over a finitely generated $K$ of ``good" characteristic, and let $V$ be a divisorial set of discrete valuations of $K$. Note that since $V$ satisfies condition (A) (see \S\ref{S-ConjFGfields}), it follows that we can pick a finite subset $V_0 \subset V$ so that $G$ has good reduction at all $v \in V \setminus V_0$. As we observed in \S\ref{S-ConjFGfields}, $V \setminus V_0$ contains a divisorial set $V'$. Thus, replacing $V$ by $V'$, we may assume that $G$ has good reduction at all $v \in V$. Now set $\overline{G}$ to be the corresponding adjoint group.
Suppose $\xi \in \text{{\brus
SH}}(\overline{G},V)$ and let $G' = G(\xi)$ be the corresponding (inner) $K$-form of $G$. By our assumption, $G' \times_K K_v \simeq G \times_K K_v$ for all $v \in V$, and consequently $G'$ has good reduction at all $v \in V$. Therefore, assuming Conjecture 5.7, we conclude that the groups $G(\xi)$ for
$\xi \in \text{{\brus SH}}(\overline{G},V)$
form finitely many $K$-isomorphism classes. In cohomological terms, this means that that the image of
$\text{{\brus SH}}(\overline{G},V)$ under the canonical map
$H^1(K , \overline{G}) \stackrel{\alpha}{\longrightarrow} H^1(K , A_{\Ks})$ is finite (as in Example 4.5, we denote by $A_{\Ks}$ the group of automorphisms of $G \times_K \Ks$). But since $\overline{G} \simeq I_{\Ks}$ has finite index in $A_{\Ks}$, the
map $\alpha$ has finite fibers, which yields the finiteness of
$\text{{\brus SH}}(\overline{G},V)$ (see \cite[Ch. I, \S\S5.3-5.5]{Serre-GC}) for a discussion of the fibers of such maps in non-abelian cohomology).

%Although we will defer a detailed discussion of available

%It was observed in [.] that the truth of Conjecture 1 automatically implies Conjecture 2 for semi-simple \emph{adjoint} groups; for other semi-simple groups (including simply connected) some additional effort is needed. Another evidence in favor of Conjecture 2 is our recent result [.] that it holds for all algebraic tori. In view of the above discussion, this implies in particular that the deviation from the local-global norm principle is finite for all separable extensions of finitely generated fields with respect to any divisorial set of discrete valuations. From this perspective, Conjecture 2 expresses a broad expectation that the deviation from the local-global principle should be finite in all situations involving reductive linear algebraic groups over a finitely generated field $K$ with respect to any divisorial set of places $V$ of $K$. For example, if $q$ is a nondegenerate $n$-dimensional quadratic form over $K$ then the set of $n$-dimensional quadratic forms $q'$ such that $q$ and $q'$ are $K_v$-equivalent for all $v \in V$ consists of finitely many $K$-equivalence classes. (In fact, we know this to be the case over 2-dimensional global field and some other situations, cf. ..).

\vskip2mm

Another fundamental finiteness property in number theory is the finiteness of the ideal class group of a number field. In the higher-dimensional situation, one replaces the class group by the Picard group. While this group can be infinite, it is known that if $X$ is a scheme that is normal and of finite type over ${\rm Spec}(\Z)$, then the Picard group ${\rm Pic}~X$ is finitely generated (cf. \cite{Kahn}). In order to transport this notion into the context of linear algebraic groups, one uses {\it adeles}.

%instead of the class group one considers the (appropriately defined) Picard group. While this group can be infinite, it is known to be {\it finitely generated}
%for any arithmetic scheme {\bf (need to be more precise)}, cf. [.]. In order to generalize this concept (notion?) to linear algebraic groups, one uses {\it adeles}.

So, let $K$ be a field equipped with a set $V$ of (pairwise inequivalent) discrete valuations, and let $G$ be a linear algebraic $K$-group with a fixed
matrix realization $G \subset \mathrm{GL}_n$. For each $v \in V$, we set
$$
G(\mathcal{O}_v) = G(K_v) \cap \mathrm{GL}_n(\mathcal{O}_v),
$$
where $\mathcal{O}_v$ is the valuation ring in the completion $K_v$. We then define the corresponding {\it adelic group} as
$$
G(\mathbb{A}(K , V)) = \{ (g_v) \in \prod_{v \in V} G(K_v) \ \vert \ g_v \in G(\mathcal{O}_v) \ \ \text{for almost all} \ \ v \in V \}.
$$
In other words, $G(\mathbb{A}(K , V))$ is the {\it restricted (topological) product} of the groups $G(K_v)$ for $v \in V$ with respect to the (open) subgroups
$G(\mathcal{O}_v)$ (cf. \cite[Ch. 5, \S5.1]{Pl-R} for the details). The product
$$
G(\mathbb{A}^{\infty}(K , V)) = \prod_{v \in V} G(\mathcal{O}_v)
$$
is called the {\it subgroup of integral adeles}. Henceforth, we will assume that $V$ satisfies condition (A) introduced in \S\ref{S-ConjFGfields} (we recall that this condition holds automatically for a divisorial set of places of a finitely generated field).
%\vskip2mm
%
%\noindent (A) \parbox[t]{16cm}{For any $a \in K^{\times}$, the set $V(a) : = \{ v \in V \ \vert \ v(a) \neq 0 \}$ is finite.}
%
%\vskip2mm
%
%\noindent
Then one has a diagonal embedding $G(K) \hookrightarrow G(\mathbb{A}(K , V))$, whose image is called the {\it subgroup of principal adeles} and which we will still denote by $G(K)$. The set of double cosets
$$
\mathrm{cl}(G, K, V) := G(\mathbb{A}^{\infty}(K , V)) \backslash G(\mathbb{A}(K , V)) / G(K)
$$
is called the {\it class set} of $G$ (we should point out that the class set is sometimes defined using {\it rational adeles} rather than the {\it full}
adeles we introduced above). The following examples link this definition with classical notions.

\vskip3mm

\noindent {\bf Example 6.2.} Let $G = \mathbb{G}_m$ be the 1-dimensional $K$-split torus. Then $G(\mathbb{A}(K , V))$ is the {\it group of ideles} $\mathbb{I}(K , V)$ and $G(\mathbb{A}^{\infty}(K , V))$ is the {\it subgroup of integral ideles}
$$
\mathbb{I}^{\infty}(K , V) = \prod_{v \in V} \mathcal{O}_v^{\times}.
$$
So, there is a natural bijection between the class set $\mathrm{cl}(G, K, V)$ and the quotient $\mathbb{I}(K , V)/\mathbb{I}^{\infty}(K , V) K^{\times}$. On the other hand, it is easy to see that the latter quotient is isomorphic to the Picard group $\mathrm{Pic}(K , V)$, which is defined as follows. Let $\mathrm{Div}(V , K)$ be the free abelian on the set $V$, which we call the {\it group of divisors}. By virtue of condition (A), we can define a group homomorphism
$$
K^{\times} \to \mathrm{Div}(K , V), \ \ \ a \mapsto \sum_v v(a) \cdot v,
$$
the image of which is the called the {\it subgroup of principal divisors} $\mathrm{P}(K , V)$. We set
$$
\mathrm{Pic}(K , V) = \mathrm{Div}(K , V) / \mathrm{P}(K , V).
$$
The isomorphism $\mathbb{I}(K , V)/\mathbb{I}^{\infty}(K , V)K^{\times} \simeq {\rm Pic}(K,V)$ is then induced by the map
$$
\nu \colon \mathbb{I}(K,V) \to {\rm Div}(K,V), \ \ \ (x_v) \mapsto \sum_{v \in V}v(x_v) \cdot v.
$$
If $K$ is the fraction field of a Dedekind domain $A$ and $V$ is the set of discrete valuations of $K$ corresponding to the maximal ideals
of $A$, then $\mathrm{Pic}(K , V)$ is precisely the ideal class group of $A$. More generally, if $K$ is the function field of an integral regular
scheme $X$ and $V$ is the set of discrete valuations associated with prime divisors of $X$, then $\mathrm{Pic}(K , V)$ coincides with the usual Picard group $\mathrm{Pic}(X)$. So, it follows from classical results that $\mathrm{Pic}(K , V)$ is finite in the first situation when $A$ is the ring of $S$-integers in a number field, and is finitely generated in the second situation when $X$ is integral, normal, and of finite type over ${\rm Spec}(\Z)$.

\vskip3mm

\noindent {\bf Example 6.3.} Let $K$ be a number field with the ring of integers $\mathcal{O}$, and let $V$ be the set of all nonarchimedean valuations of $K$.
%non-archimedean places of $K$ (which correspond to the maximal ideals of $\mathcal{O}$).
%the set of discrete valuations of $K$ that correspond to the maximal ideals of $\mathcal{O}$.
Furthermore, let $q$ be a nondegenerate quadratic form in $n$ variables with coefficients in $\mathcal{O}$, and $G = \mathrm{O}_n(q)$ be the corresponding orthogonal group. It is well-known that in this case, there is a natural bijection between the class set
$\mathrm{cl}(G, K, V)$ and the set of classes in the genus of $q$ (see \cite[Ch. 8, Proposition 8.4]{Pl-R} for the details and relevant definitions). %{\bf (Should we say something about the case $n = 2$ that goes back to Gauss?)}
We recall that when $n \geq 3$ and $q$ is {\it indefinite} (i.e., there exists an archimedean place $v \in V^K_{\infty}$ that is either
complex ($K_v = \mathbb{C}$) or is real and $q$ is indefinite in the usual sense over $K_v = \mathbb{R}$), then $\mathrm{cl}(G, K, V)$ has a natural structure of an abelian group, which is {\it finite} of order a power of $2$ (see \cite[Ch. 8, \S8.2, Theorem 8.6]{Pl-R}). On the contrary, if $q$ is {\it definite}, then $\mathrm{cl}(G, K, V)$ is a finite set which in general does not have a natural group structure and whose size can be made divisible by any given integer if one changes $q$ to a rationally equivalent form  (see \cite[Ch. 8. \S8.3, Theorem 8.9]{Pl-R}).

\vskip2mm

More generally, it was shown by Borel \cite{Borel} using reduction theory that if $K$ is a number field and $V$ is the set of all nonarchimedean valuations of $K$, then the class set $\mathrm{cl}(G, K, V)$ is {\it finite} for {\it any} linear algebraic $K$-group $G$. This finiteness result was extended to global fields of positive characteristic by Conrad \cite{ConradFiniteness}, who employed the theory of pseudo-reductive groups developed by Conrad-Gabber-Prasad \cite{CGP}. On the other hand, for $G = \mathbb{G}_m$ over a finitely generated field $K$ with $V$ a divisorial set of places, $\mathrm{cl}(G, K, V) \simeq \mathrm{Pic}(K , V)$ may be an infinite group, which is nevertheless finitely generated. For an arbitrary linear algebraic group, however, $\mathrm{cl}(G, K, V)$ may not have a natural group structure, so no general finiteness condition on the class set can conceivably be stated on the basis of finite generation. We have proposed the following path towards a possible generalization, which consolidates the two cases discussed above and appears to be quite useful. First, we observe that if $\mathrm{cl}(G, K, V)$ is either finite or a finitely generated group (in the presence of a natural group structure), then one easily shows that there exists a finite subset $S \subset V$ such that the class set $\mathrm{cl}(G, K, V\setminus S)$ reduces to a single element (see the argument following \cite[Definition 3.4]{CRR-Israel} for the details). This suggests the following condition on the triple $(G, K, V)$:

\vskip2mm

\noindent {\bf Condition (T).} {\it There exists a finite subset $S \subset V$ such that $\vert \mathrm{cl}(G, K, V \setminus S) \vert = 1.$}

\vskip2mm

While one does not expect Condition (T) to hold for an arbitrary reductive algebraic group $G$ over a general finitely generated field $K$ and a divisorial set $V$, it is likely to be true for all $G$ in certain important situations, including when

\vskip2mm

\noindent $\bullet$ \parbox[t]{16cm}{$K$ is a {\it 2-dimensional global field} (i.e. the function field of a smooth geometrically integral curve over a number field or the function field of a smooth geometrically integral surface over a finite field -- see \cite{CRR-Spinor} and \cite{Kato}) and $V$ is a divisorial set of places; and}

\vskip2mm

\noindent $\bullet$ \parbox[t]{16cm}{$K = k(C)$, the function field of a smooth geometrically integral curve $C$ over a finitely generated field $k$ and $V$ is the set of places of $K$ associated with the closed points of $C$.}

\vskip2mm

\noindent In fact, one can expect the following weaker property to be true even more generally: for a reductive group $G \subset \mathrm{GL}_n$ over a finitely generated field $K$ and a divisorial set of places $V$, there exists a finite subset $S \subset V$ such that
$$
G(\mathbb{A}(K , V \setminus S)) \bigcap \left( \mathrm{GL}_n(\mathbb{A}^{\infty}(K , V \setminus S)) \cdot \mathrm{GL}_n(K) \right) = G(\mathbb{A}^{\infty}(K , V \setminus S)) \cdot G(K).
$$
%On the other hand, with $K$ and $V$ as above, if the connected component of a linear algebraic $K$-group $G$ is a torus, condition (T) always holds for $(G, K, V)$.

Finally, we would like to observe that although at this point no direct connections between Conjectures 5.7 and 6.1 and Condition (T) have been established in the general case, ideas involving Condition (T) were used in a very essential way in the proof of Conjecture 6.1 for tori --- see \S\ref{S-ToriResults} for a brief discussion and \cite{RR-Tori} for complete details. Moreover, as we pointed out in \cite{CRR-Israel}, Condition (T) can also be used in the analysis of some finiteness questions for unramified cohomology in degree 3 and hence the genus problem for the groups of type $\textsf{G}_2$ (we will touch upon some aspects of these issues in \S\ref{S-ResultsMain} and \S\ref{S-GenusConj} below).

\section{Results}\label{S-Results}

In this section, we will give an overview of the currently available results on Conjectures 5.7 and 6.1 as well as Condition (T). We begin with the case of algebraic tori, where all conjectures were recently resolved in \cite{RR-Tori}. We then move on to absolutely almost simple algebraic groups, where there has been notable progress (see, in particular, \cite{CRR4} and \cite{CRR-Spinor}), but much work still remains to be done.

%In this section, we give an account of the currently available results on Conjectures ... focusing primarily on the Main Conjecture .. . We begin with the case of algebraic tori where all conjectured were recently resolved in [.], and then moving on to the absolutely almost simple groups where some progress has been
%made (see, in particular, \cite{CRR4}, \cite{CRR-Spinor}) but a lot of work remains to be done.

\vskip2mm

\subsection{Algebraic tori.}\label{S-ToriResults} First, we have the following finiteness result for tori with good reduction, which completely settles Conjecture 5.7 in this case.

\begin{thm}\label{T:Res1}{\rm (\cite[Theorem 1.1]{RR-Tori})}
Let $K$ be a finitely generated field of characteristic zero and $V$ be a divisorial set of places of $K$. Then for any integer $d \geq 1$, the set of $K$-isomorphism classes of $d$-dimensional $K$-tori that have good reduction at all $v \in V$ is finite.
\end{thm}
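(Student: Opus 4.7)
The plan is to classify $d$-dimensional $K$-tori by their Galois-lattice structure and reduce the theorem to a Hermite--Minkowski-type finiteness statement for extensions of $K$ unramified at all $v \in V$. As recalled in Example 4.4, a $d$-dimensional $K$-torus $T'$ is determined, up to $K$-isomorphism, by the conjugacy class of a continuous homomorphism $\rho_{T'} \colon \Ga(\Ks/K) \to \mathrm{GL}_d(\Z)$ whose image is $\Ga(L/K)$, where $L$ is the minimal splitting field of $T'$. Thus the classification is encoded by a pair $(L,\bar\rho)$ consisting of a finite Galois extension $L/K$ and a faithful $d$-dimensional integral representation $\bar\rho$ of $\Ga(L/K)$, taken up to conjugacy in $\mathrm{GL}_d(\Z)$.

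First I would bound the data coming from $\bar\rho$. By Minkowski's theorem on finite subgroups of $\mathrm{GL}_d(\Z)$, the order of any such $\Ga(L/K)$ is bounded by an explicit constant $M(d)$, so $[L:K]\leq M(d)$. The Jordan--Zassenhaus theorem then guarantees that, once the (finite) abstract group $\Ga(L/K)$ is fixed, there are only finitely many conjugacy classes of faithful integral representations of it of dimension $d$. Consequently, the theorem reduces to showing that only finitely many isomorphism classes of Galois extensions $L/K$ of degree at most $M(d)$ can arise as minimal splitting fields of $d$-dimensional $K$-tori having good reduction at all $v \in V$.

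The next ingredient is the classical criterion that a $K$-torus $T'$ has good reduction at a discrete valuation $v$ if and only if its minimal splitting field $L$ is unramified at $v$: indeed, on the level of \'etale $\mathcal{O}_v$-group schemes a torus is classified by a finite continuous action of $\Ga(K_v^{\mathrm{sep}}/K_v)$ on its cocharacter lattice, and such an action extends to the strict henselisation if and only if the inertia group at $v$ acts trivially, i.e. $L/K$ is unramified at $v$. With this criterion, it suffices to prove the following Hermite--Minkowski-type statement: for every integer $N \geq 1$, the field $K$ admits only finitely many isomorphism classes of finite separable extensions of degree $\leq N$ that are unramified at every $v \in V$.

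The main obstacle is precisely this last step, namely the analogue of the Hermite--Minkowski theorem for a finitely generated field $K$ of characteristic zero and a divisorial set $V$. I would attack it by passing to a suitable regular model $\fX$ of $K$ with $V(\fX) \subset V$ (which, by the commensurability discussion of \S\ref{S-ConjFGfields}, is harmless up to discarding finitely many places of $V$), and reinterpret the required finiteness as the assertion that the \'etale fundamental group $\pi_1^{\mathrm{\acute{e}t}}(\fX)$ has only finitely many open subgroups of index at most $N$. In characteristic zero, this can be established by induction on the transcendence degree of $K$ over $\Q$: choose a smooth morphism $\fX \to \fY$ with geometrically connected fibres of positive relative dimension, so that $\dim \fY < \dim \fX$, and combine the inductive hypothesis for $\fY$ with the topological finite generation of the \'etale fundamental group of the smooth geometric fibres via Grothendieck's homotopy exact sequence; the base case is the classical Hermite--Minkowski theorem for a number field. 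Once this finiteness of unramified extensions is in hand, combining it with the Minkowski--Jordan--Zassenhaus input yields only finitely many admissible pairs $(L,\bar\rho)$, and hence only finitely many $K$-isomorphism classes of $d$-dimensional $K$-tori having good reduction at all $v \in V$, as required.
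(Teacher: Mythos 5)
Your proposal follows essentially the same route as the paper: classify $d$-dimensional tori by the pair (minimal splitting field $L$, faithful integral representation of $\mathrm{Gal}(L/K)$), bound $[L:K]$ via Minkowski, invoke Jordan--Zassenhaus for the finiteness of integral representations, observe that good reduction forces $L/K$ to be unramified at all $v\in V$, and conclude from the finiteness of unramified extensions of bounded degree. The only difference is that where the paper simply cites the smallness of the fundamental group of a model (Harada--Hiranouchi) for this last input, you sketch a proof of it by fibering over a lower-dimensional base and inducting from the classical Hermite--Minkowski theorem, which is in the spirit of how that cited result is actually established (and of the paper's own argument for Theorem 7.4).
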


As we discussed in Examples 4.2(a) and 4.4, all $d$-dimensional $K$-tori are $K$-forms of the $d$-dimensional $K$-split torus $(\mathbb{G}_m)^d$, and their $K$-isomorphism classes are classified by the minimal splitting field $L$ and the equivalence class of a faithful representation $\mathrm{Gal}(L/K) \to \mathrm{GL}_d(\mathbb{Z})$. Furthermore, it is a general fact (see \cite[Ch. 4, \S4.4, Theorem 4.3]{Pl-R}) that a given finite group has only finitely many equivalence classes of integral representations in each dimension. So, it is enough to prove that there are only finitely many possibilities for the extension $L/K$. The key observations here are, first, that the degree $[L : K]$ is bounded by a constant depending only on $d$ (as are the orders of finite subgroups of $\mathrm{GL}_d(\mathbb{Z})$) and, second, that this extension is unramified at all $v \in V$.  Let $X$ be a model of $K$ used to define $V$. Then the fact that there are only finitely many possible extensions $L/K$ is derived from the result that the fundamental group of $X$ is ``small," i.e. satisfies Serre's condition (F) (see \cite{HH}), which can
be viewed as a higher-dimensional analogue of the Hermite-Minkowski theorem that we have already mentioned several times in our previous discussion.
% already mentioned in ... .

It should be mentioned that Theorem \ref{T:Res1} is no longer true in positive characteristic. Indeed, an infinite family of pairwise nonisomorphic $K$-tori with good reduction can be constructed using Artin-Schreier extensions over the global field $K = \mathbb{F}_p(t)$ (where $\mathbb{F}_p$ is the field with $p$ elements), with $V$ being the set of discrete valuations corresponding to all monic irreducible polynomials $f(t) \in \mathbb{F}_p[t]$ (as in Example 3.6(b)).

Now, let $G$ be an absolutely almost simple simply connected $K$-group, and let $L/K$ be the minimal Galois extension over which  $G$ becomes an inner form
of the split group (see the discussion in Example 4.5). If $G$ has good reduction at a discrete valuation $v$ of $K$, then $L/K$ is unramified at $v$. Replacing the use of the
Hermite-Minkowski theorem in the proof of Proposition \ref{P:XXX1} with its
higher-dimensional analogue, we see that in the case where $K$  is a
finitely generate field of characteristic zero and $V$ is a divisorial
set of places, there are only finitely many continuous homomorphisms
$$
\omega \colon \mathrm{Gal}(K^{\mathrm{sep}}/K) \to S
$$
to the symmetry group $S$ of the Dynkin diagram of $G$ such that
$L(\omega) = (K^{\mathrm{sep}})^{\ker \omega}$ is unramified at all $v
\in V$. Let $\omega_1, \ldots , \omega_r$ be representatives of the
conjugacy classes of such homomorphisms, and let $G_1, \ldots , G_r$
be the corresponding $K$-quasi-split forms of $G$.
Then any $K$-form of $G$ that has good reduction at all $v \in V$ is an {\it inner} form of one of the $G_i$. This implies that it is enough to prove Conjecture 5.7 for inner forms of all quasi-split $K$-groups. In fact, this conclusion remains valid over finitely generated fields of positive characteristic $p > 3$, but some care needs to be exercised in characteristic 2 and 3, even as far as the formulation of Conjecture 5.7 is concerned --- cf. \cite[Remark 2.6]{RR-Tori} for further details.

Finally, if $G$ is a non-semi-simple reductive group over a finitely generated field $K$, then applying Theorem \ref{T:Res1} to the maximal central torus, we see that it is enough to prove Conjecture 5.7 for the derived group of $G$ (cf. \cite[Ch. IV, \S 14.2, Proposition]{BorelAG} for the relevant structural information), which reduces the conjecture to the semi-simple case. In turn, the semi-simple case can essentially be reduced to the case of absolutely almost simple simply connected groups, which we will consider in the next subsection. Now, however, we turn our attention to the finiteness of the Tate-Shafarevich group of an algebraic torus.

\begin{thm}\label{T:Res2}{\rm (\cite[Theorem 1.2]{RR-Tori})}
Let $K$ be a finitely generated field and $V$ be a divisorial set of places of $K$. Then for any algebraic $K$-torus $T$, the Tate-Shafarevich group
$$
\text{{\brus SH}}^1(T , V) = \ker \left( H^1(K , T) \to \prod_{v \in V} H^1(K_v , T) \right)
$$
is finite.
\end{thm}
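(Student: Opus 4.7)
The plan is to combine the standard splitting-field reduction with the étale cohomology of an integral model, pushing the finiteness back to finite generation of units and Picard groups of finitely generated rings.

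First, I would reduce to finite Galois cohomology. Let $L/K$ be a (necessarily finite) Galois extension splitting $T$, and set $\Gamma = \mathrm{Gal}(L/K)$. Over $L$ the torus becomes $\mathbb{G}_m^d$, so Hilbert~90 gives $H^1(L,T) = 0$, and the inflation--restriction sequence yields
$$
H^1(K, T) \;\cong\; H^1(\Gamma, T(L)).
$$
Since $\Gamma$ is finite of order $n = [L:K]$, the group $H^1(K,T)$, and hence $\text{{\brus SH}}^1(T,V)$, is annihilated by $n$. Finiteness therefore reduces to proving that $\text{{\brus SH}}^1(T,V)$ is finitely generated as an abelian group.

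Second, I would realize $\text{{\brus SH}}^1(T,V)$ as (a controlled subquotient of) an étale cohomology group of an integral model. Since $V$ is divisorial, one may shrink $V$ by a finite set (changing $\text{{\brus SH}}^1$ only up to a finite error, because at each $v$ removed the kernel of $H^1(K,T) \to H^1(K_v,T)$ is itself finite of exponent dividing $n$) and choose an affine model $X = \mathrm{Spec}\, R$ of $K$ with $V = V(X)$, an $X$-torus $\mathscr{T}$ extending $T$, and a finite étale Galois cover $Y \to X$ with group $\Gamma$ splitting $\mathscr{T}$. Every class $\xi \in \text{{\brus SH}}^1(T,V)$ is then unramified at each $v \in V$ (the local triviality implies it lifts to $H^1_{\mathrm{\acute{e}t}}(\mathcal{O}_v, \mathscr{T})$), so a purity/patching argument (verified one divisorial valuation at a time) identifies $\text{{\brus SH}}^1(T,V)$, up to finite kernel and cokernel, with a subquotient of $H^1_{\mathrm{\acute{e}t}}(X, \mathscr{T})$.

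Third, I would bound $H^1_{\mathrm{\acute{e}t}}(X, \mathscr{T})$ by Hochschild--Serre for the cover $Y \to X$. On $Y$, the torus $\mathscr{T}_Y$ is split, so the low-degree terms of the spectral sequence are built out of the $\Gamma$-cohomology of $\mathcal{O}(Y)^\times$ and of $\mathrm{Pic}(Y)$. Both of these are finitely generated abelian groups: the first by the generalized Dirichlet unit theorem for normal schemes of finite type over $\mathbb{Z}$ or $\mathbb{F}_p$, and the second by the finite generation of $\mathrm{Pic}$ cited in the excerpt (\cite{Kahn}). Since $\Gamma$ is finite, all the $H^p(\Gamma,-)$-terms that contribute are finitely generated, so $H^1_{\mathrm{\acute{e}t}}(X, \mathscr{T})$ is finitely generated. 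Combined with the bounded exponent from Step~1, this gives the desired finiteness.

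The main obstacle is Step~2, the passage from the Galois-cohomological Tate--Shafarevich group (phrased in terms of the completions $K_v$) to an étale cohomology group of the model $X$. Over a number field this identification is essentially the classical equivalence between ``everywhere locally trivial'' and ``unramified,'' but in the higher-dimensional setting one must handle the arithmetic and geometric divisorial valuations on $X$ (see Example~5.6) uniformly, and reconcile completions with Henselizations. This is precisely the point at which Condition~(T), already flagged in the excerpt as essential to the proof of Conjecture~6.1 for tori in \cite{RR-Tori}, must be brought to bear in order to bridge the adelic and scheme-theoretic descriptions.
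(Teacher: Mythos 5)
Your overall architecture --- splitting-field reduction to bounded exponent, then finite generation of a global cohomology group assembled from $\mathcal{O}(Y)^{\times}$ and $\mathrm{Pic}(Y)$ --- is sound, and Steps 1 and 3 are correct: the five-term exact sequence of Hochschild--Serre exhibits $H^1_{\mathrm{\acute{e}t}}(X,\mathscr{T})$ as an extension of a subgroup of $\mathrm{Pic}(Y)^d$ by the finite group $H^1(\Gamma,\mathscr{T}(Y))$, so finite generation does follow. Be aware, though, that this is not the route of \cite{RR-Tori}: that paper gives two proofs, one reducing to unramified cohomology with \emph{finite} coefficients (whence a characteristic restriction), the other purely adelic via Condition (T); yours is a third, \'etale-cohomological route that shares with the second proof exactly its two arithmetic inputs (finite generation of units and of Picard groups of arithmetic schemes). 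Also, your justification for shrinking $V$ is both unnecessary and incorrect: for $V'\subseteq V$ one has $\text{{\brus SH}}^1(T,V)\subseteq \text{{\brus SH}}^1(T,V')$, so it suffices to prove finiteness for the smaller divisorial set with no error term at all, whereas the kernel of $H^1(K,T)\to H^1(K_v,T)$ for a \emph{single} $v$ need not be finite (for a norm torus it is the group of local norms at $v$ modulo global norms, which is infinite in general).

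The genuine gap is Step 2, which you flag as the main obstacle but neither prove nor correctly diagnose. Two separate statements are needed. First, for each $v$ one must pass from triviality of $\xi$ over the completion $K_v$ to the assertion that $\xi$, as a class over $K$, extends to a $\mathscr{T}$-torsor over the local ring $\mathcal{O}_{X,x}$ at the corresponding codimension-one point; this is a Beauville--Laszlo/formal glueing statement for torsors under the affine flat group scheme $\mathscr{T}$ relative to the square formed by $\mathcal{O}_{X,x}$, $K$, $\mathcal{O}_v$, and $K_v$. Second, one needs purity: a class in $H^1(K,T)$ extending over every codimension-one local ring of a \emph{regular} model $X$ must lie in the image of $H^1_{\mathrm{\acute{e}t}}(X,\mathscr{T})$. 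This cannot be ``verified one divisorial valuation at a time'': gluing the local extensions only yields a torsor over an open $U\subseteq X$ whose complement has codimension $\geq 2$, and extending from $U$ to $X$ requires regularity of $X$ and of the splitting cover $Y$ together with a Zariski--Nagata-type argument (e.g., Colliot-Th\'el\`ene--Sansuc purity for torsors under tori, or a comparison of the Hochschild--Serre sequences for $U$ and $X$ using $\mathcal{O}(Y_U)^{\times}=\mathcal{O}(Y)^{\times}$ and $\mathrm{Pic}(Y_U)=\mathrm{Pic}(Y)$). Condition (T) is not the missing ingredient: it concerns the class set $G(\mathbb{A}^{\infty}(K,V))\backslash G(\mathbb{A}(K,V))/G(K)$ and enters the adelic proof of this theorem at an entirely different point (to trivialize, up to finitely many choices, a cocycle that becomes a coboundary in every $T(L_w)$ by an integral adele), and that proof dispenses with \'etale cohomology of the model altogether; it yields no purity statement. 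With the two statements above supplied your argument closes; as written, it does not.
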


As we already indicated in \S\ref{S-OtherConj}, this result can be interpreted in a variety of concrete situations. For example,
let $L$ be a finite separable extension of a finitely generated field $K$ with a divisorial set of places $V$. Let
$$
\mathcal{N} := \{ a \in K^{\times} \ \vert \ a \in N_{L \otimes_K K_v/K_v}((L \otimes_K K_v)^{\times}) \ \ \text{for all} \ \ v \in V \}
$$
be the group of ``local norms." Then Theorem \ref{T:Res2} applied to the norm torus $T = \mathrm{R}^{(1)}_{L/K}(\mathbb{G}_m)$
%associated with the group of norm 1 elements
implies that the quotient $\mathcal{N}/ N_{L/K}(L^{\times})$ of $\mathcal{N}$ modulo the subgroup of ``global norms" is  always finite.
%norm subgroup (``global norms") is finite.

It should be noted that the classical proof of the finiteness of $\text{{\brus SH}}^1(T , V)$ when $K$ is a number field and $V$ is set of {\it all} places of $K$ (including the archimedean ones) relies on Tate-Nakayama duality (see \cite[Ch. 4, \S11.3, Theorem 6]{Voskr} and the subsequent discussion), which is not available in the general situation. So, in \cite{RR-Tori}, we gave two different proofs of Theorem \ref{T:Res2}. The first one requires an additional assumption on the characteristic of $K$, but it develops an approach that is applicable in some situations involving fields that are not finitely generated. The second one systematically uses adelic techniques in the context of arbitrary finitely generated fields and their divisorial sets of valuations, which, to the best of our knowledge, have not been previously employed in this generality. More specifically, the argument relies on the validity of Condition (T) in the present case --- see Theorem \ref{T:Res3} below.
Our second proof demonstrates, in particular, that in the classical situation where $K$ is a number field, the finiteness of the Tate-Shafarevich group can be established without Tate-Nakayama duality, and is actually a direct consequence of two basic facts: the finite generation of the group of $S$-units and the finiteness of the class number. Moreover, this argument applies to cohomology groups in all degrees and yields the following result: {\it Let $K$ be a finitely generated field, $V$ be a divisorial set of places, and $F/K$ be a finite separable extension. Then for any $i \geq 1$, the group
$$
\text{{\brus SH}}^i(F/K, T, V) := \ker\left(H^i(F/K , T) \longrightarrow \prod_{v \in V} H^i(F_w/K_v , T) \right)
$$
is finite (here for each $v \in V$, we pick one extension $w$ to $F$).}

\vskip2mm

The following result verifies Condition (T) for groups more general than tori.

\begin{thm}\label{T:Res3}{\rm (\cite[Theorem 3.4]{RR-Tori})}
Let $K$ be a finitely generated field and $V$ be a divisorial set of places of $K$. Then any linear algebraic $K$-group $G$ whose connected component
is a torus satisfies Condition {\rm (T)}.
\end{thm}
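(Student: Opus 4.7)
The plan is to reduce the problem first to the case of a connected torus, and then to attack the torus case by combining a quasi-trivial resolution with the Tate--Shafarevich finiteness of Theorem \ref{T:Res2} and the finite generation of Picard groups of divisorial rings (Example 6.2).

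For the initial reduction, write $T = G^0$ for the identity component (a torus by hypothesis) and $F = G/T$ for the finite \'etale component group, fitting into an exact sequence $1 \to T \to G \to F \to 1$. The corresponding adelic exact sequence relates $\mathrm{cl}(G, K, V)$ to $\mathrm{cl}(T, K, V)$ with error terms involving $H^1(K, F)$ and the local groups $H^1(K_v, F)$. Since $F$ is finite, all of these terms are controlled by the higher-dimensional Hermite--Minkowski theorem (smallness of the \'etale fundamental group of a model of $K$, as invoked in the proof of Theorem \ref{T:Res1}); the relevant classes have finite support, so absorbing them into a finite exceptional set of places reduces the problem to the case $G = T$.

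For a torus $T$, choose a quasi-trivial resolution
\[
1 \to T \to P \to Q \to 1,
\]
where $P = \prod_i \mathrm{R}_{L_i/K}(\mathbb{G}_m)$ for finite separable extensions $L_i/K$; such a $P$ exists because every character lattice embeds in a permutation lattice. By Shapiro's lemma, $\mathrm{cl}(P, K, V) \simeq \prod_i \Pic(L_i, V_{L_i})$, where $V_{L_i}$ is the (again divisorial) set of extensions of $V$ to $L_i$. By Example 6.2, each $\Pic(L_i, V_{L_i})$ is finitely generated, and since any generator is a divisor with finite support, a finite enlargement $S_0 \subset V$ containing the supports of chosen generators trivializes $\mathrm{cl}(P, K, V \setminus S_0)$. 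Chasing the long exact sequence of adelic-quotient abelian groups associated with the resolution, one finds that $\mathrm{cl}(T, K, V \setminus S_0)$ is controlled by the trivialized $\mathrm{cl}(P, K, V \setminus S_0)$, by $\mathrm{cl}(Q, K, V \setminus S_0)$, by the Tate--Shafarevich group $\Sha^1(T, V \setminus S_0)$ (finite by Theorem \ref{T:Res2}), and by finitely many local terms $H^1(K_v, T)$ at the places where local integrality of the resolution fails. Each of these inputs is finite or finitely generated, so enlarging $S_0$ once more to absorb the supports of their contributions yields a finite $S \subset V$ with $\mathrm{cl}(T, K, V \setminus S) = 1$.

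The main obstacle will be making the treatment of the cokernel torus $Q$ rigorous. The group $Q$ is not in any obvious sense ``simpler'' than $T$, so naive induction on dimension fails. The natural device is to choose the resolution to be \emph{flasque}, with $P$ quasi-trivial and $Q$ flasque; the flasqueness of $Q$ yields strong vanishing of local $H^1$ and, combined with the Picard-group mechanism above, gives the required control on $\mathrm{cl}(Q, K, V \setminus S_0)$. A second, smaller technical point is to ensure that the finite supports of all the auxiliary divisors and cocycles appearing in the diagram chase come from a single finite subset of $V$; this is guaranteed by condition (A) for divisorial sets from \S\ref{S-ConjFGfields}.
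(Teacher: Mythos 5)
The survey does not reproduce the proof of this theorem (it is only cited from \cite{RR-Tori}), so your argument has to stand on its own. Its skeleton for the connected case --- resolve $T$ by a quasi-trivial torus $P$, identify $\mathrm{cl}(P,K,V)$ with a product of finitely generated Picard groups as in Example 6.2, and invoke the observation from \S\ref{S-OtherConj} that a finitely generated class group forces Condition (T) --- is sound. However, two of your steps do not work as stated.

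First, the reduction from $G$ to $T=G^0$. In $1\to T\to G\to F\to 1$ the connecting maps go \emph{from} $F$ \emph{into} $H^1(\cdot\,,T)$, so the failure of $G(\mathbb{A})\to F(\mathbb{A})$ and $G(K)\to F(K)$ to be surjective is measured by $H^1(K_v,T)$ and $H^1(K,T)$, not by cohomology of $F$; and in any case $H^1(K,F)$ is typically infinite for finitely generated $K$ (take $F=\mathbb{Z}/2\mathbb{Z}$, which gives $K^{\times}/(K^{\times})^2$), while the higher-dimensional Hermite--Minkowski theorem controls only the \emph{unramified} part. So ``absorbing the relevant classes into a finite exceptional set of places'' is not available in the form you describe. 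What does make the reduction work is that, for almost all $v$, the images of $G(\mathcal{O}_v)$ and of $G(K_v)$ in $F(K_v)=F(\mathcal{O}_v)$ coincide: the obstruction to lifting over $\mathcal{O}_v$ a point of $F$ that lifts over $K_v$ lies in the kernel of $H^1(\mathcal{O}_v,\mathscr{T})\to H^1(K_v,T)$, which vanishes by the Grothendieck--Serre injectivity for tori over a discrete valuation ring. This lets one factor any adele of $G$ over a suitable $V\setminus S$ as an element of $G(\mathbb{A}^{\infty})$ times an adele of $T$, and only then does Condition (T) for $T$ yield Condition (T) for $G$.

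Second, the torus case. Quoting Theorem \ref{T:Res2} here is circular relative to the logical order described in \S\ref{S-ToriResults}: the unconditional proof of that theorem in \cite{RR-Tori} itself relies on Condition (T), i.e., on the statement you are proving (the alternative proof needs a hypothesis on the characteristic). Fortunately, neither the Tate--Shafarevich group nor $\mathrm{cl}(Q)$ nor the flasqueness of $Q$ is needed, and there is no literal ``long exact sequence of adelic-quotient groups'' to chase. Doing the chase by hand, the kernel of $\mathrm{cl}(T,K,V)\to\mathrm{cl}(P,K,V)$ is a subquotient of the group of $V$-units $Q(K)\cap Q(\mathbb{A}^{\infty}(K,V))$ of the \emph{quotient} torus: if $t=p_{\infty}p_K\in T(\mathbb{A})$ with $p_{\infty}\in P(\mathbb{A}^{\infty})$ and $p_K\in P(K)$, then the image of $p_K$ in $Q(K)$ is everywhere integral, and this unit determines the class of $t$ modulo the image of the $V$-units of $P$ (up to the finitely many places where the integral models are incompatible, which go into $S$). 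The $V$-unit group of any torus over a finitely generated field is finitely generated --- diagonalize over the splitting field and apply Roquette's theorem on unit groups of finitely generated domains, the other ``basic fact'' alluded to after Theorem \ref{T:Res2}. Hence $\mathrm{cl}(T,K,V)$ is itself finitely generated and Condition (T) follows. With these two repairs your outline becomes a correct proof.
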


\vskip3mm

We would now like to mention analogues of Theorems \ref{T:Res1} and \ref{T:Res2} for function fields of curves over base fields of type (F) (see \S\ref{S-FConjCurves} for the definition). To fix notations, suppose $k$ is a field of type (F) and having characteristic 0. Let $K = k(C)$ be the function field of a smooth geometrically integral curve $C$ over $k$, and let $V$ be the set of discrete valuations of $K$ associated with the closed points of $C$. We then have the following statement concerning tori with good reduction.

\begin{thm}\label{T:Res4}
With notations as above, for each $d \geq 1$, there are finitely many $K$-isomorphism classes of $d$-dimensional $K$-tori that have good reduction at all $v \in V$.
\end{thm}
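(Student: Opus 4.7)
The plan is to mimic the strategy used for Theorem \ref{T:Res1}, substituting a higher-dimensional Hermite-Minkowski input tailored to curves over fields of type (F). As recalled in Examples 4.2(a) and 4.4, every $d$-dimensional $K$-torus $T$ is a $K$-form of the split torus $(\mathbb{G}_m)^d$, and its $K$-isomorphism class is determined by the pair consisting of (i) the minimal splitting field $L/K$ of $T$, a finite Galois extension, and (ii) the equivalence class of the faithful integral representation $\mathrm{Gal}(L/K) \to \mathrm{GL}_d(\mathbb{Z})$ arising from the action on the character lattice. Minkowski's bound forces the order of every finite subgroup of $\mathrm{GL}_d(\mathbb{Z})$ to be at most some constant $N = N(d)$, and a given finite group has only finitely many integral representations of fixed dimension up to equivalence (\cite[Ch.~4, \S4.4, Theorem 4.3]{Pl-R}). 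The theorem therefore reduces to showing that there are only finitely many finite Galois extensions $L/K$ of degree $\leq N$ that are unramified at every $v \in V$.

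Next, I would observe that good reduction of $T$ at $v$ forces its minimal splitting field to be unramified at $v$: by the structure theory of reductive $\mathcal{O}_v$-group schemes, an $\mathcal{O}_v$-torus is split by a finite étale $\mathcal{O}_v$-algebra, so the character lattice is trivialized already over the maximal unramified extension of $K_v$. Given this, the remaining problem is purely topological: finite Galois extensions of $K = k(C)$ that are unramified at every closed point of $C$ correspond bijectively to connected finite Galois étale covers of $C$, hence to open normal subgroups of finite index in the étale fundamental group $\pi_1(C)$ (with respect to some chosen geometric base point). Consequently, the theorem follows once one verifies that $\pi_1(C)$ satisfies Serre's condition (F) from \S\ref{S-FConjCurves}, which plays the role of the higher-dimensional Hermite-Minkowski in this setting.

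To establish (F) for $\pi_1(C)$, I would exploit the homotopy exact sequence
$$
1 \to \pi_1(C_{\bar{k}}) \to \pi_1(C) \to \mathrm{Gal}(\bar{k}/k) \to 1,
$$
valid because $C$ is geometrically integral. By hypothesis the quotient $\mathrm{Gal}(\bar{k}/k)$ satisfies (F). Since $\mathrm{char}\: k = 0$, the Riemann existence theorem (combined with the SGA~1 comparison between topological and étale fundamental groups) identifies $\pi_1(C_{\bar{k}})$ with the profinite completion of a finitely generated discrete group -- free of finite rank when $C$ is affine, a closed surface group when $C$ is projective. In particular $\pi_1(C_{\bar{k}})$ is topologically finitely generated, hence automatically satisfies (F). A direct counting argument -- bounding open subgroups of $\pi_1(C)$ of index $\leq m$ via their intersections with the geometric $\pi_1$ (an open subgroup of bounded index there) and their images in $\mathrm{Gal}(\bar{k}/k)$ (again of bounded index) -- then shows that (F) is stable under extensions of profinite groups, so $\pi_1(C)$ inherits (F), completing the reduction.

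The main point requiring genuine care is precisely this preservation of (F) under extensions: unlike topological finite generation, which transmits through extensions in an obvious way, condition (F) is phrased in terms of finite-index subgroups and its stability involves some finite-group combinatorics (one must control the finitely many ways to splice a bounded-index quotient of $\pi_1(C_{\bar{k}})$ with one of $\mathrm{Gal}(\bar{k}/k)$ into a single bounded-index quotient of $\pi_1(C)$). The characteristic zero hypothesis is likewise essential: in positive characteristic Artin-Schreier covers render $\pi_1(C_{\bar{k}})$ of unbounded rank, producing infinitely many non-isomorphic tori with good reduction, in parallel with the counterexample to Theorem \ref{T:Res1} noted by the authors. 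Everything else -- Minkowski's bound, the good-reduction-implies-unramified implication for tori, and the étale $\pi_1$ dictionary for covers of curves -- is classical and should go through without incident.
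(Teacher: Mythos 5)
Your proposal is correct and follows essentially the same route as the paper's own (sketched) proof: reduce via the splitting-field/integral-representation classification and Minkowski's bound to the finiteness of bounded-degree everywhere-unramified extensions $L/K$, and then deduce this from the smallness of $\pi_1(C)$ via the homotopy exact sequence, using that $\mathrm{Gal}(\bar{k}/k)$ is small by hypothesis and that $\pi_1(\overline{C},\bar{x})$ is topologically finitely generated hence small. The only cosmetic difference is that you propose to verify the stability of condition (F) under extensions by a direct counting argument, whereas the paper cites \cite[Lemma 2.7]{HH} for this step.
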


The proof of this (unpublished) result proceeds essentially along the same lines as that of Theorem \ref{T:Res1} sketched above, with the key input being the fact that the \'etale fundamental group of $C$ is small. For this, we fix an algebraic closure $\bar{k}$ of $k$, set $\overline{C} = C \times_{{\rm Spec}(k)} {\rm Spec}(\bar{k})$, and let $\bar{x}$ be the corresponding geometric point of $\overline{C}.$ We then have the following standard exact sequence of profinite groups
$$
1 \to \pi_1(\overline{C}, \bar{x}) \to \pi_1(C, \bar{x}) \to \Ga(\bar{k}/k) \to 1
$$
induced by the natural maps $\overline{C} \to C$ and $C \to {\rm Spec}(k).$ By our assumption, $\Ga(\bar{k}/k)$ is small. Furthermore, it is well-known that $\pi_1(\overline{C}, \bar{x})$ is topologically finitely generated (see, for example, \cite[Theorem 4.6.7]{SzamuelyFG}), and hence is small by \cite[Ch. III, \S4, Proposition 9]{Serre-GC}. Applying \cite[Lemma 2.7]{HH}, we conclude that $\pi_1(C, \bar{x})$ is small, as required.

Next, the first proof of Theorem \ref{T:Res2} alluded to above, together with the finiteness results for the unramified cohomology of tori obtained by the second author in \cite[\S5]{IRap}, yield the following.

%The first proof of Theorem \ref{T:Res2} combined with the finiteness results on unramified cohomology in [.] yields the finiteness of the Tate-Shafarevich group for any torus in the situation at hand (in fact, this directly follows from Theorem 1.4 in [.]).

\begin{thm}\label{T:Res5}
With notations as above, for any $K$-torus $T$, the Tate-Shafarevich group $\text{{\brus SH}}^1(T , V)$ is finite.
\end{thm}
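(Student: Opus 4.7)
The plan is to transport the first proof of Theorem~\ref{T:Res2} from the finitely generated setting to the present function-field setting, substituting for its main unramified-cohomology input the finiteness results for unramified cohomology of tori obtained by the second author in \cite[\S5]{IRap}. The overall strategy is: first reduce via Hilbert's Theorem 90 and inflation-restriction to cohomology of a finite Galois group; then excise the finitely many ``bad'' places; and finally embed the resulting Tate-Shafarevich group into an unramified cohomology group whose finiteness is known.

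Concretely, first I would choose a finite Galois extension $L/K$ over which $T$ splits, and set $G = \mathrm{Gal}(L/K)$. Since $T \times_K L \simeq (\mathbb{G}_m)^d$, Hilbert's Theorem 90 yields $H^1(L, T) = 0$ and $H^1(L_w, T) = 0$ for every extension $w$ of $v \in V$ to $L$, so inflation-restriction identifies
$$
\text{{\brus SH}}^1(T, V) \;\simeq\; \text{{\brus SH}}^1(L/K, T, V) \;\subset\; H^1(G, T(L)).
$$
Next, let $V_0 \subset V$ be the finite set of places at which either $L/K$ is ramified or $T$ has bad reduction, and set $V' = V \setminus V_0$. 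Because $\text{{\brus SH}}^1(T, V) \subset \text{{\brus SH}}^1(T, V')$, it suffices to prove that the latter is finite.

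For each $v \in V'$, the torus $T$ extends to an $\mathcal{O}_v$-torus scheme $\mathscr{T}$, and one has the canonical unramified subgroup $H^1_{\mathrm{ur}}(K_v, T) = H^1_{\mathrm{\acute{e}t}}(\mathcal{O}_v, \mathscr{T}) \subset H^1(K_v, T)$, which of course contains the trivial class. Hence every locally trivial class is \emph{a fortiori} unramified at each $v \in V'$, giving the inclusion
$$
\text{{\brus SH}}^1(T, V') \;\subset\; H^1_{\mathrm{ur}}(K, T; V') \;:=\; \bigcap_{v \in V'} \ker\bigl( H^1(K, T) \longrightarrow H^1(K_v, T)/H^1_{\mathrm{ur}}(K_v, T) \bigr).
$$
The theorem will then follow from the finiteness of the right-hand side, which is precisely what is proved in \cite[\S5]{IRap} under the hypotheses that $k$ is of type (F) and $K = k(C)$. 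The main obstacle, and the substance of the proof, therefore lies not in the reductions above but in importing the result from \cite{IRap}: one must control all possible splitting fields $L/K$ simultaneously via residue sequences, using the smallness of $\pi_1^{\mathrm{\acute{e}t}}$ of $C$ (as exploited in the proof of Theorem~\ref{T:Res4}) together with the type-(F) hypothesis on $k$ to keep the count of contributing classes finite.
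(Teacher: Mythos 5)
Your proposal is correct and follows essentially the same route the paper indicates: the paper's own justification is precisely to run the first (unramified-cohomology-based) proof of Theorem \ref{T:Res2} in this setting and to plug in the finiteness of the unramified $H^1$ of tori from \cite[\S5]{IRap}, which is exactly your reduction via Hilbert 90, excision of the finitely many bad places, and the inclusion of the Tate--Shafarevich group into the unramified cohomology group. Your fleshed-out intermediate steps (inflation-restriction and the observation that locally trivial classes are automatically unramified at places of good reduction) are the standard ones implicit in that argument.
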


\vskip3mm

\subsection{Absolutely almost simple groups: Conjectures 5.5 and 5.7.}\label{S-ResultsMain} We begin with the results that have been obtained so far on the Main Conjecture 5.7 for absolutely almost simple groups.

%In this section, we discuss results that have been obtained so far on Conjecture 5.7 (our Main Conjecture for groups with good reduction) for absolutely almost simple groups.

First, for inner forms of type $\mathsf{A}_n$, Conjecture 5.7 has been proved completely.
%We begin with inner forms of type $\textsf{A}_n$, for which Conjecture 5.7 has been proved completely.

%The Main Conjecture was proved completely for inner forms of type $\textsf{A}_n$.

\begin{thm}\label{T:ResX1}
Let $K$ be a finitely generated field and $V$ be a divisorial set of places of $K$. Then for any $n \geq 2$ that is prime to $\mathrm{char}\: K$ and any simply connected inner form $G$ of type $\textsf{A}_{n-1}$, the number of $K$-isomorphism classes of \emph{inner} $K$-forms of $G$ that have good reduction at all $v \in V$ is finite.
\end{thm}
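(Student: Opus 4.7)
My approach is to translate the counting problem into one about Brauer classes that are unramified on a regular arithmetic model, and then invoke purity together with finiteness of \'etale cohomology. By Example~4.5, the inner $K$-forms of $G$ are parametrized by $H^1(K,\overline G)\simeq H^1(K,\mathrm{PGL}_n)$, which by the Skolem-Noether theorem is in natural bijection with isomorphism classes of central simple $K$-algebras of degree $n$, the form attached to $A$ being $\mathrm{SL}_{1,A}$. Since any degree-$n$ central simple algebra is determined up to isomorphism by its Brauer class (namely, as $M_r(D)$ for the unique division algebra $D$ in the class with $r\cdot\deg D=n$), counting $K$-isomorphism classes of inner forms reduces (up to at most a 2-to-1 identification, because $\mathrm{SL}_{1,A}\simeq\mathrm{SL}_{1,A^{\mathrm{opp}}}$) to counting Brauer classes $[A]\in\Br(K)[n]$ of index dividing $n$. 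By Example~3.8(b), $\mathrm{SL}_{1,A}$ has good reduction at $v$ precisely when $A$ is unramified at $v$, equivalently $\partial_v([A])=0$ in $H^1(K^{(v)},\Z/n\Z)$; so the task becomes proving finiteness of the unramified $n$-torsion
$$
\Br_{V,\mathrm{ur}}(K)[n] \;:=\; \Br(K)[n]\,\cap\,\bigcap_{v\in V}\ker\partial_v.
$$

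To bring geometry to bear, I would fix a model $\mathfrak{X}$ of $K$ with $V(\mathfrak{X})=V$. Since $\mathfrak{X}$ is normal, its singular locus has codimension $\geq 2$, so passing to the regular locus preserves every codimension-one point and hence the set $V$; inverting $n$ then removes only finitely many further places, yielding a divisorial subset $V'\subset V$ with $V\setminus V'$ finite. Finiteness over $V'$ clearly implies finiteness over $V$ (the forms with good reduction at $V$ form a subset of those with good reduction at $V'$), so we may assume $\mathfrak{X}$ is regular and $n\in\cO_{\mathfrak{X}}^{\times}$. Grothendieck-Gabber purity for Brauer groups on regular schemes then identifies $\Br(\mathfrak{X})[n]$ with $\Br_{V,\mathrm{ur}}(K)[n]$.

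The Kummer sequence on $\mathfrak{X}$ produces the short exact sequence
$$
0 \longrightarrow \Pic(\mathfrak{X})/n \longrightarrow H^2_{\mathrm{\acute{e}t}}(\mathfrak{X},\mu_n) \longrightarrow \Br(\mathfrak{X})[n] \longrightarrow 0,
$$
and Deligne's finiteness theorem for \'etale cohomology with constructible torsion coefficients (SGA~$4\tfrac12$, ``Th\'eor\`eme de finitude'') forces $H^2_{\mathrm{\acute{e}t}}(\mathfrak{X},\mu_n)$ to be finite, since $\mathfrak{X}$ is regular and of finite type over $\Z$ or $\F_p$ with $n$ invertible on $\mathfrak{X}$. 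Hence $\Br(\mathfrak{X})[n]$ is finite, completing the proof. The main technical obstacle is the purity input in this mixed-characteristic arithmetic setting: Grothendieck's original purity theorem for Brauer groups does not suffice, and one needs Gabber's refinement. The hypothesis $\gcd(n,\mathrm{char}\: K)=1$ is essential both for purity and for the finiteness of the relevant \'etale cohomology; the $p$-primary part of the Brauer group in characteristic $p$ behaves quite differently and would require a separate treatment, which is precisely why the theorem excludes that case.
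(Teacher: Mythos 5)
Your reduction is exactly the paper's: inner forms of $G$ are the groups $\mathrm{SL}_{1,A'}$ for central simple algebras $A'$ of degree $n$, good reduction at $v$ is equivalent to $A'$ being unramified at $v$ (Example 3.8(b)), and a degree-$n$ algebra is determined by its Brauer class, so everything rests on the finiteness of ${}_n\mathrm{Br}(K)_V$. Where you diverge is in how you establish that finiteness. The paper delegates this to \cite{CRR3}, which identifies ${}_n\mathrm{Br}(K)_V$ with the unramified cohomology group $H^2(K,\mu_n)_V$ and then analyzes it directly (by methods that also yield explicit bounds on its order in some cases). You instead propose the geometric route: shrink the model $\mathfrak{X}$ to a regular one with $n$ invertible (legitimate, since normality keeps the singular locus in codimension $\geq 2$ and condition (A) ensures only finitely many $v$ are lost, which only enlarges the set being bounded), invoke Gabber's absolute purity to identify ${}_n\mathrm{Br}(K)_{V}$ with $\mathrm{Br}(\mathfrak{X})[n]$, and then bound the latter by $H^2_{\mathrm{\acute{e}t}}(\mathfrak{X},\mu_n)$ via the Kummer sequence. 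This is a clean and correct strategy, and it is essentially the argument used for higher-degree unramified cohomology in \cite{RR-Tori}.

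One attribution in your final step needs repair. The ``Th\'eor\`eme de finitude'' of SGA~$4\frac{1}{2}$ gives constructibility of $R^qf_*\mathscr{F}$ for morphisms of finite type over a regular base of dimension $\leq 1$; it does \emph{not} by itself give finiteness of $H^2_{\mathrm{\acute{e}t}}(\mathfrak{X},\mu_n)$ when $\mathfrak{X}$ is of finite type over $\mathbb{Z}$. In that mixed-characteristic case you must feed the constructible sheaves $R^qf_*\mu_n$ on $\mathrm{Spec}\:\mathcal{O}_{k,S}$ into the Leray spectral sequence and then use the finiteness of $H^p_{\mathrm{\acute{e}t}}(\mathrm{Spec}\:\mathcal{O}_{k,S},\mathscr{G})$ for constructible $\mathscr{G}$ with torsion invertible on the base --- a theorem of arithmetic duality theory whose proof ultimately rests on Hermite--Minkowski, the unit theorem, and class field theory, i.e., genuinely arithmetic input beyond SGA~$4\frac{1}{2}$. (Over a finite base field the combination of Deligne's theorem for the geometric fiber with Hochschild--Serre over $\widehat{\mathbb{Z}}$ does suffice.) With that input made explicit, your argument is complete.
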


We recall that the group $G$ in the statement of the theorem is of the form $\mathrm{SL}_{1,A}$ --- the algebraic group associated with the group of elements of reduced norm 1 in a central simple $K$-algebra $A$
of degree $n$ over $K$ (cf. \cite[Ch. 2, \S2.3, Proposition 2.17]{Pl-R}). As we remarked in Example 3.8(b), such a group
has good reduction at a discrete valuation $v$ of $K$ if and only if the algebra $A$ is unramified at $v$, i.e.
there exists an { Azumaya algebra} $\mathcal{A}$ over the valuation ring $\mathcal{O}_v \subset K_v$ such that there is an isomorphism of $K_v$-algebras
$$
A \otimes_K K_v \simeq \mathcal{A} \otimes_{\mathcal{O}_v} K_v.
$$
A detailed discussion of Azumaya algebras can be found, for example, in \cite{KO}, \cite[Ch. IV]{Milne}, and \cite[Ch. 2]{Salt}. For our purposes, we merely recall that an $\mathcal{O}_v$-algebra $\mathcal{A}$ is called an {\it Azumaya algebra} if it is a free $\mathcal{O}_v$-module of finite rank and if the canonical homomorphism of $\mathcal{O}_v$-algebras
$$
\mathcal{A} \otimes_{\mathcal{O}_v} \mathcal{A}^{\mathrm{op}} \to \mathrm{End}_{\mathcal{O}_v}(\mathcal{A})
$$
(where $\mathcal{A}^{\mathrm{op}}$ denotes the opposite algebra) that sends a simple tensor $a \otimes a'$ to the endomorphism $(x \mapsto axa')$,
is an isomorphism. In this case, the quotient $\mathcal{A}/\mathfrak{p}_v\mathcal{A}$ (where $\mathfrak{p}_v \subset \mathcal{O}_v$ is the valuation ideal) is a central simple algebra over the
residue field $K^{(v)} = \mathcal{O}_v/\mathfrak{p}_v$.
%We refer the reader to \cite{KO}, \cite[Ch. IV]{Milne}, and \cite[Ch. 2]{Salt} for further details on Azumaya algebras.
%- see [Knus...], [Milne...] and [Saltman...] for the details.

The key input in the proof of Theorem \ref{T:ResX1} is a finiteness result for the {\it unramified Brauer group}, which we now describe.
%Then Theorem \ref{T:ResX1} reduces to
%a finiteness result about the {\it unramified Brauer group}, which we now describe.
First, we recall that the Brauer group $\Br(K)$ of a field $K$ consists of the Brauer equivalence classes of finite-dimensional central simple $K$-algebras (for the details of this construction, the reader can consult \cite{FarbDennis}, \cite{Pierce}, or \cite{Salt}).
Given a central simple algebra $A$ over $K$, we denote by $[A]$ the corresponding class in $\Br(K).$ It is well-known that if $A$ is a central simple $K$-algebra of degree $n$, then $[A]$ is annihilated by multiplication by $n$ in $\mathrm{Br}(K)$, i.e. belongs to the $n$-torsion subgroup ${}_n\mathrm{Br}(K)$. Furthermore, given a discrete valuation $v$ of $K$, we say that $a \in \mathrm{Br}(K)$ is unramified at $v$ if it can be represented by a central simple $K$-algebra $A$ that is unramified at $v$, as defined above. Now, if $V$ is a set of discrete valuations of $K$, we let $\mathrm{Br}(K)_V$ denote the subgroup of $\mathrm{Br}(K)$ consisting of elements that are unramified at all $v \in V$ (this group is usually referred to as the {\it unramified Brauer group} of $K$ with respect to $V$).

With these preliminaries, we now come to the following finiteness statement from which Theorem \ref{T:ResX1} is derived (cf. \cite{CRR3}):

\vskip2mm

\noindent {\it Let $K$ be a finitely generated field and $V$ be a divisorial set of places of $K$. Then for any $n \geq 1$ that is prime to
$\mathrm{char}\: K$, the group ${}_n\mathrm{Br}(K)_V = {}_n\mathrm{Br}(K) \cap
\mathrm{Br}(K)_V$ is finite.}

\vskip2mm

\noindent Now, with notations as in Theorem \ref{T:ResX1}, any inner $K$-form $G'$ of $G$ that has good reduction at all $v \in V$ is of the form
$\mathrm{SL}_{1 , A'}$, where $A'$ is a central simple $K$-algebra of degree $n$ with $[A'] \in {}_n\mathrm{Br}(K)_V$. Since the latter group is finite, the required finiteness in the theorem follows from the fact that if $A_1$ and $A_2$ are two central simple $K$-algebras of degree $n$ and $[A_1] = [A_2]$ in $\mathrm{Br}(K)$, then the algebras $A_1$ and $A_2$ are isomorphic, implying that the corresponding algebraic groups $G_1 = \mathrm{SL}_{1 , A_1}$ and $G_2 = \mathrm{SL}_{1 , A_2}$ are also isomorphic.

The proof of the finiteness of ${}_n\mathrm{Br}(K)_V$ relies on cohomological techniques. We will not go into the details here and will only briefly indicate the basic set-up.
%only briefly indicate the relevant cohomological set-up.
To begin with, it is well-known that $\mathrm{Br}(K)$ can be identified with $H^2(K , (\Ks)^{\times})$, and under this identification, for any $n \geq 1$ that is prime to $\mathrm{char}\: K$, the $n$-torsion subgroup ${}_n\mathrm{Br}(K)$ corresponds to $H^2(K , \mu_n)$, where $\mu_n$ is the group of $n$th roots of unity in $\Ks$ (cf. \cite[Ch. 14]{Pierce}). Furthermore, for any discrete valuation $v$ of $K$ such that $\mathrm{char}\: K^{(v)}$ is prime to $n$,  there exists a  {\it residue map}
$$
\partial_v^2 \colon H^2(K , \mu_n) \longrightarrow H^1(K^{(v)} , \mathbb{Z}/n\mathbb{Z}),
$$
where $\mathbb{Z}/n\mathbb{Z}$ denotes $\mu_n$ with the trivial Galois action --- see \cite[\S6.8]{Gille} for the details. It turns out that an element $a \in H^2(K , \mu_n)$ is unramified at $v$ as defined above if and only if $\partial_v^2(a) = 0$. So, assuming that $n$ is prime to $\mathrm{char}\: K^{(v)}$ (which we can always do in terms of proving Theorem \ref{T:ResX1} as $n$ is prime to $\mathrm{char}\: K$), one can define the {\it unramified cohomology group}
$$
H^2(K , \mu_n)_V := \bigcap_{v \in V} \ker \partial_v^2
$$
and then identify it with ${}_n\mathrm{Br}(K)_V$. The proof then proceeds via a careful analysis of the group $H^2(K, \mu_n)_V.$ Let us point out that in \cite{CRR3}, we not only established the finiteness of the latter group, but also provided explicit estimations of its order in certain situations.

\vskip2mm

Next, we will discuss  results on Conjecture 5.7 for groups of several other types over a special class of finitely generated fields --- the so-called {\it two-dimensional global fields}. Following Kato \cite{Kato}, by a two-dimensional global field, we mean either the function field $K = k(C)$ of a smooth geometrically integral curve $C$ over a number field $k$, or the function field $K = k(S)$ of a smooth geometrically integral surface $S$ over a finite field $k$.

\begin{thm}\label{T:ResX2}{\rm (\cite[Theorem 1.1]{CRR-Spinor})}
Let $K$ be a two-dimensional global field of characteristic $\neq 2$ and let $V$ be a divisorial set of places of $K$. Fix an integer $n \geq 5$. Then the set of $K$-isomorphism classes of spinor groups $G = \mathrm{Spin}_n(q)$ of nondegenerate quadratic forms in $n$ variables over $K$ that have good reduction at all $v \in V$ is finite.
\end{thm}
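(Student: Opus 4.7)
The plan is to convert the good-reduction hypothesis into an ``everywhere unramified'' condition on the similarity class of $q$, to classify such similarity classes by a finite set of cohomological invariants, and then to establish the finiteness of the relevant unramified cohomology groups using the arithmetic of two-dimensional global fields.

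First, I would exploit the local criterion of Example 3.8(c): $\mathrm{Spin}_n(q)$ has good reduction at $v \in V$ if and only if, over $K_v$, the form $q$ is similar to a diagonal form $\langle a_1, \ldots, a_n\rangle$ with all $a_i \in U_v$. Since $\mathrm{Spin}_n(q) \simeq \mathrm{Spin}_n(\lambda q)$ for every $\lambda \in K^{\times}$, this is really a condition on the similarity class of $q$. Conversely, for $n \geq 5$ the $K$-isomorphism class of $\mathrm{Spin}_n(q)$ recovers the similarity class of $q$ up to a finite ambiguity: in odd dimension this follows from the standard rigidity of a nondegenerate form relative to $\mathrm{SO}_n(q)$, while in even dimension the Clifford algebra together with its canonical involution (see \cite[Ch.~VII]{KMRT}) provides the requisite control, with Theorem \ref{T:ResX1} bounding the number of isomorphism classes of the resulting unramified central simple algebras. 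Thus it suffices to show that only finitely many similarity classes of $n$-dimensional nondegenerate forms over $K$ can be unramified at every $v \in V$.

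Next, I would classify these similarity classes cohomologically. By Voevodsky's proof of the Milnor conjecture, the similarity class of an $n$-dimensional form $q$ is controlled by a finite collection of invariants
\[
e_1(q) \in H^1(K,\mu_2), \quad e_2(q) \in H^2(K,\mu_2) \simeq {}_2\Br(K), \quad e_i(q) \in H^i(K,\mu_2) \text{ for } 3 \leq i \leq r(n),
\]
where $e_1$ is the signed discriminant, $e_2$ is the Hasse--Witt (Clifford) invariant, and the higher $e_i$ arise from the filtration of the Witt ring by powers of the fundamental ideal; the bound $r(n)$ depends only on $n$. Good reduction at $v$ forces every residue $\partial_v^i(e_i(q))$ to vanish, so each invariant lies in the unramified cohomology subgroup
\[
H^i(K,\mu_2)_V := \bigcap_{v \in V} \ker \partial_v^i.
\]
Consequently, the theorem reduces to proving that $H^i(K,\mu_2)_V$ is finite for $i = 1, \ldots, r(n)$.

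Finally, I would verify these finiteness statements degree by degree. For $i = 1$, $H^1(K, \mu_2)_V$ is controlled by the $2$-torsion of $\Pic(K, V)$ together with $\mathcal{O}(V)^{\times}/(\mathcal{O}(V)^{\times})^2$, both of which are finite because both $\Pic(K, V)$ and the $V$-unit group are finitely generated for a two-dimensional global field with a divisorial set of places. For $i = 2$, the required finiteness of ${}_2\Br(K)_V$ is precisely the unramified-Brauer finiteness statement from \cite{CRR3} invoked in the proof of Theorem \ref{T:ResX1}. The principal obstacle is the case $i \geq 3$: establishing finiteness of $H^i(K, \mu_2)_V$ in higher degree is not accessible by general-field arguments and relies crucially on the higher-dimensional class field theory of two-dimensional global fields developed by Kato \cite{Kato} and subsequent authors, which is presumably why the theorem is formulated at this level of generality rather than for arbitrary finitely generated fields. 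A secondary subtlety, as noted above, is the Clifford-algebraic bookkeeping needed to pass from the isomorphism class of the spinor group to the similarity class of the form in even dimension.
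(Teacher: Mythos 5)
Your proposal follows essentially the same route as the paper: reduce via the local good-reduction criterion to counting similarity classes of everywhere-unramified forms, control these through the filtration of the Witt ring and Voevodsky's isomorphisms $I(K)^d/I(K)^{d+1} \simeq H^d(K,\mu_2)$, and conclude from the finiteness of the unramified groups $H^d(K,\mu_2)_V$ --- where the paper isolates $d=3$ (not all $d \geq 3$) as the genuinely hard case, settled via Kato's cohomological Hasse principle or a modification of Jannsen's argument, with $d \geq 4$ handled by Poitou--Tate. Two small remarks: your ``converse'' step recovering the similarity class of $q$ from the isomorphism class of $\mathrm{Spin}_n(q)$ is superfluous, since the map from similarity classes to isomorphism classes of spinor groups is surjective onto the set in question and finiteness of the source suffices; and your assertion that good reduction forces all residues $\partial_v^d$ to vanish is precisely the content of the paper's technical lemma on $I(K_v)^d \cap W_0(K_v)$, which supplies the dévissage making the higher invariants $e_d$ well-defined and unramified.
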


Whereas the proof of Theorem \ref{T:ResX1} was based on the study of the Brauer group, the proof of Theorem \ref{T:ResX2} requires an analysis of the Witt ring (we refer the reader to \cite{EKM} and \cite{Lam} for the construction and basic properties of the Witt ring). We will assume henceforth that $\mathrm{char}\: K \neq 2$, and denote by $W(K)$ the Witt ring of $K$ and by $I(K)$ its fundamental ideal. For a
nondegenerate quadratic form $q$ over $K$, we set $[q]$ to be the corresponding class in $W(K)$. Now, a consequence of Voevodsky's proof of Milnor's conjecture is that for $d \geq 1$, there are natural isomorphisms of abelian groups
$$
\gamma_{K , d} \colon I(K)^d/I(K)^{d+1} \longrightarrow H^d(K , \mu_2)
$$
(cf. \cite{OVV}).
On the other hand, as above, for any discrete valuation $v$ such that $\mathrm{char}\: K^{(v)} \neq 2$ and any $d \geq 1$, there exists a
{\it residue map}
$$
\partial_v^d \colon H^d(K , \mu_2) \longrightarrow H^{d-1}(K^{(v)} , \mu_2),
$$
which actually factors through the restriction map $H^d(K , \mu_2) \to H^d(K_v , \mu_2)$ (where, as before, $K_v$ denotes the completion of $K$ at $v$). Then one says that $a \in H^d(K , v)$ is {\it unramified at $v$} if
$\partial_v^d(a) = 0$. Moreover, if $K$ is equipped with a set $V$ of discrete valuations such that $\mathrm{char}\: K^{(v)} \neq 2$ for all $v \in V$, one defines the corresponding {\it unramified cohomology group} by
$$
H^d(K , \mu_2)_V = \bigcap_{v \in V} \ker \partial_v^d.
$$
To factor in good reduction, one proves the following technical statement. Let $v$ be a discrete valuation of $K$ such that $\mathrm{char}\: K^{(v)} \neq 2$. We let $W_0(K_v)$ denote the subring of $W(K_v)$ generated by the classes of 1-dimensional forms $ux^2$ with $u \in U_v = \mathcal{O}_v^{\times}$. Then we show the following:

\vskip2mm

\noindent {\it If $q$ is a nondegenerate form over $K_v$ such that $[\lambda q] \in I(K_v)^d \cap W_0(K_v)$ (where $d \geq 1$) for some $\lambda \in K_v^{\times}$, then $[q] \in I(K_v)^d$ and
$$
\gamma_{K_v , d}([q] + I(K_v)^{d+1}) \in H^d(K_v , \mu_2)
$$
is unramified at $v$.}

\vskip2mm

\noindent (See \cite[Lemma 3.3]{CRR-Spinor}). On the other hand, as we remarked in Example 3.8(c), the spinor group $G = \mathrm{Spin}_n(q)$ has good reduction at $v$ if and only if $q$ is $K_v$-equivalent to a quadratic form $\lambda q_0$ with $\lambda \in K_v^{\times}$ and $q_0 = u_1 x_1^2 + \ldots + u_n x_i^2$, where $u_i \in U_v$ for all $i = 1, \ldots , n$.

Suppose now that $K$ is a finitely generated field of characteristic $\neq 2$ equipped with a divisorial set of places $V$. Without loss of generality we may assume that $\mathrm{char}\: K^{(v)} \neq 2$ for all $v \in V$. Using the above discussion, in conjunction with some facts from the theory of quadratic forms (most notably, the Hauptsatz --- see \cite[Ch. X, \S 5]{Lam}), one shows that to prove Theorem \ref{T:ResX2},
it is enough to establish the finiteness of the unramified cohomology groups $H^d(K, \mu_2)_V$ for all $d \geq 1$ (see \cite[Theorem 2.1]{CRR-Spinor} for a more precise statement). So, to complete the argument, we prove the required finiteness over two-dimensional global fields of characteristic $\neq 2$. Here, we will comment on this fact assuming that ${\rm char}~K =0$ (for the positive characteristic case, the reader can consult \cite[\S7]{CRR-Spinor}).  Then the finiteness of $H^1(K , \mu_2)_V$ is a standard result and, as mentioned above,
the finiteness of $H^2(K , \mu_2)_V$ was established in the course of the proof of Theorem \ref{T:ResX1}. On the other hand, the finiteness of $H^d(K , \mu_2)_V$ for $d \geq 4$ can be derived from results of Poitou-Tate (cf. \cite[Proposition 4.2]{CRR-Spinor}). The most challenging case is the finiteness of $H^3(K, \mu_2)_V$, for which we gave two proofs in \cite{CRR-Spinor}. One proof makes use of several powerful results, first and foremost, those of Kato \cite{Kato} on cohomological Hasse principles. The second proof requires considerably less input; in particular, it does not rely on Kato's local-global prinicple, but instead is based on a modification of Jannsen's proof of the latter \cite{Jann}. This argument appears to be more amenable to generalizations in the spirit of Jannsen's proof \cite{Jann2} of Kato's conjecture on the local-global principle for higher-dimensional varieties, which extended his original argument in \cite{Jann}.

The above sketches of the proofs of Theorems \ref{T:ResX1} and \ref{T:ResX2} indicate an intimate connection between Conjecture 5.7 and finiteness properties of unramified cohomology. The analysis of unramified cohomology in general and of unramified $H^3$ in particular, in both arithmetic and geometric situations, is a major direction of independent interest, which, however, lies beyond the scope of the current paper. We refer the reader to \cite{CT-SB} for an informative survey of this subject.

\vskip2mm

Next, we should mention that Theorem \ref{T:ResX2} yields similar results for some other types of groups. Here is the statement for simply connected {\it outer} forms of type $\textsf{A}_{n-1}$ that split over a quadratic extension $L/K$. We recall that such forms are obtained as follows. Let $L/K$ be a quadratic extension with a nontrivial automorphism $\tau$, and let $h$ be a nondegenerate
$\tau$-hermitian form of dimension $n \geq 2$.
%, let $W$ be a vector space over $L$ of dimension $n \geq 2$, and let $h$ be a nondegenerate $\tau$-hermitian form on $W$.
Then the groups in question are the special unitary groups $G = \mathrm{SU}_n(L/K , h)$ (cf. \cite[Ch. 2, \S2.3.4]{Pl-R}).

\begin{thm}\label{T:ResX3}{\rm (\cite[Theorem 8.1]{CRR-Spinor})}
Let $K$ be a two-dimensional global field of characteristic $\neq 2$ and $V$ be a divisorial set of places of $K$. Fix a quadratic extension $L/K$, and let $n \geq 2$. Then the number of $K$-isomorphism classes of special unitary groups $G = \mathrm{SU}_n(L/K , h)$ of nondegenerate hermitian $L/K$-forms in $n$ variables that have good reduction at all $v \in V$ is finite.
\end{thm}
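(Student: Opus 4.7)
The plan is to adapt the strategy of Theorem \ref{T:ResX2} to the unitary (outer type $\textsf{A}$) setting by replacing the Witt ring of quadratic forms over $K$ with the Witt group of hermitian forms over $L/K$, and then invoking the same finiteness of unramified mod-$2$ Galois cohomology established during the proof of Theorem \ref{T:ResX2}. Since $L/K$ is a fixed quadratic extension, it is ramified at only finitely many $v \in V$; by the commensurability discussion in \S\ref{S-ConjFGfields}, one may pass to a divisorial subset of $V$ on which $L/K$ is unramified and $\mathrm{char}\: K^{(v)} \neq 2$ throughout. Under these assumptions, the quasi-split outer $K$-form of $\mathrm{SL}_n$ attached to $L/K$ has good reduction at every $v \in V$, so by Example 4.5 any $G' = \mathrm{SU}_n(L/K, h')$ with good reduction at all $v \in V$ is an inner form of this quasi-split group over each completion $K_v$.

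The next step is to establish a local characterization of good reduction in the spirit of Example 3.8(c): $\mathrm{SU}_n(L/K, h)$ has good reduction at $v \in V$ if and only if $h$ is $K_v$-equivalent to a scaled diagonal form $\lambda h_0$ with $\lambda \in K_v^\times$ and $h_0 = u_1 x_1 \bar{x}_1 + \cdots + u_n x_n \bar{x}_n$ whose coefficients $u_i$ lie in $U_v$. Two hermitian forms determine isomorphic special unitary groups precisely when they are similar (equivalent up to multiplication by a scalar in $K^\times$), so the problem reduces to bounding the number of similarity classes of nondegenerate $n$-dimensional hermitian forms over $L/K$ whose local classes are represented by such scaled unit-coefficient diagonal forms at every $v \in V$.

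For this counting I would employ the Bayer-Fluckiger--Parimala cohomological invariants for hermitian forms: the fundamental-ideal filtration $\{I^d(L/K)\}$ of the Witt group $W(L/K)$ is equipped with graded maps
\[
e_d\colon I^d(L/K)/I^{d+1}(L/K) \longrightarrow H^d(K, \mu_2),
\]
obtainable either intrinsically or by combining the scaled transfer $W(L/K) \to W(K)$ with the Pfister--Voevodsky invariants of quadratic forms over $K$. The local characterization above forces each $e_d([h])$ to be unramified at every $v \in V$ (up to a scalar-dependent twist whose contribution is controlled by the norm torus $\mathrm{R}^{(1)}_{L/K}(\mathbb{G}_m)$ and thus takes only finitely many values by Theorem \ref{T:Res2}). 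Since the proof of Theorem \ref{T:ResX2} already yields finiteness of $H^d(K, \mu_2)_V$ for every $d \geq 1$, and since the hermitian Arason--Pfister Hauptsatz forces $I^d(L/K) = 0$ for $d$ larger than an $n$-dependent bound, only finitely many invariant tuples $(e_1([h]), e_2([h]), \ldots)$ can occur. These tuples determine $[h]$ in $W(L/K)$ up to finitely many possibilities, which in turn bounds the number of similarity classes of $h$ and, consequently, the number of $K$-isomorphism classes of $\mathrm{SU}_n(L/K, h)$.

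The main obstacle is the cohomological-invariant input: one must verify that the invariants $e_d$ commute with the residue maps defining $H^d(K, \mu_2)_V$, and the scalar twist $h \sim \lambda h$ --- invisible in the quadratic-form case of Theorem \ref{T:ResX2} --- has to be handled cleanly via norm-torus finiteness (an appeal to Theorem \ref{T:Res2} for $\mathrm{R}^{(1)}_{L/K}(\mathbb{G}_m)$). Once this machinery is in place, the passage from the finiteness of unramified mod-$2$ cohomology to the desired finiteness of isomorphism classes is formal and parallels the spinor-group argument.
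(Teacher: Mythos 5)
Your overall strategy is the one the paper uses: Theorem \ref{T:ResX3} is obtained as a corollary of the machinery behind Theorem \ref{T:ResX2}, by converting hermitian forms over $L/K$ into quadratic forms over $K$ and feeding the result into the finiteness of the unramified groups $H^d(K,\mu_2)_V$. Concretely, the route you describe as ``combining the scaled transfer $W(L/K)\to W(K)$ with the Pfister--Voevodsky invariants'' is exactly what is done: by Jacobson's theorem the trace form $q_h$ (a $2n$-dimensional quadratic form over $K$, equal to $\langle a_1,\dots,a_n\rangle\otimes\langle 1,-d\rangle$ for $L=K(\sqrt{d})$) determines $h$ up to isometry, so the classification of the groups $\mathrm{SU}_n(L/K,h)$ reduces to the quadratic-form counting already carried out, after discarding the finitely many $v$ where $L/K$ ramifies (and noting that at split places the group is $\mathrm{SL}_n$ over $K_v$ and imposes no condition). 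The one place where you diverge is the treatment of the similarity scalar $\lambda$. In the paper this is absorbed by the technical lemma quoted in \S\ref{S-ResultsMain}: if $[\lambda q]\in I(K_v)^d\cap W_0(K_v)$ for some $\lambda\in K_v^{\times}$, then $[q]$ itself lies in $I(K_v)^d$ and its invariant $\gamma_{K_v,d}([q]+I(K_v)^{d+1})$ is unramified --- so the invariants of any chosen representative of the similarity class are unramified, and no further input is needed. Your alternative, controlling the twist ``via norm-torus finiteness'' from Theorem \ref{T:Res2}, is not adequate as stated: that theorem bounds local norms modulo global norms, i.e.\ the Tate--Shafarevich group of $\mathrm{R}^{(1)}_{L/K}(\mathbb{G}_m)$, whereas the ambiguity you need to kill lives in $K^{\times}$ modulo similarity factors of $h$, which is not a priori finite over a finitely generated field. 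Fortunately this appeal is unnecessary once the technical lemma is in place, so the gap is in the chosen tool rather than in the architecture of the argument.
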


Since the number of quadratic extensions $L/K$ that are unramified at all $v \in V$ is finite, Theorem \ref{T:ResX3} in effect yields the finiteness of the number of $K$-isomorphism classes of special unitary groups with good reduction at all $v \in V$ of $n$-dimensional nondegenerate hermitian forms associated with {\it all} quadratic extensions $L/K$.

\vskip2mm

A result similar to Theorem \ref{T:ResX3} is also valid for the special unitary groups $G = \mathrm{SU}_n(D , h)$ of nondegenerate hermitian forms of dimension $n \geq 2$ over a quaternion division algebra $D$ with center $K$ with the canonical involution, which are precisely the absolutely almost simple simply connected groups of type $\textsf{C}_n$ that split over a quadratic extension of $K$.

\vskip2mm

Over two-dimensional global fields, we also have the following finiteness result for groups of type $\textsf{G}_2$.

\begin{thm}\label{T:ResX4}{\rm (\cite[Theorem 9.1]{CRR-Spinor})}
Let $K$ be a two-dimensional global field of characteristic $\neq 2$ and $V$ be a divisorial set of places of $K$. The number of $K$-isomorphism classes of $K$-groups of type $\textsf{G}_2$ that have good reduction at all $v \in V$ is finite.
\end{thm}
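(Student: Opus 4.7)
The plan is to classify $K$-groups of type $\textsf{G}_2$ via the Arason invariant of the associated norm Pfister form, and then reduce the asserted finiteness to that of the unramified cohomology group $H^3(K, \mu_2)_V$, whose finiteness over a two-dimensional global field of characteristic $\neq 2$ was already established in the course of the proof of Theorem~\ref{T:ResX2}. Over any field $K$ of characteristic $\neq 2$, every absolutely almost simple $K$-group $G$ of type $\textsf{G}_2$ is of the form $G = \mathrm{Aut}(\mathbb{O})$ for a (unique up to $K$-isomorphism) octonion $K$-algebra $\mathbb{O}$, and an octonion algebra is in turn classified by its norm form, which is a $3$-fold Pfister form $\langle\langle a, b, c\rangle\rangle$ over $K$. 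Under Voevodsky's isomorphism $\gamma_{K, 3} \colon I(K)^3/I(K)^4 \to H^3(K, \mu_2)$, the class of this norm form corresponds to the symbol $(a) \cup (b) \cup (c)$, i.e.\ to the Arason invariant $e_3(\mathbb{O}) \in H^3(K, \mu_2)$; the assignment $G \mapsto e_3(\mathbb{O})$ is injective on $K$-isomorphism classes, so it suffices to bound the set of possible Arason invariants.

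Next, I would show that good reduction of $G$ at a valuation $v$ with $\mathrm{char}\: K^{(v)} \neq 2$ forces $e_3(\mathbb{O})$ to lie in $\ker \partial_v^3$. In analogy with the discussion surrounding Example 3.8(c), good reduction of the reductive $K_v$-group $G \times_K K_v$ is equivalent to $\mathbb{O} \otimes_K K_v$ being the generic fibre of an octonion algebra over $\mathcal{O}_v$; such an $\mathcal{O}_v$-algebra carries a Pfister norm form with coefficients in $U_v$, so the norm form over $K_v$ is isometric to $\langle\langle u_1, u_2, u_3 \rangle\rangle$ for some $u_i \in U_v$. A simplifying feature compared to the spinor case of Theorem~\ref{T:ResX2} is that Pfister forms already represent $1$, so no auxiliary scalar $\lambda \in K_v^{\times}$ is needed to move into $W_0(K_v)$. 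Applying \cite[Lemma 3.3]{CRR-Spinor} (or computing the residue of $(u_1) \cup (u_2) \cup (u_3)$ directly), one obtains $\partial_v^3(e_3(\mathbb{O})) = 0$. Since this holds at every $v \in V$, we conclude that
$$
e_3(\mathbb{O}) \in H^3(K, \mu_2)_V = \bigcap_{v \in V} \ker \partial_v^3.
$$

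To finish, I invoke the finiteness of $H^3(K, \mu_2)_V$ for a two-dimensional global field $K$ of characteristic $\neq 2$ equipped with a divisorial set of places $V$; this was the most delicate step in the proof of Theorem~\ref{T:ResX2}, handled either via Kato's work on cohomological Hasse principles or via the modification of Jannsen's argument given in \cite{CRR-Spinor}. Combined with the injectivity of $G \mapsto e_3(\mathbb{O})$, this at once yields the desired finiteness of the number of $K$-isomorphism classes of $\textsf{G}_2$-groups with good reduction at all $v \in V$. The main point requiring care is the local step identifying good reduction of the $\textsf{G}_2$-group scheme over $\mathcal{O}_v$ with the existence of a representation of the norm Pfister form by $\langle\langle u_1, u_2, u_3\rangle\rangle$ with $u_i \in U_v$ over $K_v$; once this geometric input is secured, the remainder is a direct transplantation of the cohomological machinery developed for the spinor case.
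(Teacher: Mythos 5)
Your proposal is correct and follows essentially the same route as the cited proof in \cite[Theorem 9.1]{CRR-Spinor}: classify $\textsf{G}_2$-groups by octonion algebras and their $3$-fold Pfister norm forms, use the injectivity of the Arason invariant $e_3$ on Pfister classes, show that good reduction at $v$ forces $e_3$ into $\ker\partial_v^3$ via the integral octonion structure over $\mathcal{O}_v$, and conclude from the finiteness of $H^3(K,\mu_2)_V$ established for Theorem \ref{T:ResX2}. Your observation that the auxiliary scalar $\lambda$ from the spinor case is unnecessary because Pfister forms represent $1$ is also consistent with how the argument is carried out there.
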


These preceding results suggest that the proof of the following general fact should be within reach in the near future:
if $K$ is a two-dimensional global field, then for each type, there are only finitely many $K$-isomorphism classes of $K$-forms that split over a quadratic extension of $K$ and have good reduction at all discrete valuations in some divisorial set of places of $K$.

%some time in the near future.

The very recent results on finiteness of unramified cohomology obtained in \cite[\S5]{RR-Tori}
make it possible to extend the above results beyond the class of two-dimensional global fields. We will just mention that Theorems \ref{T:ResX2} and \ref{T:ResX3} remain valid for a purely transcendental extension $K = k(x_1 , x_2)$ of transcendence degree two of a number field $k$, while Theorem \ref{T:ResX4} remains valid for $K$ a purely transcendental extension of a number field $k$ of any (finite) transcendence degree as well as the function field of a Severi-Brauer variety corresponding to a central simple algebra of either odd degree or degree 2 (in all cases $V$ can be any divisorial set of places of the field at hand).

\vskip2mm

To close this subsection, we will briefly survey the results on Conjecture 5.5 in the case where $K = k(C)$ is the function field of a smooth geometrically integral curve $C$ over a base field $k$ that satisfies conditions
$(\mathrm{F})$ or $(\mathrm{F}'_m)$ (see \S\ref{S-FConjCurves} for the definitions), and $V$ is the set of discrete valuations of $K$ associated with the closed points of $C$.

\begin{thm}\label{T:ResX5}
With notations as above, assume that $k$ is of characteristic $\neq 2$ and satisfies condition $(\mathrm{F}'_2)$. Then the number of $K$-isomorphism classes of

\vskip2mm

\noindent $\bullet$ \parbox[t]{16cm}{spinor groups $G = \mathrm{Spin}_n(q)$ of nondegenerate quadratic forms $q$ over $K$ of dimension $n \geq 5$,}

\vskip1mm

\noindent $\bullet$ \parbox[t]{16cm}{special unitary groups $G = \mathrm{SU}_n(L/K , h)$ of nondegenerate hermitian forms of dimension $n \geq 2$ over a quadratic extension $L/K$ with respect to its nontrivial automorphism,}

\vskip1mm

\noindent $\bullet$ \parbox[t]{16cm}{special unitary groups $G = \mathrm{SU}_n(D , h)$ of nondegenerate hermitian forms of dimension $n \geq 1$ over a central quaternion division algebra $D$ over $K$ with respect to the canonical involution,}

\vskip1mm

\noindent $\bullet$ \parbox[t]{16cm}{groups of type $\textsf{G}_2$}

\vskip2mm

\noindent that have good reduction at all $v \in V$ is finite.
\end{thm}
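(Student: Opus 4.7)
The plan is to mirror the strategy of Theorems \ref{T:ResX2}--\ref{T:ResX4}, reducing each of the four cases to the finiteness of the unramified cohomology groups $H^d(K,\mu_2)_V$ in low degrees, and then establishing the latter finiteness in the setting $K = k(C)$ via condition $(\mathrm{F}'_2)$.

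For the reduction, one would use Voevodsky's isomorphisms $\gamma_{K,d}\colon I(K)^d/I(K)^{d+1} \xrightarrow{\sim} H^d(K,\mu_2)$ together with the good-reduction criterion of Example 3.8(c) and the technical lemma \cite[Lemma 3.3]{CRR-Spinor}, exactly as in Theorem \ref{T:ResX2}; combined with the Arason--Pfister Hauptsatz, this bounds the relevant range of degrees in terms of the quadratic-form dimension $n$. The hermitian cases (special unitary groups of $L/K$-forms and of forms over a central quaternion division algebra) are handled by the same Witt-ring machinery applied to hermitian Witt groups, following the template of Theorem \ref{T:ResX3}. The $\textsf{G}_2$ case reduces directly to $H^3(K,\mu_2)_V$ via the classification of $\textsf{G}_2$-groups by $3$-fold Pfister forms (equivalently, by symbols in $H^3(K,\mu_2)$), with the good-reduction condition translating into unramifiedness of the associated symbol.

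To establish the required finiteness of $H^d(K,\mu_2)_V$, the first key input is that $(\mathrm{F}'_2)$ forces $H^d(\ell,\mu_2)$ to be finite for every finite separable extension $\ell/k$ and every $d \geq 0$: by Voevodsky's theorem $H^d(\ell,\mu_2) \simeq K^M_d(\ell)/2$ is generated, as an abelian group of exponent $2$, by symbols $\{a_1,\ldots,a_d\}$ depending only on the classes of the $a_i$ in the finite group $\ell^\times/(\ell^\times)^2$, so this group is itself finite. The second input is the Bloch--Ogus / Faddeev exact sequence
\[
0 \longrightarrow H^d_{\mathrm{\acute{e}t}}(C,\mu_2) \longrightarrow H^d(K,\mu_2) \xrightarrow{\oplus \partial_v^d} \bigoplus_{v \in V} H^{d-1}(K^{(v)},\mu_2),
\]
which identifies $H^d(K,\mu_2)_V$ with $H^d_{\mathrm{\acute{e}t}}(C,\mu_2)$. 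The latter is controlled, via the Kummer sequence, by $k[C]^\times/(k[C]^\times)^2$, $\mathrm{Pic}(C)/2$, ${}_2\mathrm{Br}(C)$ and ${}_2 H^3_{\mathrm{\acute{e}t}}(C,\mathbb{G}_m)$; each of these in turn is analyzed (via a Leray/Hochschild--Serre argument for $\overline{C} \to C \to \mathrm{Spec}\, k$) by combining the smallness of the geometric fundamental group of $C$ with the just-proved finiteness of Galois cohomology of $k$ and its finite separable extensions with $\mu_2$-coefficients.

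The most delicate step is expected to be the analysis at $d = 3$, which controls the type-$\textsf{G}_2$ case and the top-degree contribution for spinor and unitary forms. For two-dimensional global fields, \cite{CRR-Spinor} appealed to Kato's or Jannsen's local-global principles in cohomological degree $3$; over $K = k(C)$ for $k$ merely satisfying $(\mathrm{F}'_2)$, such structural results are unavailable, so one must instead rely on the purely geometric Faddeev sequence together with the finiteness of ${}_2\mathrm{Br}(\overline{C})$ and its Galois descent to $C$. Once the finiteness of the unramified cohomology groups is established through the required range, the reductions of the first paragraph deliver the four finiteness statements of Theorem \ref{T:ResX5}.
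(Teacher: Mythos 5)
Your proposal follows the paper's route exactly: the proof given there consists precisely of the observation that the reductions in Theorems \ref{T:ResX2}--\ref{T:ResX4} (Witt-ring/Pfister-form arguments reducing good reduction to unramifiedness of classes in $H^d(K,\mu_2)$) carry over verbatim, combined with the finiteness of $H^d(K,\mu_2)_V$ for $K=k(C)$ over a base of type $(\mathrm{F}'_2)$ established in \cite{IRap}, and your sketch of that finiteness (finiteness of $K^M_d(\ell)/2$ via symbols and $(\mathrm{F}'_2)$, then the coniveau/Faddeev sequence and Hochschild--Serre for $\overline{C}\to C\to \mathrm{Spec}\: k$) matches the cited argument. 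The one imprecision is that for $d\geq 3$ the map $H^d_{\mathrm{\acute{e}t}}(C,\mu_2)\to H^d(K,\mu_2)$ need not be injective as your displayed sequence asserts; what Bloch--Ogus gives for a smooth curve is a surjection of $H^d_{\mathrm{\acute{e}t}}(C,\mu_2)$ onto the unramified subgroup, which is all that the finiteness transfer requires.
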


The proof proceeds along the same lines as the proofs of the Theorems \ref{T:ResX2}-\ref{T:ResX4} and relies on the finiteness results for unramified cohomology established in \cite{IRap}. We should point out that the list of groups in Theorem \ref{T:ResX5} was recently augmented by S.~Srinivasan \cite{Srimathy}. She proved the corresponding finiteness statement in the same situation as considered in the theorem for the universal covers $G = \widetilde{\mathrm{SU}}_n(D , h)$ of the special unitary groups $\mathrm{SU}_n(D , h)$, where $D$ is a central quaternion $K$-algebra and $h$ is a nondegenerate {\it skew-hermitian} form of dimension $n \geq 4$ with respect to the canonical involution. Recall that these groups are of type $\textsf{D}_n$, and in fact all simply connected $K$-groups of this type that split over a quadratic extension of $K$ are of the form $\widetilde{\mathrm{SU}}_n(D , h)$ (cf. \cite[Ch. 2, \S2.3.4]{Pl-R}). So, combining Theorem \ref{T:ResX5} with the main result of \cite{Srimathy}, we see that for each $r \geq 1$, there exist only finitely many $K$-isomorphism classes of absolutely almost simple simply connected algebraic $K$-groups $G$ of rank $r$ that belong to one of the types $\textsf{A}_n, \textsf{B}_n, \textsf{C}_n, \textsf{D}_n$ or $\textsf{G}_2$, split over a quadratic extension of $K$, and have good reduction at all $v \in V$. The types $\textsf{E}_6, \textsf{E}_7, \textsf{E}_8$, and $\textsf{F}_4$ have not been considered yet.

\vskip2mm

\subsection{Absolutely almost simple groups: Conjecture 6.1.}\label{S-ResultsHasse}
As we already remarked in \S\ref{S-OtherConj}, the truth of Conjecture 5.7 for the inner forms of an absolutely almost simple simply connected group $G$ automatically implies the properness of the global-to-local map
$$
\theta_{\overline{G} , V} \colon H^1(K , \overline{G}) \longrightarrow \prod_{v \in V} H^1(K_v , \overline{G})
$$
for the corresponding adjoint group $\overline{G}$. Thus, it follows from Theorem \ref{T:ResX1} that if $K$ is a finitely generated field equipped with a divisorial set of places $V$, then for
%given a finitely generated field $K$ equipped with divisorial set of places $V$, for
a central simple $K$-algebra $A$ of degree $n$ which is prime to $\mathrm{char}\: K$, the map
$$
\theta_{\mathrm{PSL}_{1 , A} , V} \colon H^1(K , \mathrm{PSL}_{1 , A}) \longrightarrow \prod_{v \in V} H^1(K_v , \mathrm{PSL}_{1 , A})
$$
is proper; in particular, the map $\theta_{\mathrm{PSL}_n , V} \colon H^1(K , \mathrm{PSL}_n) \longrightarrow \prod_{v \in V} H^1(K_v , \mathrm{PSL}_n)$ is proper\footnotemark. It should be pointed out, however, that the properness of the global-to-local map for the adjoint group does not automatically imply its properness for the corresponding simply connected group (or another isogenous group). In fact, the properness of the map global-to-local $\theta$  remains an open problem for the group $G = \mathrm{SL}_{1 , A}$ in the general case. We have the following partial result over two-dimensional global fields.

\footnotetext{Using twisting, one shows that the properness of $\theta_{\mathrm{PSL}_n , V}$ in fact implies the properness of $\theta_{\mathrm{PSL}_{1 , A} , V}$ for any central simple $K$-algebra $A$ of degree $n$.}

\begin{thm}\label{T:ResX6}{\rm (\cite[Theorem 5.7]{CRR-Spinor})}
Let $K$ be a two-dimensional global field, $V$ a divisorial set of places of $K$, and $n$ a square-free integer that is prime to $\mathrm{char}\: K$. Then for a central simple $K$-algebra $A$ of degree $n$ and $G = \mathrm{SL}_{1 , A}$, the map
$$
\theta_{G , V} \colon H^1(K , G) \longrightarrow \prod_{v \in V} H^1(K_v , G)
$$
is proper.
\end{thm}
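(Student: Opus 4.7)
The plan is to reformulate $\theta_{G,V}$ in explicitly group-theoretic terms via Hilbert's Theorem 90, reduce to prime degree via a primary decomposition of the Brauer class $[A]$ (this is where the square-free hypothesis enters), and then invoke Suslin's theorem together with the finiteness of unramified cohomology in degree 3 over a two-dimensional global field.

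First, applying Hilbert's Theorem 90 to the exact sequence $1 \to G \to \mathrm{GL}_{1,A} \xrightarrow{\Nrd} \mathbb{G}_m \to 1$ and its counterpart over each completion $K_v$, I would identify $H^1(K, G) = K^\times/\Nrd(A^\times)$ and $H^1(K_v, G) = K_v^\times/\Nrd(A_v^\times)$, where $A_v = A \otimes_K K_v$. Under these identifications, $\theta_{G, V}$ becomes a homomorphism of abelian groups, so properness is equivalent to the finiteness of the kernel $\text{\brus SH}(G, V) = \{[a] \in K^\times/\Nrd(A^\times) : a \in \Nrd(A_v^\times) \text{ for all } v \in V\}$.

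Next, writing $n = p_1 \cdots p_s$ with distinct primes $p_i$ prime to $\mathrm{char}\: K$, the Chinese Remainder decomposition ${}_n\Br(K) = H^2(K, \mu_n) = \bigoplus_i H^2(K, \mu_{p_i})$ expresses the Brauer class as $[A] = [A_1] + \cdots + [A_s]$, where each $A_i$ can be chosen of degree $p_i$. A direct calculation using the identity $\Nrd_A(x \otimes 1) = \Nrd_{A_i}(x)^{n/p_i}$ for $x \in A_i$ and the coprimality of the integers $\{n/p_i\}$ (whose gcd is $1$ precisely because $n$ is square-free) shows that $\Nrd(A^\times) = \bigcap_i \Nrd(A_i^\times)$, and similarly over each $K_v$. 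Hence there is a natural embedding $\text{\brus SH}(G, V) \hookrightarrow \prod_i \text{\brus SH}(\mathrm{SL}_{1, A_i}, V)$, reducing the claim to the prime-degree case.

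For $A$ of prime degree $p$, the key input is Suslin's theorem: cup product with $[A] \in H^2(K, \mu_p)$ yields an injection $\phi_A : K^\times/\Nrd(A^\times) \hookrightarrow H^3(K, \mu_p^{\otimes 2})$, $[a] \mapsto (a) \cup [A]$, and the analogous injection holds over each $K_v$. Functoriality gives a commutative square
\[
\begin{array}{ccc}
K^\times/\Nrd(A^\times) & \xrightarrow{\phi_A} & H^3(K, \mu_p^{\otimes 2}) \\
\downarrow & & \downarrow \\
\prod_{v \in V} K_v^\times/\Nrd(A_v^\times) & \longrightarrow & \prod_{v \in V} H^3(K_v, \mu_p^{\otimes 2})
\end{array}
\]
with the vertical arrows being the natural restriction maps. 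Hence $\phi_A$ sends $\text{\brus SH}(\mathrm{SL}_{1, A}, V)$ into the kernel of the right-hand restriction, and since the residue map $\partial^3_v$ factors through $H^3(K_v, \mu_p^{\otimes 2})$, this kernel lies inside the unramified cohomology group $H^3(K, \mu_p^{\otimes 2})_V$. Applying the finiteness of unramified degree-3 cohomology over two-dimensional global fields, established in the proof of Theorem \ref{T:ResX2} (and which extends from $\mu_2$-coefficients to $\mu_p$-coefficients for any $p$ prime to $\mathrm{char}\: K$ by the same cohomological machinery), together with the injectivity of $\phi_A$, concludes the proof in the prime-degree case, and the embedding from the previous step completes it in general.

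The main obstacle is the invocation of Suslin's theorem: this is the deep ingredient that identifies the image of the reduced norm with the kernel of a cup product, and its availability precisely in prime degree is the reason the square-free hypothesis on $n$ is essential. The finiteness of unramified $H^3$ is the other nontrivial input, and is the reason the result is confined to the two-dimensional global setting rather than to arbitrary finitely generated fields.
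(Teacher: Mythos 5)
Your proposal is correct and follows essentially the same route as the proof in \cite[Theorem 5.7]{CRR-Spinor} (the survey itself only cites the result): identify $H^1(K,\mathrm{SL}_{1,A})$ with $K^\times/\mathrm{Nrd}(A^\times)$ so that properness reduces to finiteness of the kernel, use the known square-free case of Suslin's conjecture (injectivity of cup product with $[A]$) to embed that kernel into the unramified cohomology group $H^3(K,\mu_n^{\otimes 2})_V$, and conclude by the finiteness of the latter over a two-dimensional global field. Your explicit primary decomposition together with the identity $\mathrm{Nrd}(A^{\times})=\bigcap_i \mathrm{Nrd}(A_i^{\times})$ merely unwinds how the square-free case of the reduced-norm criterion is deduced from the prime-index case, so this is a presentational rather than a substantive difference.
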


Next, for {\it odd} integers $n \geq 5$, the adjoint group for $G = \mathrm{Spin}_n(q)$ is $\overline{G} = \mathrm{SO}_n(q)$. So, in this case, Theorem \ref{T:ResX2} automatically implies the properness of the map
$$
\theta_{\mathrm{SO}_n(q) , V} \colon H^1(K , \mathrm{SO}_n(q)) \longrightarrow \prod_{v \in V} H^1(K_v , \mathrm{SO}_n(q))
$$
when $K$ is a two-dimensional global field of characteristic $\neq 2$ and $V$ is a divisorial set of places of $K$.
In fact, one proves that this map is proper for {\it all} $n \geq 5$ --- see \cite[Theorem 1.3]{CRR-Spinor}. Moreover, still assuming that $K$ is a two-dimensional global field and $V$ is a divisorial set of places of $K$, we establish in \cite{CRR-Spinor} that this map is proper for
special unitary groups $\mathrm{SU}_n(L/K , h)$ and $\mathrm{SU}_n(D , h)$ over quadratic extensions and quaternion algebras, and for groups of type $\textsf{G}_2$.
In addition, in \cite{RR-Tori}, we show that the map $\theta_{G , V}$ is proper for the groups $\mathrm{SO}_n(q)$, $\mathrm{SU}_n(L/K , h)$, and $\mathrm{SU}_n(D , h)$ in the previous notations when $K$ is a purely transcendental extension $k(x_1 , x_2)$ of transcendence degree two of a number field $k$ and $V$ is any divisorial set of places of $K$. The next result, also obtained in \cite{RR-Tori}, deals with  purely transcendental extensions of number fields of {\it any} (finite) transcendence degree.

\begin{thm}\label{T:ResX7}
Let $k$ be a number field and suppose $K = k(x_1, \ldots , x_r)$ is a purely transcendental extension of $k$ or $K = k(X)$ is the function field of a
Severi-Brauer variety $X$ over $k$ associated with a central division algebra $D$ over $k$ of degree $\ell$, and let $V$ be a divisorial set of places of $K$.
Then in each of the following situations

\vskip2mm

\noindent $\bullet$ \parbox[t]{16cm}{$G = \mathrm{SL}_{1 , A}$ where $A$ is a central simple $K$-algebra of a square-free degree $m$ such that $k$ contains a primitive $m$-th root of unity, and either $m$ is relatively prime to $\ell$ or $\ell$ is a prime number if $K$ is the function field of a Severi-Brauer variety;}

\vskip1mm

\noindent $\bullet$ \parbox[t]{16cm}{$G$ is a simple algebraic group of type $\textsf{G}_2$ and either $\ell$ is odd or $\ell = 2$ if $K$ is the function field of a Severi-Brauer variety,}

\vskip2mm

\noindent the global-to-local map $\theta_{G , V} \colon H^1(K , G) \to \prod_{v \in V} H^1(K_v , G)$ is proper.
\end{thm}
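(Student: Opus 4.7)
The plan is to follow the general cohomological strategy already used throughout the paper: reduce properness to finiteness of suitably twisted Tate--Shafarevich sets, embed these into unramified Galois cohomology with finite coefficients, and then invoke the finiteness theorems for unramified cohomology established in \cite{RR-Tori} for the fields at hand. For $G = \mathrm{SL}_{1,A}$, the set $H^1(K,G) \simeq K^\times / \Nrd(A^\times)$ is an abelian group, so $\theta_{G,V}$ is a group homomorphism, and its properness is equivalent to the finiteness of $\ker \theta_{G,V} = \text{{\brus SH}}(G,V)$. For $G$ of type $\textsf{G}_2$, the center and the group of diagram automorphisms are trivial, so every twist of $G$ is again a group of type $\textsf{G}_2$; standard twisting then reduces properness of $\theta_{G,V}$ to a finiteness statement for $\text{{\brus SH}}(G',V)$ that must hold uniformly over all such twists $G'$.

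I would next interpret these Tate--Shafarevich sets cohomologically. For $G = \mathrm{SL}_{1,A}$ with $A$ of square-free degree $m$ and $\mu_m \subset k$, the theorem of Merkurjev--Suslin supplies an injection $K^\times/\Nrd(A^\times) \hookrightarrow H^3(K, \mu_m^{\otimes 2})$ given by $x \mapsto (x) \cup [A]$; since $[A]$ is unramified at every $v$ at which $G$ has good reduction, the image of $\text{{\brus SH}}(G,V)$ lands in the unramified subgroup $H^3(K, \mu_m^{\otimes 2})_V$. For $G$ of type $\textsf{G}_2$, the classification of octonion algebras by their norm (a 3-fold Pfister form), together with the Arason invariant $e_3$, identifies $H^1(K, G)$ with a set of classes in $H^3(K, \mu_2)$, and good reduction at $v$ corresponds via a short residue computation to being unramified at $v$; hence $\text{{\brus SH}}(G,V)$ embeds into $H^3(K, \mu_2)_V$. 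In both cases the problem is thereby reduced to the finiteness of a suitable unramified degree-three cohomology group over $K$.

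I would then establish the required finiteness of $H^3(K, \mu_n^{\otimes 2})_V$ by exploiting the geometric structure of $K$. For $K = k(x_1, \ldots, x_r)$, one proceeds by induction on $r$ via the Bloch--Ogus/Faddeev localization sequence, which relates the unramified cohomology of $K$ to that of $k(x_1, \ldots, x_{r-1})$ together with residues along height-one primes; this reduces, ultimately, to finiteness results for unramified cohomology over the number field $k$, which follow from Poitou--Tate duality and the work of Kato on local-global principles. For $K = k(X)$, the function field of a Severi--Brauer variety of dimension $\ell - 1$, the Faddeev sequence is replaced by its analogue adapted to the geometry of $X$, where the kernel of restriction $H^i(k,-) \to H^i(K,-)$ is controlled by classes obtained from $[D]$ via cup products (the higher analogue of Amitsur's theorem). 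The hypothesis $\gcd(m,\ell) = 1$, or that $\ell$ is prime when $K = k(X)$, is precisely what ensures that this kernel is negligible at the relevant torsion level, or admits a sufficiently simple description to be analyzed.

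The main obstacle is the Severi--Brauer case, which demands a careful analysis of how the Brauer class $[D]$ propagates through the spectral sequence for $X \to \mathrm{Spec}\, k$ and interacts with the Merkurjev--Suslin symbol (for $\mathrm{SL}_{1,A}$) or the Arason invariant (for $\textsf{G}_2$). Two further technical points deserve attention: first, one must verify that the embeddings of the Tate--Shafarevich sets into unramified cohomology are compatible with the passage between different divisorial sets, which follows from the commensurability discussed in \S\ref{S-ConjFGfields}; second, in the $\textsf{G}_2$ case the twisting argument needs to be made uniform over all inner forms, which is straightforward because the Arason invariant behaves functorially under twisting by $\textsf{G}_2$-cocycles. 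Once these points are settled, the finiteness of the relevant unramified cohomology immediately yields the properness of $\theta_{G,V}$ via the injective maps constructed above.
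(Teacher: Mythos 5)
Your proposal is correct and follows essentially the same route as the paper, which defers the details to \cite{RR-Tori}: reduce properness to finiteness of the relevant Tate--Shafarevich sets (directly for the abelian set $H^1(K,\mathrm{SL}_{1,A})\simeq K^{\times}/\Nrd(A^{\times})$, by twisting for type $\textsf{G}_2$), embed these into unramified degree-three cohomology via the Merkurjev--Suslin reduced-norm criterion for square-free index and the Arason invariant of the octonion norm form, and then invoke the finiteness of the unramified $H^3$ over purely transcendental extensions and function fields of Severi--Brauer varieties established in \cite[\S 5]{RR-Tori}. One small repair: the reason $(x)\cup[A]$ lands in $H^3(K,\mu_m^{\otimes 2})_V$ for $x$ in the kernel is not that $[A]$ is unramified at places of good reduction (no good-reduction hypothesis is made here, and unramifiedness of $[A]$ alone would not suffice for arbitrary $x$), but simply that $(x)\cup[A]$ vanishes in $H^3(K_v,\mu_m^{\otimes 2})$ for every $v\in V$ and the residue maps factor through the completions.
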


\vskip2mm

We conclude with the following theorem, which collects available results on the properness of the global-to-local map over function fields of curves. The proofs are based on a combination of the arguments used to establish properness in \cite{CRR-Spinor}, together with the finiteness results for unramified cohomology obtained in \cite{IRap}.

\begin{thm}\label{T:ResX8}
Let $K = k(C)$ be the function field of a smooth geometrically integral curve $C$ over a field $k$, and let $V$ be the set of discrete valuations
of $K$ associated with the closed points of $C$. In each of the following situations:

\vskip2mm

\noindent $\bullet$ \parbox[t]{16cm}{$G = \mathrm{SL}_{1 , A}$, where $A$ is a central simple $K$-algebra of a square-free degree $n$ prime to
$\mathrm{char}\: k$ and such that $k$ satisfies condition $(\mathrm{F}'_n)$;}

\vskip2mm

\noindent and assuming that $k$ is of characteristic $\neq 2$ and satisfies $(\mathrm{F}'_2)$

\vskip2mm

\noindent $\bullet$ \parbox[t]{16cm}{$G = \mathrm{SO}_n(q)$, where $q$ is a nondegenerate quadratic form over $K$ of dimension $n \geq 5$;}

\vskip1mm

\noindent $\bullet$ \parbox[t]{16cm}{$G = \mathrm{SU}_n(L/K , h)$, where $h$ is nondegenerate hermitian form of dimension $n \geq 2$ over a quadratic
extension $L/K$ with respect to its nontrivial automorphism;}

\vskip1mm

\noindent $\bullet$ \parbox[t]{16cm}{$G = \mathrm{SU}_n(D , h)$, where $h$ is a nondegenerate hermitian form of dimension $n \geq 1$ over a central
quaternion division $K$-algebra with respect to the canonical involution'}

\vskip1mm

\noindent $\bullet$ \parbox[t]{16cm}{$G$ is of type $\textsf{G}_2$}

\vskip2mm

\noindent the global-to-local map $\theta_{G , V} \colon H^1(K , G) \to \prod_{v \in V} H^1(K_v , G)$ is proper.
\end{thm}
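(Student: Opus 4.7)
The plan is to adapt, case by case, the arguments developed in \cite{CRR-Spinor} for properness over two-dimensional global fields, substituting as input the finiteness results for unramified cohomology of $K = k(C)$ over base fields of type $(\mathrm{F}'_m)$ established in \cite{IRap}. The latter play here the role that Kato's local-global principle (and Jannsen's refinement of it) played in \cite{CRR-Spinor}.

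First, by the standard twisting principle in nonabelian Galois cohomology (cf. \cite[Ch. I, \S5.4]{Serre-GC}), the map $\theta_{G,V}$ is proper if and only if, for every cocycle $\xi \in Z^1(K,G)$, the Tate-Shafarevich kernel $\text{{\brus SH}}(G^\xi, V)$ of the twisted group $G^\xi$ is finite. A key feature of the present list is that, in each case, the twist $G^\xi$ remains a $K$-group of exactly the same form: a twist of $\mathrm{SL}_{1,A}$ is of the form $\mathrm{SL}_{1,A'}$ for another central simple $K$-algebra of the same degree, a twist of $\mathrm{SO}_n(q)$ is of the form $\mathrm{SO}_n(q')$, and analogously for the unitary and $\textsf{G}_2$ cases. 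Hence it suffices to prove the finiteness of $\text{{\brus SH}}(G, V)$ uniformly for $G$ running over the listed families.

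Next, for each type, I would translate the finiteness of $\text{{\brus SH}}(G,V)$ into that of one or more unramified cohomology groups via the appropriate cohomological invariants. For $G = \mathrm{SL}_{1,A}$ with $A$ of square-free degree $n$, Hilbert 90 identifies $H^1(K,G)$ with $K^\times/\Nrd(A^\times)$, and the connecting map from the sequence $1 \to \mu_n \to \mathrm{SL}_n \to \mathrm{PGL}_n \to 1$ (twisted by $A$) embeds $\text{{\brus SH}}(G,V)$ into a finite combination of the unramified Brauer group ${}_n\Br(K)_V$ and the quotient $K^\times/(U_V \cdot {K^\times}^n)$; both are finite by \cite{IRap} together with the assumption $(\mathrm{F}'_n)$ on $k$. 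For the orthogonal, unitary, and $\textsf{G}_2$ groups (under $\mathrm{char}\: k \neq 2$ and $(\mathrm{F}'_2)$), the classical cohomological invariants -- the discriminant, the Hasse-Witt (Clifford) invariant, the Arason invariant in $H^3(K,\mu_2)$, the invariants of hermitian forms over quadratic extensions or quaternion algebras, and the $f_3$-invariant classifying octonion algebras in the $\textsf{G}_2$ case -- bound $\text{{\brus SH}}(G,V)$ by finitely many pieces inside unramified groups $H^d(K,\mu_2)_V$ with $d \leq 3$, each of which is finite by \cite[\S5]{IRap}.

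The principal obstacle will be the analysis at the level of unramified $H^3(K,\mu_2)_V$, which controls both the $\textsf{G}_2$ case and the Arason-invariant piece of the spinor case; this is precisely where the $(\mathrm{F}'_2)$ hypothesis on $k$ and the corresponding finiteness results of \cite{IRap} are indispensable, and where the argument is most delicate. A secondary, more bookkeeping-level difficulty is the case $G = \mathrm{SU}_n(D,h)$, where one must simultaneously control the underlying quaternion algebra $D$ and the hermitian form $h$, so that the cohomology of $G$ unfolds as a two-step tower built from ${}_2\Br(K)_V$ (for the algebra) and the mod-$2$ invariants of $h$. Once the requisite finiteness inputs from \cite{IRap} are in place, the argument in each individual case mirrors that of the corresponding case in \cite{CRR-Spinor} with only cosmetic modifications.
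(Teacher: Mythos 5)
Your proposal matches the paper's approach: the authors prove Theorem \ref{T:ResX8} precisely by combining the properness arguments of \cite{CRR-Spinor} (twisting to reduce properness to finiteness of the Tate--Shafarevich sets of the twisted groups, then bounding these via cohomological invariants inside unramified cohomology groups $H^d(K,\mu_2)_V$, $d \leq 3$, resp. ${}_n\mathrm{Br}(K)_V$) with the finiteness results for unramified cohomology over $k(C)$ with $k$ of type $(\mathrm{F}'_m)$ established in \cite{IRap}. Your elaboration of the case-by-case reduction is consistent with what the paper (very briefly) indicates.
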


\vskip3mm

\subsection{Condition (T)}\label{S-ResultsT} As mentioned previously, we have been able to establish Condition (T) for algebraic tori in all situations --- see Theorem \ref{T:Res3} for the precise statement. However, for general reductive groups, the analysis of Condition (T) is only unfolding. We begin with the following statement over function fields of curves.

\begin{thm}\label{T:ResT1}{\rm (\cite[Theorem 4.1]{CRR-Israel})}
Let $K = k(C)$ be the function field of a smooth geometrically integral affine $C$ curve over a finitely generated field $k$, and
let $V$ be the set of discrete valuations of $K$ associated with closed points of $C$. Then Condition {\rm (T)} with respect to $V$ holds for any connected reductive split $K$-group $G$.
\end{thm}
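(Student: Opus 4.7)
The plan is to use the Iwasawa decomposition place-by-place to reduce Condition (T) for $G$ to Condition (T) for a Borel subgroup $B = T \ltimes U$ defined over $K$, and then to handle the split maximal torus $T$ and the split unipotent radical $U$ separately. Since $G$ is split, a standard comparison between the given embedding $G \subset \mathrm{GL}_n$ and a Chevalley $\Z$-form of $G$ shows that there is a finite subset $S_1 \subset V$ such that for every $v \in V \setminus S_1$ the subgroup $G(\mathcal{O}_v)$ is a hyperspecial maximal compact of $G(K_v)$, so that the Iwasawa decomposition $G(K_v) = G(\mathcal{O}_v) \cdot B(K_v)$ is available. Writing $g_v = k_v b_v$ place-by-place --- choosing $b_v = 1$ whenever $g_v \in G(\mathcal{O}_v)$, which holds for all but finitely many $v$ --- produces adeles $(k_v) \in G(\mathbb{A}^{\infty}(K, V \setminus S_1))$ and $(b_v) \in B(\mathbb{A}(K, V \setminus S_1))$, reducing the problem to Condition (T) for $B$ over $V \setminus S_1$.

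For the torus, $T \simeq \mathbb{G}_m^d$ gives $\mathrm{cl}(T, K, V') \simeq \Pic(C')^d$, where $C' \subset C$ is the open subcurve corresponding to $V' \subset V$. The key input is that $\Pic(C)$ is \emph{finitely generated} as an abelian group. Indeed, for the smooth projective completion $\bar{C}$, the Picard group $\Pic(\bar{C})$ is an extension of a subgroup of $\Z$ (the image of the degree map) by $\Pic^0(\bar{C}) = J(k)$, where $J$ is the Jacobian of $\bar{C}$; the Lang--N\'eron theorem (Mordell--Weil over finitely generated fields) asserts that $J(k)$ is finitely generated, so $\Pic(\bar{C})$, and hence its quotient $\Pic(C)$, is finitely generated. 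Choosing a finite set $P_2$ of closed points of $C$ whose divisor classes generate $\Pic(C)$ yields $\Pic(C \setminus P_2) = 0$, and letting $S_2 \subset V$ be the corresponding set of valuations establishes Condition (T) for $T$ over $V \setminus S_2$.

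For the unipotent radical, one argues inductively along a central filtration $U = U^{(0)} \supset U^{(1)} \supset \cdots \supset U^{(n)} = 1$ whose successive quotients are isomorphic to $\mathbb{G}_a$. The base case amounts to the identity $\mathbb{A}(K, V) = K + \mathbb{A}^{\infty}(K, V)$ for $\mathbb{G}_a$: any adele $(a_v)$ that fails integrality on a finite set $S_0 \subset V$ must be matched, modulo integral adeles, by a rational function on $\bar{C}$ with prescribed polar parts at the closed points of $S_0$ and regular at every other closed point of $C$. Because $\bar{C} \setminus C$ is nonempty, such a rational function exists by Riemann--Roch applied to divisors supported on $\bar{C} \setminus C$ of sufficiently high degree (equivalently, by strong approximation for $\mathbb{G}_a$ on $\bar{C}$ with respect to this nonempty set of places). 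Propagating this conclusion up the filtration gives Condition (T) for $U$ with no places removed.

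To combine these, given $(b_v) \in B(\mathbb{A}(K, V \setminus (S_1 \cup S_2)))$, write $b_v = t_v u_v$, apply Condition (T) for $T$ to obtain $(t_v) = (t_v^{\infty}) \cdot t$ with $t \in T(K)$, and rewrite $(b_v) = (t_v^{\infty}) \cdot (t u_v t^{-1}) \cdot t$; the middle factor remains in $U(\mathbb{A})$ because conjugation by a fixed $t \in T(K)$ preserves $\mathcal{O}_v$-integrality at almost all $v$. Condition (T) for $U$ then finishes the argument, yielding Condition (T) for $G$ with $S = S_1 \cup S_2$. The principal technical hurdle is the finite generation of $\Pic(C)$, which is where the hypothesis that $k$ be a finitely generated field enters substantively through Lang--N\'eron; the remaining ingredients (Iwasawa, the filtration of split unipotent groups, and Riemann--Roch) are standard, though careful bookkeeping is required throughout to preserve adelic integrality.
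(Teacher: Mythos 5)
Your argument is correct and follows essentially the same strategy as the paper's (sketched) proof: reduce Condition (T) to the maximal split torus by strong-approximation considerations for the unipotent/root-subgroup part, and then settle the torus case using the finite generation of the Picard group. Your Iwasawa-decomposition packaging and your appeal to Lang--N\'eron (rather than to finite generation of the Picard group of a divisorial model, which is what the paper invokes) are only cosmetic variations on the same underlying ingredients.
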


Using considerations involving strong approximation, the proof of this result essentially reduces to verifying Condition (T) for a maximal split torus of $G$, where it follows from the finite generation of the Picard group $\mathrm{Pic}(K , V)$ (note that $V$ can be included in a divisorial set of places of $K$). We should point out that questions about strong approximation over fields other than global are interesting in their own right, and we will briefly comment on the initial steps in their study in \S\ref{S-Strong}.

%We will review the problem of strong approximation over fields other than global in \S 9(?).

In the higher-dimensional situation, the investigation of Condition (T) is expected to be a very challenging problem. Indeed, as we will see shortly, already in the case $G = \mathrm{GL}_n$, it is related to a famous conjecture of H.~Bass \cite{Bass}, which has not seen much progress since it was posed in 1972. Now, it makes sense to consider this problem first in the more general context of commutative algebra without any finite generation assumptions. So, let $R$ be a noetherian integral domain that is integrally closed in its field of fractions $K$. We denote by $\mathrm{P}$ the set of height one primes of $R$ and let $V$ be the associated set of discrete
valuations of $K$. As in \S\ref{S-OtherConj}, we set $G(\mathbb{A}(K , V))$ to be the corresponding adele group, and let $G(\mathbb{A}^{\infty}(K , V))$ and $G(K)$ denote the subgroups of integral and principal adeles, respectively. We first observe that for $G = \mathrm{GL}_n$ the class set
$$
\mathrm{cl}(G, K, V) = G(\mathbb{A}^{\infty}(K , V)) \backslash G(\mathbb{A}(K , V)) / G(K)
$$
has the following interpretation in terms of {\it reflexive} $R$-modules. Given an $R$-module $M$, we let $M^* = \mathrm{Hom}_R(M , R)$ denote the dual module. Then there is a natural homomorphism of $R$-modules $M \to M^{**} = (M^*)^*$, and $M$ is called reflexive if this homomorphism is an isomorphism. An $R$-submodule $M$ of $W = K^n$ is called a {\it lattice} if it is finitely generated and contains a $K$-basis of $W$. We let $\textsf{Refl}_n(R)$ (resp., $\textsf{Proj}_n(R)$) denote the set of isomorphism classes of lattices in $W = K^n$ that are reflexive (resp., projective) $R$-modules. One proves that {\it there is a natural bijection between the class set $\mathrm{cl}(\mathrm{GL}_n, K, V)$ and} $\textsf{Refl}_n(R)$ --- see \cite[Proposition B.2]{CRR-Israel}. (To be more precise, one actually constructs a bijection between the class set defined using {\it rational} adeles and $\textsf{Refl}_n(R)$, but since $\mathrm{GL}_n$ has weak approximation with respect to any finite set of places, the full adeles and the rational adeles result in the same class set.) We note that in general, $\textsf{Proj}_n(R)$ is a proper subset of $\textsf{Refl}_n(R)$, but it is known that if $R$ is a regular integral domain of Krull dimension $\leq 2$, then $\textsf{Refl}_n(R) = \textsf{Proj}_n(R)$ for all $n \geq 1$ (cf. \cite[Proposition 2]{Sam} and \cite[Corollary 6, p. 78]{Serre-LocAlg}).

%[Samuel [29], Proposition 2] and [Serre, Local Algebra, Corollary 6, p. 78]).

\vskip2mm

Next, let $K_0(R)$ be the Grothendieck group of $R$ --- cf., for example, \cite[\S1]{MilnorKTheory} for the definition and basic properties. The following conjecture was proposed by Bass (\cite[\S9.1]{Bass-Conj}).

%[.., \S 9].

\vskip2mm

\noindent {\bf Conjecture 7.15} (Bass) {\it Let $R$ be a finitely generated $\mathbb{Z}$-algebra which is a regular ring. Then the group $K_0(R)$ is finitely generated.}

\vskip2mm

\noindent (We note that Bass actually conjectured the finite generation of all groups $K_n(R)$ $(n \geq 0)$ for such $R$.)

\vskip2mm

\addtocounter{thm}{1}

The following statement reveals a rather surprising connection between Conjecture 7.15 and Condition (T).
% the conjecture of Bass and Condition (T).
\begin{prop}\label{P:ResX1}{\rm (\cite[Corollary 6.16]{CRR-Israel})}
Let $R$ be an integral domain which is a finitely generated $\mathbb{Z}$-algebra and a regular ring of Krull dimension $\leq 2$, and let $V$ be the set of discrete
valuations of the fraction field $K$ associated with the height one prime ideals of $R$. If Conjecture 7.15 is true then $G = \mathrm{GL}_n$ for $n \geq 3$ satisfies Condition (T) with respect to $V$.
\end{prop}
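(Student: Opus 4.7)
The plan is to translate Condition (T) for $\mathrm{GL}_n$ into a statement about Picard groups of localizations of $R$, and then to choose a suitable localization using Bass's conjecture. By Proposition B.2 of the excerpt, combined with the equality $\textsf{Refl}_n(R') = \textsf{Proj}_n(R')$ that holds whenever $R'$ is regular of Krull dimension at most two, we obtain an identification
$$\mathrm{cl}(\mathrm{GL}_n, K, V \setminus S) \; \cong \; \textsf{Proj}_n(R'), \qquad R' := \bigcap_{v \in V \setminus S} \mathcal{O}_v.$$
I would take $S$ of the special form $S = V(a) := \{ v \in V : v(a) \neq 0 \}$ for some nonzero $a \in R$. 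Condition (A) makes $S$ finite, and because $R$ is a normal noetherian (hence Krull) domain, the intersection $R'$ coincides with the localization $R[1/a]$. Crucially, $R[1/a]$ is again a finitely generated $\Z$-algebra and a regular integral domain of Krull dimension at most two, so Proposition B.2 applies equally well to it.

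Next, since $n \geq 3 > 2 \geq \dim R[1/a]$, Serre's splitting-off theorem yields that every rank $n$ projective $R[1/a]$-module has the form $L \oplus R[1/a]^{n-1}$ for some line bundle $L$, and Bass cancellation upgrades this to a bijection $\textsf{Proj}_n(R[1/a]) \stackrel{\det}{\cong} \mathrm{Pic}(R[1/a])$. Thus the task reduces to selecting $a$ so that $\mathrm{Pic}(R[1/a]) = 0$. Here Conjecture 7.15 enters: the assumed finite generation of $K_0(R)$ forces finite generation of $\mathrm{Pic}(R)$, via the surjective determinant homomorphism $K_0(R) \twoheadrightarrow \mathrm{Pic}(R)$. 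Pick generators $[L_1], \ldots, [L_r]$ of $\mathrm{Pic}(R)$. Since $R$ is regular, each $L_i$ is represented by a Cartier divisor, supported on finitely many height one primes $\mathfrak{p}_1, \ldots, \mathfrak{p}_m$ of $R$, and I pick a nonzero element $a \in \mathfrak{p}_1 \cap \cdots \cap \mathfrak{p}_m$. The localization exact sequence for Picard groups on the regular scheme $\mathrm{Spec}(R)$,
$$\bigoplus_{D \subset V(a)} \Z \; \longrightarrow \; \mathrm{Pic}(R) \; \longrightarrow \; \mathrm{Pic}(R[1/a]) \; \longrightarrow \; 0,$$
then puts each $[L_i]$ in the image of the leftmost map by construction, hence in the kernel of the restriction to $\mathrm{Pic}(R[1/a])$. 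Since the $[L_i]$ generate $\mathrm{Pic}(R)$, we obtain $\mathrm{Pic}(R[1/a]) = 0$, and Condition (T) holds with $S = V(a)$.

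The main delicate point is the compatibility of the bijection of Proposition B.2 with localization: one must check that passing from $V$ to $V \setminus V(a)$ on the adelic side corresponds exactly to passing from $R$ to $R[1/a]$ on the module side, and that the height one primes of $R[1/a]$ correspond precisely to the valuations in $V \setminus V(a)$. Both rest on $R$ being a normal noetherian domain, so that $R = \bigcap_{v \in V} \mathcal{O}_v$ and localization at $a$ commutes with this intersection. The remaining ingredients---the surjectivity of $\det$ on $K_0$, Serre's splitting-off theorem, Bass cancellation for rank $n > \dim$, and the localization sequence for Picard groups of regular schemes---are classical, so once the translation to Picard groups is in place the argument is essentially a bookkeeping exercise.
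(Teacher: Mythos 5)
Your overall strategy -- identify $\mathrm{cl}(\mathrm{GL}_n,K,V\setminus V(a))$ with $\textsf{Proj}_n(R[1/a])$ via Proposition B.2 and the equality $\textsf{Refl}_n=\textsf{Proj}_n$ for regular rings of dimension $\leq 2$, and then choose $a$ so that the relevant invariant of $R[1/a]$ vanishes -- is the right skeleton, and the bookkeeping about $R[1/a]=\bigcap_{v\notin V(a)}\mathcal{O}_v$ and its height one primes is fine. But there is a genuine gap at the central step: the claimed bijection $\textsf{Proj}_n(R[1/a])\cong\mathrm{Pic}(R[1/a])$ via the determinant is false for rings of Krull dimension $2$. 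Serre's splitting theorem only lets you peel off free summands while the rank exceeds the dimension, so for $\dim=2$ a rank $n\geq 3$ projective module splits as $Q\oplus R[1/a]^{\,n-2}$ with $Q$ of rank $2$, and $Q$ need not be $L\oplus R[1/a]$. What Bass cancellation actually gives for $n>\dim$ is that isomorphism coincides with stable isomorphism, i.e.\ $P\mapsto [P]-[R^{\,n}]$ identifies $\textsf{Proj}_n(R')$ with the \emph{reduced Grothendieck group} $\widetilde{K}_0(R')=\ker(\mathrm{rk})$, not with $\mathrm{Pic}(R')$. On a regular affine surface the quotient $\widetilde{K}_0/\mathrm{Pic}$ (essentially $SK_0$, detected by $c_2$) can be nonzero, so there exist rank $n$ projectives with trivial determinant that are not free; your choice of $a$, made only from divisors representing generators of $\mathrm{Pic}(R)$, does nothing to kill these classes.

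A symptom of the problem is that your argument never really needs Conjecture 7.15: $\mathrm{Pic}(R)$ is finitely generated \emph{unconditionally} for a normal finite-type $\mathbb{Z}$-scheme (Kahn's result quoted in Example 6.2), so if killing $\mathrm{Pic}$ sufficed, the proposition would not be conditional. The correct argument replaces $\mathrm{Pic}$ by $\widetilde{K}_0$ throughout: assuming Bass's conjecture, $\widetilde{K}_0(R)$ is finitely generated; since $R$ is regular, $\widetilde{K}_0(R)=F^1K_0(R)$ is generated by classes of modules supported in codimension $\geq 1$, so finitely many generators are supported on a proper closed subset $Z$; choosing $a\neq 0$ vanishing on $Z$ and using the localization sequence $G_0(R/aR)\to G_0(R)\to G_0(R[1/a])\to 0$ (with $G_0=K_0$ by regularity) yields $\widetilde{K}_0(R[1/a])=0$, whence $\textsf{Proj}_n(R[1/a])$ is a single element by Bass cancellation, and Condition (T) holds with $S=V(a)$.
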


The proof relies on the equality $\textsf{Refl}_n(R) = \textsf{Proj}_n(R)$ in the situation at hand, in conjunction with the following Cancellation Theorem due to Bass \cite{Bass}: {\it Let $R$ be a noetherian commutative ring of Krull dimension $d < \infty$, and let $P$ and $Q$ be finitely generated projective $R$-modules of constant rank $r > d$. If $P \oplus F \simeq Q \oplus F$, with $F$ free and finitely generated, then $P \simeq Q$.}

\vskip2mm

As we observed in \cite[\S3]{CRR-Israel}, Condition (T) can be used to show the finiteness of certain subgroups of unramified cohomology in degree 3 with $\mu_2$-coefficients (which is needed, in particular, for the analysis of groups of type $\mathsf{G}_2$ with good reduction). However, to implement this approach, we need Condition (T) to hold not for $\mathrm{GL}_n$, but rather for its $K$-forms $\mathrm{GL}_{1 , A}$, where $A$ is a central simple $K$-algebra of degree $n$. For this, we developed in \cite[Appendix C]{CRR-Israel} a descent procedure that, under some additional assumptions, enables one to derive Condition (T) for $\mathrm{GL}_{1 , A}$ from the fact that Condition (T) holds for $\mathrm{GL}_n$ over a suitable finite Galois extension $L/K$ that splits $A$.

While the investigation of Condition (T) is still in its initial stages, the range of its potential applications as well as its connections to various other problems, make this a very natural avenue for future work.

%, its range of potential applications, as well as its connections to various other problems,

\section{Applications to the genus problem}\label{S-AppGenus}

After surveying the available results on Conjectures 5.5, 5.7, and 6.1, we now turn to applications. In this section, we will consider applications of the Main Conjecture 5.7 to the genus problem for absolutely almost simple algebraic groups, while in the next one, we will relate it to the analysis of weakly
commensurable Zariski-dense subgroups of such groups, which in turn is linked to questions in differential geometry about isospectral and length-commensurable locally symmetric spaces. Historically,
many of these developments can be traced back to \cite{PrRap-WC}: although this work focused primarily on geometric problems, it became apparent that some of the ideas introduced therein should be
considered in a much more general context. Eventually, research in this direction has led to the
Main Conjecture. So, the reader should regard \S\S\ref{S-AppGenus} and \ref{S-WCZariski} of this article
as an overview of the
problems that motivated the Main Conjecture, and for which it provides a uniform approach (as it
does with regard to some other issues, such as the local-global principle).

\vskip2mm

\subsection{The genus problem for algebraic groups}\label{S-GenusAG} Given two reductive algebraic groups $G_1$ and $G_2$ over a field $K$, we say that $G_1$ and $G_2$ {\it have the same isomorphism classes of maximal $K$-tori} if every maximal $K$-torus $T_1$ of $G_1$ is $K$-isomorphic to some maximal $K$-torus $T_2$ of $G_2$, and vice versa.
\vskip2mm

\noindent {\bf Definition 8.1.} {\it Let $G$ be an absolutely simple simply connected algebraic group over a field $K$. The {\it genus} $\mathbf{gen}_K(G)$ of $G$ is the set of $K$-isomorphism classes of (inner) $K$-forms $G'$ of $G$ that have the same isomorphism classes of maximal $K$-tori as $G$.}

\vskip2mm

\noindent (We note that if $K$ is a finitely generated field, then any $K$-form $G'$ of $G$ that has the same isomorphism classes of maximal $K$-tori as $G$ is necessarily inner --- cf. \cite[Lemma 5.2]{PrRap-Kunming}. So, restricting ourselves to just inner forms in this context does not result in a loss of generality.)

\vskip2mm

In the most general terms, the goal of the {\it genus problem} is to characterize the genus of a given group, which is of course crucial for understanding how two (absolutely almost simple simply connected) algebraic groups are related given the fact that they have the same isomorphism classes of maximal $K$-tori. From a variety of more precise questions that one can ask in connection with the genus problem, we will focus on the following two.

\vskip2mm

\noindent {\bf Question 8.2.} {\it When does $\mathbf{gen}_K(G)$ reduce to a single element?}

\vskip1mm

\noindent {\bf Question 8.3.} {\it When is $\mathbf{gen}_K(G)$ finite?}

\vskip2mm

The basic case where $K$ is a number field was considered in \cite[Theorem 7.5]{PrRap-WC}, where the following result was established (although the term ``genus," which appeared later, was not used).

\addtocounter{thm}{3}

\begin{thm}\label{T:A1}
Let $G$ be an absolutely almost simple simply connected algebraic group over a number field $K$. Then

\vskip1.5mm

\noindent {\rm (1)} $\mathbf{gen}_K(G)$ is finite;

\vskip1.5mm

\noindent {\rm (2)} if $G$ is not of type $\textsf{A}_n$, $\textsf{D}_{2n+1}$ $(n > 1)$, or $\textsf{E}_6$, we have $\vert \mathbf{gen}_K(G) \vert = 1$.
\end{thm}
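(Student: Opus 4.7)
The strategy is to translate the genus problem into a question about the Tate-Shafarevich set $\Sha(\overline{G}, V^K)$ of the adjoint group $\overline{G}$, and then to estimate this set using the long exact sequence attached to the isogeny $G \to \overline{G}$ together with the Hasse principle for simply connected groups (Kneser-Harder-Chernousov) and Borel-Serre's properness theorem.

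First, I would observe that any $G' \in \gen_K(G)$ is automatically an inner form of $G$: the $*$-action of $\Ga(\Ks/K)$ on the Dynkin diagram is encoded in its action on the cocharacter lattice of any maximal $K$-torus, so the existence of a common maximal $K$-torus of $G$ and $G'$ forces the two $*$-actions to coincide. By Example 4.5, inner $K$-forms are then classified by $H^1(K,\overline{G})$, giving a map $\gen_K(G) \to H^1(K,\overline{G})$, $G' \mapsto \xi(G')$. The crucial step is to show that $\xi(G') \in \Sha(\overline{G}, V^K)$, i.e.\ $G'$ and $G$ are isomorphic over $K_v$ for every place $v$. For each $v$ one constructs a ``$v$-generic'' maximal $K$-torus $T \subset G$ by weak approximation, starting from a sufficiently generic maximal $K_v$-torus of $G_v$ whose Galois-module structure determines the Tits index of $G_v$. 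Since $T$ embeds as a maximal $K$-torus into $G'$ by the genus hypothesis, the Tits index of $G'_v$ must match that of $G_v$, and over non-archimedean $K_v$ the Tits index classifies absolutely almost simple simply connected groups up to isomorphism. A variant of the argument at archimedean places yields $K_v$-isomorphism there too.

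Once the map $\gen_K(G) \hookrightarrow \Sha(\overline{G}, V^K)$ is established, part (1) follows from the finiteness of $\Sha(\overline{G}, V^K)$. Using the long exact sequence attached to the central isogeny $1 \to Z \to G \to \overline{G} \to 1$ together with the Hasse principle $\Sha^1(G, V^K) = 0$ for the simply connected $G$ over the number field $K$, one obtains an injection $\Sha(\overline{G}, V^K) \hookrightarrow \Sha^2(Z, V^K)$, where $Z = Z(G)$ is the finite center; the latter is plainly finite. Alternatively, one may invoke the properness of the global-to-local map of Borel-Serre (recalled in the proof of Proposition 5.4) restricted to finite places, which suffices since the construction of $v$-generic tori is most transparent for $v \in V_f^K$. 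For part (2), when $G$ is not of type $\textsf{A}_n$, $\textsf{D}_{2n+1}$, or $\textsf{E}_6$, the center $Z$ is either trivial or an elementary abelian $2$-group (types $\textsf{B}_n$, $\textsf{C}_n$, $\textsf{D}_{2n}$, $\textsf{E}_7$, $\textsf{E}_8$, $\textsf{F}_4$, $\textsf{G}_2$); in all of these cases the Hasse-Brauer-Noether theorem together with Poitou-Tate duality forces $\Sha^2(Z, V^K) = 0$, whence $\xi(G') = 1$ and $G' \simeq G$. For the excluded types the center is $\mu_{n+1}$, $\mu_4$, or $\mu_3$ respectively, and $\Sha^2(Z, V^K)$ can genuinely fail to vanish, which is precisely why only finiteness survives in those cases.

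The principal obstacle in implementing this plan is the construction of $v$-generic maximal $K$-tori, especially at archimedean $v$ and in the presence of nontrivial Dynkin symmetries. One needs fine enough control over the Galois-module structure of a maximal torus to pin down the Tits index of the ambient group at $v$, and the weak approximation argument must be designed so as to preserve the relevant arithmetic invariants while being flexible enough to cover every inner form in the genus.
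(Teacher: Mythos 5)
The paper does not prove this theorem; it cites \cite[Theorem 7.5]{PrRap-WC}. Your proposed route, however, contains a step that is not merely unproved but false, and which would prove too much. The pivotal claim is that every $G' \in \gen_K(G)$ satisfies $G' \times_K K_v \simeq G \times_K K_v$ at \emph{every} place $v$, so that $\xi(G') \in \Sha(\overline{G}, V^K)$. Since the Hasse principle holds for adjoint groups over number fields (\cite[Theorem~6.22]{Pl-R}, invoked in \S\ref{S-Dedekind}), the set $\Sha(\overline{G}, V^K)$ is \emph{trivial}; your embedding would therefore yield $\vert \gen_K(G) \vert = 1$ for all types, contradicting the fact --- recorded immediately after the statement of the theorem --- that the genus can be made arbitrarily large for types $\textsf{A}_n$, $\textsf{D}_{2n+1}$, $\textsf{E}_6$. (Your own auxiliary computation leads to the same contradiction from the other side: $\Sha^2(\mu_m, V^K) = 0$ for \emph{every} $m$, since Hilbert's Theorem 90 embeds $\Sha^2(\mu_m, V^K)$ into the locally trivial part of $\Br(K)$, which vanishes by Albert--Brauer--Hasse--Noether; so the excluded types cannot be accounted for by a nonvanishing $\Sha^2(Z)$.) The error sits in the local step: the Tits index does \emph{not} classify absolutely almost simple simply connected groups over a non-archimedean local field. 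Degree-$5$ division algebras $D, D'$ over $\Q_p$ with invariants $1/5$ and $2/5$ give anisotropic groups $\mathrm{SL}_{1,D}$, $\mathrm{SL}_{1,D'}$ of inner type $\textsf{A}_4$ with the same (trivial) Tits index and the same maximal tori, yet they are non-isomorphic. Globally, two degree-$n$ division algebras have the same maximal subfields as soon as their local \emph{indices} agree everywhere, which does not force the local invariants to agree up to sign; the resulting norm-one groups lie in the same genus without being isomorphic over $K_v$ at every finite place.

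The actual argument is organized differently. For part (1) one only shows that $\xi(G')$ and $\xi(G)$ agree in $H^1(K_v,\overline{G})$ for \emph{almost all} $v$ (for instance, $G'$ inherits good reduction from $G$ --- cf.\ Theorem~\ref{T:A4} --- so both are quasi-split with the same unramified splitting field at almost all finite places), and then one uses the \emph{properness} of $\theta_{\overline{G},V}$ --- finiteness of all fibers, not triviality of the kernel --- due to Borel and Serre, together with the finiteness of each $H^1(K_v,\overline{G})$ at the remaining places; this is exactly the mechanism in the proof of Proposition~\ref{P:XXX1}. Part (2) does require a local isomorphism statement, but only for the non-excluded types: for these one shows that coincidence of maximal tori over $K_v$ (archimedean $v$ included) determines the local form --- this is precisely where the hypothesis that $-1$ lies in the Weyl group enters --- and then the Hasse principle for adjoint groups upgrades everywhere-local isomorphism to $G' \simeq G$ over $K$. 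Your instinct to compare local forms via carefully chosen maximal tori is sound, but what such tori detect is in general weaker than $K_v$-isomorphism, and the finiteness in part (1) must come from properness rather than from a vanishing Tate--Shafarevich set.
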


A noteworthy feature here is the completely different behavior of the groups of type $\textsf{D}_n$ for $n$ even and odd. This difference was worked out in \cite{PrRap-CMH} in the context of algebras with involution and in \cite{Gar} in the context of algebraic groups. Another observation is that the types excluded in part (2)
are precisely the types for which the automorphism of multiplication by $(-1)$ of the corresponding root system is not in the Weyl group of the root system. In fact, these types are honest exceptions: indeed, it follows from \cite[Theorem 9.12]{PrRap-WC}
that the genus for each of those types can be arbitrarily large.

\vskip2mm

Having addressed number fields, the next question is what can one expect regarding $\mathbf{gen}_K(G)$ over more general fields?
In order to provide some context for the conjecture that we will formulate in \S\ref{S-GenusConj} and also to motivate the genus problem in general, we will next briefly review the genus problem for division algebras (see \cite{CRR2} for a more detailed account).

\subsection{The genus problem for division algebras}\label{S-GenusDivAlg} Let $D_1$ and $D_2$ be two central division $K$-algebras of degree $n$. We say that $D_1$ and $D_2$ {\it have the same maximal subfields} if a degree $n$ field extension $P/K$ admits a $K$-embedding $P \hookrightarrow D_1$ if and only if it admits a $K$-embedding $P \hookrightarrow D_2$. Then one can ask the following natural question:

\vskip2mm

\noindent $(*)$ \parbox[t]{16cm}{\it Let $D_1$ and $D_2$ be central division algebras of the same degree. How are they related given the fact that they have the same maximal subfields?}

\vskip2mm

\noindent This question can be seen as an extension of the following famous theorem of Amitsur \cite{Amitsur}:

\vskip2mm

\begin{thm}\label{T:Amitsur}{\rm (Amitsur)} Let $D_1$ and $D_2$ be finite-dimensional central division algebras over a field $K$ that have the same \emph{splitting fields}, i.e for a field extension $F$, the algebra $D_1 \otimes_K F$ is $F$-isomorphic to
a matrix algebra $\mathrm{M}_{n_1}(F)$ if and only if the algebra $D_2 \otimes_K F$ is isomorphic to a matrix algebra $\mathrm{M}_{n_2}(F)$. Then $n_1 = n_2$ and the classes $[D_1]$ and $[D_2]$ in the Brauer group $\mathrm{Br}(K)$ generated the same subgroup, $\langle [D_1] \rangle = \langle [D_2] \rangle$.

\end{thm}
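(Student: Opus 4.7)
The plan is to exploit the function fields of the Severi-Brauer varieties attached to $D_1$ and $D_2$, together with Amitsur's theorem describing the kernel of the restriction map to such a function field in the Brauer group. This converts the hypothesis on splitting fields, which a priori is a statement about \emph{all} field extensions, into an explicit constraint relating $[D_1]$ and $[D_2]$ inside $\mathrm{Br}(K)$.

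First I would recall that to any central simple $K$-algebra $A$ of degree $n$ one associates a smooth projective $K$-variety $\mathrm{SB}(A)$, a twisted form of $\mathbb{P}^{n-1}_K$, characterized by the property that for any field extension $F/K$ the set $\mathrm{SB}(A)(F)$ is nonempty if and only if $A \otimes_K F$ is a matrix algebra. Taking the generic point shows that the function field $L_A := K(\mathrm{SB}(A))$ always splits $A$. Setting $L_i := K(\mathrm{SB}(D_i))$, each $L_i$ therefore splits $D_i$; by the hypothesis that $D_1$ and $D_2$ share the same splitting fields, $L_1$ must also split $D_2$ and $L_2$ must also split $D_1$.

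The central ingredient is then Amitsur's theorem on Severi-Brauer varieties, which asserts that for every finite-dimensional central simple $K$-algebra $A$ the kernel of the restriction map $\mathrm{Br}(K) \to \mathrm{Br}(L_A)$ is the cyclic subgroup $\langle [A] \rangle$ generated by $[A]$. Applying this with $A = D_1$ and using that $L_1$ splits $D_2$ yields $[D_2] \in \langle [D_1] \rangle$; the symmetric application with $A = D_2$ yields $[D_1] \in \langle [D_2] \rangle$. Combining the two inclusions gives $\langle [D_1] \rangle = \langle [D_2] \rangle$, which is the second half of the conclusion.

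For the equality of degrees I would use that the degree of a central division algebra equals its Schur index. Writing $[D_2] = m[D_1]$ in $\mathrm{Br}(K)$, the algebra $D_2$ is the unique division algebra in the Brauer class of $D_1^{\otimes m}$, so $n_2 = \mathrm{ind}(D_1^{\otimes m})$. Since any splitting field of $D_1$ a fortiori splits every tensor power of $D_1$, we have $\mathrm{ind}(D_1^{\otimes m}) \mid \mathrm{ind}(D_1) = n_1$, hence $n_2 \mid n_1$; the symmetric argument gives $n_1 \mid n_2$, and therefore $n_1 = n_2$. The substantive difficulty is confined to Amitsur's theorem on the kernel of restriction to the function field of a Severi-Brauer variety, a deep cohomological input; once it is granted, everything else in the argument is formal.
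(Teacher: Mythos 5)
Your proposal is correct and follows exactly the route the paper alludes to: the paper does not reprove Amitsur's theorem but notes that its proof ``relies in a very essential way'' on generic splitting fields, i.e.\ function fields of Severi--Brauer varieties, which is precisely the mechanism you use via the fact that $\ker\left(\mathrm{Br}(K) \to \mathrm{Br}(K(\mathrm{SB}(A)))\right) = \langle [A] \rangle$. Both halves of your argument --- the two inclusions of cyclic subgroups from the mutual splitting by $L_1$ and $L_2$, and the equality of indices via $\mathrm{ind}(D_1^{\otimes m}) \mid \mathrm{ind}(D_1)$ --- are sound, with all the substantive difficulty correctly isolated in the cited theorem on the Brauer kernel of a Severi--Brauer function field.
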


The important point is that the proof of this result relies in a very essential way on
{\it infinite} (non-algebraic) extensions of $K$ --- namely, so-called {\it generic splitting fields} (concrete examples of which are function fields of Severi-Brauer varieties). So, one may wonder if it is possible to prove Amitsur's Theorem, or perhaps another statement along the same lines, using only {\it finite} extensions of $K$. In other words, is it enough to assume only that $D_1$ and $D_2$ have the same {\it finite-dimensional} splitting fields or just the same maximal subfields? It turns out that the conclusion of Amitsur's Theorem is {\it false} in this setting. In fact, using the Albert-Brauer-Hasse-Noether theorem (see \cite[Ch. 18, \S18.4]{Pierce}), one can easily construct arbitrary large collections of pairwise non-isomorphic cubic division algebras having the same maximal subfields over number fields (the same construction actually works for division algebras of any degree $d > 2$ --- cf. \cite[\S1]{CRR2}).
%and counterexamples furnishing arbitrarily large deviations from the expected statement can be constructed already over number fields using cubic algebras.
On the other hand, two quaternion division algebras over a number field that have the same quadratic subfields are necessarily isomorphic (as we will see in \S\ref{S-Quat} below, this fact turns out to have important consequences for Riemann surfaces). Thus, even over number fields, the question $(*)$ appears to be interesting. Moreover, until about 10 years ago, no information at all was available on $(*)$ over any fields other than global.
The following question along these lines was first asked in \cite[Remark 5.4]{PrRap-WC}:

\vskip2mm

\centerline{\parbox[t]{16cm}{\it Are quaternion division algebras over $\mathbb{Q}(x)$ determined uniquely up to isomorphism by their maximal subfields?}}

\vskip2mm

\noindent Shortly after it was formulated, this question was answered in the affirmative by D.~Saltman. In subsequent work, he and S.~Garibaldi \cite{GS} showed that the answer is still affirmative over the field of rational functions $k(x)$, where $k$ is any number field, and also in some other situations. This was the starting point of the investigation of question $(*)$ over fields more general than global, and we will now present the results that have been obtained since then. We note that a similar question, formulated in terms of finite-dimensional splitting fields, was considered in \cite{KrashMc}.

\vskip2mm

For our discussion, it will be convenient to quantify the problem by introducing the notion of the {\it genus} of a division algebra (this terminology was suggested by L.H.~Rowen).

%the term ``genus" was suggested by L.~Rowen).

\vskip2mm

\noindent {\bf Definition 8.6.} {\it Let $D$ be a finite-dimensional central division algebra over a field $K$. Then the genus $\mathbf{gen}(D)$ of $D$ is defined to be the set of classes $[D'] \in \mathrm{Br}(K)$ represented by central division $K$-algebras $D'$ having the same maximal subfields as $D$.}

\vskip2mm

Let us remark that Definition 8.1 in the previous subsection is a straightforward generalization of this definition (which historically was given earlier) to algebraic groups, in which maximal subfields are replaced with maximal tori. Moreover, just as in the case of algebraic groups, the general question $(*)$ essentially reduces to the analysis of the genus $\mathbf{gen}(D)$. In the present situation, we would like to focus on the analogues of Questions 8.2 and 8.3 in this situation, i.e.

\vskip2mm

\noindent $\bullet$ \parbox[t]{16cm}{{\it When does $\mathbf{gen}(D)$ reduce to a single element?} (Note that this is the case if and only if $D$ is determined uniquely up to isomorphism by its maximal subfields.)}

\vskip1mm

\noindent $\bullet$ \parbox[t]{16cm}{{\it When is $\mathbf{gen}(D)$ finite?}}

\vskip2mm

Over a number field $K$, the description of the Brauer group $\mathrm{Br}(K)$ provided by the Albert-Brauer-Hasse-Noether Theorem enables one to resolve both questions. Namely, it turns out that the genus of every quaternion division algebra is trivial (i.e., reduces to a single element), while the genus of any division algebra of higher degree is nontrivial but always finite (see \cite[Proposition 3.1]{CRR2} for the details).

Next,
the following theorem for the field of rational functions was established in \cite{RR}.

\addtocounter{thm}{1}
\begin{thm}\label{T:A2}
{\rm (Stability Theorem)}
Assume that $\mathrm{char}\: k \neq 2$. If  $\vert \mathbf{gen}(\Delta) \vert = 1$  for any central division quaternion algebra $\Delta$ over $k$,
then  $\vert \mathbf{gen}(D) \vert = 1$  for any quaternion algebra $D$ over $k(x)$.
\end{thm}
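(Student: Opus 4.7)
The plan is to reduce the problem over $K = k(x)$ to the genus-triviality hypothesis over $k$ via residue analysis on $\mathbb{P}^1_k$ and a specialization argument. Let $D_1, D_2$ be central quaternion division algebras over $K$ with the same isomorphism classes of maximal subfields. Since two quaternion algebras over a field are isomorphic iff their Brauer classes coincide, it suffices to show $[D_1] = [D_2]$ in $\mathrm{Br}(K)$. (If $k$ is finite then ${}_2\mathrm{Br}(k) = 0$ by Wedderburn, so the constant obstruction in Step~2 below automatically vanishes; we may thus assume $k$ infinite.)

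\emph{Step 1: match residues and apply Faddeev.} For each closed point $v$ of $\mathbb{A}^1_k$, the residue map
\[
\partial_v \colon {}_2\mathrm{Br}(K) \longrightarrow k(v)^\times/(k(v)^\times)^2
\]
sends a quaternion class to its residue character at $v$. One shows $\partial_v[D_1] = \partial_v[D_2]$ for every such $v$: if $\partial_v[D_i] = [u_i]$ with $u_i \in \mathcal{O}_v^\times$ a lift, then the quadratic extension $K(\sqrt{\pi u_1})$ (where $\pi$ is a uniformizer at $v$) splits $D_1$ locally at $v$, and after a Krasner-type perturbation by a square it splits $D_1$ globally; the same-maximal-subfields hypothesis forces it to split $D_2$ as well, and a Hilbert-symbol computation over the completion yields $[u_1] = [u_2]$. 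By Faddeev's exact sequence
\[
0 \to {}_2\mathrm{Br}(k) \to {}_2\mathrm{Br}(K) \xrightarrow{\bigoplus \partial_v} \bigoplus_{v \in (\mathbb{A}^1_k)^{(1)}} k(v)^\times/(k(v)^\times)^2,
\]
this residue agreement implies $[D_1] - [D_2] = [\Delta_K]$ for some $\Delta \in {}_2\mathrm{Br}(k)$ represented by a (possibly split) quaternion $k$-algebra $\Delta$.

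\emph{Step 2: apply the genus hypothesis.} Suppose for contradiction that $\Delta$ is a division algebra. Let $\Sigma \subset \mathbb{A}^1_k$ be the common ramification locus of $D_1$ and $D_2$ (a finite set by Step~1). Choose $c \in k \setminus \Sigma$; the specializations $D_1(c), D_2(c)$ are quaternion $k$-algebras satisfying $[D_1(c)] = [D_2(c)] + [\Delta]$ in $\mathrm{Br}(k)$. If $D_1(c), D_2(c)$ share the same $k$-maximal subfields, then the hypothesis $|\mathbf{gen}(D_1(c))| = 1$, applied to whichever of them is a division algebra, forces $D_1(c) \cong D_2(c)$, whence $[\Delta] = 0$ -- contradicting our standing assumption. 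It follows that $[D_1] = [D_2]$ in $\mathrm{Br}(K)$, so $D_1 \cong D_2$.

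\emph{The main obstacle} is verifying the $k$-genus equality of $D_1(c)$ and $D_2(c)$. Specialization gives "$L_0(x)/K$ splits $D_i \Rightarrow L_0/k$ splits $D_i(c)$" at once, but the reverse direction is subtle: for each $d \in k^\times$ with $k(\sqrt{d})$ splitting $D_i(c)$, one must produce a quadratic extension $K(\sqrt{f})/K$ splitting $D_i$ globally and with $f(c) \equiv d \pmod{(k^\times)^2}$. Hensel's lemma at $v = x - c$ yields a splitting of $D_i$ over $K_v(\sqrt{d})$; combining this with the matched ramification data from Step~1 at all other places and weak approximation on $\mathbb{A}^1_k$, one lifts the local splitting to a global one over a suitable $K(\sqrt{f})$. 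This lifting step -- passing from splitting at a single unramified closed point to splitting over a chosen quadratic extension of $K$, while respecting the (already matched) ramification elsewhere -- is the technical heart of the argument, and is the place where one actually exploits that $[D_1] - [D_2]$ is a \emph{constant} class rather than an arbitrary one.
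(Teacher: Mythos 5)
Your architecture is the paper's: match the residues at all geometric places (for $n=2$ a residue character of order dividing $2$ is determined by its kernel, which is the content of \cite[Lemma 2.5]{CRR01}), apply Faddeev to write $[D_1]\cdot[D_2]^{-1}=[\Delta_K]$ with $\Delta$ a constant quaternion algebra, and kill $[\Delta]$ by specializing at a $k$-point of good reduction and invoking the genus hypothesis over $k$. But the step you yourself single out as the technical heart is exactly where your mechanism fails. Weak approximation lets you prescribe the square class of $f$ at finitely many places, which only makes $K(\sqrt{f})$ split $D_1$ \emph{locally} there; there is no local--global principle over $k(x)$ that upgrades this to a global splitting, and producing a global quadratic splitting field is the entire difficulty. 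The correct lifting bypasses approximation: write $D_1=(a,b)$ with $a,b\in k[x]$, $a(c)b(c)\neq 0$. Since $k(\sqrt{d})$ splits $D_1(c)$ and $d\notin (k^{\times})^2$, the pure norm form represents $d$, say $a(c)u^2+b(c)v^2-a(c)b(c)w^2=d$ with $u,v,w\in k$; then $f=au^2+bv^2-abw^2\in k[x]$ satisfies $f(c)=d$, and $K(\sqrt{f})=K(q)$ for the pure quaternion $q=ui+vj+wij$ is a maximal subfield of $D_1$, hence of $D_2$. Writing $D_2\simeq (f,g)$ and normalizing $g$ to a $v_c$-unit (possible because $v_c(g)$ must be even, as $D_2$ is unramified at $v_c$ and $\bar{f}=d$ is a nonsquare) and specializing shows that $k(\sqrt{d})$ splits $D_2(c)$.

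More seriously, your clause ``applied to whichever of them is a division algebra'' is not valid: by definition $\gen(D_1(c))$ contains only classes of \emph{division} algebras, so if exactly one of $D_1(c)$, $D_2(c)$ is division, the hypothesis $\vert\gen(D_1(c))\vert=1$ says nothing about the split one (and of course a division algebra cannot be isomorphic to $\mathrm{M}_2(k)$). This mixed case is not vacuous a priori --- over $k=\Q_p$, the division quaternion algebra and $\mathrm{M}_2(k)$ have the same maximal subfields --- so it must be excluded separately. One way: if $D_1(c)$ is split, its pure norm form represents $1$, and substituting the representing vector as above yields a maximal subfield $K(\sqrt{f})$ of $D_1$ (hence of $D_2$, hence a splitting field of $\Delta_K$) with $f(c)=1$; the resulting smooth $k$-point of $y^2=f(x)$ gives a $k$-rational place of $K(\sqrt{f})$ at which the constant class $[\Delta]$ specializes to itself, forcing $[\Delta]=0$. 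Finally, a smaller slip in Step 1: $K_v(\sqrt{\pi u_1})$ splits a quaternion algebra with residue $[u_1]$ only when $(\bar{u}_1,\bar{u}_1)$ is split over $k(v)$; the extensions that always work are the unramified one $K_v(\sqrt{u_1})$ and $K_v(\sqrt{\pi})$, and local splitting does not by itself yield a global maximal subfield. The conclusion $\chi_v=\chi_v'$ is nevertheless correct and is exactly what the cited lemma supplies.
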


\vskip1mm

\noindent We note that the same statement remains valid for all division algebras having exponent two in the Brauer group (cf. \cite{CRR01}). On the other hand, $\vert \mathbf{gen}(D) \vert > 1$ whenever $D$ does not have exponent two since in that case, the opposite algebra $D^{\small \mathrm{op}}$ is not isomorphic to $D$, but clearly has the same maximal subfields as $D$. Now, a consequence of Theorem \ref{T:A2} is that the genus of a quaternion algebra over the purely transcendental extension $k(x_1, \ldots , x_r)$ of a number field $k$ of any (finite) transcendence degree reduces to a single element. At the same time, the following question remains open.

%On the other hand, the following question remains open.

\vskip2mm

\noindent {\bf Question 8.8.} {\it Does there exist a central quaternion division algebra $D$ over a finitely generated field $K$ of characteristic $\neq 2$ having nontrivial genus?}

\vskip2mm

Turning now to the question of the finiteness of the genus, we first should point out that over general fields, the genus $\mathbf{gen}(D)$ can be infinite. Indeed, adapting a construction that has been suggested by a number of people, including M.~Schacher, A.~Wadsworth, M.~Rost, S.~Garibaldi and D.~Saltman, J.~Meyer \cite{Meyer} produced examples of quaternion algebras over ``large" fields with infinite genus\footnotemark. \footnotetext{We observe that if the genus $\mathbf{gen}(D)$ is infinite for a central division $K$-algebra $D$, then the genus $\mathbf{gen}_K(G)$ is also infinite for the corresponding algebraic group $G = \mathrm{SL}_{1 , D}$.}
(By construction, these fields have infinite transcendence degree over the prime subfield.) In particular, there exist quaternion algebras over such fields with nontrivial genus (this was actually already observed by Garibaldi and Saltman \cite{GS}), indicating that the finite generation assumption in Question 8.8 cannot be omitted. Subsequently, S.~Tikhonov \cite{Tikh} extended this approach to construct examples of division algebras of any prime degree having infinite genus. On the other hand, in  \cite{CRR3} and \cite{CRR-Israel},
we proved the following.

%\footnotetext{We observe that if the genus $\mathbf{gen}(D)$ is infinite for a central division $K$-algebra $D$, then the genus $\mathbf{gen}_K(G)$ is also infinite for the corresponding algebraic group $G = \mathrm{SL}_{1 , D}$.}

\addtocounter{thm}{1}

\begin{thm}\label{T:A3}
Let $K$ be a finitely generated field. Then for any finite-dimensional central division $K$-algebra $D$, the genus $\mathbf{gen}(D)$ is finite.
\end{thm}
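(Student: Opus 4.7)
The plan is to embed $\mathbf{gen}(D)$ into a finite subgroup of $\mathrm{Br}(K)$---specifically, an unramified Brauer group---and then invoke the finiteness statement that underlies Theorem \ref{T:ResX1}. First observe that any central division $K$-algebra $D'$ with the same maximal subfields as $D$ must have $\deg D' = \deg D =: n$, since a central division $K$-algebra of degree $n$ has maximal subfields of degree exactly $n$, and these must embed (as maximal subfields) into $D'$ as well, forcing $\deg D' \geq n$; symmetry gives equality. Consequently $[D'] \in {}_n\mathrm{Br}(K)$, and since Brauer-equivalent division algebras are $K$-isomorphic, it suffices to bound $\mathbf{gen}(D)$ inside a finite subset of ${}_n\mathrm{Br}(K)$.

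Next, fix a model of $K$ giving a divisorial set $V$ of places. The Brauer class $[D]$ has nontrivial residue at only finitely many $v \in V$, so, using the property recalled in \S\ref{S-ConjFGfields} that the complement of a finite subset of a divisorial set contains a divisorial set, I may shrink $V$ so that $D$ is unramified at all $v \in V$ and, moreover, $\mathrm{char}\, K^{(v)} \nmid n$ for all $v \in V$. I would first handle the case where $n$ is prime to $\mathrm{char}\, K$.

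The crucial step is the local assertion: \emph{if $D$ is unramified at $v \in V$ and $D'$ has the same maximal subfields as $D$, then $D'$ is unramified at $v$.} Good reduction of $D$ at $v$ means $D \otimes_K K_v$ is the generic fiber of an Azumaya $\mathcal{O}_v$-order whose residue CSA contains a separable maximal subfield; this lifts by Hensel's lemma to an unramified maximal subfield $\tilde L \subset D \otimes_K K_v$ of degree $n$ over $K_v$. A common-subfield-type approximation argument for central simple algebras (using weak approximation on the variety of maximal étale subalgebras of $D$) then produces a separable degree-$n$ extension $L/K$ embedding in $D$ with $L \otimes_K K_v \simeq \tilde L$. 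The hypothesis gives $L \hookrightarrow D'$, hence $L \otimes_K K_v \hookrightarrow D' \otimes_K K_v$, so the unramified extension $L \otimes_K K_v$ of $K_v$ splits $D' \otimes_K K_v$. Since any class in $\mathrm{Br}(K_v)$ split by a finite unramified extension is itself unramified, $D'$ is unramified at $v$. We conclude $\mathbf{gen}(D) \subseteq {}_n\mathrm{Br}(K)_V$, which is finite by the finiteness of the unramified Brauer group that underlies Theorem \ref{T:ResX1}.

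The hard part is the local assertion, specifically the approximation step producing a global maximal subfield $L \subset D$ with prescribed completion $L \otimes_K K_v \simeq \tilde L$ at a given place $v$; this is the main technical input, carried out in \cite{CRR3}. The remaining case where $p := \mathrm{char}\, K$ divides $n$ does not follow directly, because the finiteness of the unramified Brauer group invoked above is only available in degrees prime to $p$. To handle it, I would decompose $[D]$ into its $\ell$-primary components $[D] = [D_{\ell_1}] + \cdots + [D_{\ell_r}]$, observe that the maximal-subfields hypothesis is inherited by each component via an appropriate generic splitting/descent argument, treat each $\ell \neq p$ part as above, and dispose of the $p$-primary piece separately using the descent technology and Condition (T) developed in \cite{CRR-Israel}.
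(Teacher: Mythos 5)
Your global architecture is essentially the paper's first proof (the one from \cite{CRR3}): reduce the finiteness of $\mathbf{gen}(D)$ to the finiteness of ${}_n\mathrm{Br}(K)_V$ via the key local fact that membership in the genus preserves (un)ramifiedness at each $v$. The cosmetic differences are harmless: the paper does not shrink $V$ to make $D$ everywhere unramified but instead keeps the $r$ ramified places and records the sharper estimate $\vert \mathbf{gen}(D)\vert \leq \vert {}_n\mathrm{Br}(K)_V\vert\cdot\varphi(n)^r$, which your reduction discards.

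However, your proof of the local assertion has a fatal gap at the final step. It is \emph{not} true that a class in $\mathrm{Br}(K_v)$ split by a finite unramified extension is unramified. If $l/K^{(v)}$ is the degree-$n$ residue extension and $\chi$ is a character of $\mathrm{Gal}((K^{(v)})^{\rm sep}/K^{(v)})$ with $\ker\chi \supseteq \mathrm{Gal}((K^{(v)})^{\rm sep}/l)$, the cyclic algebra $(\chi,\pi)$ ($\pi$ a uniformizer) has residue $\chi \neq 0$, hence is ramified, yet is split by the unramified extension of $K_v$ corresponding to $l$; compatibility of residues with restriction only gives $\mathrm{res}_{l/K^{(v)}}(\partial_v^2([D'])) = 0$, not $\partial_v^2([D']) = 0$. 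Concretely, over a local field every degree-$n$ division algebra contains \emph{the} unramified degree-$n$ extension as a maximal subfield, so exhibiting one unramified-at-$v$ maximal subfield of $D$ that also embeds in $D'$ says nothing about the ramification of $D'$. This is precisely why the actual argument (\cite[Lemma 2.5]{CRR01}) proves the more delicate statement $\ker\chi_v = \ker\chi'_v$, exploiting the full two-sided hypothesis on maximal subfields rather than a single well-chosen one; simultaneous unramifiedness is then the special case $\chi_v = 0$. Your treatment of $\mathrm{char}\,K \mid n$ is also not viable as stated: it is not known that the primary components of $D$ and $D'$ inherit the same-maximal-subfields condition, and the paper's second proof (\cite{CRR-Israel}) instead avoids primary decomposition by working with ramification directly and using only the finiteness of certain $D$-dependent subgroups of the unramified Brauer group.
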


There are two versions of the proof of Theorem \ref{T:A3}. Both of them rely on the analysis of ramification, but they differ in the amount of information about the unramified Brauer group that is needed as an input. The proof given in \cite{CRR3} requires the additional assumption that the degree $n$ of the division algebra $D$ is relatively prime to $\mathrm{char}\: K$, and proceeds along the following lines. Fix a divisorial set of places $V$ of $K$.
%One can find a subset $V(D) \subset V$ with finite complement $V \setminus V(D)$ such that $D$ is unramified at all $v \in V(D)$. We then show that every central division $K$-algebra $D'$ with $[D'] \in \mathbf{gen}(D)$ is also unramified at all $v \in V(D)$. Thus, the size of $\mathbf{gen}(D)$ does not exceed the size of the unramified Brauer group ${}_n\mathrm{Br}(K)_{V(D)}$, which, as we saw in our earlier discussion of Theorem \ref{T:ResX1}, is finite since $V(D)$ contains a divisorial set of places of $K$. The estimate
%$$
%\vert \mathbf{gen}(D) \vert \leq \vert {}_n\mathrm{Br}(K)_{V(D)} \vert
%$$
%provided by this argument, however, is not very practical as it involves the size of ${}_n\mathrm{Br}(K)_{V(D)}$, which varies with $D$. Nevertheless, it turns out that by elaborating the above argument, one can provide an estimate on $\vert \mathbf{gen}(D) \vert$ that depends only on the size of ${}_n\mathrm{Br}(K)_V$ for a fixed $V$. More precisely,
First, since $\mathrm{char}\: K$ is prime to $n$, we can assume without loss of generality that for each $v \in V$, the characteristic of the residue field $K^{(v)}$ is prime to $n$.  Consequently, the residue map
$$
\partial^2_v \colon H^2(K , \mu_n) \longrightarrow H^1(K^{(v)} , \mathbb{Z}/n\mathbb{Z})
$$
that we encountered in our discussion of the proof of Theorem \ref{T:ResX1} is defined. One then shows that if $D'$ is a central division algebra with $[D'] \in \mathbf{gen}(D)$ and
\begin{equation}\label{E:A1}
\chi_v = \partial^2_v([D]) \ \ \text{and} \ \ \chi'_v = \partial^2_v([D'])
\end{equation}
are the corresponding characters of the absolute Galois group of $K^{(v)}$, then $\ker \chi_v = \ker \chi'_v$ (see \cite[Lemma 2.5]{CRR01}). In particular, we see that for $D$ and $D'$ as above, the algebras are either simultaneously ramified or simultaneously unramified at a place $v$.
This fact leads to the following estimate:
$$
\vert \gen(D) \vert \leq \vert {}_n\mathrm{Br}(K)_V \vert \cdot \varphi(n)^r,
$$
where $r$ is the number of $v \in V$ where $D$ ramifies (which is necessarily finite for a divisorial set). Thus, we obtain an upper bound on the size of the genus that is uniform over all central division $K$-algebras of a given degree with a fixed number of ramification places; in particular, this estimate is uniform over all quaternion division $K$-algebras.

Our second proof of Theorem \ref{T:A3}, which we gave in \cite{CRR-Israel}, also uses the analysis of ramification, but avoids imposing restrictions on the characteristic of the field $K$. The reason for this is that the argument does not require the finiteness of the ($n$-torsion of the) unramified Brauer group, but only the finiteness of certain subgroups of the latter. On the other hand, since these subgroups depend on the division algebra at hand, we do not obtain a nice estimate on the size of the genus as
provided by our first proof.

%, the reason being that it relies not on the finiteness of the ($n$-torsion of the) unramified Brauer group but rather on the finiteness of certain subgroups of the latter (since these subgroups depend on the division algebra at hand, this argument does not yield nice estimates of the size of the genus).

To finish up this discussion, let us mention that the proof of Theorem \ref{T:A2} applies similar considerations to the set $V$ of geometric places of the field $K = k(x)$ (i.e., the set of those $v$ that correspond to the closed points of $\mathbb{P}^1_k$). Namely, let $D$ be a quaternion division algebra over $K$ and let $D'$ be another quaternion division $K$-algebra such $[D'] \in \mathbf{gen}(D)$. As above, for each $v \in V$, letting $\chi_v$ and $\chi'_v$ denote the characters defined by (\ref{E:A1}), we have $\ker \chi_v = \ker \chi'_v$. But since $n = 2$, this means that actually $\chi_v = \chi'_v$. It follows that the class $[D] \cdot [D']^{-1}$ in ${}_2\mathrm{Br}(K)$ is unramified at all $v \in V$. But according to a result of Faddeev (cf. \cite[Theorem 6.9.1]{Gille}), we have
$$
{}_2\mathrm{Br}(K)_V = {}_2\Br(k),
$$
implying that $[D'] = [D] \cdot [\Delta \otimes_k K]$ for some quaternion algebra $\Delta$ over $k$. Finally, using the assumption that the genus of every quaternion algebra over $k$ is trivial and applying a specialization argument, one concludes that the class $[\Delta] \in \mathrm{Br}(k)$ is trivial, hence $D = D'$.

\subsection{Quaternion algebras and Riemann surfaces}\label{S-Quat} In this subsection, we will briefly describe how quaternion algebras sharing ``many" (although a priori not all) quadratic subfields arise in the investigation of Riemann surfaces. These considerations, which provide a geometric context for our general discussion, will be extended in \S\ref{S-WCZariski} to locally symmetric spaces of arbitrary simple real algebraic groups.

\vskip2mm

Let $\mathbb{H} = \{ x +iy \in \mathbb{C} \, \vert \, y > 0 \}$ be the complex upper half-plane equipped with the standard hyperbolic metric $ds^2 = y^{-2}(dx^2 + dy^2)$. The action of $\mathrm{SL}_2(\mathbb{R})$ by fractional linear transformations is transitive and isometric, allowing us to identify $\mathbb{H}$ with the homogeneous (symmetric) space $\mathrm{SL}_2(\mathbb{R}) / \mathrm{SO}_2(\mathbb{R})$. Let $\pi \colon \mathrm{SL}_2(\mathbb{R}) \to \mathrm{PSL}_2(\mathbb{R})$ be the canonical projection. It is well-known (cf., for example, \cite[Theorem 27.12]{Forster}) that any compact Riemann surface of genus $> 1$ can be presented as a quotient $\Gamma \backslash \mathbb{H}$ by some discrete subgroup $\Gamma \subset \mathrm{SL}_2(\mathbb{R})$ containing $\{ \pm I \}$ and having torsion-free image $\pi(\Gamma)$. It was demonstrated in \cite{McLReid} that some properties of $M$ can be understood in terms of the associated quaternion algebra $A_{\Gamma}$, which is constructed as follows.

\vskip2mm

Let $\Gamma^{(2)}$ denote the subgroup of $\Gamma$ generated by the squares of all elements,
and set $A_{\Gamma}$ to be the $\mathbb{Q}$-subalgebra of $\mathrm{M}_2(\mathbb{R})$ generated by $\Gamma^{(2)}$. One shows that $A_{\Gamma}$ is a quaternion algebra (not necessarily a {\it division} algebra) with center
$$
K_{\Gamma} = \mathbb{Q}( \mathrm{tr}\: \gamma \, \vert \, \gamma \in \Gamma^{(2)} )
$$
(the so-called {\it trace field}) --- cf. \cite[Ch. 3]{McLReid}. Furthermore, it turns out that if $\Gamma_1$ and $\Gamma_2$ are {\it commensurable} (i.e., their intersection has finite index in both of them), then $A_{\Gamma_1} = A_{\Gamma_2}$; in other words, $A_{\Gamma}$ is an invariant of the commensurability class of $\Gamma$. Moreover, if $\Gamma$ is an {\it arithmetic} Fuchsian group, then $K_{\Gamma}$ is a number field and $A_{\Gamma}$ is {\it the} quaternion algebra involved in the description of $\Gamma$ (cf. \cite[\S8.2]{McLReid}). It follows that if $\Gamma_1$ and $\Gamma_2$ are arithmetic subgroups and the algebras $A_{\Gamma_1}$ and $A_{\Gamma_2}$ are isomorphic, then $\Gamma_1$ is commensurable with a conjugate of $\Gamma_2$. Thus, in the arithmetic case, $A_{\Gamma}$ completely determines the commensurability class of $\Gamma$ (up to conjugation). This is no longer true for non-arithmetic subgroups, but nevertheless $A_{\Gamma}$ remains an important invariant of the commensurability class.

\vskip2mm

Next, in differential geometry, one attaches various {\it spectra} to a Riemannian manifold $M$: particularly when $M$ is compact, one considers the Laplace spectrum $E(M)$, which consists of the eigenvalues of the Beltrami-Laplace operator with multiplicities; in the general case, one can also look at the (weak) length spectrum $L(M)$, which is defined as the set of lengths of all closed geodesics in $M$. Then two Riemannian manifolds $M_1$ and $M_2$ are said to be

\vskip2mm

\noindent (1) {\it isospectral} if $E(M_1) = E(M_2)$ (assuming that $M_1$ and $M_2$ are compact);

\vskip1mm

\noindent (2) {\it iso-length-spectral} if $L(M_1) = L(M_2)$;

\vskip1mm

\noindent (3) {\it length-commensurable} if $\mathbb{Q} \cdot L(M_1) = \mathbb{Q} \cdot L(M_2)$.

\vskip2mm

\noindent On the other hand, $M_1$ and $M_2$ are called {\it commensurable} if they have a common finite-sheeted cover, with the covering maps being local isometries. In general, one would like to understand how $M_1$ and $M_2$ are related if they satisfy one of the above conditions (1)-(3) or similar ones. Probably the most famous version of this general question is due to M.~Kac \cite{Kac}, who asked ``Can one hear the shape of a drum?" In other words, are two compact isospectral Riemannian manifolds necessarily isometric? For our purposes, however, we will focus on length-commensurable Riemann surfaces and their commensurability. We refer the reader to \cite{PrRap-Kunming} for a more detailed discussion of these conditions, and only mention here that for compact locally symmetric spaces of simple real algebraic groups (in particular, for compact Riemann surfaces), condition (1) implies condition (2), which in turn trivially implies condition (3). What is interesting is that as far as questions of commensurability are concerned, condition (3), despite being the weakest, has essentially the same consequences as the strongest condition (1).

We will now examine how the length-commensurability of two compact Riemann surfaces of genus $> 1$ impacts the associated quaternion algebras. First, let $M = \Gamma \backslash \mathbb{H}$ be a compact Riemann surface as above. It is well-known that every closed geodesic in $M$ corresponds to a semi-simple element $\gamma \in \Gamma$, $\gamma \neq \pm I$, and we will denote it by $c_{\gamma}$. For our discussion, we will only need the following formula for the length $\ell(c_{\gamma})$ of $c_{\gamma}.$ Since $\Gamma \subset \mathrm{SL}_2(\mathbb{R})$ is discrete and $\pi(\Gamma) \subset \mathrm{PSL}_2(\mathbb{R})$ is torsion-free, any semi-simple element $\gamma \in \Gamma$ is automatically hyperbolic, hence is conjugate to a matrix of the form $\left(\begin{array}{cc} t_{\gamma} & 0 \\ 0 & t_{\gamma}^{-1} \end{array}  \right)$ with $t_{\gamma} \in \mathbb{R}$. Then
\begin{equation}\label{E-GeoLength}
\ell(c_{\gamma}) = \frac{2}{n_{\gamma}} \cdot \vert \log \vert t_{\gamma} \vert \vert,
\end{equation}
where $n_{\gamma}$ is an integer (in fact, it is the winding number --- we refer the reader to \cite[\S8]{PrRap-WC} for the details).
It follows that
\begin{equation}\label{E:A5}
\mathbb{Q} \cdot L(M) = \mathbb{Q} \cdot \{ \log \vert t_{\gamma} \vert \ \vert \ \gamma \in \Gamma \ \ \text{semi-simple and}  \ \neq \pm I \}.
\end{equation}
Now, let $M_1 = \Gamma_1 \backslash \mathbb{H}$ and $M_2 = \Gamma_2 \backslash \mathbb{H}$ be two compact Riemann surfaces as above, and assume that they are length-commensurable. One then shows that
$$
K_{\Gamma_1} = K_{\Gamma_2} =: K
$$
(this is a consequence of the more general Theorem 8.15 in \cite{PrRap-WC} --- see also Theorem \ref{T:AZD3} below).
Furthermore, it follows from (\ref{E:A5}) that for any semi-simple $\gamma_1 \in \Gamma_1^{(2)}$ different from $\pm I$, there exists a semi-simple $\gamma_2 \in \Gamma_2^{(2)}$ such that
\begin{equation}\label{E:A6}
t_{\gamma_1}^m = t_{\gamma_2}^n
\end{equation}
for some nonzero integers $m , n$, and consequently $\gamma_1^m$ and $\gamma_2^n \in \mathrm{M}_2(\mathbb{R})$ are conjugate. Then for $i = 1, 2$, the algebra $K[\gamma_i]$ is a maximal \'etale subalgebra of $A_{\Gamma_i}$, and we have an isomorphism of $K$-algebras
$$
K[\gamma_1] = K[\gamma_1^m] \simeq K[\gamma_2^n] = K[\gamma_2].
$$
Thus, the geometric condition of length-commensurability translates into the algebraic condition that {\it $A_{\Gamma_1}$ and $A_{\Gamma_2}$ have a common center and the same isomorphism classes of maximal \'etale subalgebras that have a nontrivial intersection with
$\Gamma_1^{(2)}$ and $\Gamma_2^{(2)}$, respectively.}

We recall from our discussion in \S\ref{S-GenusDivAlg} that the genus of a quaternion algebra over a number field reduces to a single element. Note, however, that the preceding condition implied by length-commensurability is technically weaker than the condition that $A_{\Gamma_1}$ and $A_{\Gamma_2}$ belong to the same genus. Nevertheless, it was observed by A.~Reid \cite{Reid} (prior to the systematic investigation of the genus problem) that if $M_1$ and $M_2$ are isospectral (hence also iso-length spectral) Riemann surfaces with arithmetic fundamental groups $\Gamma_1$ and $\Gamma_2$, respectively, then the latter is still sufficient to conclude that $A_{\Gamma_1} \simeq A_{\Gamma_2}$. As we remarked above, this implies that $\Gamma_1$ is commensurable to a conjugate of $\Gamma_2$, and hence the
Riemann surfaces $M_1$ and $M_2$ are commensurable. In \S\ref{S-WCZariski}, we will give a brief overview of similar results for arithmetically defined locally symmetric spaces of arbitrary real simple algebraic groups (cf. \cite{PrRap-WC}, \cite{PrRap-Kunming}). We will also see that even for not necessarily arithmetic Riemann surfaces, the above condition leads to the same finiteness results as we have for the genus.

\subsection{Conjectures and results on the genus problem.}\label{S-GenusConj} Juxtaposing the treatment of the genus of absolutely almost simple simply connected algebraic groups over number fields in Theorem \ref{T:A1} with the results in \S\ref{S-GenusDivAlg} on the genus of division algebras over general fields, one is led to the following.

\vskip2mm

\noindent {\bf Conjecture 8.10.}

\vskip1mm

\noindent (1) \parbox[t]{16cm}{{\it Let $K = k(x)$ be the field of rational functions in one variable over a number field $k$. If $G$ is an absolutely almost simple simply connected algebraic $K$-group with center $Z(G)$ of size $\leq 2$, then the genus $\mathbf{gen}_K(G)$ reduces to a single element.}}

\vskip1mm

\noindent (2) \parbox[t]{16cm}{{\it Let $G$ be an absolutely almost simple simply connected algebraic group over a finitely generated field of ``good" characteristic. Then the genus
$\mathbf{gen}_K(G)$ is finite.}}

\vskip2mm

\noindent (Here, ``good" characteristic is used in the same sense as in Conjecture 5.7.)

%the notion of ``good" characteristic is the same as in ..)

\vskip1mm

The general hope is that this conjecture will be proved through a far-reaching extension to algebraic groups of the techniques developed for the study of the genus of division algebras; in this extension, groups with good reduction are expected to play a role similar to that of unramified division algebras. More precisely, as we pointed out in our sketch of the first proof of Theorem \ref{T:A3}, conceptually, one of the critical observations needed to establish the finiteness of the genus is that if $D$ is a finite-dimensional central division $K$-algebra that is unramified at a discrete valuation $v$ of $K$, then every central division $K$-algebra $D'$ such that $[D'] \in \mathbf{gen}(D)$ is also unramified at $v$. While this fact is certainly nontrivial, at the same time, it is not particularly surprising, and can be proved by exploiting the equivalence of several different characterizations of unramified algebras. Informally speaking, it means that the maximal subfields of a division algebra detect whether or not the algebra is unramified. On the other hand, there were no indications in the literature as to why the maximal tori of a reductive group should be able to detect whether or not the group has good reduction in a sufficiently general situation. So, the following result from \cite{CRR4} and \cite{CRR-GR} is quite surprising.

\addtocounter{thm}{1}

\begin{thm}\label{T:A4}
Let $G$ be an absolutely almost simple simply connected algebraic group over a field $K$, and let $v$ be a discrete valuation of $K$. Assume that the residue field $K^{(v)}$ is finitely generated and that $G$ has good reduction at $v$. Then every $G' \in \mathbf{gen}_K(G)$ also has good reduction at $v$.
\end{thm}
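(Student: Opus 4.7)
The plan is to detect good reduction of the ambient group at $v$ from the existence of a sufficiently generic unramified maximal $K$-torus inside $G$, and then to transport this torus to $G'$ via the genus hypothesis. The finite-generation hypothesis on $K^{(v)}$ is precisely what makes the production of such a torus possible.

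First, since $G$ has good reduction at $v$, there is a reductive $\mathcal{O}_v$-group scheme $\mathscr{G}$ with generic fiber $G \times_K K_v$ (by the uniqueness statement cited after Definition 3.7), and by Grothendieck's theorem on the existence of maximal tori of reductive group schemes, any maximal torus of the special fiber $\uG^{(v)}$ lifts to a maximal $\mathcal{O}_v$-torus of $\mathscr{G}$. Because $K^{(v)}$ is finitely generated, a Hilbert–irreducibility style argument applied to the variety of maximal tori of $\uG^{(v)}$ produces a maximal torus $\underline{T}$ with two key properties: its minimal splitting extension over $K^{(v)}$ has Galois group the full Weyl group $W$ acting on $X^{*}(\underline{T})$ by the standard reflection representation; and the Frobenius attached to a suitable residue-field extension acts on $X^{*}(\underline{T})$ as a regular element of $W$. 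Lifting to $\mathscr{T} \subset \mathscr{G}$, and then invoking weak approximation on the variety of maximal tori of $G$ in the spirit of the arguments of \cite{CRR3}, I obtain a maximal $K$-torus $T \subset G$ whose base change $T \times_K K_v$ is $G(K_v)$-conjugate to $\mathscr{T} \times_{\mathcal{O}_v} K_v$; in particular $T$ is unramified at $v$ with the above generic Frobenius action.

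Next, by the definition of $\gen_K(G)$, the group $G'$ contains a maximal $K$-torus $T'$ with a $K$-isomorphism $T' \simeq T$. Since the minimal Galois splitting field of a torus, together with the Galois action on its cocharacter lattice, is an invariant of the $K$-isomorphism class, the torus $T' \times_K K_v$ is likewise unramified at $v$ and its Frobenius acts through a regular element of the Weyl group of $G'$ (canonically identified with that of $G$ via the common Cartan–Killing type). Thus the genus hypothesis transfers the generic unramified torus from $G$ to $G'$ at no cost.

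Finally --- and this is the main step --- one must deduce from the presence of such a generic unramified maximal $K_v$-torus inside $G' \times_K K_v$ that $G'$ itself has good reduction at $v$. Over $K_v$, the group $G' \times_K K_v$ is an inner twist of the quasi-split good-reduction model of the given type, so it is classified by a class $\xi \in H^1(K_v, \overline{G})$. Using Bruhat–Tits theory together with the torus $T'$, one can represent $\xi$ by a cocycle with values in the normalizer $N_{\overline{G}}(\overline{T'})$; the regularity of the Frobenius action on $X^{*}(T')$ then forces this cocycle to be cohomologous to one with values in $\overline{T'}$ itself, and Steinberg's theorem applied to the unramified torus $\overline{T'}$ shows that $\xi$ lies in the kernel of the restriction to $H^1(K_v^{\mathrm{unr}}/K_v, \overline{G})$, whence $G'$ has good reduction at $v$. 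The principal obstacle is this last local step: extracting good reduction of the whole group from data attached to a single maximal torus is the deep ingredient of the argument, and is exactly where the genericity of the Weyl action constructed in the first step is indispensable.
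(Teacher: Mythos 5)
Your overall strategy --- produce a suitably generic maximal $K$-torus of $G$ that is unramified at $v$, transport it to $G'$ using the genus hypothesis, and then read off good reduction of $G'$ locally --- is indeed the ``generic tori'' approach that the paper attributes to \cite{CRR-GR} (the paper itself gives no proof, only this pointer). However, your final step rests on a local principle that is false: the presence in $G' \times_K K_v$ of a single maximal torus that is unramified and generic (even with the strongest conceivable Weyl-group condition) does \emph{not} force good reduction. Concretely, take $K = \Q(t)$ with the $t$-adic valuation $v$ (residue field $\Q$, finitely generated), and let $G' = \mathrm{SL}_{1,D}$ with $D = (-1, t)$. Then $\partial_v^2([D])$ is the class of $-1$ in $\Q^{\times}/(\Q^{\times})^2$, so $D$ is ramified and $G'$ has bad reduction at $v$; yet $D$ contains the unramified quadratic subfield $K(\sqrt{-1})$, so $G'$ contains the maximal torus $T' = \mathrm{R}^{(1)}_{K(\sqrt{-1})/K}(\mathbb{G}_m)$, whose splitting field is unramified at $v$ and on whose character lattice the Galois group acts through the full Weyl group of $\mathsf{A}_1$ (and the nontrivial element is the Coxeter, hence regular, element). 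The same torus is also a maximal torus of $\mathrm{SL}_2$, which has good reduction; so no property of the $K$-isomorphism class of a single maximal torus can certify good reduction, and your chain ``normalizer cocycle $\Rightarrow$ torus-valued by regularity $\Rightarrow$ Steinberg'' cannot be repaired (note also that Steinberg's theorem concerns $H^1$ of simply connected or quasi-split groups, not of tori, and the reduction of the cocycle to $\overline{T'}$ is asserted, not proved).

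The argument has to run in the opposite direction, mirroring the proof of \cite[Lemma 2.5]{CRR01} sketched in \S\ref{S-GenusDivAlg}: assuming $G'$ has \emph{bad} reduction at $v$, one constructs a maximal $K$-torus of $G'$ (generic over $K$, with prescribed ramification behavior at $v$) whose isomorphism class cannot occur among the maximal tori of a group with good reduction at $v$ --- the relevant local fact being of the form ``a group with good reduction admits no maximal torus ramified in such-and-such a way,'' not its (false) converse. Genericity enters to globalize the local torus and to guarantee that any embedding of it into $G$ matches up the root systems, which is also the point you elide when you ``canonically identify'' the Weyl groups of $G$ and $G'$. Two further issues: your genericity condition is phrased via ``the Frobenius of a suitable residue-field extension,'' which is meaningless when $K^{(v)}$ is infinite and unattainable (full Weyl group image through a procyclic unramified quotient) when $K^{(v)}$ is finite --- yet finite residue fields are covered by the theorem; and the Hilbert-irreducibility production of tori with full Weyl group image likewise fails over finite residue fields, so that case needs a separate treatment.
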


Since the proof relies on techniques that we will not address in this article (specifically, considerations involving so-called {\it generic tori} --- cf. \cite[\S9]{PrRap-Thin} for an overview of these), we refer the reader to \cite{CRR-GR} for the details; here, we will only discuss the consequences of this statement for the genus problem. We would also like to draw the reader's attention to Theorem \ref{T:AZD4} below that
elaborates on Theorem \ref{T:A4}.

%Moreover, in \S\ref{S-WCZariski}, we will formulate a  strengthening of this result --- see Theorem \ref{T:AZD4}.

%Since the proof uses the techniques that we haven't discussed in this paper (generic tori), we refer the reader to [preprint] for the details, and only discuss here the consequences of the theorem for the genus problem. In section 8.6, we will discuss a strengthening of the theorem -- see Theorem .. .

\vskip2mm

Suppose now that $G$ is an absolutely almost simple simply connected algebraic group defined over a finitely generated field $K$, and let $V$ be a divisorial set of places of $K$. Using the fact that $V$ satisfies condition (A) (see \S\ref{S-ConjFGfields}), it is easy to show that there exists a finite subset $S \subset V$ such that $G$ has good reduction at all $v \in V \setminus S$. Then it follows from Theorem \ref{T:A4} that every $K$-form $G'$ of $G$ whose $K$-isomorphism class lies in the genus $\mathbf{gen}_K(G)$ also has good reduction at all $v \in V \setminus S$. Since $V \setminus S$ contains a divisorial set of places of $K$, we conclude that the truth of Conjecture 5.7 would automatically imply the finiteness of $\mathbf{gen}_K(G)$ (at least if $\mathrm{char}\: K$ is ``good" for the type of $G$). We will now list some results on the genus in the spirit of Conjecture 8.10 that have already been established,  beginning with inner forms of type $\textsf{A}_n$, where the conjecture has been proved in full.

\begin{thm}\label{T:A5}  \ \
\vskip1mm

\noindent {\rm (1)} \parbox[t]{16cm}{Let $D$ be a central division algebra of exponent two over the field of rational functions $K = k(x_1, \ldots , x_r)$, where $k$ is
either a number field or a finite field of characteristic $\neq$ 2. Then for $G = \mathrm{SL}_{m , D}$ $(m \geq 1)$, the genus $\mathbf{gen}_K(G)$
reduces to a single element.}

\vskip1mm

\noindent {\rm (2)} \parbox[t]{16cm}{Let $G = \mathrm{SL}_{m , D}$, where $D$ is a central division algebra over a finitely generated field $K$ of degree prime to $\mathrm{char}\: K$. Then $\mathbf{gen}_K(G)$ is finite.}
\end{thm}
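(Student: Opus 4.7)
The plan is to reduce both parts to the corresponding results on the genus of the underlying division algebra $D$ from \S\ref{S-GenusDivAlg}. The pivotal claim I would establish is the following \emph{reduction lemma}: if $G' = \mathrm{SL}_{m',D'}$ represents a $K$-isomorphism class in $\mathbf{gen}_K(G)$, then $m' = m$, $\deg D' = \deg D$, and $[D']$ lies in the set $\mathbf{gen}(D) \cup \{[A^{\mathrm{op}}] : [A] \in \mathbf{gen}(D)\}$. Granting this, part (2) follows at once from Theorem \ref{T:A3} (finiteness of $\mathbf{gen}(D)$) together with the observation that the map $[D'] \mapsto [\mathrm{SL}_{m,D'}]$ is at worst two-to-one, collapsing an algebra with its opposite. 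For part (1), the exponent-two hypothesis gives $D \simeq D^{\mathrm{op}}$, eliminating the opposite-algebra ambiguity; iterating the Stability Theorem (Theorem \ref{T:A2}, as extended to exponent-two algebras in \cite{CRR01}) up the tower $k \subset k(x_1) \subset \cdots \subset k(x_1,\ldots,x_r)$, and starting from the triviality of $\mathbf{gen}(\Delta)$ for quaternion $\Delta$ over a number field (by Albert--Brauer--Hasse--Noether, as noted in \S\ref{S-GenusDivAlg}) or over a finite field (where no such $\Delta$ exists by Wedderburn, so the hypothesis is vacuous), yields $|\mathbf{gen}(D)| = 1$ for our exponent-two $D$ over $K$. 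Consequently $\mathbf{gen}_K(G) = \{[G]\}$.

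To prove the reduction lemma I would exploit the dictionary between maximal $K$-tori of $\mathrm{SL}_{m,D}$ and maximal \'etale commutative $K$-subalgebras $E$ of $\mathrm{M}_m(D)$, which necessarily have $K$-dimension $mn$ where $n = \deg D$. Given a maximal subfield $P$ of $D$, the $m$-fold diagonal product $P^m = P \times \cdots \times P$ embeds as a maximal \'etale $K$-subalgebra of $\mathrm{M}_m(D)$, producing a maximal $K$-torus $T_P = R^{(1)}_{P^m/K}(\mathbb{G}_m)$ of $G$. If $[G'] \in \mathbf{gen}_K(G)$, then $T_P$ must appear, up to $K$-isomorphism, as a maximal $K$-torus of $G'$, which forces $P^m$ to embed as a maximal \'etale subalgebra of $\mathrm{M}_{m'}(D')$. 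The classical embedding criterion for \'etale algebras in central simple algebras (cf.\ \cite[Ch.~2, \S2.3]{Pl-R}) then constrains the Schur index of $D' \otimes_K P$ to divide $(m' \deg D')/[P:K]$. Running this analysis over all maximal subfields $P$ of $D$---in particular over \emph{generic} maximal subfields in the sense of \cite[\S9]{PrRap-Thin}, whose Galois-theoretic properties pin down the Schur index---and imposing the symmetric constraint obtained by exchanging the roles of $D$ and $D'$, forces $P$ to be a maximal subfield of $D'$ (or of $(D')^{\mathrm{op}}$), and likewise for every maximal subfield of $D'$. The dimension identity $m \deg D = m' \deg D'$ then delivers $m' = m$ and $\deg D' = \deg D$.

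The main obstacle I anticipate lies in the final step of the preceding paragraph: showing that a generic maximal subfield $P$ of $D$ is forced to be a maximal subfield of $D'$ itself, rather than merely embedding into $\mathrm{M}_{m'}(D')$ through a nontrivial factorization. For $m = 1$ this is essentially automatic, since maximal \'etale subalgebras of $D$ of $K$-dimension $n$ that happen to be fields are precisely the maximal subfields. For $m > 1$, one must argue that the distinguished family of tori $\{T_P\}_P$ built from diagonal copies $P^m$ is rich enough to detect both the Schur index and the Brauer class of $D'$ up to opposite. Generic tori furnish the natural vehicle: by construction they are controlled by the full arithmetic of $D$, and any $G' \in \mathbf{gen}_K(G)$ must replicate such a generic torus, which in turn forces the tight correspondence between $D$ and $D'$ needed for the reduction. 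Once this rigidity step is in place, the rest of the argument assembles routinely from the results on $\mathbf{gen}(D)$ already surveyed in \S\ref{S-GenusDivAlg}.
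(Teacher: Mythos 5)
Your inference ``$T_P$ must appear, up to $K$-isomorphism, as a maximal $K$-torus of $G'$, which forces $P^m$ to embed as a maximal \'etale subalgebra of $\mathrm{M}_{m'}(D')$'' is exactly the step that fails, and your claim that the reduction is ``essentially automatic'' for $m=1$ puts the difficulty in the wrong place. The genus condition only tells you that the norm torus attached to $P$ is $K$-isomorphic to the torus attached to \emph{some} maximal \'etale subalgebra $E'$ of $\mathrm{M}_{m'}(D')$; it does not tell you that $P^m \simeq E'$ as $K$-algebras, because a $K$-isomorphism $\mathrm{R}^{(1)}_{F_1/K}(\mathbb{G}_m) \simeq \mathrm{R}^{(1)}_{F_2/K}(\mathbb{G}_m)$ does not in general force $F_1 \simeq F_2$ over $K$. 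This is precisely why the paper stresses that part (2) is \emph{not} a direct consequence of Theorem \ref{T:A3} on the finiteness of $\mathbf{gen}(D)$, \emph{even for $m=1$}. The role of generic tori is to repair exactly this defect (for a generic maximal subfield the Galois group of the splitting field is large enough that the torus does determine the field), not merely to rule out the ``nontrivial factorization'' for $m>1$ that you single out as the main obstacle. So your reduction lemma is salvageable, but only by running the generic-torus argument of \cite[Theorem 5.3]{CRR01} at the step you declare automatic; as written, the proposal has a gap there.

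Beyond that, your overall routing differs from the paper's for part (2). The paper's primary proof of part (2) does not go through $\mathbf{gen}(D)$ at all: it combines Theorem \ref{T:A4} (every member of $\mathbf{gen}_K(G)$ inherits good reduction at almost all divisorial places) with Theorem \ref{T:ResX1} (finiteness of inner forms of type $\mathsf{A}_{n-1}$ with good reduction, via the finiteness of ${}_n\mathrm{Br}(K)_V$), which yields the finiteness with no restriction to the $m=1$ case and no need to compare maximal subfields. Your route via $\mathbf{gen}(D)$ is the secondary one the paper alludes to, and it requires the generic-torus input above. For part (1), your plan (iterate the Stability Theorem in its exponent-two form from \cite{CRR01} up the tower of rational function fields, starting from the trivial or vacuous base case over a number field or finite field, combined with the same generic-torus considerations) is essentially the argument the paper sketches, and the treatment of the opposite-algebra ambiguity via $D \simeq D^{\mathrm{op}}$ is correct.
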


The discussion preceding the statement of Theorem \ref{T:A5} shows that part (2) follows from Theorem \ref{T:ResX1}.
On the other hand, we should point out that part (2) is {\it not} a direct consequence of Theorem \ref{T:A3} on the finiteness of $\gen(D)$, even for $m = 1$. The problem is that while every maximal  $K$-torus of the group $G = \mathrm{SL}_{1 , D}$,  with $D$ a central division $K$-algebra, is the norm torus $\mathrm{R}^{(1)}_{F/K}(\mathbb{G}_m)$ for some maximal separable subfield $F$ of $D$, the fact that two such tori $\mathrm{R}^{(1)}_{F_1/K}(\mathbb{G}_m)$ and $\mathrm{R}^{(1)}_{F_2/K}(\mathbb{G}_m)$ are $K$-isomorphic does not in general imply that the field extensions $F_1$ and $F_2$ are isomorphic over $K$. Thus, the fact that the algebraic $K$-groups $\mathrm{SL}_{1 , D_1}$ and $\mathrm{SL}_{1 , D_2}$, for some central division $K$-algebras $D_1$ and $D_2$, are in the same genus may {\it not} imply that the algebras $D_1$ and $D_2$ are themselves in the same genus. So, some additional considerations (involving generic tori) are needed to derive the finiteness of $\mathbf{gen}_K(G)$  from that of $\mathbf{gen}(D)$ (cf. \cite[Theorem 5.3]{CRR01}). Analogous considerations implemented in the context of the proof of the Stability Theorem (Theorem \ref{T:A2}) yield part (1)
of Theorem \ref{T:A5}. Since there are no restrictions on the characteristic in Theorem \ref{T:A3}, it would be interesting to determine if these are necessary in Theorem \ref{T:A5}(2).

\vskip2mm

Next, we will consider the genus of spinor groups.
\begin{thm}\label{T:A6} %{\rm (cf. \cite[Theorem 1.2]{CRR-Spinor}, \cite[Theorem 5.5]{RR-Tori})}
Suppose $K$ is either the field of rational functions $k(x , y)$ in two variables or the function field $k(C)$ of a smooth geometrically integral curve $C$ over $k$, where, in both cases, $k$ is a number field. Let $G = \mathrm{Spin}_n(q)$ be the spinor group of a nondegenerate quadratic form $q$ over $K$ of \emph{odd} dimension $n \geq 5$. Then $\mathbf{gen}_K(G)$ is finite.
\end{thm}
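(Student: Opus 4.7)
The plan is to reduce the finiteness of $\mathbf{gen}_K(G)$ to Theorem \ref{T:ResX2} on the finiteness of $K$-isomorphism classes of spinor groups of odd-dimensional quadratic forms having good reduction at a divisorial set of places, using Theorem \ref{T:A4} as the bridge between the genus and the good reduction property.

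First, I would fix a divisorial set of places $V$ of $K$. In both cases under consideration, the residue fields $K^{(v)}$ for $v \in V$ are finitely generated, so the hypothesis of Theorem \ref{T:A4} is satisfied at every $v \in V$. Using the fact that $V$ satisfies condition (A) of \S\ref{S-ConjFGfields} (applied to the coefficients and the discriminant of a fixed diagonalization of $q$), one finds a finite subset $S \subset V$ such that $G = \mathrm{Spin}_n(q)$ has good reduction at every $v \in V \setminus S$. Invoking Theorem \ref{T:A4}, every $G' \in \mathbf{gen}_K(G)$ then also has good reduction at every $v \in V \setminus S$. Since $V \setminus S$ contains a divisorial set $V'$ of $K$ (see \S\ref{S-ConjFGfields}), every element of $\mathbf{gen}_K(G)$ has good reduction at every $v \in V'$.

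Next, because $n$ is odd and $n \geq 5$, the Dynkin diagram of $G$ (of type $\textsf{B}_{(n-1)/2}$) admits no nontrivial symmetries, so in particular every $K$-form of $G$ is inner; moreover, as noted in Example 4.2(d), every such $K$-form is of the shape $\mathrm{Spin}_n(q')$ for some nondegenerate $n$-dimensional quadratic form $q'$ over $K$. Applying Theorem \ref{T:ResX2} directly when $K = k(C)$ is a two-dimensional global field of characteristic zero, and through its extension to $K = k(x_1,x_2)$ quoted immediately after Theorem \ref{T:ResX4}, we conclude that only finitely many $K$-isomorphism classes of spinor groups $\mathrm{Spin}_n(q')$ have good reduction at all $v \in V'$. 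Therefore $\mathbf{gen}_K(G)$ is finite.

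The substantive content lives in Theorem \ref{T:A4} (good reduction is ``detected'' by the collection of maximal $K$-tori) and Theorem \ref{T:ResX2} (finiteness of spinor groups with good reduction over the relevant fields); both are available. What is left is the routine packaging sketched above, and I do not anticipate a genuine obstacle. The only point requiring any care is ensuring that the right avatar of Theorem \ref{T:ResX2} applies to $K = k(x_1,x_2)$, which is addressed by the extensions of the unramified cohomology finiteness results established in \cite{RR-Tori} and noted in \S\ref{S-ResultsMain}.
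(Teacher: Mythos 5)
Your proposal is correct and follows essentially the same route as the paper: the authors likewise combine the observation that good reduction propagates through the genus (condition (A) plus Theorem \ref{T:A4}, with $V\setminus S$ containing a divisorial set) with Theorem \ref{T:ResX2} and its extension to $k(x_1,x_2)$ from \cite{RR-Tori}, and then use the fact that for odd $n$ all $K$-forms of $\mathrm{Spin}_n(q)$ are again spinor groups. No gaps.
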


We note that the argument sketched prior to the statement of Theorem \ref{T:A4}, combined with Theorem \ref{T:ResX2} and \cite[Theorem 5.5]{RR-Tori},
shows that for $G = \mathrm{Spin}_n(q)$, where $q$ is a nondegenerate quadratic form over $K$ of \emph{any} dimension $n \geq 5$, the number of $K$-isomorphism classes of spinor groups $G' = \mathrm{Spin}_n(q')$ that have the same maximal $K$-tori as $G$ is finite. When $n$ is odd, all $K$-forms of $G$ are again spinor groups, and we obtain the above theorem. On the other hand, when $n$ is even, $G$ has $K$-forms coming from skew-hermitian forms over noncommutative central division algebras over $K$, and so far, we have not been able to eliminate these as potential members of the genus $\mathbf{gen}_K(G)$ (see, however, \cite[Theorem 1.2]{CRR-Spinor} for a partial result in this direction).
%.. in [Compos].
We also know that $\gen_K(G)$ is finite if $K$ is the same as in Theorem \ref{T:A6} and $G$ is either $\mathrm{SU}_n(L/K , h)$, where $h$ is a nondegenerate hermitian form of dimension $n \geq 2$ over a quadratic extension $L/K$, or $\mathrm{SU}_n(D , h)$, where $h$ is a nondegenerate hermitian form of dimension $n \geq 1$ over a central quaternion division algebra $D$ over $K$ with the canonical involution (the case where $K$ is a 2-dimensional global field is handled in \cite[Theorem 8.3 and Remark 8.6]{CRR-Spinor}; for the field of rational functions in two variables, one proceeds analogously, making use of \cite[Theorem 5.1]{RR-Tori}).

%- see [Compos] ... . {\bf We need to make sure that all this is accurate.}

\vskip2mm

We conclude this section with the following result for groups of type $\textsf{G}_2$.
\begin{thm}\label{T:A7}
Let $G$ be a simple algebraic $K$-group of type $\textsf{G}_2$.

\vskip2mm

\noindent {\rm (1)} If $K$ is the field of rational functions $k(x)$, where $k$ is a number field, then $\vert \mathbf{gen}_K(G) \vert = 1$.

\vskip2mm

\noindent {\rm (2)} \parbox[t]{16cm}{If $k$ is a number field and $K$ is one of the following:

\vskip2mm

\hskip5mm $\bullet$ $K = k(x_1, \ldots, x_r)$ is the field of rational functions in any (finite) number of variables;

\vskip1mm

\hskip5mm $\bullet$ $K = k(C)$ is the function field of a smooth geometrically integral curve $C$ over $k$;

\vskip1mm

\hskip5mm $\bullet$ \parbox[t]{15cm}{$K = k(X)$ is the function field of a Severi-Brauer variety $X$ over $k$ associated with a central division algebra $D$ over $k$ of degree $\ell$, where $\ell$ is either odd or $\ell = 2$,}

\vskip2mm

\noindent then $\mathbf{gen}_K(G)$ is finite.}
\end{thm}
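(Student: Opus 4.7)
The plan is to combine Theorem \ref{T:A4}, which transfers good reduction across the genus, with the finiteness theorems for $\textsf{G}_2$-groups with good reduction from Section \ref{S-Results}, and with a Faddeev-type residue argument in the spirit of Theorem \ref{T:A2}. Throughout, I will use the standard fact that in characteristic $\neq 2, 3$, a $\textsf{G}_2$-group $G'$ over $K$ is determined up to isomorphism by its associated Cayley octonion algebra, or equivalently by its Arason--Rost invariant $\alpha(G') \in H^3(K, \mu_2)$ (the class of the $3$-fold Pfister norm form).

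For part (2), let $V$ be a divisorial set of places of $K$. Since $V$ satisfies condition (A), after discarding a finite subset (whose complement still contains a divisorial set) we may assume that $G$ has good reduction at every $v \in V$. By Theorem \ref{T:A4}, every $G' \in \mathbf{gen}_K(G)$ then also has good reduction at every $v \in V$, so that $\mathbf{gen}_K(G)$ injects into the pointed set of $K$-isomorphism classes of $\textsf{G}_2$-groups over $K$ with good reduction at all $v \in V$. In each of the three cases for $K$ listed in (2), this latter set is finite: for $K = k(C)$ (a $2$-dimensional global field) this is Theorem \ref{T:ResX4}, while for the purely transcendental extensions of arbitrary finite transcendence degree and for the Severi--Brauer function fields of the indicated degree, this is the extension of Theorem \ref{T:ResX4} recorded in the remark following Theorem \ref{T:ResX5}.

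For part (1), fix $G' \in \mathbf{gen}_K(G)$ with $K = k(x)$, and let $V_0$ denote the set of geometric places of $K$ (those associated with closed points of $\mathbb{A}^1_k$). The first step is to argue that the genus hypothesis, combined with an analysis of the maximal $K$-tori of $G$ and $G'$ in terms of the quadratic \'etale subalgebras of the underlying octonion algebras, forces the pointwise equality
\[
\partial^3_v\bigl(\alpha(G)\bigr) \;=\; \partial^3_v\bigl(\alpha(G')\bigr) \quad \text{in } H^2(K^{(v)}, \mu_2) \quad \text{for every } v \in V_0.
\]
The difference $\alpha(G) - \alpha(G') \in H^3(K, \mu_2)$ is then unramified at all geometric places, and by the Faddeev--Rost exact sequence for $H^3(k(x), \mu_2)$ (the higher-degree analogue of the Faddeev sequence invoked in the proof of Theorem \ref{T:A2}) it lies in the image of the inflation from $H^3(k, \mu_2)$; write this pullback as $\beta \in H^3(k, \mu_2)$. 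A specialization at a sufficiently general $k$-rational point $x_0 \in k$ — chosen so that both $G$ and $G'$ specialize to $\textsf{G}_2$-groups over $k$ that remain in the same $k$-genus — reduces the problem to Theorem \ref{T:A1}(2); since $\textsf{G}_2$ is not among the excluded types, the two specialized groups are $k$-isomorphic, forcing $\beta = 0$. Hence $\alpha(G) = \alpha(G')$ and $G \simeq G'$.

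The principal obstacle is the residue step in part (1): passing from the genus condition to the \emph{pointwise} equality of the residues of the Arason--Rost invariants. In the quaternion setting of Theorem \ref{T:A2}, the analogous step is essentially automatic because the residue is a $2$-torsion character of the absolute Galois group of $K^{(v)}$, for which equality of kernels already implies equality of the characters. For $\textsf{G}_2$-groups the residues live in $H^2(K^{(v)}, \mu_2)$ and are $2$-fold Pfister classes, so one must use the genus to detect embeddings of \emph{pairs} of quadratic \'etale subalgebras of the octonion algebras at $v$, in order to pin down both symbols of the residue. Executing this should require a careful generic-torus argument in the style of the proof of Theorem \ref{T:A4}, and this is where the bulk of the technical work will lie.
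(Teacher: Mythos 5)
Your treatment of part (2) is exactly the paper's argument: after discarding a finite set of places so that $G$ has good reduction on a divisorial set $V$, Theorem \ref{T:A4} transfers good reduction to every member of $\gen_K(G)$, and one then quotes the finiteness of $\textsf{G}_2$-groups with good reduction --- Theorem \ref{T:ResX4} for the two-dimensional global field $k(C)$, and the extension recorded after Theorem \ref{T:ResX5} (from \cite{RR-Tori}) for purely transcendental extensions of arbitrary finite transcendence degree and for the Severi--Brauer function fields of the indicated degrees. That part is complete.

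Part (1) is where the proposal falls short. The survey itself does not prove this statement; it defers to \cite[Theorem 9.3]{CRR-Spinor}, and the content of that reference is precisely the step you flag as the ``principal obstacle.'' Theorem \ref{T:A4} only tells you that at a geometric place $v$ where $G$ has good reduction, every $G' \in \gen_K(G)$ also has good reduction, i.e.\ both residues $\partial_v^3$ vanish there; it says nothing at the finitely many places where $G$ ramifies, and since the conclusion of (1) is the exact statement $\vert \gen_K(G) \vert = 1$ rather than mere finiteness, those places cannot be discarded. Proving $\partial_v^3(\alpha(G)) = \partial_v^3(\alpha(G'))$ at a place of bad reduction requires showing that the genus condition pins down a class in $H^2(K^{(v)}, \mu_2)$, which, as you note, is genuinely harder than the corresponding step in the Stability Theorem \ref{T:A2}, where the residue is a quadratic character already determined by its kernel. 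You diagnose this correctly but do not execute it, so the argument is incomplete at its central point.

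There is a second, unaddressed gap at the end of your part (1): after reducing to a constant class $\beta \in H^3(k, \mu_2)$, you specialize at a $k$-rational point ``chosen so that both $G$ and $G'$ specialize to groups that remain in the same $k$-genus'' and then invoke Theorem \ref{T:A1}(2). But the hypothesis that $G$ and $G'$ have the same maximal $K$-tori does not obviously descend to their specializations, and justifying a specialization of this kind is itself a nontrivial step (it is also the delicate point in the proof of Theorem \ref{T:A2}). Since $H^3(k,\mu_2)$ for a number field is detected by the real places, one could instead try to kill $\beta$ by a local analysis at the real completions, but either route must be argued rather than assumed.
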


\vskip2mm

\noindent The proofs of these statements ultimately rely on an analysis of unramified cohomology, and are treated in detail in \cite[Theorems 9.1 and 9.3]{CRR-Spinor} and \cite[Proposition 5.3]{RR-Tori}.

%{\bf $k$ a global field of characteristic $\neq 2$? Function fields of the Severi-Brauer? Maybe we should just give a reference without commenting on the proof.}

\subsection{The genus and base change.}\label{S-GenusBaseChange} To conclude our overview of the genus problem, in this subsection, we will briefly discuss how the genus $\mathbf{gen}_K(G)$ varies under base change. As a first example, we observe that if $G = \mathrm{SL}_{1,D}$,  where $D$ is a cubic division algebra over a number field $K$, then, using the description of the Brauer group of a number field provided by Albert-Brauer-Hasse-Noether theorem,
one can construct a sequence of {\it finite} extensions $F_i/K$ so that the sizes of $\mathbf{gen}_{F_i}(G \times_K F_i)$ grow unboundedly (of course, this cannot happen if $D$ is a quaternion algebra). On the other hand, the following theorem from \cite{CRR-GR} shows that the genus cannot grow under {\it purely transcendental} extensions.

\begin{thm}\label{T:A8}
Let $G$ be an absolutely almost simple simply connected algebraic group over a finitely generated field $k$ of characteristic 0, and let $K = k(x)$ be the field of rational functions in one variable. Then any $G' \in \mathbf{gen}_K(G \times_k K)$ is of the form $G' = G'_0 \times_k K$ for some $G'_0 \in \mathbf{gen}_k(G)$.
\end{thm}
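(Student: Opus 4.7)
The plan is to separate the statement into two subclaims: (a) the descent claim that $G' \simeq G'_0 \times_k K$ for some $k$-form $G'_0$ of $G$, and (b) the genus membership $G'_0 \in \mathbf{gen}_k(G)$. For (a) I propose to combine Theorem \ref{T:A4} with the Raghunathan--Ramanathan-type descent of Theorem \ref{T:XXX2}. Let $V$ be the divisorial set of places of $K = k(x)$ corresponding to the closed points of $\mathbb{A}^1_k$. Since $G$ is defined over $k$, the base change $G \times_k K$ extends to the reductive group scheme $G \times_k k[x]$ and hence has good reduction at every $v \in V$; the residue field $K^{(v)}$ is a finite extension of $k$, which is finitely generated by hypothesis, so Theorem \ref{T:A4} applies and yields that $G'$ also has good reduction at every $v \in V$. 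Since $\mathrm{char}\: k = 0$, the argument following Theorem \ref{T:XXX2} (combining Tsen's theorem, Steinberg's theorem, and Riemann--Hurwitz) shows that $G'$ splits over $k^{\rm sep}(x) = \bar{k}(x)$. Then Theorem \ref{T:XXX2} produces a $k$-form $G'_0$ of $G$ with $G' \simeq G'_0 \times_k K$, as required.

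For (b) I propose a spreading-out and specialization argument. By symmetry of the relation ``having the same isomorphism classes of maximal $K$-tori,'' it is enough to show that every maximal $k$-torus of $G$ is $k$-isomorphic to some maximal $k$-torus of $G'_0$; the reverse inclusion follows by the same argument with the roles of $G$ and $G'_0$ interchanged. So fix a maximal $k$-torus $T_0 \subset G$. Then $T_0 \times_k K$ is a maximal $K$-torus of $G \times_k K$, and the hypothesis $G' = G'_0 \times_k K \in \mathbf{gen}_K(G \times_k K)$ yields a maximal $K$-torus $T \subset G'_0 \times_k K$ together with a $K$-isomorphism $\varphi \colon T \xrightarrow{\sim} T_0 \times_k K$. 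Spreading $T$ out inside the constant family $G'_0 \times_k \mathbb{A}^1_k \to \mathbb{A}^1_k$, one obtains, after shrinking, a closed subgroup scheme $\mathcal{T} \subset G'_0 \times_k U$ over a nonempty open $U \subset \mathbb{A}^1_k$ whose fibers over $k$-points are maximal $k$-tori of $G'_0$; similarly, $\varphi$ spreads to an isomorphism $\mathcal{T}|_{U'} \simeq T_0 \times_k U'$ over some nonempty $U' \subset U$. Because $k$ is infinite (being a finitely generated field of characteristic $0$), we may pick $a \in U'(k)$, and specialization at $a$ then produces a maximal $k$-torus $\mathcal{T}_a \subset G'_0$ together with a $k$-isomorphism $\mathcal{T}_a \simeq T_0$, completing the verification.

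The chief obstacle, to my mind, is step (a): the genus hypothesis supplies good reduction of $G'$ at the geometric places only through the nontrivial Theorem \ref{T:A4}, and the passage from good reduction to the splitting of $G'$ over $k^{\rm sep}(x)$ needed to invoke Theorem \ref{T:XXX2} rests on the characteristic $0$ hypothesis in an essential way, through Tsen and Steinberg. Once (a) is in place, the specialization argument for (b) is essentially formal, relying only on the spreading-out of subgroup schemes and of isomorphisms between them together with the infinitude of $k$. It would be natural to ask whether the theorem extends to ``good'' positive characteristic; such an extension would require either a substitute for the Tsen--Steinberg input or an alternative way of controlling the minimal Galois extension of $K$ over which $G'$ becomes an inner form of the split group.
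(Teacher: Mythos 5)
Your proposal is correct and follows essentially the same route as the paper: the paper's (sketched) proof likewise takes $V$ to be the geometric places of $k(x)$, invokes Theorem \ref{T:A4} to transfer good reduction from $G \times_k K$ to $G'$, and then applies Theorem \ref{T:XXX2} (with the splitting over $\bar{k}(x)$ being automatic in characteristic $0$ via Tsen--Steinberg--Riemann--Hurwitz, exactly as you note). Your part (b) -- the spreading-out and specialization argument showing $G'_0 \in \mathbf{gen}_k(G)$ -- is a correct completion of a step the paper's sketch leaves implicit.
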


To sketch the idea of the proof, we let $V$ denote the set of geometric places of $K$ (i.e. those valuations that correspond to the closed points of $\mathbb{P}_k^1$). Then the group $G \times_k K$ has good reduction at all $v \in V$. So, it follows from Theorem \ref{T:A4} that $G'$ also has good reduction at all $v \in V$. The desired conclusion is then derived from Theorem \ref{T:XXX2}.

\vskip2mm

In conjunction with Theorem \ref{T:A1}, this yields the following.
\begin{cor}\label{C:A1}
Let $G$ be an absolutely almost simple simply connected algebraic group over a number field $k$, and let $K = k(x_1, \ldots , x_r)$ be the field of rational functions in $r \geq 1$ variables. Then the genus $\mathbf{gen}_K(G \times_k K)$ is finite, and in fact reduces to a single element if the type of $G$ is different from $\textsf{A}_n$, $\textsf{D}_{2n+1}$ $(n > 1)$, and $\textsf{E}_6$.
\end{cor}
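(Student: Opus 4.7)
The plan is to prove the corollary by induction on the number of variables $r$, using Theorem \ref{T:A8} to descend one variable at a time until we reach the number field $k$, where Theorem \ref{T:A1} applies. First I would set up the tower of fields
\[
k = k_0 \subset k_1 \subset \cdots \subset k_r = K, \qquad k_i := k(x_1, \ldots, x_i),
\]
so that $k_i = k_{i-1}(x_i)$ is a purely transcendental extension in one variable of the field $k_{i-1}$. Each $k_{i-1}$ is finitely generated of characteristic $0$ (since $k$ is a number field), so the hypotheses of Theorem \ref{T:A8} are satisfied at every step with base field $k_{i-1}$.

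Next I would iterate the theorem. For $i = r$, Theorem \ref{T:A8} says that every $G' \in \mathbf{gen}_K(G \times_k K)$ descends to some $G^{(r-1)} \in \mathbf{gen}_{k_{r-1}}(G \times_k k_{r-1})$ with $G' \simeq G^{(r-1)} \times_{k_{r-1}} K$. Applying Theorem \ref{T:A8} again with base field $k_{r-2}$ yields $G^{(r-2)} \in \mathbf{gen}_{k_{r-2}}(G \times_k k_{r-2})$ with $G^{(r-1)} \simeq G^{(r-2)} \times_{k_{r-2}} k_{r-1}$. Continuing this descent $r$ times produces $G_0 \in \mathbf{gen}_k(G)$ with
\[
G' \simeq G_0 \times_k K.
\]
This defines a surjective map of pointed sets $\mathbf{gen}_k(G) \twoheadrightarrow \mathbf{gen}_K(G \times_k K)$ given by $[H] \mapsto [H \times_k K]$.

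Finally I would conclude using Theorem \ref{T:A1}: since $k$ is a number field, $\mathbf{gen}_k(G)$ is finite, whence $\mathbf{gen}_K(G \times_k K)$ is finite as the image of a finite set. Moreover, if the type of $G$ lies outside $\textsf{A}_n$, $\textsf{D}_{2n+1}$ ($n > 1$), and $\textsf{E}_6$, then $|\mathbf{gen}_k(G)| = 1$, forcing the surjective image to satisfy $|\mathbf{gen}_K(G \times_k K)| = 1$ as well.

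There is no serious obstacle here: once Theorems \ref{T:A8} and \ref{T:A1} are available, the argument is essentially bookkeeping. The only point that deserves attention is the verification that Theorem \ref{T:A8} may be applied at each intermediate stage, which is immediate since finite generation and characteristic $0$ are both preserved under passage from $k_{i-1}$ to $k_{i-1}(x_i)$, and since $G \times_k k_{i-1}$ remains absolutely almost simple and simply connected. Thus the real work has been done in Theorems \ref{T:A8} (which in turn rests on Theorems \ref{T:A4} and \ref{T:XXX2}) and \ref{T:A1}, and the corollary follows by this straightforward induction.
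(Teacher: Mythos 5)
Your argument is correct and is exactly the route the paper intends: iterate Theorem \ref{T:A8} down the tower $k \subset k(x_1) \subset \cdots \subset k(x_1,\ldots,x_r)$ to descend any $G'$ in the genus to some $G_0 \in \mathbf{gen}_k(G)$, and then invoke Theorem \ref{T:A1}. One small caution: the assignment $[H] \mapsto [H \times_k K]$ need not land in $\mathbf{gen}_K(G \times_k K)$ (the paper emphasizes precisely this point right after the corollary), so there is no well-defined surjection $\mathbf{gen}_k(G) \twoheadrightarrow \mathbf{gen}_K(G \times_k K)$ as you state --- but the descent still yields the cardinality bound $\vert \mathbf{gen}_K(G \times_k K) \vert \leq \vert \mathbf{gen}_k(G) \vert$, which is all the conclusion requires.
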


We note that for the exceptions $\textsf{A}_n$, $\textsf{D}_{2n+1}$ $(n > 1)$, and $\textsf{E}_6$, we have $\vert \Z(G) \vert > 2$,
%are precisely the types for which the center $Z(G)$ has size $> 2$,
so this corollary proves Conjecture 8.10 in full for groups of the form $G \times_k K$ in the above notations.

\vskip2mm

It is important to point out that for $G'_0 \in \mathbf{gen}_k(G)$, the group $G' = G'_0 \times_k K$ may \emph{not} lie in $\mathbf{gen}_K(G \times_k K)$. In fact, we have the following two results from \cite{CRR-GR}, which point to a new phenomenon that we refer to as ``killing the genus"
by a purely transcendental extension of the base field.
\begin{thm}\label{T:A10}
Let $A$ be a central simple algebra of degree $n$ over a finitely generated field $k$, and let $G = \mathrm{SL}_{1 , A}$. Assume that $\mathrm{char}\: k$ is prime to $n$, and let $K = k(x_1, \ldots , x_{n-1})$ be the field of rational functions in $(n-1)$ variables. Then $\mathbf{gen}_K(G \times_k K)$ consists of
(the isomorphism classes of) groups of the form $H \times_k K$, where $H = \mathrm{SL}_{1 , B}$ and $B$ is a central simple $k$-algebra of degree $n$ such that its class $[B]$ in the Brauer group $\mathrm{Br}(k)$ generates the same subgroup as the class $[A]$.
\end{thm}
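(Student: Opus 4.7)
The theorem decomposes into two claims.

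\emph{Sufficiency} --- that every group of the form $\mathrm{SL}_{1,B} \times_k K$ with $\langle [B] \rangle = \langle [A] \rangle$ lies in the genus --- is essentially algebraic: base-changing to $K$, one has $\langle [B \otimes_k K] \rangle = \langle [A \otimes_k K] \rangle$ in $\Br(K)$, so a degree-$n$ field extension $E/K$ splits one of $A \otimes_k K$, $B \otimes_k K$ iff it splits the other. The classical criterion that a degree-$n$ extension of $K$ embeds as a maximal subfield of a central simple $K$-algebra of degree $n$ precisely when it splits that algebra then identifies the collections of maximal $K$-subfields (hence the isomorphism classes of maximal $K$-tori) of $\mathrm{SL}_{1,A \otimes_k K}$ and $\mathrm{SL}_{1,B \otimes_k K}$.

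For \emph{necessity}, let $G' \in \gen_K(G \times_k K)$ and write $G' = \mathrm{SL}_{1,B'}$ for a central simple $K$-algebra $B'$ of degree $n$. The first task is to descend $B'$ from $K$ to $k$. Since $A \otimes_k K$ is visibly unramified at every geometric discrete valuation $v$ of $K$ (one trivial on $k$; note $K^{(v)}$ is finitely generated), the group $G \times_k K$ has good reduction at every such $v$. Theorem \ref{T:A4} propagates this through the genus, so $B'$ is unramified at every geometric $v$. For $K = k(x_1, \ldots, x_{n-1})$ and the set $V_0$ of geometric places, iterating Faddeev's residue sequence along the tower $k \subset k(x_1) \subset \cdots \subset K$ yields the purity statement
$$
{}_n\Br(K)_{V_0} = {}_n\Br(k).
$$
Hence $[B'] \in \Br(K)$ is the image of some $[B] \in {}_n\Br(k)$, and choosing $B$ to be a central simple $k$-algebra of degree $n$ representing this class gives $B' \simeq B \otimes_k K$.

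It remains to establish $\langle [B] \rangle = \langle [A] \rangle$ in $\Br(k)$; by symmetry I focus on $[B] \in \langle [A] \rangle$. Amitsur's theorem characterizes this inclusion as the statement that the Severi--Brauer function field $k(\mathrm{SB}(A))$ splits $B$. A first consequence of the genus hypothesis is extracted from those maximal $K$-subfields of $A \otimes_k K$ that descend from $k$: for any maximal subfield $F/k$ of $A$, the hypothesis yields an embedding $F \otimes_k K \hookrightarrow B \otimes_k K$, and specializing $(x_1,\ldots,x_{n-1}) \mapsto (\lambda_1,\ldots,\lambda_{n-1})$ at a generic point $\lambda \in k^{n-1}$ produces a $k$-embedding $F \hookrightarrow B$. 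Thus every maximal subfield of $A$ over $k$ splits $B$. To close the gap to $[B] \in \langle [A] \rangle$, the argument then invokes the non-constant maximal subfields of $A \otimes_k K$, which are available in abundance precisely because $\mathrm{trdeg}(K/k) = n-1 = \dim \mathrm{SB}(A)$. Working with the Steinberg quotient $\mathrm{SL}_{1,A} \to \A^{n-1}_k$ and a sufficiently generic family of regular semisimple elements parametrized over $K$, one constructs enough maximal $K$-subfields of $A \otimes_k K$ so that the genus condition forces $B \otimes_k K$ to be split by the composite $K(\mathrm{SB}(A)) = K \cdot k(\mathrm{SB}(A))$. Since $k(\mathrm{SB}(A)) \hookrightarrow K(\mathrm{SB}(A))$ is purely transcendental, with injective Brauer maps, $k(\mathrm{SB}(A))$ already splits $B$ over $k$, giving $[B] \in \langle [A] \rangle$ by Amitsur.

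The main obstacle is the final construction. The intersection $\bigcap_F \ker(\Br(k) \to \Br(F))$, as $F$ ranges over maximal subfields of $A$ over $k$, is in general strictly larger than $\langle [A] \rangle$ (as witnessed by the nontriviality of the genus of, e.g., cubic division algebras over number fields), so the specialization argument alone cannot suffice. Bridging this gap requires the transcendental maximal subfields of $A \otimes_k K$ --- those made available exactly by the coincidence $\mathrm{trdeg}(K/k) = \dim \mathrm{SB}(A)$ --- and organizing the constraints they impose into a global splitting statement for $K(\mathrm{SB}(A))$ is the source of the ``killing the genus'' phenomenon emphasized in the passage preceding the theorem.
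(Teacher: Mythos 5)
Your overall architecture --- (i) reduce sufficiency to the classical embedding criterion for splitting fields, (ii) descend $B'$ to $k$ via good reduction at the geometric places (Theorem \ref{T:A4}) combined with iterated Faddeev purity, and (iii) conclude via Amitsur's theorem by showing that $k(\mathrm{SB}(A))$ splits $B$ --- is exactly the route the paper indicates (it states only that the proof ``uses Amitsur's Theorem on generic splitting fields'' and defers the details to \cite{CRR-GR}), and steps (i) and (ii) are essentially complete as written. But the proposal is not a proof: the entire content of the theorem is concentrated in the step you describe as ``one constructs enough maximal $K$-subfields of $A\otimes_k K$ so that the genus condition forces $B\otimes_k K$ to be split by $K(\mathrm{SB}(A))$,'' and you never construct them. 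What is needed is a concrete \emph{generic maximal subfield}: a separable degree-$n$ extension $P$ of $K=k(x_1,\ldots,x_{n-1})$ that embeds into $A\otimes_k K$ and satisfies $\ker\left(\Br(k)\to\Br(P)\right)=\langle[A]\rangle$, i.e.\ $P$ is itself a generic splitting field of $A$ in Amitsur's sense. Producing such a $P$ over a purely transcendental base of transcendence degree exactly $n-1$, and verifying its genericity (typically by a specialization/place argument comparing $P$ with $k(\mathrm{SB}(A))$), is the actual theorem; without it one is left only with the observation, which you yourself make, that constant maximal subfields merely place $[B]$ in $\bigcap_F\ker(\Br(k)\to\Br(F))$, which is strictly larger than $\langle[A]\rangle$ in general.

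There is a second, unacknowledged gap earlier in the necessity direction. The genus is defined by isomorphism classes of maximal \emph{tori}, and, as the paper stresses in the discussion following Theorem \ref{T:A5}, an isomorphism $\mathrm{R}^{(1)}_{P/K}(\mathbb{G}_m)\simeq\mathrm{R}^{(1)}_{P'/K}(\mathbb{G}_m)$ does not imply $P\simeq P'$. So your assertion that ``for any maximal subfield $F/k$ of $A$, the hypothesis yields an embedding $F\otimes_k K\hookrightarrow B\otimes_k K$'' does not follow: the genus hypothesis only produces a maximal \'etale subalgebra of $B\otimes_k K$ whose norm-one torus is isomorphic to that of $F\otimes_k K$. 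Repairing this requires restricting to maximal subfields whose Galois closure has a large enough Galois group (e.g.\ the full symmetric group) that the torus determines the field --- the ``generic tori'' considerations of \cite{CRR01} and \cite{CRR-GR} --- and this is precisely a property one must also build into the generic maximal subfield $P$ above. The two missing ingredients are therefore really one: the construction and genericity analysis of $P$, which your proposal correctly locates as the heart of the ``killing the genus'' phenomenon but does not supply.
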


\begin{thm}\label{T:A11}
Let $G$ be a group of type $\textsf{G}_2$ over a finitely generated field $k$ of characteristic $\neq 2$, and let $K = k(x_1, \ldots , x_6)$ be the field of rational functions in 6 variables. Then $\mathbf{gen}_K(G \times_k K)$ reduces to a single element.
\end{thm}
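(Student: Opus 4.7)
The plan is to combine the cohomological classification of $\textsf{G}_2$-groups by the Arason--Rost invariant with a descent argument based on Theorem \ref{T:A4}. Recall that over a field of characteristic $\neq 2$, an absolutely simple group of type $\textsf{G}_2$ is equivalent to a $3$-fold Pfister quadratic form, and is classified up to isomorphism by its Arason--Rost invariant in $H^3(-, \mu_2)$, which is a $3$-symbol; in particular, two such groups are isomorphic if and only if their Rost invariants agree, and this invariant is compatible with base change.

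First I would introduce the divisorial set $V$ of places of $K = k(x_1, \ldots, x_6)$ coming from the model $\mathbb{A}^6_k = \mathrm{Spec}\: k[x_1, \ldots, x_6]$. Because $G$ extends to the reductive group scheme $G \times_k k[x_1, \ldots, x_6]$, the group $G \times_k K$ has good reduction at every $v \in V$. By Theorem \ref{T:A4}, any $G' \in \mathbf{gen}_K(G \times_k K)$ inherits good reduction at every $v \in V$, and the compatibility of the Rost invariant with good reduction places $e(G')$ in the unramified subgroup $H^3(K, \mu_2)_V$. Purity for \'etale cohomology (equivalently, iterated Faddeev for the purely transcendental extension $K/k$) yields that the restriction map $H^3(k, \mu_2) \to H^3(K, \mu_2)$ is injective with image exactly $H^3(K, \mu_2)_V$; hence $e(G') = \alpha|_K$ for a uniquely determined class $\alpha \in H^3(k, \mu_2)$.

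Since $\textsf{G}_2$ has trivial Dynkin symmetry every form is inner, and the minimal splitting field of $G'$ is unramified at all $v \in V$; purity of the \'etale fundamental group of $\mathbb{A}^n_k$ (which agrees with that of $\mathrm{Spec}\: k$) forces this splitting field to lie in $k^{\mathrm{sep}}(x_1, \ldots, x_6)$. A multi-variable version of Theorem \ref{T:XXX2} (Raghunathan--Ramanathan descent for reductive torsors over affine space, in the spirit of the iteration used in Theorem \ref{T:A8}) then gives $G' = G'_0 \times_k K$ for some $\textsf{G}_2$-group $G'_0$ over $k$ with $e(G'_0) = \alpha$.

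The final and main obstacle is to establish the equality $e(G) = e(G'_0)$ in $H^3(k, \mu_2)$, which together with the classification above would yield $G \cong G'_0$ and complete the proof. I would attack this by using the genus hypothesis to transfer maximal tori from $K$ down to $k$: for every maximal $k$-torus $T \subset G$, the torus $T \times_k K$ is a maximal $K$-torus of both $G \times_k K$ and $G'_0 \times_k K$, so by specializing the reductive group schemes over $\mathbb{A}^6_k$ at suitable $k$-rational points one should obtain $G'_0 \in \mathbf{gen}_k(G)$. The role of the six transcendentals is to ensure that sufficiently generic maximal tori become available, so that via the dictionary between maximal tori of $\textsf{G}_2$-groups and quadratic (and cubic) \'etale subalgebras of the associated octonion algebras, together with the Arason exact sequence, one can conclude that the $3$-Pfister forms attached to $G$ and $G'_0$ are isomorphic. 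Carrying out this last step rigorously, specifically identifying the precise role of the six variables and ruling out exotic generic tori that could obstruct the equality $e(G) = e(G'_0)$, is where the main technical difficulty lies.
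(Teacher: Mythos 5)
Your reduction steps are sound and consistent with the machinery the paper already provides: good reduction at the geometric places of $\mathbb{A}^6_k$ propagates through the genus by Theorem \ref{T:A4}, the Rost/Arason invariant of any $G'$ in the genus is unramified and hence (by homotopy invariance/iterated Faddeev) comes from $H^3(k,\mu_2)$, and $G'$ descends to some $\mathsf{G}_2$-group $G'_0$ over $k$ --- this is essentially Theorem \ref{T:A8} iterated, as in Corollary \ref{C:A1}. But everything you have done up to that point only re-derives the descent; the entire content of the theorem is the step you leave open, and your proposed route to it cannot work as stated. Specializing at $k$-rational points to get $G'_0 \in \mathbf{gen}_k(G)$ (even granting that the specialization of the ``same maximal tori'' condition can be justified) proves nothing here, because the genus of a $\mathsf{G}_2$-group over $k$ can be \emph{nontrivial} --- the paper cites \cite{GilleG2} for exactly such examples. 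The whole point of introducing the six transcendentals is that the genus collapses over $K$ even though it need not collapse over $k$, so any argument that funnels the conclusion through $\mathbf{gen}_k(G)$ is circular in spirit and insufficient in substance.

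The missing idea --- the ``properties of quadratic Pfister forms'' the paper alludes to --- is the use of a \emph{generic} maximal torus, or equivalently a generic value of the norm form. Writing $G = \mathrm{Aut}(C)$ and $G'_0 = \mathrm{Aut}(C')$ with $3$-fold Pfister norm forms $n$ and $n'$, one takes the generic trace-zero element $v = x_1e_1 + \cdots + x_6e_6 + e_7$ of $C \otimes_k K$; the quadratic \'etale subalgebra $K[v]$ it generates is determined by the generic value $n^0(x_1,\dots,x_6,1)$ of the $7$-dimensional pure part $n^0$ (six variables suffice precisely because this value only matters modulo squares, so one coordinate may be dehomogenized --- this is the ``precise role of the six variables'' you could not identify). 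Embedding a generic maximal torus through $K[v]$ into $G'_0 \times_K K$ forces $n'$ to represent this generic value over $K$, and the Cassels--Pfister subform theorem then yields $n^0 \subset n'$, hence $n \simeq n'$ and $G \simeq G'_0$ by the classification you quoted. Without this argument (or a substitute for it), the proof is incomplete at its decisive point, as you yourself acknowledge.
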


We note that the genus of a group of type $\textsf{G}_2$ can be nontrivial (cf. \cite{GilleG2}). The proof of Theorem \ref{T:A10} uses Amitsur's Theorem (Theorem \ref{T:Amitsur})  on generic splitting fields, while the proof of Theorem \ref{T:A11} relies on properties of quadratic Pfister forms. These two results, and also similar ones for the genus of division algebras (\cite{CRR-GR}), point to the following conjecture.

\vskip2mm

\noindent {\bf Conjecture 8.19.} {\it Let $G$ be an absolutely almost simple simply connected algebraic group over a finitely generated field $k$, and assume that
$\mathrm{char}\: k$ is ``good" for the type of $G$. Then there is a purely transcendental extension $K = k(x_1, \ldots , x_r)$ of transcendence degree $r$ depending only on the type of $G$ such that every $G' \in \mathbf{gen}_K(G \times_k K)$ is of the form $G'_0 \times_k K$, where $G'_0$ has the property that
$$
G'_0 \times_k F \in \mathbf{gen}_F(G \times_k F)
$$
for \emph{any} field extension $F/k$.}

\vskip2mm

This can be reformulated using a different, more functorial, notion of the genus (proposed by A.S.~Merkurjev), which is also based on the consideration of maximal tori, but at the same time incorporates infinite extensions like those involved in Amitsur's theorem. Namely, one defines the {\it motivic genus} $\mathbf{gen}_m(G)$ of an absolutely almost simple simply connected algebraic $k$-group $G$ as the set of $k$-isomorphism  classes of $k$-forms $G'$ of $G$ such that
$$
G' \times_k F \in \mathbf{gen}_F(G \times_k F)
$$
for all field extensions $F/k$. Then according to Theorem \ref{T:A10}, the motivic genus of $G = \mathrm{SL}_{1 , A}$ is always finite and reduces to one element if $A$ has exponent two. Furthermore, by Theorem \ref{T:A11}, for a $k$-group of type $\textsf{G}_2$, the motivic genus always reduces to one element (at least when $\mathrm{char}\: k \neq 2$). Next, it was shown by Izhboldin \cite{Izhb} that for nondegenerate quadratic forms $q$ and $q'$ of \emph{odd} dimension over a field $k$ of characteristic $\neq 2$, the condition

\vskip2mm

\noindent $(\bullet)$ $q$ and $q'$ have the same Witt index over any field extension $F/k$

\vskip2mm

\noindent is equivalent to the fact that $q$ and $q'$ are scalar multiples of each other (this conclusion being {\it false} for even-dimensional forms). It follows that $\vert \mathbf{gen}_m(G) \vert = 1$ for the spinor group $G = \mathrm{Spin}_n(q)$ with $n$ odd. We note that the condition $(\bullet)$ amounts to the fact that the motives of $q$ and $q'$ in the category of Chow motives are isomorphic (cf. \cite{Vish1}, \cite[Theorem 4.18]{Vish2}, and \cite{Karp}), which prompted the choice of terminology for this version of the notion of the genus. One can expect the motivic genus to be finite for all absolutely almost simple simply connected groups (at least over fields of ``good" characteristic). On the other hand, Conjecture 8.19 asserts that the genus gets reduced to the motivic genus (i.e., becomes as small as possible) after a suitable purely transcendental extension of the base field.

\section{Applications to Zariski-dense subgroups}\label{S-WCZariski} The analysis of Zariski-dense (thin) subgroups of semi-simple algebraic groups is a very broad and active area (see, for example, \cite{Breu-Oh}). Our goal in this section is to give some indications of how reduction techniques, and particularly Conjecture 5.7, can be applied in this context. More specifically, we will
focus on the geometry of locally symmetric spaces, and at the end will demonstrate how this approach leads to a finiteness result for length-commensurable Riemann surfaces without any assumptions of arithmeticity -- see Theorem \ref{T:AZD6}.

%Our goal in this section is to give some indications of how reduction techniques, and particularly Conjecture 5.7, can be applied in this context. We will restrict ourselves to the geometry of locally symmetric spaces; our reason for doing this is that several elements of the general theory that we are discussing in this article can be traced
%back to the initial work in \cite{PrRap-WC} and \cite{PrRap-GeomDed}, where results on the commensurability of isospectral and/or
%length-commensurable {\it arithmetic} Riemann surfaces \cite{Reid} were extended to arithmetically defined locally symmetric spaces of {arbitrary} simple real algebraic groups. Let us point out that some of the techniques developed in these works are applicable not only to arithmetic subgroups, but in fact to arbitrary (finitely generated) Zariski-dense subgroups. In this section, we will see that Conjecture 5.7
%is precisely the kind of statement that is needed in order to establish certain finiteness statements in this generality (cf. ...). At the end,
%we will demonstrate how this approach leads to a finiteness result for length-commensurable Riemann surfaces without any assumptions of arithmeticity --- see Theorem \ref{T:AZD6}.

\vskip1mm

We begin by quickly recalling the standard geometric set-up. Let $G$ be a simple algebraic group over $\mathbb{R}$. We view the group of $\R$-points $\mathcal{G} = G(\mathbb{R})$ as a Lie group, pick a maximal compact subgroup $\mathcal{K}$ of $\mathcal{G}$, and consider the associated symmetric space $\mathfrak{X} = \mathcal{G} / \mathcal{K}$. Furthermore, given a discrete torsion-free subgroup $\Gamma \subset \mathcal{G}$, we let $\mathfrak{X}_{\Gamma} = \Gamma \backslash \mathfrak{X}$ denote the corresponding locally symmetric space. We say that $\mathfrak{X}_{\Gamma}$ is {\it arithmetically defined} if the subgroup $\Gamma$ is {\it arithmetic} (see \cite[\S1]{PrRap-WC} for the details). As in the case of Riemann surfaces that we saw in \S\ref{S-Quat}, closed geodesics in $\mathfrak{X}_{\Gamma}$ correspond to nontrivial semi-simple elements of $\Gamma$. However, the formula relating the length of the closed geodesic $c_{\gamma}$ attached to a semi-simple element $\gamma \in \Gamma$ to the
eigenvalues of $\gamma$ is substantially more complicated than equation (\ref{E-GeoLength}) for Riemann surfaces, particularly when the rank of $\mathfrak{X}$, i.e. the real rank of $G$, is $> 1$ (see \cite[Proposition 8.5]{PrRap-WC} for the precise statement). Consequently, the fact that two locally symmetric spaces are length-commensurable does not translate into a simple condition like (\ref{E:A6}) on the eigenvalues of semi-simple elements. Instead, the  characterization of length-commensurable locally symmetric spaces requires the following relationship that was introduced in \cite{PrRap-WC}.
%To avoid technical complications, we will assume throughout this section that the base field is of characteristic zero.

\vskip2mm

\noindent {\bf Definition 9.1.} {\it Let $F$ be an infinite field.}

\vskip2mm

\noindent {\rm (1)} \parbox[t]{16.3cm}{\it Let $\gamma_1 \in \mathrm{GL}_{n_1}(F)$ and $\gamma_2 \in \mathrm{GL}_{n_2}(F)$ be \emph{semi-simple (diagonalizable)} matrices, and let
$$
\lambda_1, \ldots , \lambda_{n_1} \ \ \text{and} \ \ \mu_1, \ldots , \mu_{n_2}
$$
be their eigenvalues (in a fixed algebraic closure $\overline{F}$). We say that $\gamma_1$ and $\gamma_2$ are \emph{weakly commensurable} if there exist integers $a_1, \ldots , a_{n_1}$ , $b_1, \ldots , b_{n_2}$ such that
\begin{equation}\tag{WC}\label{E:WC}
\lambda_1^{a_1} \cdots \lambda_{n_1}^{a_{n_1}} = \mu_1^{b_1} \cdots \mu_{n_2}^{b_{n_2}}.
\end{equation}}

\vskip1mm

\noindent {\rm (2)} \parbox[t]{16.3cm}{\it Let $G_1 \subset \mathrm{GL}_{n_1}$ and $G_2 \subset \mathrm{GL}_{n_2}$ be reductive algebraic $F$-groups. We say that Zariski-dense subgroups $\Gamma_1 \subset G_1(F)$ and $\Gamma_2 \subset G_2(F)$ are \emph{weakly commensurable} if every semi-simple element $\gamma_1 \in \Gamma_1$ of infinite order is weakly commensurable to some semi-simple element $\gamma_2 \in \Gamma_2$ of infinite order, and vice versa.}

\vskip2.5mm

One can view this relationship simply as a higher-dimensional version of (\ref{E:A6}) in the case where $G_1 = G_2 = \mathrm{SL}_2$. But in fact, even irrespective of locally symmetric spaces, it provides a way of matching the eigenvalues of semi-simple elements of $\Gamma_1$ and $\Gamma_2$ that does not depend on the choice of the matrix realizations of the ambient algebraic groups $G_1$ and $G_2$. In the context of locally symmetric spaces though, one proves the following (cf. \cite[Corollary 8.14]{PrRap-WC}):

\vskip2mm

\noindent {\it Let $G_1$ and $G_2$ be simple real algebraic groups, and let $\mathfrak{X}_i$ be the symmetric space of $\mathcal{G}_i = G_i(\mathbb{R})$ for $i = 1, 2$. If $\Gamma_i \subset \mathcal{G}_i$ is a torsion-free lattice and the locally symmetric spaces $\mathfrak{X}_{\Gamma_i} := \Gamma_i \backslash \mathfrak{X}_i$, for $i = 1, 2$ are length-commensurable (in particular, compact isospectral or iso-length-spectral), then their fundamental groups $\Gamma_1$ and $\Gamma_2$ are weakly commensurable.}

\vskip2mm

\noindent On the other hand, the locally symmetric spaces $\mathfrak{X}_{\Gamma_1}$ and $\mathfrak{X}_{\Gamma_2}$ are commensurable if there exists an $\mathbb{R}$-isomorphism $\varphi \colon G_1 \to G_2$ such that $\varphi(\Gamma_1)$ and $\Gamma_2$ are commensurable in
the usual sense (in this case, we say that $\Gamma_1$ and $\Gamma_2$ are commensurable up to an isomorphism between $G_1$ and $G_2$). This suggests that to attack the geometric problem of when length-commensurable locally symmetric spaces are commensurable, one should try to prove that
in the cases of interest, the weak commensurability of Zariski-dense subgroups implies their commensurability (in a suitable sense). At first glance, however, the chances of obtaining a sufficiently general result along these lines appear to be rather slim. Indeed, the matrices
$$
A = \left(\begin{array}{ccc} 12 & 0 & 0 \\ 0 & 2 & 0 \\ 0 & 0 & 1/24
\end{array} \right) \ \  ,  \ \ B = \left( \begin{array}{ccc} 4 & 0 & 0 \\ 0 & 3 & 0 \\ 0 & 0 & 1/12    \end{array} \right) \ \ \in \ \
\mathrm{SL}_3(\mathbb{C})
$$
are weakly commensurable because
$$
\lambda_1  = 12  = 4 \cdot 3 =  \mu_1 \cdot \mu_2 \ \ (\text{or}  \ \ \lambda_1 = \mu^{-1}_3).
$$
On the other hand, no powers $A^m$ and $B^n$ for $m , n \neq 0$ are conjugate, so the subgroup $\langle A \rangle$ is not commensurable to any conjugate of $\langle B \rangle$. It turns out, though,
that the situation changes dramatically if, instead of ``small subgroups" (such as cyclic ones), one considers ``big subgroups" (e.g. Zariski-dense subgroups) of simple algebraic groups. In fact, the case of arithmetic subgroups was worked out almost completely \cite{PrRap-WC}\footnotemark.
Since the results that we will discuss deal with fields of characteristic zero, we will assume for the remainder of this section that \underline{{\it the base field has characteristic zero}.}

\footnotetext{We will not go into the details of this analysis here and would only like to point out that one of the important factors is the existence of so-called {\it generic} elements in every Zariski-dense subgroup --- see \cite{PrRap-Thin}
%the paper by G.~Prasad and the first author in \cite{Breu-Oh}
for a detailed discussion. The reader interested in the technical ingredients can also review the Isogeny Theorem (Theorem 4.2) in \cite{PrRap-WC},
which provides a far-reaching generalization of the following fact used in section \S\ref{S-Quat}: {\it if $\gamma_1 , \gamma_2 \in \mathrm{SL}_2(F)$ are semi-simple elements of infinite order that are weakly commensurable, then for any subfield $K$ that contains the traces of $\gamma_1$ and $\gamma_2$, the subalgebras $K[\gamma_1]$ and $K[\gamma_2]$ are $K$-isomorphic}.}

\addtocounter{thm}{1}

\begin{thm}\label{T:AZD1}{\rm (cf. \cite[Theorems 4 and 5]{PrRap-WC})}
Let $G_1$ and $G_2$ be absolutely almost simple algebraic groups over a field $F$ of characteristic zero.

\vskip2mm

\noindent {\rm (1)} \parbox[t]{16cm}{Assume that $G_1$ and $G_2$ are of the the same type, which is different from $\textsf{A}_n$, $\textsf{D}_{2n+1}$ $(n > 1)$, or $\textsf{E}_6$. If Zariski-dense arithmetic subgroups $\Gamma_1 \subset G_1(F)$ and $\Gamma_2 \subset G_2(F)$ are weakly commensurable, then they are commensurable\footnotemark.}

\vskip1mm

\noindent {\rm (2)} \parbox[t]{16cm}{In all cases, the Zariski-dense arithmetic subgroups $\Gamma_2 \subset G_2(F)$ that are weakly commensurable to a given
Zariski-dense arithmetic subgroup $\Gamma_1 \subset G_1(F)$ form finitely many commensurability classes.}
\end{thm}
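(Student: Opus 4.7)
The plan is to reduce weak commensurability of arithmetic subgroups to the genus problem resolved by Theorem~\ref{T:A1}, by translating eigenvalue data into torus data. Begin with the structural description of Zariski-dense arithmetic subgroups: each $\Gamma_i \subset G_i(F)$ is commensurable with $\mathcal{G}_i(\cO_{K_i}(S_i))$ for a uniquely determined number field $K_i$ (the \emph{field of definition}), a $K_i$-form $\mathcal{G}_i$ of $G_i$, and a finite set of places $S_i$. The first step is to show that weak commensurability forces $K_1 = K_2 =: K$ and that $\mathcal{G}_2$ is an inner $K$-form of $\mathcal{G}_1$; this is obtained by producing a supply of \emph{generic} semi-simple elements in $\Gamma_i$ (cf.~\cite{PrRap-Thin}) and invoking the Isogeny Theorem of \cite[\S4]{PrRap-WC}, which relates the minimal field containing the eigenvalues of such elements to the trace field.

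The second and crucial step upgrades weak commensurability to an identification of maximal $K$-tori. Given any maximal $K$-torus $T_1 \subset \mathcal{G}_1$, density of generic elements ensures the existence of a generic semi-simple $\gamma_1 \in \Gamma_1 \cap T_1(K)$. Weak commensurability then yields a semi-simple $\gamma_2 \in \Gamma_2$ of infinite order satisfying the multiplicative relation (WC) with $\gamma_1$; since $\gamma_2$ is itself generic in the unique maximal $K$-torus $T_2 \subset \mathcal{G}_2$ containing it, the Isogeny Theorem produces a $K$-isomorphism $T_1 \simeq T_2$. Reversing the roles of $\Gamma_1$ and $\Gamma_2$, one concludes that $\mathcal{G}_1$ and $\mathcal{G}_2$ carry the same $K$-isomorphism classes of maximal $K$-tori, i.e.\ $[\mathcal{G}_2] \in \gen_K(\mathcal{G}_1)$ in the sense of Definition~8.1.

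At this point, part~(1) follows from Theorem~\ref{T:A1}(2): when the type is not $\textsf{A}_n$, $\textsf{D}_{2n+1}$, or $\textsf{E}_6$, the genus reduces to a single element, so $\mathcal{G}_1 \simeq_K \mathcal{G}_2$. Transporting $\Gamma_2$ across this $K$-isomorphism reduces the problem to showing that two weakly commensurable arithmetic subgroups of a single $K$-group are commensurable after conjugation. This last assertion follows by identifying their local closures at each finite place of $K$: outside $S_1 \cup S_2$ these closures are hyperspecial parahoric subgroups, and the compatibility of eigenvalues forced by (WC), together with the finiteness of the symmetric difference $S_1 \bigtriangleup S_2$, pins down the integral structures up to conjugation. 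For part~(2), Theorem~\ref{T:A1}(1) gives finitely many possibilities for $[\mathcal{G}_2]$, and for each of them weak commensurability bounds $S_2$ within a finite collection, producing only finitely many commensurability classes in total.

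The main obstacle is the second step: obtaining enough generic semi-simple elements of $\Gamma_i$ lying in a prescribed maximal $K$-torus, and then applying the Isogeny Theorem uniformly over all such tori. Density of generic elements is precisely the point where Zariski-density and arithmeticity are both indispensable, and it is what separates the present setting from the toy example of $\langle A \rangle$ and $\langle B \rangle$ in $\mathrm{SL}_3(\C)$ preceding the theorem, where weak commensurability fails to imply commensurability for cyclic subgroups.
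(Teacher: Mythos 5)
Your overall architecture --- parametrizing commensurability classes of arithmetic subgroups by pairs $(K,\mathscr{G})$, proving $K_1=K_2$ via generic elements and the Isogeny Theorem, and then comparing the $K$-forms $\mathscr{G}_1,\mathscr{G}_2$ --- matches the intended argument. But your pivotal second step has a genuine gap: weak commensurability of $\Gamma_1$ and $\Gamma_2$ does \emph{not} imply that $\mathscr{G}_1$ and $\mathscr{G}_2$ have the same $K$-isomorphism classes of \emph{all} maximal $K$-tori, so you cannot conclude $[\mathscr{G}_2]\in\mathbf{gen}_K(\mathscr{G}_1)$ and invoke Theorem \ref{T:A1}. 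Given an arbitrary maximal $K$-torus $T_1\subset\mathscr{G}_1$, there is no reason for $\Gamma_1\cap T_1(K)$ to contain a generic element, or even an element of infinite order: $\Gamma_1$ is commensurable with $\mathscr{G}_1(\mathcal{O}_K)$, and $T_1(\mathcal{O}_K)$ can be finite (its rank is $\sum_{v\mid\infty}\mathrm{rk}_{K_v}T_1-\mathrm{rk}_K\,T_1$, which vanishes for suitable anisotropic tori). Weak commensurability only controls the tori that actually meet the subgroups nontrivially --- a distinction this survey stresses explicitly, both at the end of \S\ref{S-Quat} and in the discussion surrounding Conjecture 9.6. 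Two further overstatements: the Isogeny Theorem produces a $K$-\emph{isogeny} of the ambient tori (with control on character lattices), not a $K$-isomorphism --- the gap between the two is precisely what makes $\textsf{A}_n$, $\textsf{D}_{2n+1}$, $\textsf{E}_6$ exceptional --- and an element $\gamma_2$ satisfying (WC) with a generic $\gamma_1$ is not automatically generic; that requires a separate argument.

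The route actually taken in \cite{PrRap-WC} does not pass through the genus statement. From the matching of \emph{generic} tori one deduces that $\mathscr{G}_1$ and $\mathscr{G}_2$ become isomorphic over the completions $K_v$ (first for almost all $v$, then for all $v$), and one then applies the Hasse principle for adjoint groups to obtain $\mathscr{G}_1\simeq\mathscr{G}_2$ in part (1), and the properness of the global-to-local map to obtain finiteness in part (2); Theorem \ref{T:A1} is proved in parallel by the same method rather than serving as an input. Finally, your closing step for part (1) is unnecessary and confused: once $K_1=K_2=K$ and $\mathscr{G}_1\simeq\mathscr{G}_2$ over $K$, commensurability of $\Gamma_1$ and $\Gamma_2$ follows immediately from the criterion you quoted at the outset (\cite[Proposition 2.5]{PrRap-WC}). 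There is no further ``integral structure'' to pin down --- all arithmetic subgroups of a fixed $K$-group are mutually commensurable --- and the sets $S_i$ you introduce belong to the $S$-arithmetic variant, not to the statement at hand.
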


\footnotetext{This means that that there exists an $F$-isomorphism $\varphi \colon \overline{G}_1 \to \overline{G}_2$ between the corresponding adjoint groups such that $\varphi(\overline{\Gamma}_1)$ is commensurable with $\overline{\Gamma}_2$, where $\overline{\Gamma}_i$ denotes the image of $\Gamma_i$ in $\overline{G}_i(F)$.}

Let us note that this theorem was established in \cite{PrRap-WC} not only for arithmetic, but also for $S$-arithmetic subgroups. Furthermore, just as in Theorem \ref{T:A1}, the types excluded in part (1) of the theorem are honest exceptions: for each of them, one can construct arbitrarily large, but finite, families of weakly commensurable and pairwise noncommensurable arithmetic subgroups (cf. \cite[\S9]{PrRap-WC}). As we will see in Theorem \ref{T:AZD2} below,
%soon (cf. Theorem ..),
the only situation in which $G_1$ and $G_2$ of {\it different} types can contain finitely generated Zariski-dense weakly commensurable is when one of the groups is of type $\textsf{B}_n$ and the other of type $\textsf{C}_n$ $(n \geq 3)$. Weakly commensurable {\it arithmetic} subgroups in this case were completely classified in \cite{GarRap}.

As we already indicated above, this line of work was initially motivated by questions in differential geometry, and
Theorem \ref{T:AZD1} has served as a basis for various geometric applications to isospectral and length-commensurable locally symmetric spaces. For example, it implies the following statement:

\vskip2mm

\noindent {\it Let $M_1$ and $M_2$ be arithmetically defined hyperbolic manifolds of the same dimension $d \not\equiv 1(\mathrm{mod}\: 4)$. If they are length-commensurable (in particular, if they are compact and isospectral) then they are commensurable.}

\vskip2mm
\noindent (To be more precise, it is actually enough to assume that only one manifold is arithmetically defined and the other is of finite volume.) We refer the reader to \cite{PrRap-WC}, \cite{PrRap-GeomDed}, and \cite{R-ICM} for a number of other geometric applications.

%[PrRap], [PrRap-GeomDed], [ICM] for more geometric applications.

The proof of Theorem \ref{T:AZD1} does not deal with arithmetic subgroups directly, but rather uses the following criterion for their conjugacy. Let $G$ be an absolutely almost simple algebraic group over a field $F$ of characteristic zero. Then the commensurability classes of Zariski-dense arithmetic subgroups $\Gamma$ of $G(F)$ can be parametrized by pairs $(K , \mathscr{G})$, where $K$ is a number field contained in $F$ and $\mathscr{G}$ is an $F/K$-form of the adjoint group $\overline{G}$. More precisely, every arithmetic subgroup is $(K , \mathscr{G})$-arithmetic for some pair $(K , \mathscr{G})$ as above, which means that there is an $F$-isomorphism $\theta \colon \mathscr{G} \times_K F \to \overline{G}$ such that the image $\overline{\Gamma}$ of $\Gamma$ in $\overline{G}(F)$ is commensurable with $\theta(\mathscr{G}(\mathcal{O}_K))$, where $\mathcal{O}_K$ is the ring of algebraic integers in $K$ (this description is very similar to the standard description of arithmetic Fuchsian groups). Then, given absolutely almost simple $F$-groups $G_1$ and $G_2$, {\it Zariski-dense arithmetic subgroups $\Gamma_1 \subset G_1(F)$ and $\Gamma_2 \subset G_2(F)$ corresponding to the pairs $(K_1 , \mathscr{G}_1)$ and $(K_2 , \mathscr{G}_2)$ are commensurable} (up to an $F$-isomorphism between $\overline{G}_1$ and $\overline{G}_2$) {\it if and only if
$$
K_1 = K_2 =: K \ \ \text{and} \ \ \mathscr{G}_1 \simeq \mathscr{G}_2 \ \ \text{over} \ \ K
$$}
(cf. \cite[Proposition 2.5]{PrRap-WC}). So, in the proof of Theorem \ref{T:AZD1}, one first shows that in both parts, there is an equality $K_1 = K_2 =: K$; then, using various local-global considerations, it is shown that $\mathscr{G}_1 \simeq \mathscr{G}_2$ in part (a) and that there are finitely many $K$-isomorphism classes of possible groups $\mathscr{G}_2$ for a given $\mathscr{G}_1$ in part (b).

\vskip1mm

The important point here is that such characteristics as $K$ and $\mathscr{G}$ can be defined and analyzed not only for arithmetic, but in fact for arbitrary Zariski-dense subgroups (although then $K$ does not have to be a number field). Although, in general, they do not necessarily determine the commensurability class of a subgroup, they still carry important information. We begin with two results for arbitrary finitely generated Zariski-dense subgroups that were established in \cite{PrRap-WC}. To fix notations, let $G_1$ and $G_2$ be absolutely almost simple algebraic groups defined over a field $F$ of characteristic zero, and for $i = 1, 2$, let $\Gamma_i \subset
G_i(F)$ be a finitely generated Zariski-dense subgroup.
\begin{thm}\label{T:AZD2}{\rm (\cite[Theorem 1]{PrRap-WC})}
If $\Gamma_1$ and $\Gamma_2$ are weakly commensurable, then either $G_1$ and $G_2$ have the same type or one of them is of type $\textsf{B}_n$ and the other of type $\textsf{C}_n$ for some $n \geq 3$.
\end{thm}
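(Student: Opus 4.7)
The plan is to extract, from the weak commensurability of $\Gamma_1$ and $\Gamma_2$, enough structural information about the root systems of $G_1$ and $G_2$ to force their Killing-Cartan types to coincide up to the $\textsf{B}_n/\textsf{C}_n$ duality. The engine of the argument will be the interplay between generic semisimple elements in a Zariski-dense subgroup and an isogeny-type theorem relating the character lattices of their centralizer tori.

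First I would pass to the simply connected covers of $G_1$ and $G_2$ (this is harmless because weak commensurability only involves eigenvalues of elements of infinite order) and fix faithful matrix realizations $G_i \hookrightarrow \mathrm{GL}_{n_i}$. Using the existence of generic semisimple elements in finitely generated Zariski-dense subgroups of characteristic zero (see \cite{PrRap-Thin}), I would produce infinitely many semisimple $\gamma_1 \in \Gamma_1$ of infinite order for which $T_1 := Z_{G_1}(\gamma_1)^{\circ}$ is a maximal $F$-torus and the image of $\Ga(F^{\mathrm{sep}}/F)$ in $\mathrm{Aut}(X(T_1))$ contains the Weyl group $W(G_1, T_1)$. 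Weak commensurability then produces semisimple $\gamma_2 \in \Gamma_2$ of infinite order satisfying (\ref{E:WC}); by refining the choice, replacing $\gamma_2$ by a suitable power if necessary, and reinvoking genericity on the $\Gamma_2$ side, one arranges that $\gamma_2$ is itself generic, so that $T_2 := Z_{G_2}(\gamma_2)^{\circ}$ is a maximal $F$-torus of $G_2$ with correspondingly large Galois image. Reading (\ref{E:WC}) in terms of the characters of $T_1$ and $T_2$ that appear in the chosen matrix realizations gives, for each such pair $(\gamma_1, \gamma_2)$, a nontrivial multiplicative identity between evaluations of characters of $T_1$ and $T_2$ at $\gamma_1$ and $\gamma_2$ respectively.

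Running this across many weakly commensurable pairs and invoking the Isogeny Theorem (cf.\ \cite[Theorem 4.2]{PrRap-WC}), I would then extract a $\Q$-linear isomorphism
$$
\varphi \colon X(T_1) \otimes_{\Z} \Q \;\longrightarrow\; X(T_2) \otimes_{\Z} \Q
$$
that is equivariant for the respective Galois actions and maps the root system $\Phi(G_1, T_1)$ bijectively onto a rescaling of $\Phi(G_2, T_2)$, inducing in particular an isomorphism of Weyl groups. A standard combinatorial fact about root systems then concludes the argument: two irreducible reduced root systems that generate the same $\Q$-space and carry the same Weyl group are either isomorphic or duals of each other, and in the Killing-Cartan classification the only pair of distinct types that are duals of one another is $\textsf{B}_n$ and $\textsf{C}_n$ with $n \geq 3$ (in rank two, $\textsf{B}_2 \cong \textsf{C}_2$).

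The main obstacle is the third step. A single relation of the form (\ref{E:WC}) is much too weak to pin down any root-theoretic structure; what makes the argument work is that one must use simultaneously a rich supply of weakly commensurable generic pairs $(\gamma_1, \gamma_2)$, exploiting the largeness of the Galois image on each $X(T_i)$ to exclude accidental multiplicative coincidences among eigenvalues and to promote the raw matching of eigenvalues into a Galois-equivariant $\Q$-identification of character lattices that respects roots. This promotion is the substance of the Prasad-Rapinchuk Isogeny Theorem, and executing it in the general Zariski-dense setting -- rather than the $S$-arithmetic setting where one has many more rigid constraints at one's disposal -- is what carries the real technical weight of the proof.
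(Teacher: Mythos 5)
Your proposal reconstructs exactly the argument the paper points to: the survey offers no proof of this theorem beyond the citation to \cite[Theorem 1]{PrRap-WC} and a footnote identifying the two essential ingredients --- the existence of generic elements in finitely generated Zariski-dense subgroups and the Isogeny Theorem (Theorem 4.2 of \cite{PrRap-WC}) --- and your outline (a generic $\gamma_1$, a weakly commensurable generic partner $\gamma_2$, the resulting Galois-equivariant rational identification of character lattices carrying roots to multiples of roots, and the combinatorial fact that the only distinct pair of dual irreducible types is $\textsf{B}_n$/$\textsf{C}_n$ with $n \geq 3$) is precisely that route. The one step you leave vague, namely arranging that the partner produced by weak commensurability is itself generic, is indeed the delicate point of the original argument, and you correctly flag it as carrying the technical weight rather than treating it as automatic.
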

(We note that arithmetic Zariski-dense subgroups in the groups of type $\textsf{B}_n$ and $\textsf{C}_n$ can be weakly commensurable --- cf. \cite{GarRap}, \cite[Example 6.7]{PrRap-WC}.)

Next, given a semi-simple $F$-group $G$ and a Zariski-dense subgroup $\Gamma \subset G(F)$, we let $K_{\Gamma}$ denote the {\it trace field}, i.e. the subfield of $F$ generated by the traces $\mathrm{tr}(\mathrm{Ad}\: \gamma)$ of all elements $\gamma \in \Gamma$ in the adjoint representation on the corresponding Lie algebra $\mathfrak{g} = L(G)$. According to a result of E.B.~Vinberg \cite{Vinberg},
the field $K = K_{\Gamma}$ is the {\it minimal field of definition} of $\Gamma$. This means that $K$ is the minimal subfield of $F$ with the property that all transformations in $\mathrm{Ad}\: \Gamma$ can be simultaneously represented by matrices having all entries in $K$ in a suitable basis of $\mathfrak{g}$. If such a basis is chosen, then the Zariski-closure of $\mathrm{Ad}\: \Gamma$ in $\mathrm{GL}(\mathfrak{g})$ is a semi-simple algebraic $K$-group $\mathscr{G}$. It is an $F/K$-form of the adjoint group $\overline{G}$, and we will call it the {\it algebraic hull} of $\mathrm{Ad}\: \Gamma$. One proves that if $\Gamma$ is a Zariski-dense $(K , \mathscr{G})$-arithmetic subgroup of an absolutely almost simple $F$-group $G$, then the trace field of $\Gamma$ coincides with $K$ and the algebraic hull with $\mathscr{G}$.
\begin{thm}\label{T:AZD3}{\rm (cf. \cite[Theorem 2]{PrRap-WC})}
If $\Gamma_1$ and $\Gamma_2$ are weakly commensurable, then $K_{\Gamma_1} = K_{\Gamma_2}$.
\end{thm}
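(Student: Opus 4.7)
My plan is to prove the inclusion $K_{\Gamma_1} \subseteq K_{\Gamma_2}$; equality will then follow by the symmetry of the weak commensurability relation. The essential reformulation is Vinberg's theorem (already used in the excerpt): $K_{\Gamma_1}$ is the subfield of $F$ generated over $\Q$ by the traces $\tr(\Ad \gamma_1)$ as $\gamma_1$ ranges over $\Gamma_1$, and coincides with the minimal field of definition of the algebraic hull $\mathscr{G}_1$ of $\Ad \Gamma_1$ in $\mathrm{GL}(L(G_1))$. So it is enough to prove that $\tr(\Ad \gamma_1) \in K_{\Gamma_2}$ for a Zariski-dense collection of semi-simple $\gamma_1 \in \Gamma_1$, since traces are Zariski-continuous and $K_{\Gamma_2}$ is a field.

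The first step is to harvest a sufficient supply of regular semi-simple elements from $\Gamma_1$. By the results on generic elements of Zariski-dense subgroups (see \cite{PrRap-Thin} and the genericity arguments underlying \cite{PrRap-WC}), the set of $\gamma_1 \in \Gamma_1$ which are regular semi-simple of infinite order, and whose Zariski closure has identity component a maximal $F$-torus $T_1 \subset G_1$, is itself Zariski-dense. For such $\gamma_1$, weak commensurability produces a semi-simple $\gamma_2 \in \Gamma_2$ of infinite order satisfying a nontrivial multiplicative relation $(\mathrm{WC})$ among eigenvalues; again by Zariski-density, one can arrange that $\gamma_2$ is likewise regular and sits in a well-defined maximal $F$-torus $T_2 \subset G_2$.

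The decisive ingredient is then the Isogeny Theorem \cite[Theorem 4.2]{PrRap-WC}. Applied to the pair $(\gamma_1, \gamma_2)$, it yields a Galois-equivariant identification (up to Weyl action and, in the $\textsf{B}_n/\textsf{C}_n$ case allowed by Theorem \ref{T:AZD2}, Langlands duality) of the cocharacter lattices of $T_1$ and $T_2$ compatible with the eigenvalue relation, and this identification is $F$-rational. On the $\Gamma_2$-side, the algebraic hull $\mathscr{G}_2$ of $\Ad \Gamma_2$ is by Vinberg defined over $K_{\Gamma_2}$; hence $\Ad(T_2)$ is a $K_{\Gamma_2}$-torus and the trace $\tr(\Ad \gamma_2) = \dim T_2 + \sum_{\alpha \in \Phi(G_2, T_2)} \alpha(\gamma_2)$ lies in $K_{\Gamma_2}$, with the individual character values $\alpha(\gamma_2)$ living in a controlled finite Galois extension whose Galois action is dictated by the $K_{\Gamma_2}$-structure of $T_2$. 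Transporting this data through the isogeny forces the analogous sum on the $\Gamma_1$-side, namely $\tr(\Ad \gamma_1)$, into $K_{\Gamma_2}$.

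The step I anticipate to be the main obstacle is ensuring that the Isogeny Theorem really descends the trace into $K_{\Gamma_2}$ rather than into some larger field generated by individual eigenvalues of $\gamma_2$. Weak commensurability provides only one multiplicative identity, not a literal matching of eigenvalues as elements of $F$; to bridge this, one needs a Galois-descent argument exploiting the $K_{\Gamma_2}$-rationality of $\mathscr{G}_2$ together with the Weyl-group ambiguity inherent in the Isogeny Theorem, and one must also handle the mixed-type $\textsf{B}_n/\textsf{C}_n$ situation, where the transport of characters proceeds through the Langlands dual root system but still preserves the relevant trace (since traces in the adjoint representation are invariants of the root datum up to duality). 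Finite generation of $\Gamma_1$ enters to keep this analysis finitary: only finitely many generic pairs $(\gamma_1, \gamma_2)$ are needed to generate $K_{\Gamma_1}$ over $\Q$.
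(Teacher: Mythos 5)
Your proposal contains two genuine gaps, and you have correctly located the more serious one without closing it. First, the opening reduction is invalid as justified: knowing that $\mathrm{tr}(\mathrm{Ad}\:\gamma_1)\in K_{\Gamma_2}$ for a Zariski-dense \emph{set} of $\gamma_1\in\Gamma_1$ implies nothing about the remaining traces --- a regular function can take values in a proper subfield on a Zariski-dense subset and arbitrary values elsewhere (consider $x\mapsto x$ on $\mathbb{A}^1_{\mathbb{C}}$ and the subfield $\Q$). A reduction of this kind can be rescued by a Burnside/Gram-matrix argument, but only for subsets whose complement in $\Gamma_1$ lies in a proper closed subvariety; the set of generic elements does not have this property, since genericity (the Galois group of the splitting field of the associated torus containing the Weyl group) is an arithmetic condition, and its complement in $\Gamma_1$ is typically also Zariski-dense. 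Second, and more fundamentally, the appeal to the Isogeny Theorem is circular. That theorem is formulated for semisimple elements lying in maximal $K$-tori of groups defined over a \emph{common} field $K$, with $T_1$ required to be $K$-irreducible, and its output is a $K$-isogeny. To extract $\mathrm{tr}(\mathrm{Ad}\:\gamma_1)\in K_{\Gamma_2}$ you would have to run it over $K_{\Gamma_2}$, which presupposes that $\gamma_1$, $T_1$, and the hull $\mathscr{G}_1$ are defined over $K_{\Gamma_2}$ --- essentially the statement being proved. In \cite{PrRap-WC} the logical order is the opposite: the equality of trace fields (their Theorem 2) is established first and independently, and only afterwards is the Isogeny Theorem applied, over the now-common trace field, to generic elements in order to obtain the finer conclusions (same type, commensurability in the arithmetic case).

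There is also a quantitative obstruction in your third paragraph: weak commensurability and the Isogeny Theorem match only \emph{powers} $\gamma_1^{m_1}\leftrightarrow\gamma_2^{m_2}$, so at best one controls $\mathrm{tr}(\mathrm{Ad}\:\gamma_1^{m_1})$ for some $m_1$ depending on $\gamma_1$; recovering $\mathrm{tr}(\mathrm{Ad}\:\gamma_1)$ from this loses information to roots of unity and yields only algebraicity over $K_{\Gamma_2}$, not membership. This is precisely why the argument in \cite{PrRap-WC} does not proceed by matching traces but works multiplicatively with eigenvalues: assuming $K_{\Gamma_1}\not\subseteq K_{\Gamma_2}$, one produces a discrete valuation of the finitely generated compositum at which every eigenvalue of every semisimple element of $\Gamma_2$ is a unit (their characteristic polynomials have coefficients in $K_{\Gamma_2}$ and unit constant term), while $\Gamma_1$ contains a suitable semisimple element of infinite order whose eigenvalues generate a group meeting the units trivially; any relation of type (WC) then equates a unit with a non-unit. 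Valuations are insensitive to replacing elements by powers, which is exactly what makes that route succeed where trace-matching does not. If you want to pursue your approach, the steps you must supply are: (i) a proof that the traces of generic elements generate the full trace field, and (ii) a non-circular field of definition over which to apply the Isogeny Theorem; neither is available off the shelf.
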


For the sake of completeness, we mention one further result.
%(cf. [Kunming, Lemma 5.2]).
Assume that $\Gamma_1$ and $\Gamma_2$ as above are weakly commensurable, and let $K$ be their common trace field and $\mathscr{G}_i$ be the algebraic hull of $\mathrm{Ad}\: \Gamma_i$ for $i = 1, 2$. We set $L_i$ to be the minimal Galois extension of $K$ over which $\mathscr{G}_i$ becomes an inner form of the split group (see Example 4.5 for the relevant definitions).

\begin{prop}\label{P:AZD1}{\rm (cf. \cite[Lemma 5.2]{PrRap-Kunming})}
In the above notations, we have $L_1 = L_2$.
\end{prop}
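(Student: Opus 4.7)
The plan is to reconstruct each $*$-action homomorphism $\omega_i$ directly from the Galois action on the character group of a sufficiently generic maximal $K$-torus inside the algebraic hull $\mathscr{G}_i$, and then to transport this structure across weak commensurability by invoking the Isogeny Theorem from \cite[Theorem 4.2]{PrRap-WC} recalled in the footnote. First I would reduce to the case of a common Killing--Cartan type. By Theorem \ref{T:AZD2}, either $G_1$ and $G_2$ have the same type, or one is of type $\textsf{B}_n$ and the other of type $\textsf{C}_n$ for some $n \geq 3$. In the latter case the symmetry groups of the Dynkin diagrams of $\mathscr{G}_1$ and $\mathscr{G}_2$ are both trivial, the $*$-actions $\omega_i$ are trivial, and $L_1 = L_2 = K$ automatically. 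So I may assume $G_1$ and $G_2$ have the same type, identify the symmetry groups of the respective Dynkin diagrams with a single finite group $S$, and view the $*$-actions as continuous homomorphisms $\omega_i \colon \Ga(\Ks/K) \to S$ with $L_i = (\Ks)^{\ker \omega_i}$, as recalled in Example 4.5.

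Next I would choose a generic regular semi-simple element $\gamma_1 \in \Gamma_1$, using the existence of such elements in any finitely generated Zariski-dense subgroup (cf. \cite{PrRap-Thin} and the discussion following Theorem \ref{T:AZD1}). The element $\gamma_1$ is contained in a unique maximal $K$-torus $T_1 \subset \mathscr{G}_1$, and genericity ensures that the image of $\Ga(\Ks/K)$ in $\mathrm{Aut}(X^*(T_1)_{\Q}, \Phi_1) \simeq W_1 \rtimes S$ intersects $W_1$ in the full Weyl group. By weak commensurability, there is a semi-simple $\gamma_2 \in \Gamma_2$ of infinite order whose adjoint eigenvalues satisfy a relation of the form \eqref{E:WC} with those of $\gamma_1$; after passing to suitable powers, both elements lie in maximal $K$-tori $T_i \subset \mathscr{G}_i$. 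The Isogeny Theorem then produces a $\Ga(\Ks/K)$-equivariant isomorphism of rational root data $(X^*(T_1)_{\Q}, \Phi_1) \simeq (X^*(T_2)_{\Q}, \Phi_2)$; by the same genericity argument applied to $\gamma_2$, the torus $T_2$ is also generic in the analogous sense.

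The key algebraic step is then to extract $\omega_i$ from this data. For a maximal torus $T$ in a semi-simple $K$-group $\mathscr{G}$, the Galois representation on $X^*(T)_{\Q}$ factors through $\mathrm{Aut}(X^*(T)_{\Q}, \Phi) \simeq W \rtimes S$; when $T$ is generic in the above sense, the composition with the projection $W \rtimes S \twoheadrightarrow S$ is precisely the $*$-action $\omega$ (the $W$-ambiguity being killed exactly because the image fills $W$). Applying this to $T_1$ and $T_2$ and combining with the Galois-equivariant isomorphism furnished by the Isogeny Theorem gives $\omega_1 = \omega_2$ (after the identification $S_1 = S_2 = S$), hence $\ker \omega_1 = \ker \omega_2$, and therefore $L_1 = L_2$.

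The main obstacle I anticipate is to arrange that the root-datum isomorphism produced by the Isogeny Theorem actually intertwines the projections to $S$, rather than matching only the abstract root systems. Concretely, one must verify that the induced isomorphism $W_1 \rtimes S \to W_2 \rtimes S$ is the identity on the $S$-quotient (up to an inner automorphism of $W \rtimes S$, which does not affect the kernel of the projection to $S$). This is where genericity of the $T_i$ is essential: it rules out contributions from outer Dynkin automorphisms that could otherwise be absorbed into a noncanonical identification of root data, and it ensures that the reconstruction of $\omega_i$ from the Galois representation on $X^*(T_i)_{\Q}$ is canonical and hence preserved by the Isogeny Theorem.
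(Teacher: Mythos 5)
Your argument is correct and is essentially the proof of the cited \cite[Lemma 5.2]{PrRap-Kunming}, which the survey does not reproduce: dispose of the $\textsf{B}_n$/$\textsf{C}_n$ case by triviality of the diagram symmetries, then combine a generic element, the Isogeny Theorem applied to the algebraic hulls, and the recovery of the $*$-action from the Galois module $X^*(T_i)_{\Q}$. One small clarification: the composite $\Ga(\Ks/K) \to \mathrm{Aut}(X^*(T), \Phi) \to S$ equals $\omega$ for \emph{any} maximal $K$-torus $T$ (genericity is needed only so that the Isogeny Theorem produces the equivariant isomorphism of rational root data), and the difficulty you anticipate in the last paragraph evaporates because $W$ is the normal subgroup of $\mathrm{Aut}(\Phi)$ generated by the root reflections and is therefore carried to $W_2$ by any isomorphism matching $\Phi_1$ with (multiples of) $\Phi_2$; hence $\ker \omega_i = \{\sigma \ \vert \ \sigma \ \text{acts on} \ X^*(T_i)_{\Q} \ \text{through} \ W_i\}$ and the equality of kernels follows directly, without any need to control the induced map on the $S$-quotient itself.
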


What makes these results interesting is that $\Gamma_1$ and/or $\Gamma_2$ may very well be free groups (in fact, by a theorem of Tits \cite{TitsFree}, the group $G_i(F)$ always contains a Zariski-dense subgroup which is a free group of rank two). In this case, structurally these groups carry no imprint of the ambient simple algebraic group, but nevertheless, according to Theorem \ref{T:AZD2}, information about the eigenvalues of their elements is sufficient to recover the type of this group. Furthermore, classical rigidity results due to Mostow and Margulis imply that an isomorphism between higher rank arithmetic group/lattices yields an isomorphism between their fields of definition\footnote{See section \S\ref{S-Rigidity} for some rigidity results over rings more general than rings of algebraic $S$-integers}. However, again, the structural approach does not extend to arbitrary finitely generated Zariski-dense subgroups, while information about eigenvalues
can be used to identify the fields of definition\footnotemark.
\footnotetext{Of course, the traces of elements in the adjoint representation that generate the field of definition can be easily expressed in terms of the eigenvalues, but in our set-up, all we can work with are relations like (\ref{E:WC}) in Definition 9.1 for $\gamma_1 \in \Gamma_1$ and $\gamma_2 \in \Gamma_2$, which do not  immediately yield any relation between $\mathrm{tr}(\mathrm{Ad}\: \gamma_1)$ and $\mathrm{tr}(\mathrm{Ad}\: \gamma_2)$.}
So, Theorems \ref{T:AZD2} and \ref{T:AZD3} point to a new form of rigidity, which is potentially applicable to arbitrary finitely generated Zariski-dense subgroups and is based on information about the eigenvalues of elements in the subgroups --- for these reasons, it was named {\it eigenvalue rigidity} in \cite{R-ICM}. The critical issue in this analysis that is {\it not} addressed by the above results is the precise relationship between the algebraic groups containing $\Gamma_1$ and $\Gamma_2$. Let us fix an absolutely almost simple algebraic $F$-group $G_1$
and a finitely generated Zariski-dense subgroup $\Gamma_1 \subset G_1(F)$, and denote by $K = K_{\Gamma_1}$ and $\mathscr{G}$ its trace field and algebraic hull, respectively.
%denote its trace field and $\mathscr{G}_1$ the algebraic hull.
Next, let $G_2$ be another absolutely almost simple $F$-group with a finitely generated Zariski-dense subgroup $\Gamma_2 \subset G_2(F)$ that is
weakly commensurable to $\Gamma_1$. What can we say about the algebraic hull $\mathscr{G}_2$ of $\Gamma_2$? By Theorem \ref{T:AZD3}, the trace field $K_{\Gamma_2}$ must coincide with $K$, so assuming that $G_2$ is adjoint (which we always may), we conclude that $\mathscr{G}_2$ is an $F/K$-form of $G_2$.
%According to Theorem \ref{T:AZD2}, apart from the ambiguity between types $\textsf{B}_n$ and $\textsf{C}_n$, the group $G_2$ must be of the same type as $G_1$. %In any case, it is enough to consider the situations where the type of $G_2$ is fixed. Without loss of generality we may assume $G_2$ to be adjoint and $F$ to %be algebraically closed, and then $G_2$  itself is a fixed $F$-group. Now, if a finitely generated Zariski-dense subgroup $\Gamma_2 \subset G_2(F)$ is weakly %commensurable to $\Gamma_1$, the trace field $K_{\Gamma_2}$ must coincide with $K$, so the algebraic hull $\mathscr{G}_2$ is an $F/K$-form of $G_2$.
In general, as $\Gamma_2$ varies, while remaining weakly commensurable to $\Gamma_1$, the $K$-isomorphism class of $\mathscr{G}_2$ can also vary. However, one expects that this class always remains within a {\it finite} set of possibilities.

%\footnotetext{Of course, the traces of elements in the adjoint representation that generate the field of definition can easily be expressed in terms of the eigenvalues but in our set-up all we can work with is relations like (\ref{E:WC}) for $\gamma_1 \in \Gamma_1$ and $\gamma_2 \in \Gamma_2$ which do not  immediately yield any relation between $\mathrm{tr}(\mathrm{Ad}\: \gamma_1)$ and $\mathrm{tr}(\mathrm{Ad}\: \gamma_2)$.}

\vskip2mm

\noindent{\bf Conjecture 9.6.} (Finiteness Conjecture for Algebraic Hulls of Weakly Commensurable Subgroups) {\it Let $G_1$ be an absolutely almost simple algebraic $F$-group, and $\Gamma_1 \subset G_1(F)$ be a finitely generated Zariski-dense subgroup with trace field $K = K_{\Gamma_1}$. Given another absolutely simple adjoint $F$-group $G_2$, there exists a \emph{finite} collection $\mathscr{G}_2^{(1)}, \ldots , \mathscr{G}_2^{(r)}$ of $F/K$-forms of $G_2$ such that if $\Gamma_2 \subset G_2(F)$ is a finitely generated Zariski-dense subgroup that is weakly commensurable to $\Gamma_1$, then it is conjugate to a subgroup of one of the $\mathscr{G}_2^{(i)}(K)$ $(\subset G_2(F))$.}

\vskip2mm

Now, instead of fixing an absolutely simple $F$-group $G_2$, one can consider all possible absolutely simple adjoint $F$-groups $G_2$ such that there exists a finitely generated Zariski-dense subgroup $\Gamma_2 \subset G_2(F)$ that is weakly commensurable to $\Gamma_1$. First, according to Theorem \ref{T:AZD2}, apart from the ambiguity between types $\textsf{B}_n$ and $\textsf{C}_n$, $G_2$ must have the same type as $G_1$. We may therefore assume that the type of $G_2$ is fixed. Second, replacing the field $F$ with its algebraic closure $\overline{F}$, we can assume that $G_2$ (or, more precisely, $G_2 \times_F \overline{F}$) itself is fixed as an $\overline{F}$-group. In other words, taking into account the ambiguity between types $\mathsf{B}_n$ and $\mathsf{C}_n$, there are either one or two possibilities for $G_2$ as an $\overline{F}$-group. Applying Conjecture 9.6, we then conclude that, even without initially fixing $G_2$ as an $F$-group, there are only finitely many $K$-isomorphism classes of algebraic hulls $\mathscr{G}_2$.

%even if we initially do not fix $G_1$ as an $F$-group.

Thus, if proven, Conjecture 9.6, in conjunction with the preceding results, would tell us that if we fix a finitely generated Zariski-dense subgroup $\Gamma_1 \subset G_1(F)$ with trace field $K = K_{\Gamma_1}$, then the finitely generated Zariski-dense subgroups of absolutely simple algebraic groups $F$-groups that are weakly commensurable to $\Gamma_1$ will all have the same trace field $K$ and there will be only finitely many possibilities for their algebraic hulls. Such a statement would completely resolve one of the keys issues in the study of eigenvalue rigidity (we refer the reader to \cite[\S10]{PrRap-Thin}
%\S10 of the paper by G.~Prasad and the first author in \cite{Breu-Oh}
for several other open problems in this area).  In more informal terms, Conjecture 9.6 would basically imply that an absolutely simple algebraic group is almost determined (i.e., guaranteed to be in a finite set of possibilities) by the eigenvalues of the elements of a finitely generated Zariski-subgroup, regardless of how
small/thin this subgroup may be (e.g., it can be a free group on two generators). For a slightly different perspective, recall that in Conjecture 8.10, we considered the problem of whether an absolutely almost simple algebraic group $G$ over a finitely generated field $K$ of good characteristic is almost  determined by the set of $K$-isomorphism classes of its maximal $K$-tori. From this point of view, Conjecture 9.6 would imply that $G$ is still almost determined (at least in characteristic zero) by the $K$-isomorphism classes of those maximal $K$-tori that intersect nontrivially a given Zariski-dense subgroup --- cf. \cite[\S5]{PrRap-Kunming}. (Note that this is consistent with our discussion at the end of \S\ref{S-Quat}). Here is what Conjecture 9.6 yields in some concrete situations.

\vskip2mm

\noindent {\bf Example 9.7.} Let $A$ be a central simple algebra of degree $n$ over a finitely generated field $K$ of characteristic zero, set $G = \mathrm{SL}_{1 , A}$ to be the corresponding algebraic $K$-group associated with group of elements of reduced norm 1, and suppose
$\Gamma \subset G(K)$ is a finitely generated Zariski-dense subgroup with trace field $K = K_{\Gamma}$. Now, let $G'$ be another absolutely almost simple simply connected $K$-group such that there exists a finitely generated Zariski-dense subgroup $\Gamma' \subset G'(K)$ that is weakly commensurable to $\Gamma'$. Then we deduce from Theorem \ref{T:AZD2} and Proposition \ref{P:AZD1} that $G'$ is necessarily of the form $G' = \mathrm{SL}_{1 , A'}$, where $A'$ is a central simple $K$-algebra of the same degree $n$ (note that Proposition \ref{P:AZD1} implies that $G'$ is an inner form of $G$). Furthermore, Conjecture 9.6 in this situation would tell us that there are only finitely many possibilities for $A'$ up to $K$-isomorphism.

\vskip2mm

\noindent {\bf Example 9.8.} Let $q$ be a nondegenerate quadratic form in $n \geq 5$ variables over a finitely generated field $K$ of characteristic zero, let $G = \mathrm{SO}_n(q)$ be the corresponding special orthogonal group, and take $\Gamma \subset G(K)$ to be a finitely generated Zariski-dense subgroup with trace field $K = K_{\Gamma}$. Then Conjecture 9.6 would imply that there exist finitely many similarity classes of nondegenerate $n$-dimensional quadratic forms $q'$ over $K$ such that for $G' = \mathrm{SO}_n(q')$, the group $G'(K)$ contains a finitely generated Zariski-dense subgroup $\Gamma'$ that is weakly commensurable to $\Gamma$.

\vskip2mm

The important point in the context of our discussion in this article is that the assertion of Conjecture 9.6 can be derived from Conjecture 5.7. The argument hinges on the following result from \cite{CRR-GR} that links weak commensurability with good reduction.

\addtocounter{thm}{3}

\begin{thm}\label{T:AZD4}
Let $G$ be an absolutely almost simple simply connected algebraic group over a finitely generated field $K$ of characteristic zero, and let $V$ be a divisorial set of places of $K$. Given a finitely generated Zariski-dense subgroup $\Gamma \subset G(K)$ with trace field $K$, there exists a finite subset $S(\Gamma) \subset V$ such that every absolutely almost simple simply connected algebraic $K$-group $G'$ with the property that there exists a finitely generated Zariski-dense subgroup $\Gamma' \subset G'(K)$ that is weakly commensurable to $\Gamma$ has good reduction at all $v \in V \setminus S(\Gamma)$.
\end{thm}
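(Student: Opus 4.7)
The plan is to extract from $\Gamma$ a finite collection of \emph{generic} maximal $K$-tori of $G$ that constrain the structure of any algebraic hull $G'$ of a weakly commensurable Zariski-dense subgroup, and then to deploy the generic-torus machinery underlying Theorem~\ref{T:A4} to transfer good reduction from these tori to $G'$ itself. First, fix a faithful $K$-embedding $G \hookrightarrow \mathrm{GL}_n$. Since $V$ satisfies condition~(A) (see \S\ref{S-ConjFGfields}), the matrix entries of the finitely many generators of $\Gamma$, together with the coefficients of the defining equations of $G$, are $v$-integral for all but finitely many $v \in V$, so there exists a finite subset $S_0 \subset V$ outside of which $G$ extends to a reductive $\mathcal{O}_v$-scheme and $\Gamma \subset G(\mathcal{O}_v)$.

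Second, by the theory of generic elements in finitely generated Zariski-dense subgroups of $G(K)$ (surveyed in \cite{PrRap-Thin}, and systematically deployed in the proofs of both Theorem~\ref{T:AZD1} and Theorem~\ref{T:A4}), we can select finitely many semi-simple $\gamma_1, \ldots, \gamma_r \in \Gamma$ of infinite order such that each centralizer $T_i := Z_G(\gamma_i)^{\circ}$ is a \emph{generic} maximal $K$-torus of $G$, meaning that the Galois image in $\mathrm{Aut}(X^*(T_i))$ contains the Weyl group $W(G,T_i)$, and such that the collection $\{T_i\}$ is rich enough to determine $G$ up to strict inner twisting. Let $L_i/K$ be the minimal Galois splitting extension of $T_i$. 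Enlarging $S_0$ by the finitely many $v$ at which some $L_i/K$ is ramified or at which some $\gamma_i$ fails to have regular semi-simple reduction yields the required finite set $S(\Gamma)$, which depends only on the choice of $\Gamma$ and its chosen generic elements.

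Third, suppose $G'$ is any absolutely almost simple simply connected $K$-group containing a finitely generated Zariski-dense $\Gamma' \subset G'(K)$ weakly commensurable to $\Gamma$. By Theorem~\ref{T:AZD3} the trace field of $\Gamma'$ is again $K$, so the algebraic hull of $\mathrm{Ad}\,\Gamma'$ is the adjoint group $\overline{G}'$ defined over $K$. For each $i$, weak commensurability yields a semi-simple $\gamma'_i \in \Gamma'$ of infinite order whose eigenvalues satisfy the multiplicative relation~(\ref{E:WC}) with those of $\gamma_i$. Applying the Isogeny Theorem of \cite[Theorem~4.2]{PrRap-WC}, the centralizer $T'_i := Z_{G'}(\gamma'_i)^{\circ}$ is a maximal $K$-torus of $G'$ whose minimal splitting field is contained in $L_i$; in particular, each $T'_i$ splits over an extension of $K$ that is unramified at every $v \in V \setminus S(\Gamma)$.

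Fourth, for $v \in V \setminus S(\Gamma)$ we pass to the completion $K_v$ and apply the local criterion for good reduction (cf.~\cite[Corollary~5.2.14]{Con}): $G'$ has good reduction at $v$ if and only if $G' \times_K K_v$ is quasi-split and the minimal Galois extension over which it becomes an inner form of its split form is unramified over $K_v$. Because the tori $T'_i$ arise from genuinely \emph{generic} elements, the $\ast$-action of $\mathrm{Gal}(\overline{K_v}/K_v)$ on the Dynkin diagram of $G'$ is controlled by the Galois action on the cocharacter modules $X_*(T'_i)$, and the unramifiedness at $v$ of the $L_i$ translates into the required unramifiedness of this $\ast$-action; moreover, the existence of a generic maximal $K_v$-torus in $G'$ defined over $\mathcal{O}_v$ forces $G'$ to be quasi-split over $K_v$. \emph{The principal obstacle} is exactly this last synthesis: one must show that the information carried by finitely many individual tori, matched via weak commensurability and therefore only up to isogeny, aggregates to control the reduction type of the entire group $G'$ at $v$. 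This is the analogue, for eigenvalue rigidity, of the detection mechanism that drives Theorem~\ref{T:A4} from the full collection of maximal tori, and it is the technical heart of the argument; once it is in place, the statement of the theorem follows immediately.
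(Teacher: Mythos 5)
Your high-level plan — extract generic elements of $\Gamma$, transfer the associated generic maximal tori to $G'$ via weak commensurability and the Isogeny Theorem, and then argue that these tori detect good reduction of $G'$ at almost all $v \in V$ — is exactly the strategy the paper describes (the paper itself only sketches this, deferring the full argument to \cite{CRR-GR}). But your proposal is not a proof, for two reasons. First, you explicitly defer the decisive step: showing that finitely many unramified generic maximal tori force good reduction of the whole group at $v$. That step \emph{is} the theorem — it is the ``quite surprising'' content already present in Theorem \ref{T:A4}, which the paper emphasizes had no precedent in the literature — so labeling it ``the principal obstacle'' and moving on leaves essentially nothing established.

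Second, the route you propose for that step is broken. You invoke the criterion of \cite[Corollary 5.2.14]{Con} (good reduction $\Leftrightarrow$ quasi-split with unramified $*$-action), but that equivalence holds over local fields with \emph{finite} residue field, where it rests on Lang's theorem. Here the residue fields $K^{(v)}$ are finitely generated infinite fields, and good reduction does not imply quasi-splitness: for example, $G' = \mathrm{Spin}_5(x_1^2+\cdots+x_5^2)$ over $K=\Q(t)$ has good reduction at $v=v_t$ (the form is unimodular over $\mathcal{O}_v$), yet $G'\times_K K_v$ is anisotropic over $K_v=\Q(\!(t)\!)$ by Springer's theorem. So your step 4 — proving that $G'\times_K K_v$ is quasi-split — aims at a conclusion that is generally false for groups satisfying the hypotheses of the theorem, and no amount of control on the $*$-action will rescue it. The actual argument must produce a reductive $\mathcal{O}_v$-model of $G'\times_K K_v$ directly (this is what Definition 3.7 requires), using the genericity of the tori and the finitely generated residue field in an essentially different way; this is carried out in \cite{CRR-GR} and cannot be replaced by the finite-residue-field local criterion.
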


The proof of this theorem is based on a significant elaboration of the ideas involved in the proof of Theorem \ref{T:A4}. The key point is to show that in order to characterize good reduction, one does not need to consider all maximal tori over the base field, but only those that intersect nontrivially a given finitely generated Zariski-dense subgroup and also are {\it generic} (cf. \cite[\S5]{PrRap-Kunming}).
%- see, for example, [PrRap-BreuOh, \S 9].

\vskip1mm

In the case where the trace field of $\Gamma$ is a number field (although $\Gamma$ does not need to be arithmetic), Conjecture 9.6 was proved in
\cite[\S5]{PrRap-Kunming}. Furthermore, arguing as in Example 9.7 and combining Theorems \ref{T:ResX1} and \ref{T:AZD4}, one proves Conjecture 9.6 in the case where the
algebraic hull $\mathscr{G}_1$ of $\Gamma_1$ is an inner form of type $\textsf{A}_n$ $(n \geq 1)$. We leave it to the reader to consider various other cases of Conjecture 9.6 that can be obtained by combining Theorem \ref{T:AZD4} with the results from \S\ref{S-ResultsMain} on Conjecture 5.7.
In the case of lattices, the preceding observations lead to the following consequence.
%from \S 7 on the Main Conjecture, but instead would like to point a consequence of the above two cases for lattices.
Let $G$ be an absolutely almost simply algebraic $\mathbb{R}$-group, and let $\Gamma$ be a lattice in $\mathcal{G} = G(\mathbb{R})$. It follows from \cite[7.67, 7.68]{Rag} that if $G$ is not isogenous to $\mathrm{SL}_2$, then the trace field $K_{\Gamma}$ is necessarily a number field. Thus, we obtain the following.

\begin{thm}\label{T:AZD5}
Conjecture 9.6 is true when $F = \mathbb{R}$ and $\Gamma_1$ is a lattice (not necessarily arithmetic) in $G_1(\mathbb{R})$.
\end{thm}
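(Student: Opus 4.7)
The plan is to deduce Theorem~\ref{T:AZD5} by combining the number-field case of Conjecture 9.6 already proved in \cite[\S5]{PrRap-Kunming} with the good-reduction statement of Theorem~\ref{T:AZD4}, using Theorems~\ref{T:AZD2} and \ref{T:AZD3} to control the type of $G_2$ and the trace field of $\Gamma_2$.

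Let $\Gamma_1 \subset G_1(\mathbb{R})$ be a lattice, and set $K := K_{\Gamma_1}$. In the case where $G_1$ is not isogenous to $\mathrm{SL}_2$, I would first observe, as indicated in the excerpt, that the local rigidity theorems of Weil, Garland and Raghunathan \cite[7.67, 7.68]{Rag} imply that $K$ is an algebraic number field: local rigidity conjugates $\mathrm{Ad}\,\Gamma_1$ into $\mathrm{GL}(\mathfrak{g})(\overline{\mathbb{Q}})$, and Vinberg's characterization of the minimal field of definition then forces $K$ to be a finitely generated, hence finite, subextension of $\overline{\mathbb{Q}}/\mathbb{Q}$. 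By Theorem~\ref{T:AZD3}, any finitely generated Zariski-dense subgroup $\Gamma_2 \subset G_2(\mathbb{R})$ weakly commensurable to $\Gamma_1$ satisfies $K_{\Gamma_2} = K$, so its algebraic hull $\mathscr{G}_2$ is an $\mathbb{R}/K$-form of the adjoint group of $G_2$ defined over the number field $K$; and Theorem~\ref{T:AZD2} restricts the $\mathbb{R}$-type of $G_2$ to at most two possibilities (the type of $G_1$, together with the $\textsf{B}_n \leftrightarrow \textsf{C}_n$ exchange when applicable). The number-field version of Conjecture 9.6 from \cite[\S5]{PrRap-Kunming} then supplies the required finite list $\mathscr{G}_2^{(1)}, \ldots, \mathscr{G}_2^{(r)}$ of $\mathbb{R}/K$-forms of $G_2$ into whose $K$-points every eligible $\Gamma_2$ can be conjugated.

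In the remaining case where $G_1$ is isogenous to $\mathrm{SL}_2$, local rigidity genuinely fails because of Teichm\"uller deformations of cofinite Fuchsian groups, and $K$ may be transcendental over $\mathbb{Q}$, so the argument above does not apply directly. However, Theorem~\ref{T:AZD2} forces $G_2$ also to be of type $\textsf{A}_1$, and since the Dynkin diagram of type $\textsf{A}_1$ has no nontrivial symmetries, every $K$-form of the adjoint group of $G_2$ is automatically inner; consequently the algebraic hull of any eligible $\Gamma_2$ has the shape $\mathrm{PGL}_{1, A_2}$ for some quaternion algebra $A_2$ over $K$. The plan here is to pick a divisorial set of places $V$ of the finitely generated field $K$ and apply Theorem~\ref{T:AZD4}, which yields a finite subset $S \subset V$ outside which $\mathscr{G}_2$ must have good reduction for every eligible $\Gamma_2$; the finiteness of the set of $K$-isomorphism classes of such $\mathscr{G}_2$ is then immediate from Theorem~\ref{T:ResX1} (finiteness of inner forms of type $\textsf{A}_{n-1}$ over a finitely generated field having good reduction at all places of a divisorial set), applied with $n = 2$.

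The main obstacle is precisely this $\mathrm{SL}_2$ case, for the reason just indicated: the absence of local rigidity means that the trace field is not controlled algebraically, and the full strength of both Theorem~\ref{T:AZD4} and Theorem~\ref{T:ResX1} must be invoked rather than the more elementary Hermite--Minkowski--style arguments available in the number-field setting. Fortunately both of these inputs are available in the required generality --- over any finitely generated field of good characteristic --- and their combination supplies exactly the finiteness statement needed to complete the proof.
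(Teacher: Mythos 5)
Your proposal is correct and follows exactly the route the paper intends: for $G_1$ not isogenous to $\mathrm{SL}_2$, local rigidity (\cite[7.67, 7.68]{Rag}) forces $K_{\Gamma_1}$ to be a number field and the result follows from \cite[\S5]{PrRap-Kunming}, while the $\mathrm{SL}_2$ case is handled as in Example 9.7 by combining Theorem \ref{T:AZD4} with Theorem \ref{T:ResX1} for inner forms of type $\textsf{A}_1$. Nothing essential differs from the paper's (largely implicit) argument.
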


We conclude this section with a finiteness statement for Riemann surfaces that does not require any assumptions of arithmeticity or even the finiteness of the volume. Motivated by the well-known result that a family of isospectral compact Riemann surfaces consists of finitely many isometry classes \cite{McKean}, one may wonder if a family of length-commensurable Riemann surfaces necessarily consists of finitely many commensurability classes. While this question remains open, the following result for the associated quaternion algebras is established in \cite{CRR-GR} (here we use the notations introduced in \S\ref{S-Quat}).
% in [preprint] (notations are as in section 8.3).
\begin{thm}\label{T:AZD6}
Let $M_i = \mathbb{H}/\Gamma_i$ $(i \in I)$ be a family of length-commensurable Riemann surfaces, where the subgroups $\Gamma_i \subset \mathrm{SL}_2(\mathbb{R})$ are finitely generated, discrete, and Zariski-dense, and have torsion-free images in $\mathrm{PSL}_2(\mathbb{R})$. Then all of the $\Gamma_i$ have the same trace field $K$, and the corresponding quaternion algebras $A_{\Gamma_i}$ $(i \in I)$ split into finitely many $K$-isomorphism classes.
\end{thm}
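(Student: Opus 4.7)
The plan is to reduce the theorem to the finiteness of inner $K$-forms of $\mathrm{SL}_2$ with good reduction (Theorem \ref{T:ResX1} at $n=2$), by using Theorems \ref{T:AZD3} and \ref{T:AZD4} as the bridge from the geometric hypothesis to the algebraic setting. The argument would proceed in three stages.

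First, I would show that length-commensurability of the $M_i$ implies pairwise weak commensurability of the $\Gamma_i$ in the sense of Definition 9.1. Since each $\Gamma_i$ is discrete with torsion-free image in $\mathrm{PSL}_2(\mathbb{R})$, every semi-simple element of $\Gamma_i$ different from $\pm I$ is hyperbolic and contributes, via (\ref{E-GeoLength}), a closed geodesic of length proportional to $\bigl|\log |t_\gamma|\bigr|$. Combining this with (\ref{E:A5}) and the hypothesis $\mathbb{Q}\cdot L(M_i) = \mathbb{Q}\cdot L(M_j)$ yields, for each hyperbolic $\gamma_i \in \Gamma_i$, a hyperbolic $\gamma_j \in \Gamma_j$ and nonzero integers $m,n$ with $t_{\gamma_i}^{m} = t_{\gamma_j}^{\pm n}$, which is precisely the relation (\ref{E:WC}) for $\mathrm{SL}_2$. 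By the symmetry of the hypothesis, the same holds with the roles of $i$ and $j$ interchanged. Applying Theorem \ref{T:AZD3} to any pair then produces a common trace field $K = K_{\Gamma_i}$, which is a finitely generated extension of $\mathbb{Q}$ (since each $\Gamma_i$ is finitely generated). This yields the first assertion of the theorem.

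For the second assertion, set $G_i = \mathrm{SL}_{1, A_{\Gamma_i}}$, an absolutely almost simple simply connected inner $K$-form of $\mathrm{SL}_2$. The characteristic subgroup $\Gamma_i^{(2)} \subseteq \Gamma_i$ has finite index (its quotient being a finite elementary abelian $2$-group), hence is itself finitely generated; moreover it lies in $G_i(K)$, is Zariski-dense in $G_i$ (as it generates $A_{\Gamma_i}$ over $K$ by construction), and has trace field $K$. Weak commensurability of the $\Gamma_i$ descends to weak commensurability of the $\Gamma_i^{(2)}$, since squaring simply squares eigenvalues and preserves relations of the form (\ref{E:WC}). Fix $i_0 \in I$ and a divisorial set $V$ of places of $K$. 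Applying Theorem \ref{T:AZD4} to $\Gamma_{i_0}^{(2)} \subseteq G_{i_0}(K)$ produces a finite subset $S \subset V$ such that every $G_i$ has good reduction at each $v \in V\setminus S$. Since $V\setminus S$ contains a divisorial set $V'$ (see the end of \S\ref{S-ConjFGfields}), all $G_i$ have good reduction at every $v \in V'$. Theorem \ref{T:ResX1} (at $n=2$) then asserts that only finitely many $K$-isomorphism classes of such inner forms can occur, and because $\mathrm{SL}_{1,A} \simeq \mathrm{SL}_{1,A'}$ over $K$ if and only if $A \simeq A'$ over $K$, the algebras $A_{\Gamma_i}$ split into finitely many $K$-isomorphism classes.

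The main obstacle I anticipate lies in the first stage: establishing the weak commensurability of the $\Gamma_i$ cleanly, in the absence of any cofinite-volume assumption, requires confirming that every semi-simple $\gamma \in \Gamma_i$ of infinite order contributes a bona fide closed geodesic to $L(M_i)$ and, more delicately, handling the winding-number denominators $n_\gamma$ in (\ref{E-GeoLength}) when converting a linear relation over $\mathbb{Q}$ among geodesic lengths into the multiplicative eigenvalue relation (\ref{E:WC}). Once this technical passage is secured, the remainder of the argument is a routine assembly of Theorems \ref{T:AZD3}, \ref{T:AZD4}, and \ref{T:ResX1} already available in the paper.
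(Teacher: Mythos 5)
Your proposal is correct and follows essentially the same route as the paper: length-commensurability yields pairwise weak commensurability of the $\Gamma_i$ (via the discussion in \S\ref{S-Quat}), Theorem \ref{T:AZD3} gives the common trace field, Theorem \ref{T:AZD4} gives good reduction (equivalently, unramifiedness of the $A_{\Gamma_i}$) outside a finite set of places, and the finiteness of the unramified Brauer group underlying Theorem \ref{T:ResX1} concludes. The extra details you supply --- passing to the finite-index subgroup $\Gamma_i^{(2)}$ to land in $\mathrm{SL}_{1,A_{\Gamma_i}}(K)$, and the caveat about winding numbers --- are consistent with, and no less rigorous than, the paper's own (rather terse) argument.
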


Indeed, it follows from the discussion in \S\ref{S-Quat} that the subgroups $\Gamma_i$ are pairwise weakly commensurable. Hence, by Theorem \ref{T:AZD3}, they all have the same trace field $K$, which is finitely generated. Let $V$ be a divisorial set of places of $K$. Then it follows from Theorem \ref{T:AZD4} that there exists a finite subset $S \subset V$ such that all algebras $A_{\Gamma_i}$ are unramified at all $v \in V \setminus S$. Consequently, the finiteness assertion follows from the finiteness of the unramified Brauer group, as discussed in the proof of Theorem \ref{T:ResX1}. (In the present case, one can also give a rather elementary argument by observing that all of the algebras $A_{\Gamma_i}$ share a quadratic extension of $K$ and arguing as in \cite[\S2]{CRR-Israel}). Curiously, this theorem is probably one of the first applications of reduction techniques and arithmetic geometry to differential geometry.

\section{Afterword}\label{S-Conclusion} Our primary goal in this article was to introduce Conjecture 5.7 (our Main Conjecture) and discuss its links with several other problems in the investigation of algebraic groups  over higher-dimensional fields. For the sake of completeness, we would like to conclude with a brief discussion of two other issues that play a critical role in the classical arithmetic theory of algebraic groups, but whose extension to the more general setting is not directly related to the Main Conjecture: strong approximation and rigidity.

\subsection{Strong approximation.}\label{S-Strong} Let $K$ be a field equipped with a set $V$ of discrete valuations satisfying condition (A) (see \S\ref{S-ConjFGfields} and also \S\ref{S-OtherConj} for notations and terminology pertaining to adelic groups).
%we will use the notations and terminology introduced in \S\ref{S-OtherConj}).
Given an algebraic $K$-group $G$, the corresponding adelic group $G(\mathbb{A}(K , V))$ is endowed with a natural topology that has  sets of the form
$$
\prod_{v \in V \setminus T} G(\mathcal{O}_v) \times  \prod_{v \in T} U_v,
$$
where $T \subset V$ is a finite subset and $U_v \subset G(K_v)$ is an arbitrary open subset for $v \in T$, as a basis of open neighborhoods. We say that $G$ has {\it strong approximation} with respect to $V$ if the diagonal embedding $G(K) \hookrightarrow G(\mathbb{A}(K , V))$ has dense image.\footnote{Let us point out that here we deviate from the standard terminology. Namely, recall that in the classical setting where $K$ is a number field, $V^K$ is the set of all places of $K$, and $S \subset V^K$ is a finite subset with complement $V = V^K \setminus S$, we say that $G$ has strong approximation {\it with respect to $S$} if the diagonal embedding  $G(K) \hookrightarrow G(\mathbb{A}(K , V))$ has dense image (cf. \cite[Ch. 7]{Pl-R}).}
%Here we deviate from the standard terminology [PlRap, ..] where in case $K$ is a number field, $V^K$ is the set of all valuations of $K$ and $V \subset V^K$ is
%a subset such that the diagonal embedding $G(K) \hookrightarrow G(\mathbb{A}(K , V))$ is dense we say that $G$ has strong approximation {\it with respect to} $S = V^K \setminus V$.}.
This property plays a fundamental role in the study of various questions about algebraic groups over global fields. We refer the reader to \cite[Ch. 7]{Pl-R} for a detailed account (based on Platonov's proof \cite{PlatonovSA}) of strong approximation for algebraic groups over number fields, and to the original papers of Margulis \cite{MargulisSA} and Prasad \cite{PrasadSA} for groups over arbitrary global fields.
The survey \cite{Rap-BreuOh} also contains a discussion of some variations, generalizations, and applications of strong approximation. In this section, we will briefly consider strong approximation over fields other than global.

%Here we would like to raise the question about strong approximation over fields other than global.

\vskip2mm

\noindent {\bf Example 10.1.} Let $R$ be a Noetherian integral domain that is integrally closed in its field of fractions $K$, and let $V$ be the set of discrete valuations of $K$ associated with height one prime ideals of $R$. Take $G$ to be the additive group $\mathbb{G}_a$, so that the corresponding adelic group is simply the additive group of the adele ring $\mathbb{A}(K , V)$. If $G$ has strong approximation with respect to $V$, then it easily follows from the definition of the adelic topology that for any height one prime ideals $\mathfrak{p}_1 , \mathfrak{p}_2$ of $R$, $\mathfrak{p}_1 \neq \mathfrak{p}_2$, the diagonal map
$$
R \longrightarrow R/\mathfrak{p}_1 \times R/\mathfrak{p}_2
$$
must be surjective. However, if the Krull dimension of $R$ is $> 1$, then typically $R$ contains height one prime ideals $\mathfrak{p}_1 \neq \mathfrak{p}_2$ such that $\mathfrak{p}_1 + \mathfrak{p}_2 \neq R$, in which case the above map is not surjective, and hence $G$ fails to have strong approximation with respect to $V$. On the other hand, if the Krull dimension of $R$ is 1, i.e. $R$ is a Dedekind domain, then any two distinct prime ideals $\mathfrak{p}_1 , \mathfrak{p}_2$ of $R$ satisfy $\mathfrak{p}_1 + \mathfrak{p}_2 = R$. It follows from the Chinese Remainder Theorem that $R$ is dense in
$$
\mathbb{A}^{\infty}(K , V) = \prod_{v \in V} \mathcal{O}_v,
$$
easily implying that $G$ does have strong approximation with respect to $V$.

\vskip2mm

There are two takeaways from this example. First, we should confine the consideration of strong approximation to the situation where $K$ is the fraction field of a Dedekind domain $R$ and $V$ is the set of discrete valuations of $K$ associated with the nonzero prime ideals of $R$. The most interesting case is where $R = k[C]$ is the coordinate ring of a smooth geometrically integral affine curve $C$ over a field $k$ and $V$ is the corresponding set of geometric places of the function field $K = k(C)$. (Of course, if $k$ is a finite field, then the function field $K$ is global, in which case strong approximation is completely understood. However, the problem for other fields remains wide open --- see below.)

Second, the $K$-subgroups of $G$ isomorphic to $\mathbb{G}_a$ may be helpful for proving strong approximation in $G$. More precisely, let us assume that $G$ is an absolutely almost simple simply connected $K$-group. We recall that $G$ is called $K$-{\it isotropic} if it contains a
nontrivial $K$-split torus (in which case it also contains an
abundance of 1-parameter unipotent subgroups), and $K$-{\it
anisotropic} otherwise.
%If $G$ contains a $K$-subgroup isomorphic to $\mathbb{G}_a$, then $G$ is called $K$-{\it isotropic}; otherwise it is  $K$-{\it anisotropic}.
Assuming that $G$ is $K$-isotropic, one can consider the (normal) subgroup $G(K)^+ \subset G(K)$ generated by the $K$-rational elements of the unipotent radicals of $K$-defined parabolic subgroups. In \cite{PlatonovTA},
V.P.~Platonov constructed the first examples where $G(K)^+ \neq G(K)$, thereby disproving the Kneser-Tits conjecture (see \cite{GilleBourbaki} for a relatively recent survey of the Kneser-Tits problem). At the same time, the quotient $$W(G , K) = G(K)/G(K)^+,$$ known as the {\it Whitehead group} of $G$ over $K$, is conjectured to be abelian (in which case it will automatically be of finite exponent depending only on the type of $G$), which has been established in many cases. (In fact, $W(G , K)$ is expected to be {\it finite} abelian group whenever $K$ is finitely generated, but only partial results are available so far --- cf. \cite{CTFinitude}).
Motivated by these properties, we will say that  a normal subgroup $N$ of an abstract group $H$ is ``big'' if the quotient $H/N$ is an abelian group of finite exponent.
%In fact, $W(G , K)$ is expected to be finite whenever $K$ is finitely generated, but only partial results are available so far (cf. \cite{CTFinitude}).
%[Colliot, K-theory, 1996].
Now, if $K$ is the fraction field of a Dedekind domain $R$ and $V$ is the set of discrete valuations of $K$ associated with the nonzero prime ideals of $R$, then using strong approximation in $\mathbb{G}_a$, one easily proves that for any absolutely almost simple simply connected $K$-isotropic group $G$, the closure $\overline{G(K)}$ of $G(K)$ in $G(\mathbb{A}(K , V))$ always contains
$$
G(\mathbb{A}(K , V)) \, \bigcap \, \prod_{v \in V} G(K_v)^+.
$$
Thus, for those types where the Whitehead group $W(G , F)$ is known to be abelian for any field extension $F/K$, the closure $\overline{G(K)^+}$ is a ``big'' subgroup of $G(\mathbb{A}(K , V))$. We observe that Platonov \cite{PlatonovSteklov} has found examples of isotropic groups where $G(K)$ is not dense in $G(K_v)$, so $G$ may not have strong approximation in the general case. It would be interesting to determine if $\overline{G(K)}$ may be of infinite index in $G(\mathbb{A}(K , V))$ when $K = k(C)$ is the function field of a smooth geometrically integral affine curve $C$ over a finitely generated field $k$ and $V$ is the set of geometric places of $K$.

\vskip2mm

\noindent {\bf Example 10.2.} Let $K = \mathbb{C}(t)$, take $V$ to be the set of discrete valuations of $K$ corresponding to all linear polynomials $t - a$, $a \in \mathbb{C}$, and let $G$ be an absolutely almost simple simply connected $K$-group. It follows from Tsen's theorem that $K$ has cohomological dimension $\leq 1$, which implies that $G$ is quasi-split (cf. \cite[Ch. III, \S2]{Serre-GC}). Then the Whitehead group $W(G, F)$ is trivial for any field extension $F/K$ (cf. \cite{TitsWhitehead}). So, the above discussion shows that $G$ has strong approximation with respect to $V$.

\vskip2mm

Of course, techniques using 1-parameter unipotent subgroups are inapplicable if $G$ is $K$-anisotropic. So, we would like to propose the following.

\vskip2mm

\noindent {\bf Problem 10.3.} {\it Let $K$ be the field of fractions of a Dedekind domain $R$, and let $V$ be the set of discrete valuations of $K$ corresponding to the nonzero prime ideals of $R$. Investigate the problem of strong approximation for an absolutely almost simple simply connected $K$-anisotropic group $G$. When can one guarantee that the closure $\overline{G(K)}$ of $G(K)$ in $G(\mathbb{A}(K , V))$ is a ``big'' subgroup of the latter? More specifically, for the coordinate ring $R = k[C]$ of a smooth geometrically integral affine curve $C$ over a finitely generated field $k$ and $V$ the corresponding set of geometric places of $K = k(C)$, in what cases is $\overline{G(K)}$ of finite index in $G(\mathbb{A}(K , V))$? Can one always ensure that it is of finite index by deleting from $V$ a finite set of places?}

\vskip2mm

To the best of our knowledge, in the anisotropic case over fields other than global, strong approximation has been established only for the groups $\mathrm{SL}_{1 , D}$, where $D$ is a quaternion division algebra over $\mathbb{R}(t)$ (see \cite{Yamasaki}). It is likely that this result can be extended to function fields $\mathbb{R}(C)$ of arbitrary smooth geometrically integral affine real curves $C$. Furthermore, since every semi-simple algebraic group over $\mathbb{R}(C)$ becomes quasi-split over its quadratic extension $\mathbb{C}(C)$ (cf. Example 10.2), using strong approximation for the quaternionic group, in conjunction with the techniques of dealing with semi-simple groups that split over a quadratic extension of the base field (cf. \cite{Weisfeiler}), one may be able to establish strong approximation for many (and maybe even all) absolutely almost simple simply connected groups over $\mathbb{R}(C)$. Because of the paucity of research done in this area so far, it is difficult to predict what methods may be useful, but one should probably re-examine Kneser's approach to strong approximation (\cite{Kneser1}, \cite{Kneser2}),
which relies primarily on considerations from Galois cohomology.

%Galois cohomology considerations.

%In this concluding section, we briefly comment on some further developments in the rapidly developing arithmetic theory of algebraic groups over higher-dimensional fields and rings.

\subsection{Rigidity}\label{S-Rigidity}
 The analysis of representations, and more generally actions, of arithmetic groups and lattices has been one of the central subjects in the theory of arithmetic groups and discrete subgroups of Lie groups in the past 60 years. So, to complete our account of the trends in the arithmetic theory of algebraic groups over higher-dimensional fields, we would like to discuss briefly one result on representations of higher-dimensional analogues of arithmetic groups. The reader may want to consult \cite{IR3} for more information. This subject goes back to the classical paper of Bass, Milnor, and Serre \cite{BMS}, where, as a consequence of the solution of the Congruence Subgroup Problem, it was shown that for $n \geq 3$, every finite-dimensional complex representation $$\rho \colon \mathrm{SL}_n(\mathbb{Z}) \to \mathrm{GL}_k(\mathbb{C})$$ is {\it almost algebraic}, i.e. there exists a morphism of algebraic groups $\sigma \colon \mathrm{SL}_n(\mathbb{C}) \to \mathrm{GL}_k(\C)$ such that for a suitable finite-index subgroup $\Delta \subset \mathrm{SL}_n(\mathbb{Z})$, the restrictions $\rho \vert \Delta$ and $\sigma \vert \Delta$ coincide (cf. \cite[Theorem 16.2]{BMS}). Serre \cite{SerreCongruence} proved a similar result for the group ${\rm SL}_2(\Z[1/p])$. Subsequently, very general results about
representations of higher-rank arithmetic groups were obtained by Margulis in his Superrigidity Theorem (cf. \cite[Ch. VII]{Mar}).
At the same time, Steinberg \cite{St2} showed that the above results for representations of ${\rm SL}_n (\Z)$ ($n \geq 3$) can be derived directly from the commutator relations for elementary matrices.

Taking inspiration from Steinberg's generators-relations approach, the second author introduced in \cite{IR} a novel method for analyzing abstract representations of elementary subgroups of higher-rank Chevalley groups over arbitrary commutative rings. To fix notations, let $\Phi$ be a reduced irreducible root system of rank $\geq 2$, and let $G$ be the simply-connected Chevalley group scheme over $\Z$ of type $\Phi$. Given a commutative ring $R$, we denote by $E(R)$ the subgroup of $G(R)$ generated by the $R$-points of the canonical 1-parameter root subgroups $e_{\alpha} \colon \mathbb{G}_a \to G$ for all $\alpha \in \Phi$ (this group is usually called the {\it elementary subgroup} of $G(R)$). In \cite{IR}, the second author studied in detail the finite-dimensional representations
$$
\rho \colon E(R) \to {\rm GL}_n(K),
$$
where $K$ is an algebraically closed field of characteristic 0. He showed that if $(\Phi, R)$ is a {\it nice pair}\footnotemark, then \footnotetext{We will say that $(\Phi, R)$ is a {\it nice pair} if $2$ is a unit in $R$ whenever $\Phi$ contains a subsystem of type $\mathsf{B}_2$, and 2 and $3$ are units in $R$ whenever $\Phi$ is of type $\mathsf{G}_2$.} such a representation often has a {\it standard description}, i.e. there exists a commutative finite-dimensional $K$-algebra $B$, a ring homomorphism $f \colon R \to B$ with Zariski-dense image, and a morphism $\sigma \colon G(B) \to {\rm GL}_n(K)$ of algebraic $K$-groups such that for a suitable finite-index subgroup $\Gamma \subset E(R)$ of, we have
$$
\rho \vert_{\Gamma} = (\sigma \circ F) \vert_{\Gamma},
$$
where $F \colon E(R) \to E(B)$ is the group homomorphism induced by $f$ (see \cite[Main Theorem]{IR} for the precise statement). A key step in the proof of this theorem is the construction of an algebraic ring that is naturally associated to the representation $\rho$ and which captures information about the images of all root subgroups of $E(R)$ (see \cite[Theorem 3.1]{IR}). This result, on the one hand, confirmed in the case of split groups a long-standing conjecture of Borel and Tits \cite{BT} on the structure of abstract homomorphisms of algebraic groups, and, on the other hand, subsumed most previous rigidity statements for Chevalley groups over commutative rings. Subsequently, an analogous statement was also obtained for representations of the groups $G(k)$, where $G = {\rm SL}_{n, D}$ with $n \geq 3$ and $D$ is a finite-dimensional central division algebra over a field $k$ of characteristic 0 (cf. \cite{IR1}). Moreover, in \cite{IR1} and \cite{IR-Char}, these results were applied to the analysis of character varieties of certain finitely generated groups. Additionally, by studying the structure of algebraic rings in positive characteristic, D.~Boyarchenko and the second author \cite{BoRa} established, in many situations, the existence of standard descriptions for representations of elementary subgroups of Chevalley groups over fields of characteristic $p$.

Now, while the rigidity properties of arithmetic groups and lattices are well understood, over the
last 20 years, there has been a great deal of interest in the representations and related properties (such as Kazhdan's property (T)) of groups over higher-dimensional rings, particularly the groups ${\rm SL}_n(\Z[x_1, \dots, x_k])$ with $n \geq 3$ (these are sometimes called {\it universal lattices}). In the context of the analysis abstract homomorphisms, we should mention that, using a variation of the method of Bass, Milnor, and Serre, Shenfeld \cite{Sh} showed that any {\it completely reducible} finite-dimensional complex representation of ${\rm SL}_n(\Z[x_1, \dots, x_k])$ ($n \geq 3)$ has a standard description, thereby answering a question of Kazhdan. However, until recently, there were no rigidity statements available for {\it arbitrary} finite-dimensional representations of universal lattices. In \cite{IR-Curves}, using the general framework developed in \cite{IR}, in conjunction
with a careful analysis of certain central extensions, the second author obtained the following rigidity result.

%In \cite{IR-Curves}, using considerations involving algebraic rings, together with the analysis of certain central extensions, the second author obtained the following result.

\addtocounter{thm}{3}

\begin{thm}\label{T-RigidityCurves}{\rm (cf. \cite[Corollary 1.3]{IR-Curves})}
Let $\Phi$ be a reduced irreducible root system of rank $\geq 2$, $G$ be the corresponding simply-connected Chevalley group scheme over $\Z$, and be $K$ an algebraically closed field of characteristic 0. If $\mathcal{O}$ is the ring of $S$-integers in a number field such that $(\Phi, \mathcal{O})$ is a nice pair, then any representation
$$
\rho \colon E(\mathcal{O}[x]) \to {\rm GL}_m(K)
$$
has a standard description.
\end{thm}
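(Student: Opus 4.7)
The plan is to deploy the general rigidity framework for abstract representations of elementary subgroups of Chevalley groups developed in \cite{IR}, specializing to the base ring $R = \mathcal{O}[x]$, and then to reduce the additional complications arising from the polynomial extension to a tractable problem about central extensions over $\mathcal{O}$ itself. First I would observe that since $2$ (and, in the $\textsf{G}_2$ case, also $3$) is invertible in $\mathcal{O}$, it remains invertible in $\mathcal{O}[x]$, so $(\Phi, \mathcal{O}[x])$ is again a nice pair, and the technical hypotheses required by the constructions of \cite{IR} are met.

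Next I would apply \cite[Theorem 3.1]{IR} to $\rho$ in order to attach to it an algebraic $K$-ring $A$ together with a ring homomorphism $f \colon \mathcal{O}[x] \to A$ whose image is Zariski-dense, such that the images $\rho(e_{\alpha}(t))$ of elements of the root subgroups factor through $f$ and the canonical 1-parameter algebraic subgroups of $G(A)$. Since $\Phi$ has rank $\geq 2$, the Chevalley commutator relations force $A$ to be commutative, and the structural results on algebraic rings of \cite{IR} force it to be finite-dimensional over $K$. These data then assemble into a morphism of algebraic $K$-groups $\sigma \colon G(A) \to \mathrm{GL}_m(K)$ with the property that $\rho$ and $\sigma \circ F$ agree on each individual root subgroup of $E(\mathcal{O}[x])$, where $F \colon E(\mathcal{O}[x]) \to E(A) \hookrightarrow G(A)$ is the homomorphism induced by $f$.

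The main obstacle is to upgrade agreement on individual root subgroups to agreement on some finite-index subgroup $\Gamma \subset E(\mathcal{O}[x])$. The discrepancy is governed by a central extension of $E(\mathcal{O}[x])$ coming from the Steinberg group $\mathrm{St}(\Phi, \mathcal{O}[x])$ and is controlled by unstable $K_2$-type invariants of the pair $(\Phi, \mathcal{O}[x])$. To handle this, I would exploit the extra structure available for polynomial rings: the family of specialization homomorphisms $x \mapsto a$ for $a \in \mathcal{O}$, together with the $\mathbb{G}_a$-action on $E(\mathcal{O}[x])$ by translations of $x$, let one compare the central extension over $\mathcal{O}[x]$ with its specializations over $\mathcal{O}$, where the results of Bass-Milnor-Serre (together with Matsumoto's presentation of $K_2$) provide enough control to ensure that the cocycle becomes trivial after passage to a finite-index subgroup. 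The hard step --- and the one I expect to require the most care --- is showing that this specialization-and-lift procedure is compatible with the algebraic ring $A$ produced above, so that the trivialization obtained at each $\mathcal{O}$-point of the affine line actually propagates, coherently in $x$, back to a finite-index subgroup of $E(\mathcal{O}[x])$ on which $\rho$ matches $\sigma \circ F$.
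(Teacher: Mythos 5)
Your opening moves do match the actual proof: Theorem \ref{T-RigidityCurves} is obtained in \cite{IR-Curves} precisely by feeding $R=\mathcal{O}[x]$ into the machinery of \cite{IR} --- the algebraic ring $A$ attached to $\rho$ via \cite[Theorem 3.1]{IR}, its commutativity and finite-dimensionality in rank $\geq 2$, and the reduction of the standard description to the splitting, on a finite-index subgroup, of a certain central extension. The gap is in how you propose to handle that central extension. The obstruction does not live over $\mathcal{O}[x]$ or over its specializations $\mathcal{O}[x]/(x-a)\cong\mathcal{O}$; it lives in the finite-dimensional commutative $K$-algebra $A$, more precisely in its Jacobson radical $J$, whose associated group is the unipotent radical of the connected component of $\overline{\rho(E(\mathcal{O}[x]))}$. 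In characteristic zero this kernel is a $K$-vector group, not a finite congruence-kernel-type obstruction, so passing to a finite-index subgroup cannot by itself trivialize a cocycle valued in it, and the Bass--Milnor--Serre/Matsumoto input you invoke (which controls $K_2$ and the finite congruence kernel over $\mathcal{O}$) is not the relevant tool. Moreover, the homomorphism $f\colon \mathcal{O}[x]\to A$ produced by the construction has no compatibility with the evaluation maps $x\mapsto a$ or with the substitution $x\mapsto x+c$ (the representation $\rho$ is not equivariant for these), so there is no mechanism for your ``specialize, trivialize over $\mathcal{O}$, and propagate coherently in $x$'' step --- which you yourself identify as the hard point but leave entirely open.

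What \cite{IR-Curves} actually exploits is the feature of $\mathcal{O}[x]$ emphasized in the text right after the theorem: it is a ring with ``few'' derivations. The component of $f$ landing in $J$ behaves like a derivation relative to the semisimple quotient $A/J$; since $\mathcal{O}$ admits no nonzero derivations into a field of characteristic zero and the K\"ahler differentials of $\mathcal{O}[x]$ are generated by $dx$, the radical $J$ is forced to be small and of controlled structure, and it is this structural constraint on $A$ --- not a specialization argument --- that makes the careful analysis of the central extension go through. To repair the proposal, your third paragraph should be replaced by this analysis of the radical of the algebraic ring.
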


In fact, the results of \cite{IR-Curves} deal, more generally, with representations of the groups $E(R)$, where $R$ is a ring with ``few" derivations, which, in particular, explains the classical rigidity results for
Chevalley groups of rank $\geq 2$ over number rings. Now, by a well-known result of Suslin \cite{Suslin}, if $\Phi$ is of type $\mathsf{A}$, then $E(\mathcal{O}[x]) = {\rm SL}_n(\mathcal{O}[x])$, so Theorem \ref{T-RigidityCurves} shows, in particular, that {\it any} finite-dimensional representation of the universal lattice ${\rm SL}_n(\Z[x])$ has a standard description. We should note that Suslin's result has recently been extended by Stavrova \cite{Stavrova} to {\it all} simply-connected Chevalley groups of rank $\geq 2$, thereby yielding the existence of standard descriptions for arbitrary representations of $G(\mathcal{O}[x]).$

\vskip3mm

\noindent {\small {\bf Acknowledgements.}  Special thanks are due to Brian Conrad, who carefully read the article and made a number of
suggestions that helped to improve the exposition. We are also grateful to Uriya First, Ariyan Javanpeykar, Boris Kunyavski\={\i}, Daniel Loughran, Alexander Merkurjev, Dipendra Prasad, C.~Rajan, Zinovy Reichstein, Charlotte Ure, and Uzi Vishne for helpful comments.}

\bibliographystyle{amsplain}

\end{document}